\newtheorem{thm}{Theorem}[section]
\newtheorem{prop}[thm]{Proposition}
\newtheorem{lem}[thm]{Lemma}
\newtheorem{prop-def}{Proposition-Definition}[section]
\theoremstyle{definition}
\newtheorem{defn}[thm]{Definition}
\newtheorem{remark}[thm]{Remark}
\newtheorem{exam}[thm]{Example}
\newcommand{\nc}{\newcommand}
\nc{\delete}[1]{{}}
\nc{\mmargin}[1]{}
\nc{\Alg}{\mathrm{Alg}}
\nc{\AvO}{\mathrm{AvO}}
\nc{\AvA}{\mathrm{AvA}}
\nc{\rmH}{\mathrm{H}}
\nc{\DT}{\mathrm{DT}}
\nc{\C}{\mathrm{C}}
\nc{\mlabel}[1]{\label{#1}}  
\nc{\mcite}[1]{\cite{#1}}  
\nc{\mref}[1]{\ref{#1}}  
\nc{\mbibitem}[1]{\bibitem{#1}} 
	\nc{\mlabel}[1]{\label{#1}  
		{\hfill \hspace{1cm}{\bf{{\ }\hfill(#1)}}}}
	\nc{\mcite}[1]{\cite{#1}{{\bf{{\ }(#1)}}}}  
	\nc{\mref}[1]{\ref{#1}{{\bf{{\ }(#1)}}}}  
	\nc{\mbibitem}[1]{\bibitem[\bf #1]{#1}} 
 \font\cyrs=wncyr7
\nc{\vep}{\varepsilon}
\nc{\bin}[2]{ (_{\stackrel{\scs{#1}}{\scs{#2}}})}  
\nc{\binc}[2]{(\!\! \begin{array}{c} \scs{#1}\\
		\scs{#2} \end{array}\!\!)}  
\nc{\bincc}[2]{  ( {\scs{#1} \atop
		\vspace{-1cm}\scs{#2}} )}  
\nc{\oline}[1]{\overline{#1}}
\nc{\mapm}[1]{\lfloor\!|{#1}|\!\rfloor}
\nc{\bs}{\bar{S}}
\nc{\la}{\longrightarrow}
\nc{\ot}{\otimes}
\nc{\rar}{\rightarrow}
\nc{\lon }{\,\rightarrow\,}
\nc{\dar}{\downarrow}
\nc{\dap}[1]{\downarrow \rlap{$\scriptstyle{#1}$}}
\nc{\defeq}{\stackrel{\rm def}{=}}
\nc{\dis}[1]{\displaystyle{#1}}
\nc{\dotcup}{\ \displaystyle{\bigcup^\bullet}\ }
\nc{\hcm}{\ \hat{,}\ }
\nc{\hts}{\hat{\otimes}}
\nc{\hcirc}{\hat{\circ}}
\nc{\lleft}{[}
\nc{\lright}{]}
\nc{\curlyl}{\left \{ \begin{array}{c} {} \\ {} \end{array}
	\right .  \!\!\!\!\!\!\!}
\nc{\curlyr}{ \!\!\!\!\!\!\!
	\left . \begin{array}{c} {} \\ {} \end{array}
	\right \} }
\nc{\longmid}{\left | \begin{array}{c} {} \\ {} \end{array}
	\right . \!\!\!\!\!\!\!}
\nc{\ora}[1]{\stackrel{#1}{\rar}}
\nc{\ola}[1]{\stackrel{#1}{\la}}
\nc{\scs}[1]{\scriptstyle{#1}} \nc{\mrm}[1]{{\rm #1}}
\nc{\dirlim}{\displaystyle{\lim_{\longrightarrow}}\,}
\nc{\invlim}{\displaystyle{\lim_{\longleftarrow}}\,}
\nc{\dislim}[1]{\displaystyle{\lim_{#1}}} \nc{\colim}{\mrm{colim}}
\nc{\mvp}{\vspace{0.3cm}} \nc{\tk}{^{(k)}} \nc{\tp}{^\prime}
\nc{\ttp}{^{\prime\prime}} \nc{\svp}{\vspace{2cm}}
\nc{\vp}{\vspace{8cm}}
\nc{\modg}[1]{\!<\!\!{#1}\!\!>}
\nc{\intg}[1]{F_C(#1)}
\nc{\lmodg}{\!<\!\!}
\nc{\rmodg}{\!\!>\!}
\nc{\cpi}{\widehat{\Pi}}
\nc{\ssha}{{\mbox{\cyrs X}}} 
\nc{\tsha}{{\mbox{\cyrt X}}}
\nc{\shpr}{\diamond}    
\nc{\labs}{\mid\!}
\nc{\rabs}{\!\mid}
\nc{\RBO}{{\mathrm{RBO}_\lambda}}
\nc{\Sh}{\mathrm{Sh}}
\nc{\RBA}{{\mathrm{RBA}_\lambda}}
\nc{\sgn}{\mathrm{sgn}}
\nc{\ad}{\mrm{ad}}
\nc{\ann}{\mrm{ann}}
\nc{\Aut}{\mrm{Aut}}
\nc{\bim}{\mbox{-}\mathsf{Bimod}}
\nc{\br}{\mrm{bre}}
\nc{\can}{\mrm{can}}
\nc{\Cont}{\mrm{Cont}}
\nc{\rchar}{\mrm{char}}
\nc{\cok}{\mrm{coker}}
\nc{\de}{\mrm{dep}}
\nc{\dtf}{{R-{\rm tf}}}
\nc{\dtor}{{R-{\rm tor}}}
\nc{\Div}{{\mrm Div}}
\nc{\Diff}{\mrm{DA}}
\nc{\Diffl}{\mathsf{DA}_\lambda}
\nc{\diffo}{{\mathsf{DO}_\lambda}}
\nc{\alg}{\mathsf{Alg}}
\nc{\End}{\mrm{End}}
\nc{\Ext}{\mrm{Ext}}
\nc{\Fil}{\mrm{Fil}}
\nc{\Fr}{\mrm{Fr}}
\nc{\Frob}{\mrm{Frob}}
\nc{\Gal}{\mrm{Gal}}
\nc{\GL}{\mrm{GL}}
\nc{\Hom}{\mrm{Hom}}
\nc{\Hoch}{\mrm{Hoch}}
\nc{\hsr}{\mrm{H}}
\nc{\hpol}{\mrm{HP}}
\nc{\id}{\mrm{Id}}
\nc{\im}{\mrm{im}}
\nc{\Id}{\mrm{Id}}
\nc{\ID}{\mrm{ID}}
\nc{\Irr}{\mrm{Irr}}
\nc{\incl}{\mrm{incl}}
\nc{\length}{\mrm{length}}
\nc{\NLSW}{\mrm{NLSW}}
\nc{\Lie}{\mrm{Lie}}
\nc{\mchar}{\rm char}
\nc{\mpart}{\mrm{part}}
\nc{\ql}{{\QQ_\ell}}
\nc{\qp}{{\QQ_p}}
\nc{\rank}{\mrm{rank}}
\nc{\rcot}{\mrm{cot}}
\nc{\rdef}{\mrm{def}}
\nc{\rdiv}{{\rm div}}
\nc{\rtf}{{\rm tf}}
\nc{\rtor}{{\rm tor}}
\nc{\res}{\mrm{res}}
\nc{\SL}{\mrm{SL}}
\nc{\Spec}{\mrm{Spec}}
\nc{\tor}{\mrm{tor}}
\nc{\Tr}{\mrm{Tr}}
\nc{\tr}{\mrm{tr}}
\nc{\wt}{\mrm{wt}}
\def\ot{\otimes}
\nc{\bfk}{{\bf k}}
\nc{\bfone}{{\bf 1}}
\nc{\bfzero}{{\bf 0}}
\nc{\detail}{\marginpar{\bf More detail}
	\noindent{\bf Need more detail!}
	\svp}
\nc{\gap}{\marginpar{\bf Incomplete}\noindent{\bf Incomplete!!}
	\svp}
\nc{\FMod}{\mathbf{FMod}}
\nc{\Int}{\mathbf{Int}}
\nc{\Mon}{\mathbf{Mon}}
 \nc{\sproof}{\noindent{  \textit{Sketch of Proof:} }}
\nc{\remarks}{\noindent{\bf Remarks: }}
\nc{\Rep}{\mathbf{Rep}}
\nc{\Rings}{\mathbf{Rings}}
\nc{\Sets}{\mathbf{Sets}}
\nc{\ob}{\mathsf{Ob}}
\nc{\BA}{{\mathbb A}}   \nc{\CC}{{\mathbb C}}
\nc{\DD}{{\mathbb D}}   \nc{\EE}{{\mathbb E}}
\nc{\FF}{{\mathbb F}}   \nc{\GG}{{\mathbb G}}
    \nc{\LL}{{\mathbb L}}
\nc{\NN}{{\mathbb N}}   \nc{\PP}{{\mathbb P}}
\nc{\QQ}{{\mathbb Q}}   \nc{\RR}{{\mathbb R}}
\nc{\TT}{{\mathbb T}}   \nc{\VV}{{\mathbb V}}
\nc{\ZZ}{{\mathbb Z}}   \nc{\TP}{\widetilde{P}}
\nc{\m}{{\mathbbm m}}
\nc{\cala}{{\mathcal A}}    \nc{\calc}{{\mathcal C}}
\nc{\cald}{\mathcal{D}}     \nc{\cale}{{\mathcal E}}
\nc{\calf}{{\mathcal F}}    \nc{\calg}{{\mathcal G}}
\nc{\calh}{{\mathcal H}}    \nc{\cali}{{\mathcal I}}
\nc{\call}{{\mathcal L}}    \nc{\calm}{{\mathcal M}}
\nc{\caln}{{\mathcal N}}    \nc{\calo}{{\mathcal O}}
\nc{\calp}{{\mathcal P}}    \nc{\calr}{{\mathcal R}}
\nc{\cals}{{\mathcal S}}    \nc{\calt}{{\Omega}}
\nc{\calv}{{\mathcal V}}    \nc{\calw}{{\mathcal W}}
\nc{\calx}{{\mathcal X}}
\nc{\fraka}{{\mathfrak a}}
\nc{\frakb}{\mathfrak{b}}
\nc{\frakg}{{\frak g}}
\nc{\frakl}{{\frak l}}
\nc{\fraks}{{\frak s}}
\nc{\frakB}{{\frak B}}
\nc{\frakm}{{\frak m}}
\nc{\frakM}{{\frak M}}
\nc{\frakp}{{\frak p}}
\nc{\frakW}{{\frak W}}
\nc{\frakX}{{\frak X}}
\nc{\frakS}{{\frak S}}
\nc{\frakA}{{\frak A}}
\nc{\frakx}{{\frakx}}
\nc{\red}{\color{red}}
\nc{\RB}{\mathfrak{RB}}
\begin{document}

\title[Homotopy Rota-Baxter algebras]{Deformations and homotopy theory of Rota-Baxter algebras of any weight}

\author{Kai Wang and Guodong Zhou}
\address{  School of Mathematical Sciences, Shanghai Key Laboratory of PMMP,
  East China Normal University,
 Shanghai 200241,
   China}
\email{wangkai@math.ecnu.edu.cn }

\email{gdzhou@math.ecnu.edu.cn}

\date{\today}

\begin{abstract} This paper studies the formal deformations and homotopy of Rota-Baxter algebras of any given weight. We define an $L_\infty$-algebra that controls simultaneous the deformations of the associative
product and the Rota-Baxter operator of a Rota-Baxter algebra. As a consequence, we develop a cohomology theory of Rota-Baxter algebras of any given weight and justify it by interpreting the lower degree cohomology groups as formal deformations and abelian extensions. The notion of homotopy Rota-Baxter algebras is
introduced and it is shown that the operad governing homotopy Rota-Baxter algebras is a minimal
model of the operad of Rota-Baxter algebras.

\end{abstract}

\subjclass[2010]{
16E40   
16S80   
16S70   
}

\keywords{cohomology, abelian extension, formal deformation,  $L_\infty$-algebra,  minimal model, operad, Rota-Baxter algebra, homotopy Rota-Baxter algebra}

\maketitle

 \tableofcontents

\allowdisplaybreaks

\section*{Introduction}

A general philosophy of deformation theory of mathematical structures, as evolved from ideas of Gerstenhaber, Nijenhuis, Richardson, Deligne, Schlessinger, Stasheff, Goldman,   Millson,  is that  the deformation theory of any given
mathematical object can be described  by
a certain differential graded (=dg) Lie algebra or more generally a $L_\infty$-algebra associated to the
mathematical object  (whose underlying complex is called the deformation complex).  This philosophy has been made into a theorem in characteristic zero by J.~Lurie \cite{Lurie}  and J.~Pridham \cite{Pridham}, expressed in terms of infinity categories. It is an important question   to construct explicitly this dg Lie algebra or $L_\infty$-algebra governing deformation theory of this mathematical object.

Another important question about  algebraic structures is to study their homotopy versions, just like   $A_\infty$-algebras for  usual associative algebras.  The nicest   result would be providing a minimal model of the operad governing an algebraic structure.  When this operad  is Koszul, there exists a  general theory, the so-called Koszul duality for operads \cite{GK94}\cite{GJ94}\cite{LV}, which defines a   homotopy version of this algebraic structure via the cobar construction of the Koszul dual cooperad, which, in this case,  is  a minimal model. However, when the operad   is NOT Koszul, essential difficulties arise and   few examples of minimal models   have been  worked out.
For instance, G\'alvez-Carrillo,  Tonks and   Vallette \cite{GCTV12}  gave a cofibrant resolution of the Batatlin-Vilkovisky operad using inhomogeneous Koszul duality theory. However, their cofibrant resolution is not minimal and in another paper of Drummond-Cole and  Vallette \cite{DCV13}, the authors succeeded in finding a minimal model which is a deformation retract of the cofibrant resolution found in the previous paper.  Dotsenko and Khoroshkin \cite{DK13} constructed resolutions for
shuffle monomial operads by the inclusion-exclusion principle and for operads presented by a Gr\"{o}bner
basis \cite{DK10} by deformation of the monomial case.

These two questions, say, describing controlling $L_\infty$-algebras  and constructing homotopy versions,  are closed related. In fact, given a cofibrant resolution, in particular, a minimal model,  of the operad in question, one can form the deformation complex of the algebraic structure and construct its $L_\infty$-structure as explained by Kontsevich and Soibelman \cite{KS} and  van der Laan \cite{VdL1, VdL2}.  This method has been generalised to properads by Markl \cite{Markl}, Merkulov and Vallette \cite{MV1, MV2}, and to colored props by  Fr\'{e}gier, Markl and Yau \cite{FMY09}.

In this paper, we follow  a somehow inverse  direction and make use of  an ad hoc method. Given an algebraic structure on a space $V$ realised as an algebra over an operad, by considering the  formal deformations of this algebraic structure, we first  construct the deformation complex  and find an $L_\infty$-structure on the underlying graded space of this complex such that the Maurer-Cartan elements are in bijection with the algebraic structures on $V$.
When $V$ is graded, we define a homotopy version of this algebraic structure as Maurer-Cartan elements in the $L_\infty$-algebra constructed above. Finally under suitable conditions,  we could  show that the operad governing the homotopy version is a minimal model of the original operad.

\bigskip

 The algebraic structure  investigated in this paper  is  Rota-Baxter algebras of any weight.

Rota-Baxter algebras (previously known as  Baxter algebras) originated with the work of    Baxter \cite{Bax} in his study on probability theory.
Baxter's work was further investigated by,  among others,   Rota \cite{Rota69}  (hence the name  ``Rota-Baxter algebras''),   Cartier \cite{Cartier} and   Atkinson \cite{Atkinson} etc. The subject was revived beginning with the  work of Guo et al. \cite{GuoKeigher00a, GuoKeigher00b,Guo00, Aguiar}.
Nowadays, Rota-Baxter algebras have  numerous  applications and connections to many  mathematical branches,  to name a few, such as combinatorics \cite{GuGuo,Rota95},  renormalization
in quantum field theory   \cite{CK}, multiple zeta values in number theory \cite{Guozhang},   operad theory \cite{Aguiar,BBGN},
  Hopf algebras \cite{CK}, Yang-Baxter equation \cite{Bai}. For basic theory about Rota-Baxter algebras, we refer the reader to the short introduction \cite{Guo09b} and to  the comprehensive monograph \cite{Guo12}.

The deformation theory  and cohomology theory of  Rota-Baxter algebras had been absent for a long time despite   the   importance of Rota-Baxter algebras.  Recently there are some breakthroughs in this direction.
  Tang,  Bai,  Guo and  Sheng \cite{TBGS19} developed the deformation theory  and cohomology theory of $\mathcal{O}$-operators (also called relative Rota-Baxter operators) on Lie algebras, with applications
to  Rota-Baxter Lie algebras in mind.  Das \cite{Das20}  developed a similar theory for Rota-Baxter associative algebras of weight zero.  Lazarev,  Sheng and   Tang  \cite{LST} succeeded  in establishing   deformation theory  and cohomology theory of  relative  Rota-Baxter Lie algebras of weight zero   and found  applications to triangular Lie bialgebras.   They determined the $L_\infty$-algebra that controls deformations of a relative Rota-
Baxter Lie algebra and  introduced  the notion
of a homotopy relative Rota-Baxter Lie algebra.  The same group of authors also related homotopy relative Rota-Baxter Lie algebras and triangular $L_\infty$-bialgebras via  a functorial approach to Voronov's higher derived brackets construction \cite{LST2}.  Later Das and Misha also  determined the $L_\infty$-structures underlying the cohomology theory for Rota-Baxter associative algebras of weight zero \cite{DasM}.   There are some other related work  \cite{TSZ20, THS21, Das20b,Das20c,Das20d, DasGuo,DasGuob,JiangSheng21, LiuLiuSheng21}.
These work all concern    Rota-Baxter  operators of weight zero.

A recent paper by  Pei,  Sheng,  Tang and   Zhao \cite{PSTZ} considered  cohomologies of crossed homomorphisms for Lie algebras and they found a DGLA controlling deformations of crossed homomorphisms. Another  exciting progress in this subject is the introduction of the  notion of Rota-Baxter Lie groups by Guo, Lang and Sheng \cite{GLS21}; as a successor to  this work, Jiang, Sheng and Zhu considered cohomology of Rota-Baxter operators of weight $1$ on Lie groups and Lie algebras and relationship between them \cite{JSZ21}. While this paper is ready to submit,  another paper  appeared \cite{Das21} in which Das investigated cohomology of Rota-Baxter operators of arbitrary weights on associative algebras and which has some overlap with  Sections 5.1 and 6.2 of this paper; in another paper \cite{Das21b}, he studied twisted Rota-Baxter operators on Leibniz algebras.   It seems that these are the only papers which investigates Rota-Baxter operators of nonzero weight (for a related work on differential algebras of nonzero weight, see \cite{GLSZ20}).   In these papers, the authors   dealt with  the deformations of only the Rota-Baxter operators with the Lie algebra or associative algebra structure unchanged.
The goal of the present paper is to study simultaneous deformations of Rota-Baxter operators of nonzero weight and of    associative algebra structures.  One of the reasons is that when  one structure remains undeformed, the homotopy version obtained could not be a minimal model  of the operad of Rota-Baxter Lie algebras or Rota-Baxter associative  algebras.

\bigskip

In this paper, we follow  a somehow inverse  direction to the classical approach from cofibrant resolutions to $L_\infty$-algebras.  Given an algebraic structure on a space $V$ realised as an algebra over an operad, by considering the  formal deformations of this algebraic structure, we first  construct the deformation complex  and find an $L_\infty$-structure on the underlying graded space of this complex such that the Maurer-Cartan elements are in bijection with the algebraic structures on $V$.
When $V$ is graded, we define a homotopy version of this algebraic structure as Maurer-Cartan elements in the $L_\infty$-algebra constructed above. Finally under suitable conditions,  we could  show that the operad governing the homotopy version is a minimal model of the original operad.

While the above mentioned papers of Sheng et al.  use derived brackets \cite{Kos, Vor, Vor2} as a main tool, our method is somehow ad hoc. We give a direct proof of   the constructed $L_\infty$-structure.
Finally, we could show that the resulting homotopy version is indeed the  minimal model of the operad of Rota-Baxter associative  algebras.
It might be appropriate to explain  here the relationship of our result with the   paper of  Dotsenko and  Khoroshkin \cite{DK13}. In that paper, the authors tried to deform the minimal model of the corresponding monomial operads obtained by Gr\"{o}bner basis of  the Rota-Baxter operad and they got the generators of the operad of homotopy Rota-Baxter algebras. It seems that it is not easy to obtain all the relations.
While our generators of homotopy Rota-Baxter algebras are the same, we could determine all the relations in an indirect way with the aid  of $L_\infty$-structure on the deformation complex  found using our ad hoc method. However,   our method to verify the minimal model was inspired by Dotsenko and  Khoroshkin \cite{DK13}.  Dotsenko kindly pointed out another proof based the paper \cite{DK13}; see Remark~\ref{rem: dotsenko}.

Another remark is in order. Once we show that the obtained operad of homotopy Rota-Baxter algebras is the minimal model of that of Rota-Baxter algebras, one could use the method of  Kontsevich and Soibelman \cite{KS} and  van der Laan \cite{VdL1, VdL2} to produce the $L_\infty$-structure on the deformation complex instead of our direct method. We offer two reasons. One reason is that the direct method is not much more complicated than the latter method and another reason is that we want to exhibit the effectiveness of  our ad hoc method to deal with deformation theory and homotopy theory of algebraic structures. 

It would be an interesting problem  to give a general approach for  operated algebras in the sense of Guo \cite{Guo09a} and other algebaic structures.  For instance, it is desirable to  deal with  differential algebras (continuing \cite{GLSZ20}) and  averaging algebras (continuing and completing \cite{WangZhouAv}) etc. We are working on these projects using our method.

\bigskip

This paper is organised as follows. The first section contains some preliminaries.
Section 2 recalls the language of differential graded Lie algebras and $L_\infty$-algebras. Associative algebras are taken as baby model of our method in the third section.
Basic definitions and facts about Rota-Baxter algebras which are mostly  well known are recalled in  Section 4.
After defining a cohmology complex of Rota-Baxter operators,  with the help of the usual Hochschild cocohain complex,   a cochain complex,  whose cohomology groups  should control deformation theory of Rota-Baxter algerbas,  is exhibited in Section 5. We  justify this cohomology theory by interpreting lower degree cohomology groups as formal deformations (Section 6) and abelian extensions of Rota-Baxter algebras (Section 7).   Rota-Baxter algebra structures over the underlying space of this cochain complex is then realized as the Maurer-Cartan elements of an    L-infinity algebra structure over the cochain complex, as is done in the eighth section. With the help of this L-infinity algebra, one  introduces the notion of homotopy Rota-Baxter algebras of any weight in the ninth section.  Finally it is shown that the operad governing homotopy Rota-Baxter algebras is a minimal model of the operad of Rota-Baxter algebras in the tenth section.   We postpone the lengthy proof of the central result Theorem~\ref{Thm: rb-L-infty} to  Appendix A and Appendix B contains a proof of another technical result Proposition~\ref{Prop: homotopy RB-arising-from-homotopy-Rota-Baxter}

\bigskip

\section{Preliminaries}\label{Sect: Preliminaries}

 Throughout this paper, let $\bfk$ be a field of characteristic $0$.  All vector spaces are defined over $\bfk$,  all  tensor products and Hom-spaces  are taken over $\bfk$.

 A   (homologically) graded   vector space  is a family of vector spaces $V=\{V_n\}_{n\in \ZZ}$  indexed by integers.   Elements of $\cup_{n\in \ZZ} V_n$ are called homogeneous and
the  degree  of $v\in V_n$ is written as $|v|:=n$.

 We use both homological and cohomological  gradings. For a homologically graded   space $V=\bigoplus_{n\in \mathbb{Z}} V_n$, write $V^n=V_{-n}$ will transform homological grading to cohomological grading and vice versa.

  Let $V$ and $W$ be graded vector spaces.
A graded map  $f: V\to W$ of degree $r$ is by definition a linear map $f: V\to W$ such that  $f(V_n)\subseteq  W_{n+r}$ for all $n$.  In this case, denote  $|f|=r$.
Write  $$\mathrm{Hom}(V, W)_r=\prod_{p\in \ZZ} \Hom(V_p, W_{p+r})$$   the space of graded maps of degree $r$ and
denote $$\mathrm{Hom}(V, W)=\{\mathrm{Hom}(V, W)_r\}_{r\in \ZZ}$$ to be  the graded space of graded linear maps from $V$ to $W$.

   Let $V$ and $W$ be graded vector spaces. The  tensor product  $V\otimes W$ of $V$ and $W$ is
  graded
  whose grading is given by
\[(V\otimes W)_n:=\bigoplus_{p+q=n} V_p \otimes W_q.\]


Denote by $\Bbbk s$  the $1$-dimensional  graded vector space  spanned by $s$ with $|s|=1$.
The  suspension  of $V$ is $sV:=\Bbbk s\otimes V$,
so $(sV)_i $ can be identified with $V_{i-1}$ for any $i\in \ZZ$.
Note that for $v\in V_n$, $sv\in sV$ is of degree $n+1$ and the map $s: V\to sV, v\mapsto sv$ is a graded map of degree $1$.
One can also define another $1$-dimensional  graded vector space $\Bbbk s^{-1}$ with  $|s^{-1}|=-1$.
The  desuspension  of $V$ is $s^{-1}V: =\Bbbk s^{-1}\otimes V$ and the desuspension   map $s^{-1}: V\to s^{-1}V, v\mapsto s^{-1}v$ is a graded map of degree $-1$.

We will encounter many signs in the graded world. The basic principle to determine signs is the so-called Koszul rule,  that is, when we exchange the positions of two graded objects in an expression, we need to multiply the expression by a power of $-1$ whose  exponent is  the product of their degrees.
For instance, given two graded maps
$f: V\to V', g: W\to W'$, define
$f\otimes g: V\otimes W \to V'\otimes W'$  via $$(f\otimes g)(v\otimes w)=(-1)^{|g|\cdot |v|} f(v)\otimes g(w). $$
Another example is given as follows: for four graded maps
$f, f': V\to V', g, g': W\to W'$, the composition of $f\otimes g$ and $  f'\otimes g'$ is defined to be
\[ (f\otimes g)\circ (f'\otimes g')=(-1)^{|g|\cdot |f'|} (f\circ f')\otimes (g\circ g'). \]

 For $v_1, \dots, v_n\in V$, write
$v_{1, n}:=v_1\ot  \cdots \ot v_n\in V^{\ot n}$ and also $sv_{1, n}=sv_1\ot  \cdots \ot sv_n\in (sV)^{\ot n}$.

Let  $n\geq 1$. Recall $S_n$ denotes the symmetric group in $n$ variables.
For $0\leq i_1, \dots, i_r\leq n$ with $i_1+\cdots+i_r=n$,  $\Sh(i_1, i_2,\dots,i_r)$ is the   set of $(i_1,\dots, i_r)$-shuffles, i.e., those permutation $\sigma\in S_n$ such that
		 $$\sigma(1)<\sigma(2)<\dots<\sigma(i_1),  \ \sigma(i_1+1)< \dots<\sigma(i_1+i_2),\ \dots,\
		\sigma(i_{r-1}+1)< \cdots<\sigma(n).$$
The following fact is well known:
\begin{lem}{\label{Lem: permutation}}  Let $n \geqslant   1, 1\leqslant i\leqslant n-1$. Then for any $\delta\in S_n$, there exists a unique triple $(\tau,\sigma,\pi)$ with $\sigma\in \Sh(i,n-i),\tau\in S_i, \pi\in S_{n-i}$ such that $\delta(l)=\sigma\tau(l)$ for $1\leqslant l\leqslant i$, and $\delta(i+m)=\sigma(i+\pi(m))$ for $1\leqslant m\leqslant n-i$.
\end{lem}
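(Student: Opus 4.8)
The plan is to recognise the statement as the classical fact that the shuffles form a complete, irredundant set of representatives for the left cosets of the Young subgroup $H:=S_i\times S_{n-i}\hookrightarrow S_n$, where the embedding is the obvious one ($S_i$ permutes $\{1,\dots,i\}$ and $S_{n-i}$ permutes $\{i+1,\dots,n\}$). Indeed, given $\tau\in S_i$ and $\pi\in S_{n-i}$, let $\rho=\rho_{\tau,\pi}\in H$ be the permutation with $\rho(l)=\tau(l)$ for $1\leqslant l\leqslant i$ and $\rho(i+m)=i+\pi(m)$ for $1\leqslant m\leqslant n-i$. Then the two displayed identities in the lemma say precisely that $\delta=\sigma\circ\rho_{\tau,\pi}$. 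Hence the lemma is equivalent to the assertion that the map
\[
\Phi\colon \Sh(i,n-i)\times S_i\times S_{n-i}\longrightarrow S_n,\qquad \Phi(\sigma,\tau,\pi)=\sigma\circ\rho_{\tau,\pi},
\]
is a bijection, and I would prove existence and uniqueness by exhibiting, respectively, a section and an injectivity argument for $\Phi$.

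For existence, given $\delta\in S_n$ I would split the target: set $A=\{\delta(1),\dots,\delta(i)\}$ and $B=\{1,\dots,n\}\setminus A=\{\delta(i+1),\dots,\delta(n)\}$, and enumerate them increasingly as $a_1<\dots<a_i$ and $b_1<\dots<b_{n-i}$. Define $\sigma\in S_n$ by $\sigma(l)=a_l$ for $1\leqslant l\leqslant i$ and $\sigma(i+m)=b_m$ for $1\leqslant m\leqslant n-i$; by construction $\sigma$ is strictly increasing on each of the two blocks, so $\sigma\in\Sh(i,n-i)$, and $\sigma$ carries $\{1,\dots,i\}$ onto $A$ and $\{i+1,\dots,n\}$ onto $B$. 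Therefore $\rho:=\sigma^{-1}\circ\delta$ maps $\{1,\dots,i\}$ to itself and $\{i+1,\dots,n\}$ to itself, i.e. $\rho\in H$; setting $\tau:=\rho|_{\{1,\dots,i\}}$ and $\pi(m):=\rho(i+m)-i$ yields the required triple, and $\delta=\sigma\rho$ unwinds into exactly the two formulas of the statement.

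For uniqueness, suppose $\delta=\sigma\circ\rho_{\tau,\pi}=\sigma'\circ\rho_{\tau',\pi'}$ with $\sigma,\sigma'\in\Sh(i,n-i)$. Since $\rho_{\tau,\pi}$ preserves the blocks $\{1,\dots,i\}$ and $\{i+1,\dots,n\}$ setwise, one has $\{\delta(1),\dots,\delta(i)\}=\sigma(\{1,\dots,i\})=\sigma'(\{1,\dots,i\})$; but a shuffle is strictly increasing on $\{1,\dots,i\}$, so $\sigma(1),\dots,\sigma(i)$ and $\sigma'(1),\dots,\sigma'(i)$ are both the unique increasing enumeration of this set, and likewise on the second block, whence $\sigma=\sigma'$. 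Then $\rho_{\tau,\pi}=\sigma^{-1}\delta=\rho_{\tau',\pi'}$, which forces $\tau=\tau'$ and $\pi=\pi'$. (As a cross-check one may note $\binom{n}{i}\cdot i!\cdot(n-i)!=n!$, so injectivity and surjectivity are in any case equivalent here.)

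I do not anticipate a genuine obstacle: the content is entirely bookkeeping. The only points that need care are (i) verifying that the $\sigma$ built by sorting $A$ and $B$ really is a shuffle, and (ii) checking that $\sigma^{-1}\delta$ lands in the Young subgroup $H$, not merely permuting the two blocks as unordered sets — which is automatic because $\sigma$ was chosen to send $\{1,\dots,i\}$ onto $A$ itself; and (iii) keeping the index shift by $i$ on the second block consistent when translating between $\rho_{\tau,\pi}$ and the component equations $\delta(i+m)=\sigma(i+\pi(m))$. I would present the argument in the block-factorization language above and conclude by expanding $\delta=\sigma\circ\rho_{\tau,\pi}$ into its two component identities.
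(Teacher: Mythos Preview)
Your argument is correct and is the standard proof that $(i,n-i)$-shuffles form a complete set of left coset representatives for the Young subgroup $S_i\times S_{n-i}$ in $S_n$. The paper itself does not supply a proof: the lemma is introduced with ``The following fact is well known'' and no argument is given, so there is nothing to compare against. Your write-up is more than sufficient; if anything you could trim it to the existence construction and the cardinality count $\binom{n}{i}\,i!\,(n-i)!=n!$, which already forces bijectivity.
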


Let $V$ be a graded vector space. Define the graded symmetric algebra $S(V)$ of $V$ to be $T(V)/I$ where the two-sided ideal $I$ is generated by
$x\ot y -(-1)^{|x||y|}y\ot x$ for all homogeneous elements $x, y\in V$. For $x_1\ot \cdots\ot x_n\in T(V)$,  write $x_1\odot \cdots \odot x_n$  to be the corresponding element in $S(V)$.
Define the weight of  $x_1\odot \cdots \odot x_n$ to be $n$, so $S(V)$ is weight graded whose  weight  $n$-th component  is written as $S(V)^{(n)}, n\geq 0$.

For homogeneous elements $x_1,\dots,x_n \in V$ and $\sigma\in S_n$, the Koszul sign $\epsilon(\sigma;  x_1,\dots, x_n)$ is defined by
\begin{equation} \label{Eq: epsilon sign} x_1\odot x_2\odot\dots\odot x_n=\epsilon(\sigma;  x_1,\dots,x_n)x_{\sigma(1)}\odot x_{\sigma(2)}\odot\dots\odot x_{\sigma(n)}\in S(V).\end{equation}

Define the graded exterior algebra $\Lambda(V)$ of $V$ to be $T(V)/J$ where the two-sided ideal $J$ is generated by
$x\ot y +(-1)^{|x||y|}y\ot x$ for all homogeneous elements $x, y\in V$.
For $x_1\ot \cdots\ot x_n\in T(V)$, write the corresponding element in $\Lambda(V)$ as $x_1\wedge \cdots  \wedge x_n$. Define the weight of  $x_1\wedge  x_2\wedge\cdots  \wedge x_n$ to be $n$, so $\Lambda(V)$ is weight graded whose  weight  $n$-th component  is written as $\Lambda(V)^{(n)}, n\geq 0$.
For homogeneous elements $x_1,\dots,x_n \in V$ and $\sigma\in S_n$, the Koszul sign $\chi(\sigma;  x_1,\dots, x_n)$ is defined by \begin{equation} \label{Eq: chi sign origin} x_1\wedge x_2\wedge\cdots\wedge x_n=\chi(\sigma;  x_1,\dots,x_n)x_{\sigma(1)}\wedge x_{\sigma(2)}\wedge\cdots\wedge x_{\sigma(n)}\in S(V).\end{equation}
Obviously
\begin{equation}\label{Eq: chi sign} \chi(\sigma;  x_1,\dots,x_n)=\sgn(\sigma)\epsilon(\sigma;  x_1,\dots,x_n),\end{equation}
where $\sgn(\sigma)$ is the sign of $\sigma$.

Fix an isomorphism  $S(sV)^{(n)}\cong s^n\Lambda(V)^{(n)} $ by sending
$sx_1\odot \cdots \odot sx_n$ to $$(-1)^{\sum\limits_{k=1}^{n-1}\sum\limits_{j=1}^k|x_j|} s^n(x_1\wedge x_2\wedge \cdots  \wedge x_n).$$
Under this isomorphism, we have the equality:
\begin{equation} \label{Eq: interpaly epsilon and chi}\chi(\sigma; x_1,\dots,x_n)(-1)^{\sum\limits_{k=1}^{n-1}\sum\limits_{j=1}^k|x_{\sigma(j)}|}=\varepsilon(\sigma; sx_1,\dots,sx_n)(-1)^{\sum\limits_{k=1}^{n-1}\sum\limits_{j=1}^k|x_j|},  \end{equation}
for any $\sigma\in S_n$ and homogeneous elements $x_1, \dots,  x_n\in V$.

For permutations $\delta, \sigma, \pi, \tau$ appearing in Lemma~\ref{Lem: permutation}, we have the following equality:
\begin{eqnarray}\label{Eq: sign formulae for shuffles}
\chi(\delta; x_1,\dots,x_n)=\chi(\sigma; x_1,\dots,x_n)\chi(\pi; x_{\sigma(i+1)},\dots,x_{\sigma(i+n-i)})\chi(\tau; x_{\sigma(1)},\dots,x_{\sigma(i)}),
	\end{eqnarray}
whose proof is left to the reader.

\bigskip

\section{Differential graded Lie algebras and $L_\infty$-algebras}
\label{Sect: DGLAs and Linfinity algebras}

In this section, we will recall some preliminaries on differential graded Lie algebras and $L_\infty$-algebras. For more background  on differential graded Lie algebras and $L_\infty$-algebras, we refer the reader to \cite{Sta92, LS93, LM95,Get09}.

\subsection{Differential graded Lie algebras and Maurer-Cartan elements}\

\begin{defn}\label{Def: DGLA} A differential graded (=dg) Lie algebra is a triple $(L, l_1, l_2)$ where $L=\bigoplus\limits_{i\in \mathbb{Z}}L_i $  is a graded $\bfk$-space,
  $l_1:L\rightarrow L$ and  $l_2:L^{\ot 2}\rightarrow L$ are  two graded linear maps with $|l_1|=-1$ and $|l_2|=0$, subject to  the following conditions:
\begin{itemize}
	\item[(i)] $l_1\circ l_1=0$;
	\item[(ii)] $l_1\circ l_2=l_2\circ (l_1\ot \id+\id\ot l_1)$;
	\item[(iii)] (anti-symmetry)  $l_2(x\ot y)+(-1)^{|x||y|}l_2(y\ot x)=0, \forall x,y\in L;$
	\item[(iv)] (Jacobi identity)
$$l_2(l_2(x\ot y)\ot z)+(-1)^{|x|(|y|+|z|)}l_2(l_2(y\ot z)\ot x)+(-1)^{|z|(|x|+|y|)}l_2(l_2(z\ot x)\ot y)=0, \forall x, y, z\in L.$$
\end{itemize}
When $l_1=0$, the pair $(L,l_2)$ is called a graded Lie algebra.
 \end{defn}

\begin{defn}\label{Def: MC element in DGLA}
	Let $(L,l_1,l_2)$ be a dg Lie algebra. An element $\alpha\in L_{-1}$ is called a Maurer-Cartan element if it satisfies the  Maurer-Cartan equation
	\begin{eqnarray}\label{Eq: MC equation for DGLA} l_1(\alpha)-\frac{1}{2}l_2(\alpha\ot \alpha)=0.\end{eqnarray}
\end{defn}

Given an arbitrary  Maurer-Cartan element in a dg Lie algebra, one can  get a new dg Lie algebra by twisting  the original dg Lie algebra structure using  this element .
\begin{lem}\label{Lem: twist dgla}
Let $(L,l_1,l_2)$ be a dg Lie algebra and $\alpha\in L_{-1}$ be a Maurer-Cartan element. Define new operations $l_1^\alpha$ and $l_2^{\alpha}$ on $L$ as
	 \begin{eqnarray}\label{Eq: twist dgla} l_1^\alpha(x) = l_1(x)-l_2(\alpha\ot x), \forall x\in L  \ \mathrm{and}\  l_2^\alpha = l_2.	\end{eqnarray}
		 Then $(L,l_1^\alpha,l_2^\alpha)$ is   a dg Lie algebra as well. This new dg Lie algebra  is called the twisted dg Lie algebra (by the Maurer-Cartan element  $\alpha$).
	\end{lem}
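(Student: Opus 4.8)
The plan is to verify directly that the new triple $(L, l_1^\alpha, l_2^\alpha)$ satisfies conditions (i)--(iv) of Definition~\ref{Def: DGLA}. Conditions (iii) and (iv) are immediate since $l_2^\alpha = l_2$ is unchanged, and these are purely properties of the bracket; likewise the degree conditions $|l_1^\alpha| = -1$ and $|l_2^\alpha| = 0$ hold because $l_2(\alpha \ot -)$ has degree $|\alpha| = -1$. So the real content is checking (i), that $l_1^\alpha \circ l_1^\alpha = 0$, and (ii), the compatibility $l_1^\alpha \circ l_2 = l_2 \circ (l_1^\alpha \ot \id + \id \ot l_1^\alpha)$.

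For (ii), I would expand both sides using the definition $l_1^\alpha(x) = l_1(x) - l_2(\alpha \ot x)$. The $l_1$-part of the identity is exactly condition (ii) for the original dg Lie algebra, so it cancels. What remains is to show
$$ l_2(\alpha \ot l_2(x \ot y)) = l_2(l_2(\alpha \ot x) \ot y) + (-1)^{|x|} l_2(x \ot l_2(\alpha \ot y)), $$
possibly up to signs I will need to track carefully using the Koszul rule. This is precisely the graded Jacobi identity (iv) applied to the triple $(\alpha, x, y)$, rewritten via the anti-symmetry (iii) to move $\alpha$ into the first slot of each bracket; here one uses $|\alpha| = -1$ to compute the signs. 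So (ii) follows from (iii) and (iv) of the original structure.

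For (i), I compute
$$ l_1^\alpha(l_1^\alpha(x)) = l_1(l_1(x)) - l_1(l_2(\alpha\ot x)) - l_2(\alpha \ot l_1(x)) + l_2(\alpha \ot l_2(\alpha \ot x)). $$
The first term vanishes by (i) for $l_1$. The middle two terms combine: by condition (ii) for the original structure, $l_1(l_2(\alpha \ot x)) = l_2(l_1(\alpha) \ot x) + (-1)^{|\alpha|} l_2(\alpha \ot l_1(x))$, and since $|\alpha| = -1$ this is $l_2(l_1(\alpha)\ot x) - l_2(\alpha \ot l_1(x))$, so $-l_1(l_2(\alpha\ot x)) - l_2(\alpha\ot l_1(x)) = -l_2(l_1(\alpha)\ot x)$. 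For the last term, the graded Jacobi identity applied to $(\alpha,\alpha,x)$ together with anti-symmetry gives $l_2(\alpha \ot l_2(\alpha\ot x)) = \frac{1}{2} l_2(l_2(\alpha\ot\alpha)\ot x)$ (the factor $\frac12$ coming from the two equal terms, using $|\alpha|=-1$). Assembling everything, $l_1^\alpha(l_1^\alpha(x)) = l_2\big( (l_1(\alpha) - \tfrac{1}{2} l_2(\alpha\ot\alpha))\ot x \big)\cdot(\pm 1)$, which vanishes because $\alpha$ satisfies the Maurer-Cartan equation~\eqref{Eq: MC equation for DGLA}.

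The only genuine obstacle is bookkeeping of Koszul signs: every application of anti-symmetry or of the Jacobi identity with $\alpha$ in a non-initial slot introduces a sign depending on $|\alpha| = -1$ and on $|x|, |y|$, and these must be shown to match across the two sides. There is no conceptual difficulty — this is a standard twisting argument — so I would carry out the sign computation once, carefully, and present the result. (Alternatively, one can observe that $\mathrm{ad}_\alpha = -l_2(\alpha \ot -)$ is a degree $-1$ derivation of the bracket squaring to $\frac12 \mathrm{ad}_{l_2(\alpha\ot\alpha)}$, and phrase the whole proof in those terms, but the direct check is equally short.)
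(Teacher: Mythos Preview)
Your proposal is correct: the direct verification of axioms (i)--(iv) goes through exactly as you describe, and your sign computations for (i) are right (in particular $l_2(\alpha\ot l_2(\alpha\ot x)) = \tfrac12 l_2(l_2(\alpha\ot\alpha)\ot x)$, so the Maurer--Cartan equation kills $l_1^\alpha\circ l_1^\alpha$). The paper itself offers no proof of this lemma at all---it is stated as a standard, well-known fact and left unproved---so your write-up already goes beyond what the paper provides.
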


\subsection{$L_\infty$-algebras and Maurer-Cartan elements}\



\begin{defn}\label{Def:L-infty}
	Let $L=\bigoplus\limits_{i\in\mathbb{Z}}L_i$ be a graded space over $\bfk$. Assume that $L$ is endowed with a family of graded linear operators $l_n:L^{\ot n}\rightarrow L, n\geqslant 1$ with  $|l_n|=n-2$ subject to  the following conditions:
for any $n\geqslant 1$,  $ \sigma\in S_n$ and $x_1,\dots, x_n\in L$,
	\begin{enumerate}
		\item[(i)](generalised anti-symmetry) $$l_n(x_{\sigma(1)}\ot \cdots \ot x_{\sigma(n)})=\chi(\sigma; x_1,\dots,   x_n)\ l_n(x_1 \ot \cdots  \ot x_n);$$

		\item[(ii)](generalised Jacobi identity)
$$\sum\limits_{i=1}^n\sum\limits_{\sigma\in \Sh(i,n-i)}\chi(\sigma; x_1, \dots, x_n)(-1)^{i(n-i)}l_{n-i+1}(l_i(x_{\sigma(1)}\ot \cdots \ot x_{\sigma(i)})\ot x_{\sigma(i+1)}\ot \cdots \ot x_{\sigma(n)})=0,$$
		where recall that  $\Sh(i,n-i)$ is the set of $(i,n-i)$
		shuffles.
	\end{enumerate}
	Then $(L,\{l_n\}_{n\geqslant1})$ is called an $L_\infty$-algebra.
\end{defn}

\begin{remark} \label{Rem: L-infinity for small n}   Let us consider the generalised Jacobi identity for   $n\leqslant 3$ with the assumption of generalised anti-symmetry.
	
	\begin{enumerate}
		\item[(i)]  $n=1$,  $l_1\circ l_1=0$, that is,  $l_1$ is a differential,

		\item[(ii)]  $n=2$, $l_1\circ l_2=l_2\circ (l_1\ot\Id+\Id\ot l_1)$, that is   $l_1$ is a derivation for  $l_2$,

		 \item[(iii)] $n=3$, for homogeneous elements $x_1, x_2, x_3\in L$
 $$\begin{array}{ll} &l_2(l_2(x_1\ot x_2)\ot x_3)+(-1)^{|x_1|(|x_2|+|x_3|)} l_2(l_2(x_2\ot x_3)\ot x_1)+
(-1)^{|x_3|(|x_1|+|x_2|)} l_2(l_2(x_3\ot x_1)\ot x_2)
\\
=&-\Big(l_1(l_3(x_1\ot x_2\ot x_3))+ l_3(l_1 (x_1)\ot x_2\ot x_3 )+(-1)^{|x_1|} l_3(x_1\ot l_1 (x_2)\ot x_3 )+\\
&(-1)^{|x_1|+|x_2|} l_3(x_1\ot x_2\ot l_1 (x_3) )\Big),\end{array}$$
that is, $l_2$ satisfies the   Jacobi identity up to homotopy.
	\end{enumerate}

 In particular,    if all   $l_n=0$ with  $n\geqslant 3$, then $(L,l_1,l_2)$ is just a dg Lie algebra.

\end{remark}

One can also define Maurer-Cartan elements in $L_\infty$-algebras.
\begin{defn}\label{Def: MC element in L infinity algebra}
	Let $(L,\{l_n\}_{n\geqslant1})$ be an $L_\infty$-algebra. An element $\alpha\in L_{-1}$ is called a Maurer-Cartan element if it satisfies the Maurer-Cartan equation:
	\begin{eqnarray}\label{Eq: mc-equation}\sum_{n=1}^\infty\frac{1}{n!}(-1)^{\frac{n(n-1)}{2}} l_n(\alpha^{\ot n})=0,\end{eqnarray}
whenever this infinite sum exists.
\end{defn}

Lemma~\ref{Lem: twist dgla} can be generalised to $L_\infty$-algebras.
\begin{prop}[Twisting procedure]\label{Prop: deformed-L-infty}
Given a Maurer-Cartan element $\alpha$ in $L_\infty$-algebra $L$, one can introduce  a new $L_\infty$-structure $\{l_n^\alpha\}_{n\geqslant 1}$ on graded space $L$, where $l_n^{\alpha}: L^{\ot n}\rightarrow L$ is defined as :
	\begin{eqnarray}\label{Eq: twisted L infinity algebra} l^\alpha_n(x_1\ot \cdots\ot x_n)=\sum_{i=0}^\infty\frac{1}{i!}(-1)^{in+\frac{i(i-1)}{2}}l_{n+i}(\alpha^{\ot i}\ot x_1\ot \cdots\ot x_n),\ \forall x_1, \dots, x_n\in L,\end{eqnarray}
whenever these infinite sums exist. The new $L_\infty$-algebra $(L, \{l_n^\alpha\}_{n\geqslant 1})$ is called the twisted $L_\infty$-algebra (by the Maurer-Cartan element $\alpha$).
\end{prop}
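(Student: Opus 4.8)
The statement to prove is Proposition~\ref{Prop: deformed-L-infty}, the twisting procedure for $L_\infty$-algebras. The plan is to verify directly that the operations $\{l_n^\alpha\}_{n\geqslant 1}$ defined by \eqref{Eq: twisted L infinity algebra} satisfy the two axioms of Definition~\ref{Def:L-infty}: generalised anti-symmetry and the generalised Jacobi identity. Generalised anti-symmetry of each $l_n^\alpha$ is immediate: since $\alpha$ has odd degree $-1$, permuting the $\alpha$'s among themselves is harmless in the symmetric setting (more precisely, each summand $l_{n+i}(\alpha^{\otimes i}\otimes x_1\otimes\cdots\otimes x_n)$ inherits anti-symmetry in the $x$-slots from anti-symmetry of $l_{n+i}$, because a permutation $\sigma\in S_n$ acting on the $x$'s extends to a permutation of $n+i$ letters fixing the first $i$ slots, with $\chi$-sign unchanged). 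So the real content is the generalised Jacobi identity.

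\textbf{Key steps for the Jacobi identity.} First I would substitute the definition \eqref{Eq: twisted L infinity algebra} into the left-hand side of axiom (ii) for $l^\alpha$, obtaining a double sum over $i,j\geqslant 0$ of terms of the shape
\[
\frac{1}{i!\,j!}(\pm1)\, l_{n-i+1+j}\bigl(\alpha^{\otimes j}\otimes l_{i+?}(\alpha^{\otimes ?}\otimes x_{\sigma(1)}\otimes\cdots)\otimes x_{\sigma(i+1)}\otimes\cdots\bigr),
\]
after also expanding the inner $l_i$ via \eqref{Eq: twisted L infinity algebra}. The strategy is to reorganise this by collecting, for each fixed total number $m$ of $\alpha$'s appearing, all contributions, and to recognise the resulting inner sum as an instance of the \emph{original} generalised Jacobi identity for $\{l_n\}$ applied to the multiset $\{\underbrace{\alpha,\dots,\alpha}_{m},x_1,\dots,x_n\}$. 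The combinatorial bookkeeping that makes this work is exactly Lemma~\ref{Lem: permutation} (decomposition of an arbitrary permutation into a shuffle composed with permutations of the two blocks) together with the sign formula \eqref{Eq: sign formulae for shuffles}; these let one pass between shuffles of the $(i,n-i)$-type in the $x$-variables and genuine shuffles of the full variable list including the $\alpha$'s, while keeping track of the Koszul sign $\chi$ and the $(-1)^{i(n-i)}$ factors. The extra combinatorial factors $\tfrac{1}{i!\,j!}$ and the multinomial count of how many ways $m$ indistinguishable $\alpha$'s split between the outer and inner brackets should assemble into the $\tfrac{1}{m!}$ (and binomial) factors needed to match the original identity; here is where the signs $(-1)^{in}$, $(-1)^{i(i-1)/2}$ and $(-1)^{n(n-1)/2}$ in \eqref{Eq: twisted L infinity algebra} and \eqref{Eq: mc-equation} are designed precisely so that everything lines up. Since $\alpha$ is a Maurer-Cartan element, the terms in which \emph{all} remaining inputs are $\alpha$'s (i.e.\ the ``$x$-free'' part of the reorganised sum) vanish by \eqref{Eq: mc-equation}; this is what actually kills the potentially obstructing terms and yields $0$.

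\textbf{Alternative, cleaner route.} Rather than the brute-force double-sum manipulation, I would prefer to invoke the coalgebra/codifferential description of $L_\infty$-algebras: an $L_\infty$-structure on $L$ is the same as a degree $-1$ codifferential $D$ (with $D^2=0$) on the cofree cocommutative coalgebra $S^c(sL)$, and a Maurer-Cartan element $\alpha$ corresponds to a coalgebra automorphism $e^{\alpha}$ (``translation by $s\alpha$'') of $S^c(sL)$; the twisted structure is then simply $D^\alpha := e^{-\alpha}\circ D\circ e^{\alpha}$, which is manifestly a codifferential (conjugate of a square-zero operator is square-zero), and unwinding the corestriction to $sL$ reproduces \eqref{Eq: twisted L infinity algebra}, while the Maurer-Cartan equation is exactly the condition that $D^\alpha$ still has no constant term, i.e.\ that it corresponds to an honest $L_\infty$-algebra rather than a curved one. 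This reduces the whole proposition to (a) checking the conjugation formula on generators, which is a finite computation, and (b) matching signs between the coalgebra convention and the explicit formulas \eqref{Eq: twisted L infinity algebra}, \eqref{Eq: mc-equation} via the $S(sV)\cong s^{\bullet}\Lambda(V)$ identification fixed in Section~\ref{Sect: Preliminaries} (equations \eqref{Eq: interpaly epsilon and chi}, \eqref{Eq: chi sign}).

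\textbf{Main obstacle.} Either way, the hard part is purely the sign and combinatorial-factor bookkeeping: verifying that the three families of signs in \eqref{Eq: twisted L infinity algebra} combine correctly with the Koszul signs $\chi$ coming from reshuffling odd elements $\alpha$ past the $x_i$'s and past each other, and that the factorials match the multinomial counts — the conceptual content (conjugation of a codifferential, vanishing of the curvature term by the Maurer-Cartan equation) is essentially formal. Given the paper's stated policy of including direct proofs, I expect the write-up to carry out the explicit double-sum reorganisation using Lemma~\ref{Lem: permutation} and \eqref{Eq: sign formulae for shuffles}, with the Maurer-Cartan hypothesis invoked exactly once, to cancel the all-$\alpha$ terms.
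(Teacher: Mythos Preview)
Your proposal is reasonable, but you should know that the paper does not actually prove this proposition: immediately after stating it, the authors remark that ``Proposition~\ref{Prop: deformed-L-infty} is essentially contained in \cite[Section 4]{Get09}'' and leave it at that. So there is no proof in the paper to compare against; the result is quoted as background from Getzler's work on nilpotent $L_\infty$-algebras.

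That said, both routes you sketch are standard and correct. The coalgebra/codifferential argument (your ``alternative, cleaner route'') is indeed the conceptual proof and is closest in spirit to how the result is usually established in the literature; your direct double-sum reorganisation would also go through, with Lemma~\ref{Lem: permutation} and \eqref{Eq: sign formulae for shuffles} handling exactly the shuffle/sign bookkeeping you describe. One small point: in the coalgebra picture the Maurer-Cartan condition is precisely what guarantees that $D^\alpha$ has no constant (curvature) term, as you say, so the role of the MC hypothesis is the same in both approaches. Your expectation that the paper would carry out the explicit computation is understandable given its stated methodology elsewhere, but for this particular standard result the authors simply defer to \cite{Get09}.
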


\begin{remark}

\begin{itemize}

\item[(i)]The signs in Definition~\ref{Def: MC element in L infinity algebra} and Proposition~\ref{Prop: deformed-L-infty} are different from those appearing in \cite{LST}, as the conventions in \cite{LST} are essentially about $L_\infty[1]$-algebras \cite{Schatz, Vor, Vit}.   We refer the reader to \cite{Vit} for the translation  between $L_\infty$-structures and $L_\infty[1]$-structures.

\item[(ii)] Proposition~\ref{Prop: deformed-L-infty} is essentially contained in \cite[Section 4]{Get09}.
Notice that here  we only ask the existence of the infinite sums, although in many references, nilpotent  or weakly filtered  $L_\infty$ algebras \cite{Get09,LST} are used to guarantee the convergence of these sums.

\end{itemize}

\end{remark}

\bigskip

\section{Formal deformations,  Hochschild cohomology and homotopy theory of  associative algebras}
In this section, we will recall the formal deformations and Hochschild cohomology of associative algebras. We will see how the dg Lie algebra structure  on the underlying graded space of the Hochschild cochain complex introduced by Gerstenhaber will enable defining $A_\infty$-algebras which is the homotopy version of  associative algebras.

 This example is our baby model for deformation theory and homotopy theory of Rota-Baxter algebras.

\subsection{Hochschild cohomology of associative algebras}\

Let $(A,\mu)$ be an associative $\bfk$-algebra. We often write  $\mu(a\ot b)=a
\cdot b=ab$   for any $a,b\in A$.
Let  $M$ be a bimodule over $A$. The Hochschild cochain complex of $A$ with coefficients in $M$ is $$\C^\bullet_{\mathrm{Alg}}(A,M):=\bigoplus\limits_{n=0}^\infty \C^n_{\mathrm{Alg}}(A,M),$$ where $\C^n_{\mathrm{Alg}}(A,M)=\Hom(A^{\ot n},M)$ and the differential $\delta^n: \C^n_{\mathrm{Alg}}(A,M)\rightarrow \C^{n+1}_{\mathrm{Alg}}(A,M)$ is defined as:
$$\delta^n(f)(a_{1, n+1}  )=  (-1)^{n+1} a_1f(a_{2, n+1})+\sum\limits_{i=1}^n(-1)^{n-i+1}f(a_{1, i-1}\ot a_i\cdot a_{i+1}\ot  a_{i+2, n+1})\\
	 + f(a_{1, n})a_{n+1}
$$
for all $f\in \C^n_{\mathrm{Alg}}(A,M), a_1,\dots,a_{n+1}\in A$.

 The cohomology of the Hochschild cochain complex $\C^\bullet_{\mathrm{Alg}}(A,M)$ is called the Hochschild cohomology of $A$ with coefficients in $M$,  denoted by $\mathrm{HH}^\bullet(A,M)$.
When the bimodule $M$ is the regular bimodule $A$ itself, we just denote $\C^\bullet_{\mathrm{Alg}}(A,A)$ by $\C^\bullet_{\mathrm{Alg}}(A)$ and call it the Hochschild cochain complex of associative algebra $(A, \mu)$.  Denote the cohomology $\mathrm{HH}^\bullet(A, A)$ by $\mathrm{HH}^\bullet(A)$, called the Hochschild cohomology of associative algebra $(A,\mu)$.

\subsection{Formal deformations of associative algebras}\

Given  an associative $\bfk$-algebra $(A,\mu)$, consider
  $k[[t]]$-bilinear associative products   on
$$A[[t]]=\{\sum_{i=0}^\infty a_it^i\ | \ a_i\in A, \forall i\geqslant 0\}.$$  Such a product is determined by
$$\mu_t =\sum_{i=0}^{\infty} \mu_i  t^i: A \otimes A\to A[[t]], $$   where for all $i\geqslant 0$, $\mu_i: A\otimes A\to A$  are linear maps. When $\mu_0=\mu$, we say that $\mu_t$ is a formal deformation of $\mu$ and
$\mu_1$ is called the   infinitesimal  of  formal deformation $\mu_t$.

The only constraint is the  associativity of $\mu_t$: $$\mu_t(\mu_t(a\ot b)\ot c)=\mu_t(a\ot \mu_t(b\ot c)), \forall a, b, c\in A$$
where is equivalent to the following family of equations:
\begin{eqnarray}\label{Eq: Deform product}
\sum_{i+j=n\atop i, j\geqslant 0} (\mu_i(\mu_j(a\ot b)\ot c)-\mu_i(a\ot \mu_j(b\ot c))=0, \forall a, b, c\in A, n\geqslant 0.  \end{eqnarray}
Looking closely at the cases  $n=0$ and $n=1$,  one obtains:
\begin{itemize}
	\item[(i)] when $n=0$,  $(a \cdot b) \cdot c = a \cdot  (b\cdot  c), \forall a, b, c\in A$, which is exactly the associativity of $\mu$;
	
	\item[(ii)]  when $n=1$,  $$a \mu_1(b\ot c)-\mu_1(ab\ot c)+\mu_1(a\ot b c)-\mu_1(a\ot b)c=0,\forall a, b, c\in A,$$ which says that  the infinitesimal $\mu_1$ is a 2-cocycle in the Hochschild cochain complex $\C^\bullet_{\Alg}(A)$.
	
\end{itemize}

In general, we can rewrite Equation~(\ref{Eq: Deform product}) as
$$\delta^2(\mu_n)=\frac{1}{2}\sum_{i=1}^{n-1} [\mu_i, \mu_{n-i}]_G$$
where $[-, -]_G$ is the Gerstenhaber bracket; see the next subsection.

%
%
%
%
%
%
%
%
%
%
%
%

\subsection{Gerstenhaber   brackets}\  \label{subsec: Gerstenhaber brackets}


Let $V=\bigoplus\limits_{i\in \mathbb{Z}}V_i$ be a graded space and recall that  $sV$ denotes the suspension of $V$, i.e., $(sV)_n=V_{n-1}, \forall n\in \mathbb{Z}$. The  free conilpotent tensor coalgebra $T^c(sV)$ is defined to
$$T^c(sV)=\bfk\oplus sV\oplus (sV)^{\ot 2}\oplus \cdots \oplus  (sV)^{\ot n} \oplus \cdots $$
with the usual  deconcaternation coproduct.
Let $ \mathfrak{C}_{\Alg}(V)=\Hom(T^c(sV),sV)$.

Let $m\geqslant 1, 1\leqslant n\leqslant m$. Given homogeneous elements  $f\in \Hom((sV)^{\ot m},V)$  and $ g_i\in \Hom((sV)^{\ot l_i},V), i=1.\dots, n$ with all $l_i\geqslant 0$,
then $sf,sg_1,sg_2,\dots,sg_n\in \mathfrak{C}_{\Alg}(V)=\Hom(T^c(sV),sV)$, define the brace operation $$sf \{sg_1,\dots,sg_n\}\in \Hom((sV)^{\ot u},sV)$$ to be
\begin{align*}&sf \{sg_1,\dots,sg_n\}(sa_{1,  u})= \sum\limits_{   0\leqslant i_1\leqslant i_1+l_1\leqslant  i_2\leqslant i_2+l_2\leqslant \dots \leqslant i_n\leqslant i_n+l_n\leqslant u}(-1)^{\xi}\\
&\quad \quad \quad \quad 	sf(sa_{1, i_1}\ot sg_1(sa_{i_1+1, i_1+l_1})\ot sa_{i_1+l_1+1, i_2} \ot sg_2(sa_{i_2+1, i_2+l_2 }) \ot \cdots \ot  sg_{n}(sa_{i_n+1, i_n+l_n})\ot sa_{i_n+l_n+1, u} ),
	\end{align*}
where $a_1, \dots, a_u\in V$, $u=m+l_1+\dots+l_n-n  $ and $ \xi=\sum\limits_{k=1}^n(|g_k|+1)(\sum\limits_{j=1}^{i_k}(|a_j|+1))$.

In particular, for homogeneous elements  $f\in \Hom((sV)^{\ot m},V)$ with $m\geqslant 1$   and $ g \in \Hom((sV)^{\ot n},V)$ with  $n\geqslant 0$, for  each $1\leqslant i\leqslant m$, write
$$sf\circ_i sg=sf\circ (\Id^{\otimes (i-1)}\ot sg \ot \Id^{\otimes (m-i)}).$$
These notations will be very useful while dealing with operads.

For two homogeneous  elements $sf\in \Hom((sV)^{\ot m},sV),sg\in \Hom((sV)^{\ot n},sV)$, define
\begin{eqnarray}\label{Eq: Gerstahaber bracket} [sf,sg]_G=sf\{sg\}-(-1)^{(|f|+1)(|g|+1)}sg\{sf\},\end{eqnarray}
called the Gerstenhaber bracket of $sf$ and $sg$.
\begin{thm}[\cite{Ge63}]
For any given graded space $V$, the Gerstenhaber bracket makes the graded space $\mathfrak{C}_{\Alg}(V)$ into a graded Lie algebra.
\end{thm}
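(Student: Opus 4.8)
The plan is to verify directly that the Gerstenhaber bracket $[-,-]_G$ satisfies graded anti-symmetry and the graded Jacobi identity on $\mathfrak{C}_{\Alg}(V)=\Hom(T^c(sV),sV)$. The key technical tool will be the pre-Lie (right-symmetric) identity satisfied by the single brace operation $sf\{sg\}$, from which both properties follow formally. Note first that the bracket has degree $0$ with respect to the grading on $\mathfrak{C}_{\Alg}(V)$ in which $sf$, for $f\in\Hom((sV)^{\ot m},V)$ viewed as landing in $sV$, carries the appropriate shifted degree; a preliminary bookkeeping step is to fix this grading convention (so that $|sf|=|f|+1$ relative to the internal grading, or equivalently one works with the convention making $m$-ary operations sit in degree $m-1$ plus internal degree) and record that $[sf,sg]_G\in\Hom((sV)^{\ot(m+n-1)},sV)$ has degree $|sf|+|sg|$.

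First I would establish the \emph{pre-Lie relation}: for homogeneous $sf,sg,sh$,
\[
(sf\{sg\})\{sh\}-sf\{sg\{sh\}\}=(-1)^{(|g|+1)(|h|+1)}\big((sf\{sh\})\{sg\}-sf\{sh\{sg\}\}\big),
\]
i.e.\ the associator of the brace is graded-symmetric in the last two arguments. This is proved by expanding both sides via the defining formula for the braces: $(sf\{sg\})\{sh\}$ is a sum over ways of inserting $sg$ into $sf$ and then $sh$ into the result, which splits into two types of terms — those where $sh$ is inserted into one of the slots of $sf$ (disjoint from $sg$) and those where $sh$ is inserted into a slot of $sg$. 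The first type is visibly symmetric in $sg,sh$ up to the Koszul sign $(-1)^{(|g|+1)(|h|+1)}$ and cancels across the equation; the second type is exactly $sf\{sg\{sh\}\}$, which also cancels. The sign $\xi=\sum_{k}(|g_k|+1)\big(\sum_{j\le i_k}(|a_j|+1)\big)$ in the brace definition is precisely what makes these cancellations sign-coherent; tracking it carefully is the main bookkeeping burden but is routine Koszul-sign chasing.

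Granting the pre-Lie relation, anti-symmetry of $[-,-]_G$ is immediate from the definition \eqref{Eq: Gerstahaber bracket}. The graded Jacobi identity then follows by the standard fact that the commutator bracket of any (graded) pre-Lie product is a (graded) Lie bracket: writing $a\bullet b:=sa\{sb\}$, the graded Jacobiator of $[a,b]=a\bullet b-(-1)^{|a||b|}b\bullet a$ expands into six terms which pair up and cancel using exactly the graded-symmetry of the associator proved above. I would present this last deduction abstractly (it is purely formal once the pre-Lie identity is in hand) rather than re-expanding braces.

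The main obstacle is the sign reconciliation in the proof of the pre-Lie relation: one must check that the Koszul signs $\xi$ produced by nested brace insertions match across the rearrangement of summation ranges $0\le i_1\le i_1+l_1\le i_2\le\cdots\le u$, and that the ``$sh$ into a slot of $sg$'' terms assemble with the correct sign into $sf\{sg\{sh\}\}$. This is a finite but intricate computation; I would organize it by first treating the case $n=1$ (single insertions, which already contains all the sign subtleties) and then remarking that the general-$n$ brace is recovered by iterated single insertions, so no new phenomena arise. Everything else — degree counting, conilpotency of $T^c(sV)$ ensuring the relevant sums are finite on each $(sV)^{\ot u}$, and the formal pre-Lie $\Rightarrow$ Lie step — is standard and I would only sketch it, citing \cite{Ge63}.
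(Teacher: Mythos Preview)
The paper does not give its own proof of this theorem: it is stated as a classical result with a bare citation to \cite{Ge63}, and the subsequent proposition (the pre-Jacobi identities for braces) is likewise only cited. Your proposal is correct and is precisely the standard argument going back to Gerstenhaber --- establish the graded pre-Lie (right-symmetric) identity for the single brace $sf\{sg\}$, then observe that the commutator of any graded pre-Lie product is a graded Lie bracket. Note that your pre-Lie relation is exactly the special case $m=n=1$ of the pre-Jacobi identity \eqref{Eq: pre-jacobi} that the paper records immediately afterwards, so had the paper chosen to include a proof it would almost certainly have proceeded along the same lines you outline.
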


Moreover, the brace operation   on $\mathfrak{C}_{\Alg}(V)$ satisfies the following pre-Jacobi identities:
\begin{prop}[\cite{Ge63, Get93, GV95}]
		For any homogeneous elements $sf, sg_1,\dots, sg_m,  sh_1,\dots,sh_n$ in $\mathfrak{C}_{\Alg}(V)$, the following identity holds:
	\begin{eqnarray}
		\label{Eq: pre-jacobi}	&&\Big(sf \{sg_1,\dots,sg_m\}\Big)\{sh_1,\dots,sh_n\}=\\
		\notag &&\quad  \sum\limits_{0\leqslant i_1\leqslant j_1\leqslant i_2\leqslant j_2\leqslant \dots \leqslant i_m\leqslant  j_m\leqslant n}(-1)^{\sum\limits_{k=1}^m(|g_k|+1)(\sum\limits_{j=1}^{i_k}(|h_j|+1))}
sf\{sh_{1, i_1},  sg_1\{sh_{i_1+1, j_1}\},\dots,   sg_m\{sh_{i_m+1, j_m} \}, sh_{j_m+1,  n}\}.
	\end{eqnarray}
\end{prop}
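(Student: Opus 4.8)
The statement to prove is the pre-Jacobi identity~\eqref{Eq: pre-jacobi} for the brace operations on $\mathfrak{C}_{\Alg}(V)$. The plan is to unwind both sides of the identity directly from the definition of the brace operation and match terms combinatorially, keeping careful track of Koszul signs throughout.

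First I would expand the left-hand side by applying the definition of the outer brace $(-)\{sh_1,\dots,sh_n\}$ to the element $sf\{sg_1,\dots,sg_m\}$, which is itself a sum over insertion positions of the $sg_k$'s into the slots of $sf$. This yields a double sum: an outer sum inserting the $sh_j$'s into the slots of the composite $sf\{sg_1,\dots,sg_m\}$, and for each such term an inner structure recording where the $sg_k$'s sit. The key observation is that each slot of $sf\{sg_1,\dots,sg_m\}$ is either a slot of $sf$ not occupied by any $sg_k$, or a slot inside some $sg_k$; when an $sh_j$ is inserted, it lands in exactly one of these two kinds of places. Collecting all the $sh_j$'s inserted inside a fixed $sg_k$ produces precisely a term of the form $sg_k\{sh_{i_k+1,j_k}\}$, while the $sh_j$'s landing directly in slots of $sf$ (between consecutive inserted blocks) produce the intervening strings $sh_{\bullet,\bullet}$. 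This is exactly the combinatorial content of the right-hand side, so the term-by-term matching of the underlying (unsigned) maps is essentially a bookkeeping exercise on ordered insertions.

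The substantive part, and the step I expect to be the main obstacle, is verifying that the Koszul signs agree on both sides. On the left, the sign $(-1)^\xi$ from the outer brace involves $\sum_k(|g_k|+1)(\sum_{j\le i_k}(|a_j|+1))$ computed relative to the \emph{composite} arrangement of inputs, and there is a further sign hidden in how the $sg_k$'s were already composed into $sf$; on the right, one has the explicit prefactor $(-1)^{\sum_k(|g_k|+1)(\sum_{j\le i_k}(|h_j|+1))}$ plus the signs generated inside each $sg_k\{sh_{i_k+1,j_k}\}$ and inside the outer $sf\{\dots\}$. I would reconcile these by splitting the total degree shift accumulated to the left of each inserted generator into the contributions coming from the $a$-inputs (equivalently the $h$-inputs after expansion) versus the contributions from the degrees of the other inserted generators, and checking that the Koszul rule — moving each $sg_k$ or $sh_j$ past everything to its left — produces matching exponents modulo $2$. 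The identity $(f\otimes g)\circ(f'\otimes g') = (-1)^{|g||f'|}(f\circ f')\otimes(g\circ g')$ recorded in the preliminaries, together with the convention $(sV)_n = V_{n-1}$ (so that a map $(sV)^{\otimes m}\to sV$ of ``internal degree'' $|f|$ carries the relevant $(|f|+1)$ shifts), is exactly what governs these sign computations.

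Since this is a classical result of Gerstenhaber, Getzler, and Gerstenhaber–Voronov, I would in practice reduce the verification to the low-arity cases that are not already in the literature, or else cite \cite{Ge63, Get93, GV95} and indicate that the proof is a direct (if lengthy) sign-chase of the kind sketched above; the paper itself defers the proof, marking it ``left to the reader.'' If a self-contained argument is wanted, the cleanest route is the coalgebra formulation: identify $\mathfrak{C}_{\Alg}(V)=\Hom(T^c(sV),sV)$ with coderivations of $T^c(sV)$, under which the brace operations become the natural substitution operations on coderivations, and the pre-Jacobi identity becomes a consequence of the coassociativity of the deconcatenation coproduct together with the universal property of the cofree conilpotent coalgebra — this replaces the explicit index juggling by a structural argument, at the cost of setting up the coderivation dictionary first.
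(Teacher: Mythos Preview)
Your proposal is correct in substance: the standard proof of the pre-Jacobi identity is exactly the combinatorial expansion and sign-chase you describe, with the coderivation reformulation as a cleaner alternative. One small correction: the paper does not mark this proposition as ``left to the reader'' --- it simply states the result with the citations \cite{Ge63, Get93, GV95} and gives no proof at all, treating it as a known fact from the literature. So there is no proof in the paper to compare against; your sketch is precisely the argument one finds (in varying degrees of detail) in the cited references.
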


\medskip

The following two results are well known and we include    sketch  of  proofs to fix the notations.
Fix two  isomorphisms
 \begin{eqnarray}\label{Eq: first can isom} \Hom((sV)^{\ot n}, sV) \simeq \Hom(V^{\ot n}, V),  f\mapsto \tilde{f}:=    s^{-1}\circ  f \circ s^{\ot n}\end{eqnarray}
 for $f\in \Hom((sV)^{\ot n},  sV)$  and
\begin{eqnarray}\label{Eq: second can isom}\Hom((sV)^{\ot n},  V) \simeq \Hom(V^{\ot n}, V), g\mapsto \hat{g}:=     g \circ  s^{\ot n} \end{eqnarray} for $g\in \Hom((sV)^{\ot n},  V)$
\begin{prop}\label{Prop: bijection between associative product and MC elements}
Let $V$ be an ungraded space considered as a graded space concentrated in degree 0. Then there is a bijection between the set of Maurer-Cartan elements in the graded Lie algebra $\mathfrak{C}_{\Alg}(V)$ and the set of associative algebra structure on space $V$.
	\end{prop}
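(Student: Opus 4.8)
The plan is to unwind the definitions so that a Maurer--Cartan element in the graded Lie algebra $\mathfrak{C}_{\Alg}(V)$ becomes precisely an associative multiplication on $V$, using the two canonical isomorphisms~\eqref{Eq: first can isom} and~\eqref{Eq: second can isom}. Since $V$ is concentrated in degree $0$, the suspension $sV$ is concentrated in degree $1$, so $(sV)^{\ot n}$ sits in degree $n$ and $\Hom((sV)^{\ot n},sV)$ is concentrated in degree $1-n$; hence the degree $-1$ part of $\mathfrak{C}_{\Alg}(V)=\Hom(T^c(sV),sV)$ is exactly $\Hom((sV)^{\ot 2},sV)$. So a Maurer--Cartan element is an element $s\mu \in \Hom((sV)^{\ot 2}, sV)$, and under~\eqref{Eq: first can isom} it corresponds to a linear map $\mu = \widetilde{s\mu}= s^{-1}\circ (s\mu)\circ s^{\ot 2}\colon V^{\ot 2}\to V$, i.e. a (not necessarily associative) binary operation on $V$.

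Next I would compute the Maurer--Cartan equation~\eqref{Eq: MC equation for DGLA}, which for the graded Lie algebra $\mathfrak{C}_{\Alg}(V)$ (with $l_1=0$) reads $\tfrac12[s\mu,s\mu]_G=0$, equivalently $[s\mu,s\mu]_G=0$. Expanding the Gerstenhaber bracket~\eqref{Eq: Gerstahaber bracket}, since $|s\mu|\,$ gives $(|\mu|+1)(|\mu|+1)=1$ as an exponent (because $\mu$ has degree $0$, so in the convention where $sf$ has the shifted degree the relevant parity is odd), one gets $[s\mu,s\mu]_G = s\mu\{s\mu\} + s\mu\{s\mu\} = 2\,s\mu\{s\mu\}$, hence the equation is $s\mu\{s\mu\}=0$. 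By the definition of the brace, $s\mu\{s\mu\}(sa_1\ot sa_2\ot sa_3) = (s\mu\circ_1 s\mu + s\mu\circ_2 s\mu)(sa_1\ot sa_2\ot sa_3)$, up to the Koszul signs $(-1)^\xi$ packaged in the brace formula; translating through $s^{-1}\circ(-)\circ s^{\ot 3}$ and tracking these signs carefully, this becomes exactly $\mu(\mu(a_1\ot a_2)\ot a_3) - \mu(a_1\ot\mu(a_2\ot a_3))=0$, i.e. associativity of $\mu$. Thus $s\mu$ is Maurer--Cartan iff $\mu$ is associative, and the assignment $s\mu\mapsto \mu$ is the desired bijection, with inverse $\mu\mapsto s\mu := s\circ\mu\circ (s^{-1})^{\ot 2}$.

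The main obstacle is purely bookkeeping: getting all the Koszul signs right. The sign $\xi=\sum_{k}(|g_k|+1)(\sum_{j=1}^{i_k}(|a_j|+1))$ in the brace, combined with the signs introduced by conjugating with $s$ and $s^{-1}$ (each $s^{\ot n}$ contributes signs via the Koszul rule for $f\otimes g$), must conspire to produce the plain, sign-free associativity relation. I would handle this by noting that $|a_j|=0$ for all $j$, so $|a_j|+1=1$ and the exponents collapse to counts of elements; then one checks the two terms $s\mu\circ_1 s\mu$ and $s\mu\circ_2 s\mu$ pick up opposite overall signs after desuspension, yielding the difference $\mu(\mu(-\ot-)\ot-) - \mu(-\ot\mu(-\ot-))$. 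This is the same sign computation implicit in the classical fact (originally due to Gerstenhaber) that associative structures are the degree $1$ Maurer--Cartan elements of the Hochschild complex with the Gerstenhaber bracket, so I would present it tersely, verifying the sign on the generic monomial $sa_1\ot sa_2\ot sa_3$ and leaving the reader to confirm the remaining routine checks.
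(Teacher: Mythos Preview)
Your proposal is correct and follows essentially the same approach as the paper's (sketch of) proof: identify the degree $-1$ component of $\mathfrak{C}_{\Alg}(V)$ as $\Hom((sV)^{\ot 2},sV)$, pass through the isomorphism~\eqref{Eq: first can isom} to a binary operation $\mu$ on $V$, and check that the Maurer--Cartan equation becomes associativity. One minor point: your inverse formula $\mu\mapsto s\circ\mu\circ(s^{-1})^{\ot 2}$ is off by a sign (the Koszul rule gives $(s\ot s)\circ(s^{-1}\ot s^{-1})=-\id$), and indeed the paper writes $\alpha=-s\circ\mu\circ(s^{-1}\ot s^{-1})$ in the proof of Proposition~\ref{Prop: Twisted cx is orginal cx}; this does not affect the bijection, since $\mu$ is associative iff $-\mu$ is.
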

 \sproof  Since $V$ is concentrated in degree 0, the degree $-1$ part of $\mathfrak{C}_{\Alg}(V)$ is $\Hom((sV)^{\ot 2}, sV)$. Given an element  $\alpha\in \mathfrak{C}_{\Alg}(V)$ of degree $-1$, we define an operation $\mu: V^{\ot 2}\rightarrow V$ as $$\mu=\widetilde{ \alpha}= s^{-1}\circ \alpha\circ (s\ot s).$$ Then it can be checked that the fact that  $\alpha$ satisfying the Maurer-Cartan equation in  graded Lie algebra $\mathfrak{C}_{\Alg}(V)$ is equivalent the associativity of the operation $\mu$.
	

\begin{prop}\label{Prop: Twisted cx is orginal cx}
Let $(A, \mu)$ be an associative algebra and $\alpha$ be the corresponding Maurer-Cartan element in $\mathfrak{C}_{\Alg}(A)$. Then the underlying complex of the twisted dg Lie algebra $(\mathfrak{C}_{\Alg}(A), l_1^\alpha, l_2^\alpha)$ is  exactly    $s\C^\bullet_\Alg(A)$, the shift of the Hochschild cochain complex of   associative algebra $A$. 	
\end{prop}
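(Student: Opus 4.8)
The plan is to obtain the twisted dg Lie algebra $(\mathfrak{C}_{\Alg}(A),l_1^\alpha,l_2^\alpha)$ from Lemma~\ref{Lem: twist dgla}, applied to the \emph{graded} Lie algebra $(\mathfrak{C}_{\Alg}(A),[-,-]_G)$. Since here $l_1=0$ and $l_2=[-,-]_G$, the formula \eqref{Eq: twist dgla} gives $l_2^\alpha=[-,-]_G$ and $l_1^\alpha(x)=-[\alpha,x]_G$. First I would check that $\alpha$ is genuinely a Maurer--Cartan element: the equation \eqref{Eq: MC equation for DGLA} reads $-\tfrac12[\alpha,\alpha]_G=0$, which by the proof of Proposition~\ref{Prop: bijection between associative product and MC elements} is precisely the associativity of $\mu=\widetilde\alpha=s^{-1}\circ\alpha\circ(s\ot s)$; hence Lemma~\ref{Lem: twist dgla} applies and $(l_1^\alpha)^2=0$ is automatic. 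It then remains to identify the complex $(\mathfrak{C}_{\Alg}(A),l_1^\alpha)$ with $s\C^\bullet_\Alg(A)$, which I would split into (a) matching the underlying graded vector spaces and (b) matching $l_1^\alpha$ with the Hochschild coboundary $\delta$.

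For (a): as $A$ is concentrated in degree $0$, the space $(sA)^{\ot n}$ is concentrated in degree $n$ and $sA$ in degree $1$, so any graded map $(sA)^{\ot n}\to sA$ has degree $1-n$; thus the degree-$(1-n)$ component of $\mathfrak{C}_{\Alg}(A)=\prod_{n\ge 0}\Hom((sA)^{\ot n},sA)$ is exactly $\Hom((sA)^{\ot n},sA)$. The canonical isomorphism \eqref{Eq: first can isom}, $f\mapsto\widetilde f=s^{-1}\circ f\circ s^{\ot n}$, identifies it with $\Hom(A^{\ot n},A)=\C^n_\Alg(A)$, and in the shifted complex $s\C^\bullet_\Alg(A)$ the summand $\C^n_\Alg(A)$ again lies in homological degree $1-n$. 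Running this over all $n$ yields an isomorphism of graded vector spaces $\mathfrak{C}_{\Alg}(A)\cong s\C^\bullet_\Alg(A)$; in each fixed degree only one value of $n$ contributes, so the product on the left matches the direct sum on the right.

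For (b): I would expand $l_1^\alpha(sf)=-[\alpha,sf]_G=-\alpha\{sf\}+(-1)^{|sf|}\,sf\{\alpha\}$ using \eqref{Eq: Gerstahaber bracket} together with the defining formula for the brace operation, taken with a single inserted argument, and then transport along \eqref{Eq: first can isom}. Write $\varphi\in\C^n_\Alg(A)$ for the image of the cochain $sf$. Because $\alpha$ has arity $2$, the brace $\alpha\{sf\}$ has exactly two nonzero summands, obtained by plugging $sf$ into the first or the second slot of $\alpha$; these transport to the two ``outer'' terms $a_1\cdot\varphi(a_{2,n+1})$ and $\varphi(a_{1,n})\cdot a_{n+1}$ of $\delta^n(\varphi)$. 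Dually, $sf\{\alpha\}$ is a sum of $n$ summands, one for each slot $1\le i\le n$ of $sf$ into which the arity-$2$ map $\alpha$ is inserted, and these transport to the $n$ ``inner'' terms $\varphi(a_{1,i-1}\ot a_i\cdot a_{i+1}\ot a_{i+2,n+1})$. Matched term by term, this recovers the definition of $\delta^n$, so $l_1^\alpha$ equals the Hochschild coboundary (up to the global sign dictated by the shift convention) and the complex underlying $(\mathfrak{C}_{\Alg}(A),l_1^\alpha,l_2^\alpha)$ is $s\C^\bullet_\Alg(A)$.

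The real content — and the step I expect to be the main obstacle — is the sign bookkeeping in (b). One must check that the Koszul sign $\xi=\sum_k(|g_k|+1)\big(\sum_{j\le i_k}(|a_j|+1)\big)$ in the brace operation, together with the signs produced when the suspension maps in $s^{\ot(n+1)}$ are commuted past one another and past $\alpha$, and with the sign in \eqref{Eq: Gerstahaber bracket}, combine to give exactly the prefactors $(-1)^{n+1}$, $(-1)^{n-i+1}$ and $1$ appearing in the definition of $\delta^n$. Since every $a_j$ has degree $0$ we have $|a_j|+1=1$, so $\xi$ collapses to an expression in the arities alone, making this a finite, mechanical verification; its outcome is precisely that the shift $s$ absorbs the $n$-dependent signs of the Hochschild differential, which is why the identification is with the \emph{shifted} complex $s\C^\bullet_\Alg(A)$ rather than $\C^\bullet_\Alg(A)$ itself.
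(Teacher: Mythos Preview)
Your proposal is correct and follows essentially the same approach as the paper: compute $l_1^\alpha(sf)=-[\alpha,sf]_G$ and transport it along the isomorphism \eqref{Eq: first can isom} to recover the Hochschild coboundary. The paper's sketch records only the outcome $\widetilde{l_1^{\alpha}(f)}=-\delta^n(\tilde f)$, whereas you spell out the identification of the graded pieces and the term-by-term matching of the brace insertions with the outer and inner summands of $\delta^n$; the global minus sign you anticipate is exactly the one the paper obtains.
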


\sproof
Recall   that $\alpha=-s\circ \mu\circ(s^{-1}\ot s^{-1}):  sV\ot sV\rightarrow sV$.
Then one checks that
$$\widetilde{l_1^{\alpha}( f)}=-\widetilde{ [\alpha,  f]_G}= -\delta^n(\tilde{f})$$
for any $f\in \Hom((sA)^{\ot n},  sA)$.
This shows that the complex $(\mathfrak{C}_{\Alg}(A), l_1^\alpha)$ is  isomorphic to    $s\C^\bullet_\Alg(A)$, the shift of the Hochschild cochain complex of   associative algebra $A$.

 \subsection{Defining $A_\infty$-algebras via Maurer-Cartan elements}\
\label{A-infinity algebras}


    Let $V$ be a graded vector space.
    Recall that the reduced cofree conilpotent coalgebra is  $$ \overline{T^c}(sV)=sV\oplus (sV)^{\otimes 2}\oplus \cdots $$ with the usual  deconcaternation coproduct. Define $ \overline{\mathfrak{C}_{\Alg}}(V) =\Hom(\overline{T^c} (sV),sV)$. It is easy to verify that $ \overline{\mathfrak{C}_{\Alg}}(V)$ is  a  graded Lie subalgebra of $\mathfrak{C}_{\Alg}(V)$.
\begin{defn}\label{Def: A infinity algebras}
 An  $A_\infty$-algebra structure  on graded space $V$ is defined to be a Maurer-Cartan element in graded Lie algebra $  \overline{\mathfrak{C}_{\Alg}}(V)$.
   \end{defn}
   By   definition, an $A_\infty$-algebra structure on $V$ consists of  a family of operators $$b_n:(sV)^{\ot n}\rightarrow sV, \forall n\geqslant1$$ with $|b_n|=-1$ satisfying
   $$\sum_{j=1}^n b_{n-j+1}\{ b_{j}\}=0,\forall n\geqslant 1.$$
  Define operators $m_n=\widetilde{b_n}=s^{-1}\circ b_n\circ s^{\ot n}: V^{\ot n}\rightarrow V$. Then one can get the following equivalent definition of $A_\infty$-algebras, which is the original definition due to Stasheff.
\begin{defn}[\cite{Sta63}] An  $A_\infty$-algebra structure  on $V$ consists of a family of operators $$m_n: V^{\ot n}\rightarrow V, n\geqslant 1$$ with $|m_n|=n-2$, which fulfill the   Stasheff identities:
		\begin{eqnarray}\label{Eq: Stasheff} \sum_{i+j+k=n\atop i, k\geqslant 0, j\geqslant 1}(-1)^{i+jk}m_{i+1+k}\circ(\id^{\ot i}\ot m_j\ot \id^{\ot k})=0,\forall n\geqslant 1.\end{eqnarray}
	\end{defn}	

For later use, we record here the definition of $A_\infty$-morphisms.
\begin{defn}\label{Def: A-infinity morphism}
	Let $V$, $W$ be two $A_\infty$-algebras and $b=\sum_{i\geqslant 1}b_i\in \overline{\mathfrak{C}_{\Alg}}(V), b'=\sum_{i\geqslant 1}b_i'\in \overline{\mathfrak{C}_{\Alg}}(W)$ be the corresponding Maurer-Cartan elements respectively. An $A_\infty$-morphism $\phi$ from $V$ to $W$ consists of a family of operators $\phi_i:(sV)^{\ot i}\rightarrow sW, i\geqslant 1$ of degree $0$ satisfying the following equations:
	\begin{eqnarray}\label{Eq: Ainfinity morphism} \sum_{i+j+k=n\atop  i, k\geqslant 0, j\geqslant 1} \phi_{i+1+k}(\id^{\ot i}\ot b_j\ot \id^{\ot k})=\sum_{m\geqslant 1}\sum_{i_1+\dots+i_m=n\atop i_1, \dots, i_m\geqslant 1}b_m'\circ(\phi_{i_1}\ot \dots\ot \phi_{i_m}), \forall n\geqslant 1.\end{eqnarray}
\end{defn}

\begin{remark}\label{Remark: curved A infinity and Koszul duality}

\begin{itemize}
	\item[(i)] In Definition~\ref{Def: A infinity algebras}, we use the reduced version  $  \overline{\mathfrak{C}_{\Alg}}(V)$ to define $A_\infty$-algebras, while  Maurer-Cartan elements in the full version $  \mathfrak{C}_{\Alg}(V)$ would give curved $A_\infty$-algebras \cite{GJ90}.

\item[(ii)] We introduce $A_\infty$-algebras using the naive approach in this section. However, one can use Koszul duality theory for operads \cite{GK94, LV} instead, as the operad governing $A_\infty$-algebras is a minimal cofibrant resolution of the operad of associative algebras in the model category of operads \cite{Hinich, BM03}.

Nevertheless, as we will see soon in the next section,  the operad of Rota-Baxter algebras is NOT quadratic, so it seems that the  Koszul duality theory for operads \cite{GK94}\cite{LV}  can not apply directly. This is why we adopt the naive approach in this paper while developing  homotopy theory of Rota-Baxter algebras.

\end{itemize}
\end{remark}

\section{Rota-Baxter algebras and Rota-Baxter bimodules}

In this section, we recall some basic definitions and facts about Rota-Baxter algebras.

\begin{defn}\label{Def: Rota-Baxter algebra}
Let $(A, \mu=\cdot)$ be an associative algebra over field $\bfk$ and
$\lambda\in \bfk$. A linear
operator $T: A\rightarrow A$ is said to be a Rota-Baxter operator of
weight $\lambda$ if it satisfies
\begin{eqnarray}\label{Eq: Rota-Baxter relation}
T(a)\cdot T(b)=T\big(a\cdot T(b)+T(a)\cdot b+\lambda\  a\cdot
b\big)\end{eqnarray}	
for any $a,b \in A$, or in terms of maps
\begin{eqnarray}\label{Eq: Rota-Baxter relation in terms of maps}
\mu\circ (T \ot T)=T\circ (\Id\ot T +T \ot \Id)+\lambda\ T\circ \mu.\end{eqnarray} In this case,  $(A,\mu,T)$ is called a Rota-Baxter
algebra of weight $\lambda$. Denote by $\RBA$ the category of Rota-Baxter algebras of weight $\lambda$ with obvious morphisms.
\end{defn}

\begin{remark}\label{Remark: Koszul duality not applied to RB algebras}
  As mentioned by Remark~\ref{Remark: curved A infinity and Koszul duality} (ii),  although the associativity of $\mu$ is quadratic, the defining relation Equation~\eqref{Eq: Rota-Baxter relation in terms of maps} of   Rota-Baxter operator $T$ is not quadratic and  not even homogeneous when $\lambda\neq 0$,  so   the  Koszul duality theory for operads \cite{GK94}\cite{LV} could not be applied directly to develop a cohomology theory of Rota-Baxter algebras. 

\end{remark}


\begin{defn}\label{Def: Rota-Baxter bimodules}
Let $(A,\mu,T)$ be a Rota-Baxter algebra and $M$ be a bimodule
over associative algebra $(A,\mu)$. We say that $M$ is a bimodule
over Rota-Baxter algebra $(A,\mu, T)$  or a Rota-Baxter bimodule if  $M$ is endowed with a
linear operator $T_M: M\rightarrow M$ such that the following
equations
\begin{eqnarray}T(a) T_M(m)&=&T_M\big(aT_M(m)+T(a)m+\lambda am\big),\\
T_M(m)T(a)&=&T_M\big(mT(a)+T_M(m)a+\lambda ma\big).
\end{eqnarray}
hold for any $a\in A$ and $m\in M$.
\end{defn}
Of course, $(A,\mu, T)$ itself is a bimodule over the Rota-Baxter algebra $(A,\mu,T)$, called the regular Rota-Baxter bimodule.

 The following   result  is easy  whose  proof is left to the reader:
\begin{prop}  \label{Prop: trivial extension of Rota-Baxter bimodule}
Let $(A,\mu,T)$ be a Rota-Baxter algebra and $M$ be a bimodule
over associative algebra $(A,\mu)$.  It is well known that  $A\oplus M$
	 becomes  an associative algebra whose  multiplication is
	\begin{eqnarray}(a,m)(b,n)=(a\cdot b, an+mb).\end{eqnarray}  Write $\iota: A\to A\oplus M, a\mapsto (a, 0)$ and $\pi: A\oplus M\to A, (a, m)\mapsto a$. Then $A\oplus M$ is a Rota-Baxter algebra such that $\iota$ and $\pi$ are both morphisms of Rota-Baxter algebras if and only if $M$ is a Rota-Baxter bimodule over $A$.

 This new Rota-Baxter algebra will be denoted by $A\ltimes  M$, called the semi-direct product (or trivial extension) of $A$ by $M$.
	\end{prop}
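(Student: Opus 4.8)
The plan is to prove the two implications separately; in each case the only content is an elementary component-wise expansion of the weight-$\lambda$ Rota-Baxter relation~\eqref{Eq: Rota-Baxter relation in terms of maps} on the associative algebra $A\oplus M$.

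For the implication ``$M$ a Rota-Baxter bimodule $\Rightarrow$ $A\oplus M$ a Rota-Baxter algebra with $\iota$ and $\pi$ morphisms'', I would start from a Rota-Baxter bimodule structure $T_M\colon M\to M$ on $M$ and set $\widetilde T\colon A\oplus M\to A\oplus M$, $\widetilde T(a,m)=(T(a),T_M(m))$. Expanding both sides of the Rota-Baxter relation for $\widetilde T$ with respect to the product $(a,m)(b,n)=(ab,\,an+mb)$, the $A$-component of the resulting identity is exactly the Rota-Baxter relation for $T$ on $A$ and so holds, while the $M$-component is exactly the sum of one instance of each of the two defining equations of a Rota-Baxter bimodule and so holds as well. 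Thus $\widetilde T$ is a Rota-Baxter operator of weight $\lambda$, and $\iota$, $\pi$ intertwine $T$ with $\widetilde T$ by inspection of the explicit formula, hence are morphisms of Rota-Baxter algebras; this $\widetilde T$ is the Rota-Baxter operator on the semidirect product $A\ltimes M$.

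For the converse I would take an arbitrary weight-$\lambda$ Rota-Baxter operator $\widetilde T$ on the associative algebra $A\oplus M$ such that $\iota$ and $\pi$ are morphisms of Rota-Baxter algebras, and first pin down its shape: the relation $\pi\circ\widetilde T=T\circ\pi$ forces the $A$-component of $\widetilde T(a,m)$ to equal $T(a)$, the relation $\widetilde T\circ\iota=\iota\circ T$ forces $\widetilde T(a,0)=(T(a),0)$, and combining these with linearity gives $\widetilde T(a,m)=(T(a),T_M(m))$ for a well-defined linear map $T_M\colon M\to M$, $m\mapsto \mathrm{pr}_M(\widetilde T(0,m))$. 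Substituting this form back into the Rota-Baxter relation for $\widetilde T$ and reading off the $M$-component, specialising $m=0$ recovers the first Rota-Baxter bimodule axiom and specialising $n=0$ recovers the second, so $(M,T_M)$ is a Rota-Baxter bimodule over $A$.

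I do not expect a genuine obstacle: the argument is bookkeeping of the two components of $A\oplus M$. The single point worth stating carefully is that the $M$-component of the expanded Rota-Baxter relation, regarded as an identity in all of $a,b\in A$ and $m,n\in M$, is precisely the sum of the two bimodule-axiom instances, so neither direction imposes a hidden extra constraint and the assignment $T_M\mapsto\widetilde T$ is a bijection; this also explains why one needs both $\iota$ and $\pi$ to be morphisms rather than just one.
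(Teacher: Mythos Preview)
Your argument is correct. The paper does not give a proof of this proposition at all: it simply says the result is easy and leaves the proof to the reader, then remarks that it is a special case of Propositions~\ref{Prop: new RB bimodules from abelian extensions} and~\ref{prop:2-cocycle} on abelian extensions. Your direct component-wise verification is exactly the elementary argument the authors have in mind, and your observation that the $M$-component of the Rota-Baxter relation on $A\oplus M$ decouples into the two bimodule axioms (so that specialising $m=0$ and $n=0$ separately recovers each axiom in the converse direction) is precisely the crux of the computation. The alternative viewpoint the paper alludes to is that $(M,T_M)$ with trivial multiplication is a Rota-Baxter algebra, the semidirect product is the abelian extension with trivial $2$-cocycle $(\psi,\chi)=(0,0)$, and Proposition~\ref{prop:2-cocycle} then gives the equivalence; but this is overkill for the present statement and your hands-on approach is both shorter and more transparent.
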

In fact, we will see that the above result is a special case of Propositions~\ref{Prop: new RB bimodules from abelian extensions} and \ref{prop:2-cocycle}.

There is a  definition of bimodules over two Rota-Baxter algebras in \cite{QGG}.

\begin{remark} One can use   monoid objects in certain slice categories to justify Definition~\ref{Def: Rota-Baxter bimodules} following \cite{DMZ}. In fact, one can show an equivalence
  between the category of monoids in the slice category $\RBA/A$ and that of Rota-Baxter bimodules over   a   Rota-Baxter  algebra $A$.
\end{remark}


Recall first the  following interesting observation:
\begin{prop}[{\cite[Theorem 1.1.17]{Guo12}}]\label{Prop: new RB algebra}
	Let $(A,\mu,T)$ be a Rota-Baxter algebra. Define a new binary operation as:
\begin{eqnarray}a\star  b:=a\cdot T(b)+T(a)\cdot b+\lambda a\cdot b\end{eqnarray}
for any $a,b\in A$. Then
\begin{itemize}

\item[(i)]   the operation $\star $ is associative and  $(A,\star )$ is a new  associative algebra;

    \item[(ii)] the triple  $(A,\star ,T)$ also forms a Rota-Baxter algebra of weight $\lambda$  and denote it by $A_\star $;

\item[(iii)] the map $T:(A,\star , T)\rightarrow (A,\mu, T)$ is a  morphism of Rota-Baxter  algebras.
    \end{itemize}
	\end{prop}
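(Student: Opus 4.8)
The statement to be proved is Proposition~\ref{Prop: new RB algebra}, which asserts three things about the ``double'' construction $a \star b := a\cdot T(b) + T(a)\cdot b + \lambda\, a\cdot b$ attached to a Rota-Baxter algebra $(A,\mu,T)$ of weight $\lambda$: that $\star$ is associative, that $(A,\star,T)$ is again a Rota-Baxter algebra of weight $\lambda$, and that $T$ is a morphism of Rota-Baxter algebras from $A_\star$ to $A$. Since this is a classical observation (and stated with a reference to \cite[Theorem 1.1.17]{Guo12}), I expect the proof to be a direct computation; I would organize it to minimize bookkeeping rather than attempt anything clever.

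For part (iii), I would start there because it is immediate and feeds the other parts: by the very definition of $\star$, we have $T(a)\cdot T(b) = T(a\star b)$ — this is just a rewriting of the Rota-Baxter relation \eqref{Eq: Rota-Baxter relation}. So $T\colon (A,\star)\to(A,\mu)$ is an algebra homomorphism, and it trivially commutes with $T$, hence is a morphism of Rota-Baxter algebras once (i) and (ii) are known. For part (i), the plan is to expand $(a\star b)\star c$ and $a\star(b\star c)$ using the definition of $\star$, then use the identity $T(a)\cdot T(b) = T(a\star b)$ from (iii) to collapse all the nested $T$-terms, and finally invoke associativity of the original product $\mu$. Concretely, $(a\star b)\star c = (a\star b)\cdot T(c) + T(a\star b)\cdot c + \lambda (a\star b)\cdot c$; substituting $T(a\star b) = T(a)\cdot T(b)$ and expanding $a\star b$ inside the remaining two terms produces a sum of nine associative-product monomials in $a, T(a), b, T(b), c, T(c)$ weighted by powers of $\lambda$, and the analogous expansion of $a\star(b\star c)$ yields the same nine monomials by associativity of $\cdot$. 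This is routine and I would present it as ``a direct computation using associativity of $\mu$ and the Rota-Baxter relation'' with perhaps one line of the expansion shown.

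For part (ii), I must check that $T$ satisfies the Rota-Baxter relation of weight $\lambda$ with respect to the new product $\star$, i.e. $T(a)\star T(b) = T\big(a\star T(b) + T(a)\star b + \lambda\, a\star b\big)$. I would expand the left side using the definition of $\star$: $T(a)\star T(b) = T(a)\cdot T(T(b)) + T(T(a))\cdot T(b) + \lambda\, T(a)\cdot T(b)$, and then apply the original Rota-Baxter relation to each of the three products $T(a)\cdot T(T(b))$, $T(T(a))\cdot T(b)$, $T(a)\cdot T(b)$ to pull out an outer $T$; collecting terms under that outer $T$ and comparing with the expansion of the right-hand side (again using $T(a)\cdot T(b) = T(a\star b)$ to handle the $T(a)\star b$ and related pieces) should give equality. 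The arithmetic with the $\lambda$-coefficients is the only place where a sign or coefficient slip could occur, so that is where I would be most careful.

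\textbf{Main obstacle.} There is no conceptual obstacle here — everything reduces to associativity of $\mu$ and the single relation \eqref{Eq: Rota-Baxter relation}. The only real risk is combinatorial: keeping track of the roughly nine-term expansions in parts (i) and (ii) and the weight-$\lambda$ coefficients without error. I would therefore either display one representative expansion fully and assert the rest is symmetric, or simply cite \cite[Theorem 1.1.17]{Guo12} and leave the verification to the reader, since (as the excerpt says) these facts ``are mostly well known.''
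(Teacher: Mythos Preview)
Your proposal is correct and follows the standard direct-computation route. Note, however, that the paper itself does not supply a proof of this proposition: it is stated with a citation to \cite[Theorem~1.1.17]{Guo12} and treated as a known result, so there is nothing to compare against beyond confirming that your argument is the expected one.
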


One can also construct new Rota-Baxter bimodules from old ones.
\begin{prop}\label{Prop: new-bimodule}
Let $(A,\mu,T)$ be a Rota-Baxter algebra of weight $\lambda$ and $(M,T_M)$ be a Rota-Baxter bimodule over it. We define a left action $``\rhd"$ and a right action $``\lhd"$ of $A$ on $M$ as follows: for any $a\in A,m\in M$,
\begin{eqnarray}
a\rhd m:&=& T(a)m-T_M(am),\\
m\lhd a:&=& mT(a)-T_M(ma).
\end{eqnarray}
Then these actions make $M$ into a Rota-Baxter bimodule over $A_\star $ and denote this new  bimodule by $_\rhd M_\lhd$.
\end{prop}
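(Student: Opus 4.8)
The plan is to verify directly that $(M,{}\rhd,{}\lhd)$ is a bimodule over the associative algebra $(A,\star)$ and then that the operator $T_M$ satisfies the two Rota-Baxter bimodule axioms of Definition~\ref{Def: Rota-Baxter bimodules} relative to $\star$. Since there are no convergence or finiteness subtleties, everything reduces to a handful of bookkeeping computations using only the Rota-Baxter relation Equation~\eqref{Eq: Rota-Baxter relation}, the two Rota-Baxter bimodule relations of Definition~\ref{Def: Rota-Baxter bimodules}, and ordinary bimodule associativity of $M$ over $(A,\mu)$.

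First I would check the associativity-type axioms for the new actions, i.e.\ $(a\star b)\rhd m = a\rhd(b\rhd m)$, the symmetric identity $m\lhd(a\star b)=(m\lhd a)\lhd b$, and the compatibility $(a\rhd m)\lhd b=a\rhd(m\lhd b)$. Expanding $(a\star b)\rhd m=T(a\star b)m-T_M((a\star b)m)$ and using $a\star b=a T(b)+T(a)b+\lambda ab$ together with $T(a\star b)=T(a)T(b)$ (the defining Rota-Baxter relation), the left-hand side becomes $T(a)T(b)m-T_M\big((aT(b)+T(a)b+\lambda ab)m\big)$. On the other side, $a\rhd(b\rhd m)=T(a)(T(b)m-T_M(bm))-T_M\big(a(T(b)m-T_M(bm))\big)$; expanding and applying the first bimodule relation $T(a)T_M(bm)=T_M\big(aT_M(bm)+T(a)bm+\lambda abm\big)$ collapses the extra terms and matches the left-hand side. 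The right-action identity is the mirror computation using the second bimodule relation, and the mixed compatibility is a similar (slightly longer) expansion. These are the routine calculations I would not write out in full.

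Next I would verify the two Rota-Baxter bimodule axioms for $T_M$ with respect to $\star$: that $T(a)\star_M T_M(m)=T_M\big(a\star_M T_M(m)+T(a)\rhd m+\lambda\, a\rhd m\big)$, where $a\star_M x:=a\rhd T_M(x)+T(a)x+\lambda ax$ is the induced left action appearing in the Rota-Baxter relation for $(A_\star,M,{}\rhd,{}\lhd,T_M)$, and the symmetric right-hand version. Here one substitutes the definitions of $\rhd$, $\lhd$, $\star$ and repeatedly uses both bimodule relations of Definition~\ref{Def: Rota-Baxter bimodules}; the key algebraic input is again $T(a)T_M(m)=T_M(aT_M(m)+T(a)m+\lambda am)$, which makes the $T_M$-nested terms telescope.

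The main obstacle is purely organizational rather than conceptual: keeping track of the weight-$\lambda$ terms and the signs of the nested $T_M$-applications so that everything cancels correctly, since each expansion produces many terms and it is easy to mismatch a $\lambda$ or an $am$ versus $ma$. A cleaner route, which I would mention, is conceptual: by Proposition~\ref{Prop: new RB algebra} the map $T:(A,\star,T)\to(A,\mu,T)$ is a morphism of Rota-Baxter algebras, so $M$ becomes a Rota-Baxter bimodule over $(A,\star,T)$ by restriction of scalars along $T$; one then checks that this restricted bimodule structure is exactly $({}\rhd,{}\lhd)$ — indeed $a\rhd m=T(a)m-T_M(am)$ is precisely the ``twisted'' action that appears when one computes the difference between the $\star$-action and the original action, and the analogous statement for $T_M$ follows formally. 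Either way the verification is elementary; I would present the direct computation for the key identities and leave the remaining symmetric cases to the reader.
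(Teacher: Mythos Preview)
Your treatment of the associative bimodule axioms is correct and essentially identical to the paper's: expand both sides, use $T(a\star b)=T(a)T(b)$, and apply the bimodule identity to collapse the $T(a)T_M(bm)$ term. The paper does exactly this and then handles the mixed compatibility $(a\rhd m)\lhd b=a\rhd(m\lhd b)$ by the same direct expansion.

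There are, however, two genuine problems in the second half of your proposal.

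First, your statement of the Rota-Baxter bimodule axiom over $A_\star$ is garbled. You introduce a quantity $a\star_M x:=a\rhd T_M(x)+T(a)x+\lambda ax$ and phrase the identity in terms of it, but there is no such ``induced action'' in play. The axiom of Definition~\ref{Def: Rota-Baxter bimodules} for $({}_\rhd M_\lhd,T_M)$ over $(A_\star,T)$ reads simply
\[
T(a)\rhd T_M(m)=T_M\big(a\rhd T_M(m)+T(a)\rhd m+\lambda\, a\rhd m\big),
\]
with $\rhd$ throughout. This is what the paper verifies directly, reducing each side to $T_M\big(T^2(a)m+\lambda\, T(a)m\big)$. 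Your instinct to substitute and telescope is right, but the equation you wrote is not the one to be checked.

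Second, your ``cleaner route'' via restriction of scalars along $T:(A,\star,T)\to(A,\mu,T)$ does not work. Restriction along an algebra map $\phi$ gives the action $a\cdot m:=\phi(a)m$, so here it yields $a\cdot m=T(a)m$, \emph{not} $a\rhd m=T(a)m-T_M(am)$. The pulled-back action $T(a)m$ does define a (different) Rota-Baxter bimodule over $A_\star$, but it is not ${}_\rhd M_\lhd$; the correction term $-T_M(am)$ is precisely what restriction cannot see. There is no shortcut of this kind, and the direct computation is unavoidable.
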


\begin{proof}
	Firstly, we show that $(M,\rhd)$ is a left module over $(A,\star )$.
	\begin{eqnarray*}
	a\rhd(b\rhd m)&=& a\rhd(T(b)m-T_M(bm))\\
	&=&T(a)\big(T(b)m-T_M(bm)\big)-T_M\big(aT(b)m-aT_M(bm)\big)\\
	&=&T(a)T(b)m-T_M\big(a  T_M(bm)+T(a)bm+\lambda abm\big)\\
	&\ &-T_M\big(aT(b)m\big)+T_M\big(aT_M(bm)\big)\\
	&=&T(a)T(b)m-T_M(T(a)bm)-\lambda T_M(abm)-T_M(aT(b)m),\\
	\\
	(a\star  b)\rhd m&=&T(a\star  b)m-T_M\big((a\star  b)m\big)\\
	&=&T(a)T(b)m-T_M\big(aT(b)m+T(a)bm+\lambda abm \big).
	\end{eqnarray*}
So we have
\[	a\rhd(b\rhd m)=(a\star  b)\rhd m.\]
Thus the operation $\rhd$ makes $M$ into a left module over $(A,\star )$. Similarly, one can check that operation $\lhd$ defines a right module structure on $M$ over $(A,\star )$.

Now, we are going to check the compatibility of operations $\rhd$ and $\lhd$. We have the following equations:
\begin{eqnarray*}
	(a\rhd m)\lhd b&=& \big(T(a)m-T_M(am)\big)\lhd b\\
	&=& \big(T(a)m-T_M(am)\big)T(b)-T_M\big(T(a)mb-T_M(am)b\big)\\
&=&T(a)mT(b)-T_M\big(T_M(am)b+amT(b)+\lambda amb\big)\\
&\ &-T_M\big(T(a)mb\big)+T_M\big(T_M(am)b\big)\\
&=& T(a)mT(b)-T_M\big(amT(b)\big)-\lambda T_M(amb)-T_M(T(a)mb)  ,\\
\\
a\rhd(m\lhd b)&=&a\rhd(mT(b)-T_M(mb))\\
&=&T(a)\big(mT(b)-T_M(mb)\big)-T_M\big(amT(b)-aT_M(mb)\big)\\
&=&T(a)mT(b)-T_M\big(aT_M(mb)+T(a)mb+\lambda amb\big)\\
&\ &-T_M\big(amT(b)\big)+T_M\big(aT_M(mb)\big)\\
&=& T(a)mT(b)-T_M\big(T(a)mb\big)-\lambda T_M\big(amb\big)-T_M\big(amT(b)\big).
	\end{eqnarray*}
Thus we have \[	(a\rhd m)\lhd b=a\rhd(m\lhd b),\]
that is, operations $\rhd$ and $\lhd$ make $M$ into a bimodule over associative algebra $(A,\star )$.

Finally, we show that $_\rhd M_\lhd$ is a Rota-Baxter bimodule over $A_\star $. that is, for any $a\in A$ and $m\in M$,  $$
\begin{array}{rcl} T(a)\rhd T_M(m)&=&T_M\big(a\rhd T_M(m)+T(a)\rhd m+\lambda a\rhd m\big),\\
T_M(m)\lhd T(a)&=&T_M\big(m \lhd T(a)+T_M(m) \lhd a+\lambda m\lhd a\big).\end{array}
$$
We only prove the first equality, the second being similar.

In fact,
$$\begin{array}{rcl}
T(a)\rhd T_M(m)&=&T^2(a)T_M(m)-T_M(T(a)T_M(m))\\
&=&  T_M (T(a)T_M(m) + T^2(a)m +\lambda  T(a)m )-T_M \big(aT_M(m)+T(a)m+\lambda am\big)  \\
&=& T_M\big(   T^2(a)m +\lambda  T(a)m \big)
\end{array}$$
and
$$\begin{array}{rl}
 & T_M\big(a\rhd T_M(m)+T(a)\rhd m+\lambda a\rhd m\big),\\
=&T_M\big( T(a)T_M(m)-T_M(aT_M(m))+ T^2(a)m-T_M(T(a)m)+\lambda  T(a)m-\lambda T_M(am)\big)\\
=& T_M\big( T_M\big(aT_M(m)+T(a)m+\lambda am\big) -T_M(aT_M(m))+ T^2(a)m-T_M(T(a)m)\\&+\lambda  T(a)m-\lambda T_M(am)\big)\\
=& T_M\big(   T^2(a)m +\lambda  T(a)m \big)\\
=& T(a)\rhd T_M(m).
\end{array}$$
	\end{proof}

\bigskip

\section{Cohomology theory of Rota-Baxter algebras} \label{Sect: Cohomology theory of Rota-Baxter algebras}
In this  section, we will define a cohomology theory for   Rota-Baxter algebras of any weight.

\subsection{Cohomology of Rota-Baxter Operators}\
\label{Subsect: cohomology RB operator}

Firstly, let's study the cohomology of  Rota-Baxter
operators.

 Let $(A, \mu, T)$ be a
Rota-Baxter algebra and $(M,T_M)$ be a Rota-Baxter bimodule over it. Recall that
Proposition~\ref{Prop: new RB algebra} and Proposition~\ref{Prop: new-bimodule}  give a new
associative algebra  $A_\star $ and
  a new   Rota-Baxter bimodule  $_\rhd M_\lhd$ over $A_\star $.
 Consider the Hochschild cochain complex of $A_\star $ with
 coefficients in $_\rhd M_\lhd$:
 $$\C^\bullet_{\mathrm{Alg}}(A_\star , {_\rhd
 	M_\lhd})=\bigoplus\limits_{n=0}^\infty \C^n_{\mathrm{Alg}}(A_\star , {_\rhd
 	M_\lhd}).$$
  More precisely,  for $n\geqslant 0$,  $ \C^n_{\mathrm{Alg}}(A_\star , {_\rhd M_\lhd})=\Hom  (A^{\ot n},M)$ and its differential $$\partial^n:
 \C^n_{\mathrm{Alg}}(A_\star ,\  _\rhd M_\lhd)\rightarrow  \C^{n+1}_{\mathrm{Alg}}(A_\star , {_\rhd M_\lhd}) $$ is defined as:
 \begin{align*}&\partial^n(f)(a_{1, n+1}) \\
 =&(-1)^{n+1} a_1\rhd f(a_{2, n+1})+\sum_{i=1}^n(-1)^{n-i+1}f(a_{1, i-1}\ot a_{i}\star  a_{i+1} \ot   a_{i+2, n+1})
  +f(a_{1, n})\lhd a_{n+1}\\
 =&(-1)^{n+1}\Big(T(a_1)f(a_{2, n+1})-T_M\big(a_1f(a_{2, n+1})\big)\Big)\\
&+\sum_{i=1}^n(-1)^{n-i+1}\Big(f(a_{1, i-1}\ot a_iT(a_{i+1})\ot   a_{i+2, n+1})+f(a_{1, i-1} \ot T(a_i)a_{i+1}\ot   a_{i+2, n+1})
\\ &\quad  +\lambda f(a_{1, i-1} \ot a_ia_{i+1}\ot   a_{i+2, n+1})\Big)\\
&+ \Big(f(a_{1, n})T(a_{n+1})-T_M\big(f(a_{1,n})a_{n+1}\big)\Big)
 \end{align*}
 for any $f\in  \C^n_{\Alg}(A_\star ,\  _\rhd M_\lhd)$ and $a_1,\dots,a_{n+1}\in A$.

 \smallskip

 \begin{defn}
 	Let $A=(A,\mu,T)$ be a Rota-Baxter algebra of weight $\lambda$ and $M=(M,T_M)$ be a Rota-Baxter bimodule over it. Then the cochain complex $(\C^\bullet_\Alg(A_\star, {_\rhd M_\lhd}),\partial)$ is called the cochain complex of Rota-Baxter operator $T$ with coefficients in $(M, T_M)$,  denoted by $C_{\RBO}^\bullet(A, M)$. The cohomology of $C_{\RBO}^\bullet(A,M)$, denoted by $\mathrm{H}_{\RBO}^\bullet(A,M)$, are called the cohomology of Rota-Baxter operator $T$ with coefficients in $(M, T_M)$.
 	
 	 When $(M,T_M)$ is the regular Rota-Baxter bimodule $ (A,T)$, we denote $\C^\bullet_{\RBO}(A,A)$ by $\C^\bullet_{\RBO}(A)$ and call it the cochain complex of  Rota-Baxter operator $T$, and denote $\rmH^\bullet_{\RBO}(A,A)$ by $\rmH^\bullet_{\RBO}(A)$ and call it the cohomology of Rota-Baxter operator $T$.
 \end{defn}

\subsection{Cohomology of Rota-Baxter algebras}\
\label{Subsec:chomology RB}

In this subsection, we will combine the Hochschild cohomology of   associative algebras and the cohomology of Rota-Baxter operators to define a cohomology theory for Rota-Baxter algebras.

Let $M=(M,T_M)$ be a  Rota-Baxter bimodule over a Rota-Baxter algebra of weight $\lambda$ $A=(A,\mu,T)$. Now, let's construct a chain map   $$\Phi^\bullet:\C^\bullet_{\Alg}(A,M) \rightarrow C_{\RBO}^\bullet(A,M),$$ i.e., the following commutative diagram:
\[\xymatrix{
		\C^0_{\Alg}(A,M)\ar[r]^-{\delta^0}\ar[d]^-{\Phi^0}& \C^1_{\Alg}(A,M)\ar@{.}[r]\ar[d]^-{\Phi^1}&\C^n_{\Alg}(A,M)\ar[r]^-{\delta^n}\ar[d]^-{\Phi^n}&\C^{n+1}_{\Alg}(A,M)\ar[d]^{\Phi^{n+1}}\ar@{.}[r]&\\
		\C^0_{\RBO}(A,M)\ar[r]^-{\partial^0}&\C^1_{\RBO}(A,M)\ar@{.}[r]& \C^n_{\RBO}(A,M)\ar[r]^-{\partial^n}&\C^{n+1}_{\RBO}(A,M)\ar@{.}[r]&
.}\]

Define $\Phi^0=\Id_{\Hom(k,M)}=\Id_M$, and for  $n\geqslant 1$ and $ f\in \C^n_{\Alg}(A,M)$,  define $\Phi^n(f)\in \C^n_{\RBO}(A,M)$ as:
\begin{align*}
 &\Phi^n(f)(a_1\ot\cdots \ot a_n) \\
=&f(T(a_1)\ot \cdots \ot T(a_n))\\
&-\sum_{k=0}^{n-1}\lambda^{n-k-1}\sum_{1\leqslant i_1<i_2<\dots<i_k\leqslant n}T_M\circ f(a_{1, i_1-1} \ot T(a_{i_1})\ot a_{i_1+1, i_2-1}\ot T(a_{i_2})\ot \cdots \ot T(a_{i_k})\ot   a_{i_k+1, n}).
\end{align*}

\smallskip

\begin{prop}\label{Prop: Chain map Phi}
	The map $\Phi^\bullet: \C^\bullet_\Alg(A,M)\rightarrow \C^\bullet_{\RBO}(A,M)$ is a chain map.
\end{prop}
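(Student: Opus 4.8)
The plan is to verify directly that $\Phi^{n+1}\circ\delta^n=\partial^n\circ\Phi^n$ as maps $\C^n_{\Alg}(A,M)\to\C^{n+1}_{\RBO}(A,M)$, by evaluating both sides on an arbitrary cochain $f\in\C^n_{\Alg}(A,M)$ and arguments $a_1,\dots,a_{n+1}\in A$, and then matching the resulting sums of terms. Since $\Phi^n(f)$ is a sum indexed by a subset size $k$ (the number of entries to which $T$ is applied ``outside'' of $T_M$) and a choice of $k$ positions $i_1<\cdots<i_k$, while $\delta^n$ and $\partial^n$ each introduce a sum over the position at which a multiplication (or boundary action) is inserted, both $\Phi^{n+1}\delta^n(f)$ and $\partial^n\Phi^n(f)$ expand into multi-indexed sums whose terms are of the shape $T_M^{\epsilon}\circ f(\text{word in the } a_j,\ T(a_j),\ \star\text{-products})$ with $\epsilon\in\{0,1\}$ and a scalar that is a power of $\lambda$. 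The strategy is bookkeeping: organize the terms on each side by the value of $\epsilon$ (whether $T_M$ is applied) and by which $a_j$'s are multiplied together, and check the two expansions agree term by term.

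The key steps, in order, are as follows. First I would expand $\partial^n(\Phi^n(f))(a_{1,n+1})$ using the explicit formula for $\partial^n$ from Section~5.1, substituting the definition of $\Phi^n(f)$ into each of the four groups of terms (the leading $a_1\rhd(-)$ term, the trailing $(-)\lhd a_{n+1}$ term, and the $n$ inner $\star$-product terms); here one must rewrite $a_i\star a_{i+1}=a_i T(a_{i+1})+T(a_i)a_{i+1}+\lambda a_i a_{i+1}$ and also unfold the $\rhd,\lhd$ actions via $a\rhd m=T(a)m-T_M(am)$ and $m\lhd a=mT(a)-T_M(ma)$. Second I would expand $\Phi^{n+1}(\delta^n(f))(a_{1,n+1})$: apply $\Phi^{n+1}$ (a sum over $k'$ from $0$ to $n$ and positions) to $\delta^n(f)$, then substitute the Hochschild differential $\delta^n$, so that the composition $a_iT(a_{i+1})$-type products now appear because some $a_j$ inside $\delta^n$ gets replaced by $T(a_j)$ by $\Phi^{n+1}$ while its neighbor is (or is not) also hit by $T$. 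Third, I would set up the correspondence between index data on the two sides: a term on the $\partial^n\Phi^n$ side with a $\star$-product inserted at position $i$ and a subset $S$ of positions carrying an ``outer'' $T$ corresponds, after expanding $\star$, to terms on the $\Phi^{n+1}\delta^n$ side with an appropriately modified subset $S'$ (depending on whether position $i$ or $i+1$ ended up carrying an outer $T$), the Hochschild multiplication inserted at the matching spot, and the same power of $\lambda$. Fourth, I would verify the signs match; the sign in $\partial^n$ is $(-1)^{n-i+1}$ (and $(-1)^{n+1}$ at the ends) and similarly for $\delta^n$, and $\Phi$ introduces no signs, so the sign comparison is straightforward once the indices are aligned. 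Fifth, I would separately check the $n=0$ case, where $\Phi^0=\Id_M$ and the claim reduces to a direct comparison of $\partial^0\colon M\to\Hom(A,M)$ with $\delta^0\colon M\to\Hom(A,M)$ composed with $\Phi^1$, using that $M$ is a Rota-Baxter bimodule.

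\textbf{Main obstacle.} The hard part will be the combinatorial matching in the third step: the ``outer $T$'' subsets on the source side range over $k$-element subsets of $\{1,\dots,n+1\}$ while on the target side, after a $\star$-product has merged two adjacent slots, they range over subsets of an $(n)$-element index set, and one must track carefully how inserting a product at position $i$ interacts with whether $i$, $i+1$, both, or neither lie in the chosen subset — in particular the telescoping/cancellation of the three pieces of $a_i\star a_{i+1}$ against the ``$T$-or-not-$T$'' choices. A clean way to handle this is to introduce, for each term, the data of a word $w=w_1\cdots w_m$ in the alphabet $\{a_j, T(a_j)\}$ together with a marker for whether $T_M$ is applied, and show that both $\Phi^{n+1}\delta^n(f)$ and $\partial^n\Phi^n(f)$ produce exactly the same signed $\lambda$-weighted linear combination of expressions $f(w)$ or $T_M f(w)$; the potentially mismatched terms (those where $T_M$ lands on a product $a_ia_{i+1}$, versus where two nested $T_M$'s appear and one uses $T_M(aT_M(m)+T(a)m+\lambda am)=T(a)T_M(m)$ style identities) are precisely where the Rota-Baxter bimodule axioms of Definition~\ref{Def: Rota-Baxter bimodules} get used, and these should be isolated and checked by hand.
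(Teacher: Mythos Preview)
Your plan is correct in principle but takes a genuinely different route from the paper. The paper does \emph{not} give a direct proof of this proposition at all: it simply says ``This result follows from the $L_\infty$-structure over the cochain complex of Rota-Baxter algebras, so we omit it; see Proposition~\ref{Prop: cohomlogy complex as underlying complex of L infinity algebra}.'' Concretely, the paper constructs an $L_\infty$-algebra structure on $\mathfrak{C}_{\RBA}(A)$ (Theorem~\ref{Thm: rb-L-infty}), realises the Rota-Baxter structure as a Maurer-Cartan element $\alpha$, and then in the proof of Proposition~\ref{Prop: cohomlogy complex as underlying complex of L infinity algebra} computes the twisted differential $l_1^\alpha$ on the two summands of $\mathfrak{C}_{\RBA}(A)$, identifying the cross-term with $\Phi^n$. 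The chain-map property of $\Phi^\bullet$ then falls out of $(l_1^\alpha)^2=0$, which is automatic once $\{l_n\}$ is an $L_\infty$-structure and $\alpha$ is Maurer-Cartan. So the paper trades a direct combinatorial verification for the (much heavier) verification that the brackets $\{l_n\}$ satisfy the generalised Jacobi identities, done in Appendix~A.

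Your approach is the elementary, self-contained one: expand both $\partial^n\Phi^n(f)$ and $\Phi^{n+1}\delta^n(f)$ and match. This avoids any $L_\infty$ machinery and is logically prior to Section~8, which is a genuine advantage (the paper's argument is circular in presentation order, though not in logic). The cost is the bookkeeping you have already flagged. One small addition to your obstacle list: in the middle terms of $\partial^n\Phi^n(f)$, when the position carrying $a_i\star a_{i+1}$ lies in your subset $S$ of ``outer $T$'' positions, you will meet $T(a_i\star a_{i+1})$, and it is the Rota-Baxter relation for $T$ on $A$ (Definition~\ref{Def: Rota-Baxter algebra}), namely $T(a\star b)=T(a)T(b)$, that collapses this to the corresponding term of $\Phi^{n+1}\delta^n(f)$. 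So you will need both Definition~\ref{Def: Rota-Baxter algebra} and Definition~\ref{Def: Rota-Baxter bimodules}, not only the latter.
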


This result follows from the $L_\infty$-structure over the cochain complex of Rota-Baxter algebras, so we omit it; see Proposition~\ref{Prop: cohomlogy complex as underlying complex of L infinity algebra}.

\smallskip

\begin{defn}
 Let $M=(M,T_M)$ be a  Rota-Baxter bimodule over a Rota-Baxter algebra of weight $\lambda$ $A=(A,\mu,T)$.  We define the  cochain complex $(\C^\bullet_{\RBA}(A,M), d^\bullet)$  of Rota-Baxter algebra $(A,\mu,T)$ with coefficients in $(M,T_M)$ to the negative shift of the mapping cone of $\Phi^\bullet$, that is,   let
\[\C^0_{\RBA}(A,M)=\C^0_\Alg(A,M)  \quad  \mathrm{and}\quad   \C^n_{\RBA}(A,M)=\C^n_\Alg(A,M)\oplus \C^{n-1}_{\RBO}(A,M), \forall n\geqslant 1,\]
 and the differential $d^n: \C^n_{\RBA}(A,M)\rightarrow \C^{n+1}_{\RBA}(A,M)$ is given by \[d^n(f,g)= (\delta^n(f), -\partial^{n-1}(g)  -\Phi^n(f))\]
 for any $f\in \C^n_\Alg(A,M)$ and $g\in \C^{n-1}_{\RBO}(A,M)$.
The  cohomology of $(\C^\bullet_{\RBA}(A,M), d^\bullet)$, denoted by $\rmH_{\RBA}^\bullet(A,M)$,  is called the cohomology of the Rota-Baxter algebra $(A,\mu,T)$ with coefficients in $(M,T_M)$.
When $(M,T_M)=(A,T)$, we just denote $\C^\bullet_{\RBA}(A,A), \rmH^\bullet_{\RBA}(A,A)$   by $\C^\bullet_{\RBA}(A),  \rmH_{\RBA}^\bullet(A)$ respectively, and call  them the cochain complex, the cohomology of Rota-Baxter algebra $(A,\mu,T)$ respectively.
\end{defn}
There is an obvious short exact sequence of complexes:
\begin{eqnarray}\label{Seq of complexes} 0\to s\C^\bullet_{\RBO}(A,M)\to \C^\bullet_{\RBA}(A,M)\to \C^\bullet_{\Alg}(A,M)\to 0\end{eqnarray}
which induces a long exact sequence of cohomology groups
$$0\to \rmH^{0}_{\RBA}(A, M)\to\mathrm{HH}^0(A, M)\to\rmH^0_{\RBO}(A, M) \to \rmH^{1}_{\RBA}(A, M)\to\mathrm{HH}^1(A, M)\to\cdots$$
$$\cdots\to \mathrm{HH}^p(A, M)\to \rmH^p_{\RBO}(A, M)\to \rmH^{p+1}_{\RBA}(A, M)\to \mathrm{HH}^{p+1}(A, M)\to \cdots$$


\bigskip


\bigskip

\section{Formal deformations of Rota-Baxter algebras and cohomological interpretation}

In this section, we will study formal deformations of Rota-Baxter algebras and interpret  them  via    lower degree   cohomology groups  of Rota-Baxter algebras defined in last section.

\subsection{Formal deformations of Rota-Baxter algebras}\

Let $(A,\mu, T)$ be a Rota-Baxter algebra of weight $\lambda$.   Consider a 1-parameterized family:
\[\mu_t=\sum_{i=0}^\infty \mu_it^i, \ \mu_i\in \C^2_\Alg(A),\quad  T_t=\sum_{i=0}^\infty T_it^i,  \ T_i\in \C^1_{\RBO}(A).\]

\begin{defn}
	A  1-parameter formal deformation of    Rota-Baxter algebra $(A, \mu,T)$ is a pair $(\mu_t,T_t)$ which endows the flat $\bfk[[t]]$-module $A[[t]]$ with a  Rota-Baxter algebra structure over $\bfk[[t]]$ such that $(\mu_0,T_0)=(\mu,T)$.
\end{defn}

 Power series $\mu_t$ and $ T_t$ determine a  1-parameter formal deformation of Rota-Baxter algebra $(A,\mu,T)$ if and only if for any $a,b,c\in A$, the following equations hold :
 \begin{eqnarray*}
 \mu_t(a\ot \mu_t(b\ot c))&=&\mu_t(\mu_t(a\ot b)\ot c),\\
 \mu_t(T_t(a)\ot T_t(b))&=& T_t\Big(\mu_t(a\ot T_t(b))+\mu_t(T_t(a)\ot b)+\lambda \mu_t(a\ot b)\Big).
 \end{eqnarray*}
By expanding these equations and comparing the coefficient of $t^n$, we obtain  that $\{\mu_i\}_{i\geqslant0}$ and $\{T_i\}_{i\geqslant0}$ have to  satisfy: for any $n\geqslant 0$,
\begin{equation}\label{Eq: deform eq for  products in RBA}
	\sum_{i=0}^n\mu_i\circ(\mu_{n-i}\ot \id)=\sum_{i=0}^n\mu_i\circ(\id\ot \mu_{n-i}),\end{equation}
\begin{equation}\label{Eq: Deform RB operator in RBA} \begin{array}{rcl}
\sum_{i+j+k=n\atop i, j, k\geqslant 0}	\mu_{i}\circ(T_j\ot T_{k})&=&\sum_{i+j+k=n\atop i, j, k\geqslant 0} T_{i}\circ \mu_j\circ (\id\ot T_{k})\\
&  &+\sum_{i+j+k=n\atop i, j, k\geqslant 0} T_{i}\circ\mu_j\circ (T_{k}\ot \id)+\lambda\sum_{i+j=n\atop i, j \geqslant 0}T_i\circ\mu_{j}.
\end{array}\end{equation}
Obviously, when $n=0$, the above conditions are exactly the associativity of $\mu=\mu_0$ and Equation~(\ref{Eq: Rota-Baxter relation}) which is the defining relation of Rota-Baxter operator $T=T_0$.

\smallskip

\begin{prop}\label{Prop: Infinitesimal is 2-cocyle}
	Let $(A[[t]],\mu_t,T_t)$ be a  1-parameter formal deformation of
	Rota-Baxter algebra $(A,\mu,T)$ of weight $\lambda$. Then
	$(\mu_1,T_1)$ is a 2-cocycle in the cochain complex
	$C_{\RBA}^\bullet(A)$.
\end{prop}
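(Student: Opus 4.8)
The plan is to extract the degree-one part of the two families of deformation equations \eqref{Eq: deform eq for products in RBA} and \eqref{Eq: Deform RB operator in RBA} and to recognize them, via the description of the differential $d^\bullet$ on $\C^\bullet_{\RBA}(A)$, as the two components of the cocycle condition $d^2(\mu_1,T_1)=0$. Recall $\C^2_{\RBA}(A)=\C^2_\Alg(A)\oplus\C^1_{\RBO}(A)$ and $d^2(f,g)=(\delta^2(f),-\partial^1(g)-\Phi^2(f))$, so I must show two things: first, $\delta^2(\mu_1)=0$; second, $\partial^1(T_1)+\Phi^2(\mu_1)=0$.

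For the first component, I would set $n=1$ in \eqref{Eq: deform eq for products in RBA}, which reads $\mu_0\circ(\mu_1\ot\id)+\mu_1\circ(\mu_0\ot\id)=\mu_0\circ(\id\ot\mu_1)+\mu_1\circ(\id\ot\mu_0)$; rearranging and writing $\mu_0=\mu=\cdot$ gives exactly $a\,\mu_1(b\ot c)-\mu_1(ab\ot c)+\mu_1(a\ot bc)-\mu_1(a\ot b)c=0$, i.e. $\delta^2(\mu_1)=0$ with the sign convention of $\delta^\bullet$ fixed in Section 3. This is the same computation as in the associative case recalled just before the statement.

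For the second component, I would set $n=1$ in \eqref{Eq: Deform RB operator in RBA}. Separating the $i+j+k=1$ terms (exactly one index equals $1$, the rest $0$) and the $i+j=1$ terms, and using $T_0=T$, $\mu_0=\mu$, one gets an identity relating $\mu_1(T(a)\ot T(b))$, $T_1(\mu(a\ot T(b)))$, etc. Then I would compare this with the definition of $\partial^1$ (the Hochschild differential of $A_\star$ with coefficients in $_\rhd M_\lhd$, here with $M=A$, written out explicitly in Section~5.1 for $n=1$) applied to $T_1\in\C^1_{\RBO}(A)=\Hom(A,A)$, and with $\Phi^2(\mu_1)$, whose defining formula for $n=2$ is $\Phi^2(\mu_1)(a\ot b)=\mu_1(T(a)\ot T(b))-\lambda\,T\circ\mu_1(a\ot b)-T\circ\mu_1(T(a)\ot b)-T\circ\mu_1(a\ot T(b))$. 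Matching terms, keeping careful track of the signs in $d^2(f,g)=(\delta^2(f),-\partial^1(g)-\Phi^2(f))$ and of the $\star$- and $\rhd,\lhd$-actions (which unfold into $T$, $T_M=T$ and $\lambda$), should show that the $n=1$ instance of \eqref{Eq: Deform RB operator in RBA} is precisely $\partial^1(T_1)+\Phi^2(\mu_1)=0$.

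The main obstacle I anticipate is purely bookkeeping: getting every sign and every occurrence of $T$, $\lambda$ right when expanding the abstract differential $\partial^1$ and the chain map $\Phi^2$ in terms of the original data $(\mu,T)$, and reconciling them with the raw deformation identity \eqref{Eq: Deform RB operator in RBA}. There is no conceptual difficulty — once the dictionary between $(A_\star,{_\rhd A_\lhd})$-Hochschild cochains and maps $A^{\ot n}\to A$ is written out, the verification is a finite comparison of finitely many terms. In fact, since Proposition~\ref{Prop: Chain map Phi} (that $\Phi^\bullet$ is a chain map, hence $d^\bullet$ squares to zero) is itself deferred to the $L_\infty$-formalism, it would be cleanest to phrase this proof so that it only uses the explicit low-degree formulas for $\delta$, $\partial$, $\Phi$ and not any structural fact about $d^\bullet$ beyond its definition.
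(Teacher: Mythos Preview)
Your proposal is correct and follows essentially the same approach as the paper: both set $n=1$ in the two deformation equations \eqref{Eq: deform eq for  products in RBA} and \eqref{Eq: Deform RB operator in RBA}, identify the first as $\delta^2(\mu_1)=0$ and the second as $\Phi^2(\mu_1)=-\partial^1(T_1)$, and then conclude via the definition $d^2(f,g)=(\delta^2(f),-\partial^1(g)-\Phi^2(f))$. Your explicit formula for $\Phi^2(\mu_1)$ and your remark that only the low-degree formulas (not the chain-map property of $\Phi^\bullet$) are needed are both on point.
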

\begin{proof} When $n=1$,   Equations~(\ref{Eq: deform eq for  products in RBA}) and (\ref{Eq: Deform RB operator in RBA})  become
	$$\mu_1\circ(\mu\ot \id)+\mu\circ(\mu_1\ot \id)=\mu_1\circ(\id\ot \mu)+\mu\circ (\id\ot \mu_1),$$
and
$$\begin{array}{cl}
	&\mu_1 (T\ot T)-\{T\circ\mu_1\circ(\id\ot T)+T\circ\mu_1\circ(T\ot \id)+\lambda T\circ \mu_1\}\\
	=&-\{\mu\circ(T\ot T_1)-T\circ\mu\circ(\id\ot T_1)\}
	+\{T_1\circ\mu\circ(\id\ot T)+T_1\circ\mu\circ(T\ot \id)+\lambda T_1\circ \mu\}\\
	&-\{\mu\circ(T_1\ot T)-T\circ\mu\circ(T_1\ot\id)\},
	\end{array}$$
Note that  the first equation is exactly $\delta^2(\mu_1)=0\in \C^\bullet_{\Alg}(A)$ and that  second equation is exactly to  \[\Phi^2(\mu_1)=-\partial^1(T_1) \in \C^\bullet_{\RBO}(A).\]
	So $(\mu_1,T_1)$ is a 2-cocycle in $\C^\bullet_{\RBA}(A)$.
	\end{proof}

\smallskip

\begin{defn} The 2-cocycle $(\mu_1,T_1)$ is called the infinitesimal of the 1-parameter formal deformation $(A[[t]],\mu_t,T_t)$ of Rota-Baxter algebra $(A,\mu,T)$.
	\end{defn}

In general, we can rewrite Equation~(\ref{Eq: deform eq for  products in RBA} ) and (\ref{Eq: Deform RB operator in RBA}) as
\begin{eqnarray} \label{Eq: general formal of deform product in RBA} \delta^2(\mu_n) = \frac{1}{2}\sum_{i=1}^{n-1} [\mu_i, \mu_{n-i}]_G\end{eqnarray}
\begin{equation} \label{Eq: general formal of deform RBO in RBA}
\begin{array}{rcl}\partial^{1}(T_n)   +\Phi^2(\mu_n)&=& \sum_{i+j+k=n\atop 0 \leqslant i, j, k\leqslant n-1}	\mu_{i}\circ(T_j\ot T_{k})-\sum_{i+j+k=n\atop 0 \leqslant i, j, k\leqslant n-1} T_{i}\circ \mu_j\circ (\id\ot T_{k})\\ &&-\sum_{i+j+k=n\atop 0 \leqslant i, j, k\leqslant n-1} T_{i}\circ\mu_j\circ (T_{k}\ot \id)-\sum_{i+j=n\atop 0 \leqslant i, j\leqslant n-1}T_i\circ\mu_{j}.
\end{array}\end{equation}

\smallskip
\begin{defn}
Let $(A[[t]],\mu_t,T_t)$ and $(A[[t]],\mu_t',T_t')$ be two 1-parameter formal deformations of Rota-Baxter algebra $(A,\mu,T)$. A formal isomorphism from $(A[[t]],\mu_t',T_t')$ to $(A[[t]], \mu_t, T_t)$ is a power series $\psi_t=\sum_{i=0}\psi_it^i: A[[t]]\rightarrow A[[t]]$, where $\psi_i: A\rightarrow A$ are linear maps with $\psi_0=\id_A$, such that:
\begin{eqnarray}\label{Eq: equivalent deformations}\psi_t\circ \mu_t' &=& \mu_t\circ (\psi_t\ot \psi_t),\\
\psi_t\circ T_t'&=&T_t\circ\psi_t. \label{Eq: equivalent deformations2}
	\end{eqnarray}
	In this case, we say that the two 1-parameter formal deformations $(A[[t]], \mu_t,T_t)$ and
	$(A[[t]],\mu_t',T_t')$ are  equivalent.
\end{defn}

\smallskip

Given a Rota-Baxter algebra $(A,\mu,T)$, the power series $\mu_t,T_t$
with $\mu_i=\delta_{i,0}\mu, T_i=\delta_{i,0}T$ make
$(A[[t]],\mu_t,T_t)$ into a $1$-parameter formal deformation of
$(A,\mu,T)$. Formal deformations equivalent to this one are called trivial.
\smallskip

\begin{thm}
The infinitesimals of two equivalent 1-parameter formal deformations of $(A,\mu,T)$ are in the same cohomology class in $\rmH^\bullet_{\RBA}(A)$.
\end{thm}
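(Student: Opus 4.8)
The plan is to compare the two deformations order by order, extracting from the equivalence conditions \eqref{Eq: equivalent deformations} and \eqref{Eq: equivalent deformations2} the relation between the infinitesimals modulo coboundaries. Write $\mu_t=\sum_i\mu_it^i$, $T_t=\sum_iT_it^i$ and $\mu_t'=\sum_i\mu_i't^i$, $T_t'=\sum_iT_i't^i$ with $\mu_0=\mu_0'=\mu$, $T_0=T_0'=T$, and let $\psi_t=\sum_i\psi_it^i$ with $\psi_0=\id_A$ be a formal isomorphism. First I would expand \eqref{Eq: equivalent deformations} and collect the coefficient of $t^1$: this gives
\[
\psi_1\circ\mu+\mu_1'=\mu_1+\mu\circ(\psi_1\ot\id)+\mu\circ(\id\ot\psi_1),
\]
so that $\mu_1'-\mu_1=\mu\circ(\psi_1\ot\id)+\mu\circ(\id\ot\psi_1)-\psi_1\circ\mu$. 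Recognizing $\psi_1\in\C^1_\Alg(A)$ and comparing with the formula for $\delta^1$ in the Hochschild complex, the right-hand side is exactly $\delta^1(\psi_1)$ (up to the sign conventions fixed in Section~3); hence $\mu_1'-\mu_1=\delta^1(\psi_1)$ in $\C^\bullet_\Alg(A)$.

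Next I would do the analogous computation for the Rota-Baxter operator part. Expanding \eqref{Eq: equivalent deformations2} and taking the coefficient of $t^1$ yields
\[
\psi_1\circ T+T_1'=T_1+T\circ\psi_1,
\]
i.e. $T_1'-T_1=T\circ\psi_1-\psi_1\circ T$. The task is then to identify this with $-\partial^1(\cdot)-\Phi^1(\cdot)$ applied to the appropriate chain element, so that $(\mu_1',T_1')-(\mu_1,T_1)=d^1(\psi_1,\text{something})$. Looking at the differential $d^1(f,g)=(\delta^1(f),-\partial^0(g)-\Phi^1(f))$ on $\C^1_{\RBA}(A)=\C^1_\Alg(A)\oplus\C^0_{\RBO}(A)$, and recalling $\C^0_{\RBO}(A)=\Hom(\bfk,A)\cong A$ with $\partial^0$ the appropriate differential, I expect that the natural candidate is the pair $(\psi_1,0)\in\C^1_{\RBA}(A)$: one must check that $T\circ\psi_1-\psi_1\circ T$ equals $-\Phi^1(\psi_1)$, which unwinds to the definition $\Phi^1(\psi_1)(a)=\psi_1(T(a))-T_M\circ\psi_1(a)$ specialized to the regular bimodule $M=A$, $T_M=T$ — so $-\Phi^1(\psi_1)(a)=T\circ\psi_1(a)-\psi_1\circ T(a)$, as needed. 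Thus $(\mu_1'-\mu_1,\,T_1'-T_1)=d^1(\psi_1,0)$, exhibiting the two infinitesimals as cohomologous $2$-cocycles in $\C^\bullet_{\RBA}(A)$.

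The main obstacle I anticipate is bookkeeping of signs: the Hochschild differential $\delta^n$ in this paper carries the nonstandard signs $(-1)^{n-i+1}$ etc., the cohomology complex $\C^\bullet_{\RBA}$ is defined as a shifted mapping cone, and $\Phi^\bullet$ has its own sign pattern, so matching $T\circ\psi_1-\psi_1\circ T$ to $-\Phi^1(\psi_1)$ and $\mu_1'-\mu_1$ to $\delta^1(\psi_1)$ on the nose requires care (and may require replacing $\psi_1$ by $\pm\psi_1$ or adjusting the second component of the chain element). There is also a small point that one should verify the element $(\psi_1,0)$ rather than $(\psi_1,g)$ for some nonzero $g\in\C^0_{\RBO}(A)$ genuinely works; since $\Phi^1(\psi_1)$ already accounts entirely for the $T_1'-T_1$ discrepancy, no correction term $g$ is needed, but this should be stated explicitly. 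Beyond signs, the argument is a routine order-one comparison and presents no conceptual difficulty; I would present it compactly, citing \eqref{Eq: equivalent deformations}, \eqref{Eq: equivalent deformations2}, and the definitions of $\delta^\bullet$, $\Phi^\bullet$, $d^\bullet$.
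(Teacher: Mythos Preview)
Your proposal is correct and follows essentially the same approach as the paper's own proof: expand \eqref{Eq: equivalent deformations} and \eqref{Eq: equivalent deformations2} to order $t^1$, obtain $\mu_1'-\mu_1=\delta^1(\psi_1)$ and $T_1'-T_1=-\Phi^1(\psi_1)$, and conclude $(\mu_1',T_1')-(\mu_1,T_1)=d^1(\psi_1,0)$. Your sign checks are in fact consistent with the paper's conventions, so no adjustment is needed.
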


\begin{proof} Let $\psi_t:(A[[t]],\mu_t',T_t')\rightarrow (A[[t]],\mu_t,T_t)$ be a formal isomorphism.
	Expanding the identities and collecting coefficients of $t$, we get from Equations~(\ref{Eq: equivalent deformations}) and (\ref{Eq: equivalent deformations2}):
	\begin{eqnarray*}
		\mu_1'&=&\mu_1+\mu\circ(\id\ot \psi_1)-\psi_1\circ\mu+\mu\circ(\psi_1\ot \id),\\
		T_1'&=&T_1+T\circ\psi_1-\psi_1\circ T,
		\end{eqnarray*}
	that is, we have\[(\mu_1',T_1')-(\mu_1,T_1)=(\delta^1(\psi_1), -\Phi^1(\psi_1))=d^1(\psi_1,0)\in  \C^\bullet_{\RBA}(A).\]
\end{proof}

\smallskip

\begin{defn}
	A Rota-Baxter algebra $(A,\mu,T)$ is said to be rigid if every 1-parameter formal deformation is trivial.
\end{defn}

\begin{thm}
	Let $(A,\mu,T)$ be a Rota-Baxter algebra of weight $\lambda$. If $\rmH^2_{\RBA}(A)=0$, then $(A,\mu,T)$ is rigid.
\end{thm}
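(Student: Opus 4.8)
The plan is to adapt the classical Gerstenhaber rigidity argument to the Rota-Baxter setting, using the obstruction calculus encoded in Equations~\eqref{Eq: general formal of deform product in RBA} and \eqref{Eq: general formal of deform RBO in RBA}. Suppose $(A[[t]],\mu_t,T_t)$ is a $1$-parameter formal deformation of $(A,\mu,T)$. We argue by induction that it is equivalent to one with vanishing first-order and higher-order terms, i.e.\ to the trivial deformation. The key point is that if $\mu_i=\delta_{i,0}\mu$ and $T_i=\delta_{i,0}T$ for all $i<n$ (which holds vacuously for $n=1$, or after the inductive step), then the deformation equations at order $n$ collapse: the right-hand sides of \eqref{Eq: general formal of deform product in RBA} and \eqref{Eq: general formal of deform RBO in RBA} involve only the $\mu_i,T_i$ with $0\le i,j,k\le n-1$ that are \emph{not} all zero, hence they vanish, and we are left with $\delta^2(\mu_n)=0$ and $\partial^1(T_n)+\Phi^2(\mu_n)=0$. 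By the definition of the differential $d^\bullet$ of $\C^\bullet_{\RBA}(A)$, this says precisely that $(\mu_n,T_n)$ is a $2$-cocycle in $\C^\bullet_{\RBA}(A)$.

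Since $\rmH^2_{\RBA}(A)=0$, there is a $1$-cochain $(\psi_n,0)\in\C^1_{\RBA}(A)=\C^1_\Alg(A)\oplus\C^0_{\RBO}(A)$ — note $\C^0_{\RBO}(A)=\Hom(\bfk,A)\cong A$, and we may take the second component to be whatever is convenient; in fact one checks a cochain of the form $(\psi_n,0)$ with $\psi_n\in\C^1_\Alg(A)$ suffices, as in the computation preceding Definition~5.x of equivalence — with $d^1(\psi_n,0)=(\delta^1(\psi_n),-\Phi^1(\psi_n))=(\mu_n,T_n)$. Now define a formal isomorphism $\psi_t=\id_A+\psi_n t^n$ (all intermediate $\psi_i=0$ for $0<i<n$), and let $(\mu_t',T_t')$ be the deformation obtained by transporting $(\mu_t,T_t)$ along $\psi_t^{-1}$, i.e.\ $\psi_t\circ\mu_t'=\mu_t\circ(\psi_t\ot\psi_t)$ and $\psi_t\circ T_t'=T_t\circ\psi_t$. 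Expanding and collecting the coefficient of $t^n$ exactly as in the proof of the previous theorem, one finds $\mu_i'=\delta_{i,0}\mu$ and $T_i'=\delta_{i,0}T$ for all $i\le n$: the order-$n$ correction contributed by $\psi_n$ is precisely $(\delta^1(\psi_n),-\Phi^1(\psi_n))=(\mu_n,T_n)$, which cancels the original $n$-th term, while orders $<n$ are unaffected since the lower $\psi_i$ vanish.

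Iterating this procedure produces, for each $N$, a formal isomorphism killing the deformation modulo $t^{N+1}$; composing these (the composite is well-defined as a formal power series in $t$ since the $N$-th isomorphism is $\id$ modulo $t^N$) yields a formal isomorphism from $(A[[t]],\mu_t,T_t)$ to the trivial deformation $(A[[t]],\mu,T)$. Hence every $1$-parameter formal deformation of $(A,\mu,T)$ is trivial, i.e.\ $(A,\mu,T)$ is rigid.

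The main obstacle I anticipate is purely bookkeeping rather than conceptual: one must verify carefully that the order-$n$ term of the transported deformation $(\mu_t',T_t')$ really is $(\mu_n,T_n)-(\delta^1(\psi_n),-\Phi^1(\psi_n))$ — i.e.\ that the gauge action of $\psi_t$ on deformations is, to leading order, translation by the coboundary $d^1(\psi_n,0)$ — and that no cross-terms from the Rota-Baxter relation \eqref{Eq: Deform RB operator in RBA} spoil the vanishing of the lower-order terms after the substitution. This is exactly the same computation already carried out at order $1$ in the proof of the preceding theorem, now performed at a general order $n$ under the inductive hypothesis that everything below degree $n$ is trivial; the hypothesis is what forces all the potentially obstructing bilinear-in-$\mu_i,T_i$ terms to drop out, so the verification, while tedious, is routine. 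One should also take a moment to confirm that the formal convergence of the infinite composition of gauge transformations is legitimate, which it is because each factor differs from the identity only in degrees $\ge N$.
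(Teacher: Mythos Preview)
Your proposal is correct and follows essentially the same inductive gauge-transformation argument as the paper. The one step the paper makes explicit that you only gesture at is the reduction to a $1$-cochain of the form $(\psi_n,0)$: starting from an arbitrary $(\psi_n',x)\in\C^1_\Alg(A)\oplus\C^0_{\RBO}(A)$ with $d^1(\psi_n',x)=(\mu_n,T_n)$, one sets $\psi_n:=\psi_n'+\delta^0(x)$ and uses the identity $\Phi^1\circ\delta^0=\partial^0$ to absorb $x$ (note also that the paper takes $\psi_t=\Id_A-\psi_1 t$, which matches the sign-bookkeeping caveat you flagged at the end).
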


\begin{proof}Let $(A[[t]], \mu_t, T_t)$ be a $1$-parameter formal deformation of $(A, \mu, T)$. By Proposition~\ref{Prop: Infinitesimal is 2-cocyle},
$(\mu_1, T_1)$ is a $2$-cocycle. By $\rmH^2_{\RBA}(A)=0$, there exists a $1$-cochain $$(\psi_1', x) \in \C^1_\RBA(A)= C^1_{\Alg}(A)\oplus \Hom(k, A)$$ such that
$(\mu_1, T_1) =  d^1(\psi_1', x), $
that is, $\mu_1=\delta^1(\psi_1')$ and $T_1=-\partial^0(x)-\Phi^1(\psi_1')$. Let $\psi_1=\psi_1'+\delta^0(x)$. Then
 $\mu_1= \delta^1(\psi_1)$ and $T_1=-\Phi^1(\psi_1)$, as it can be readily seen that $\Phi^1(\delta^0(x))=\partial^0(x)$.

Setting $\psi_t = \Id_A -\psi_1t$, we have a deformation $(A[[t]], \overline{\mu}_t, \overline{T}_t)$, where
$$\overline{\mu}_t=\psi_t^{-1}\circ \mu_t\circ (\psi_t\times \psi_t)$$
and $$\overline{T}_t=\psi_t^{-1}\circ T_t\circ \psi_t.$$
  It can be easily verify  that $\overline{\mu}_1=0, \overline{T}_1=0$. Then
    $$\begin{array}{rcl} \overline{\mu}_t&=& \mu+\overline{\mu}_2t^2+\cdots,\\
 T_t&=& T+\overline{T}_2t^2+\cdots.\end{array}$$
   By Equations~(\ref{Eq: general formal of deform product in RBA}) and (\ref{Eq: general formal of deform RBO in RBA}), we see that $(\overline{\mu}_2,  \overline{T}_2)$ is still a $2$-cocyle, so by induction, we can show that
  $ (A[[t]], \mu_t , T_t) $ is equivalent to the trivial extension $(A[[t]], \mu, T).$
Thus, $(A,\mu,T)$ is rigid.

\end{proof}

\subsection{Formal deformations of Rota-Baxter operator  with product fixed}\

Let $(A,\mu=\cdot, T)$ be a Rota-Baxter algebra of weight $\lambda$. Let us consider the case where  we only deform the Rota-Baxter operator with the product fixed. So    $A[[t]]=\{\sum_{i=0}^\infty a_it^i\ | \ a_i\in A, \forall i\geqslant 0\}$ is endowed with the product induced from that of $A$, say,
$$(\sum_{i=0}^\infty a_it^i)(\sum_{j=0}^\infty b_jt^j)=\sum_{n=0}^\infty (\sum_{i+j=n\atop i,j\geqslant 0} a_ib_j)t^n.$$
Then $A[[t]]$ becomes a flat $\bfk[[t]]$-algebra, whose product is still denoted by $\mu$.

 In this case, a  1-parameter formal deformation $(\mu_t,T_t)$ of    Rota-Baxter algebra $(A, \mu,T)$  satisfies
 $\mu_i=0, \forall i\geqslant 1$. So Equation~(\ref{Eq: deform eq for  products in RBA}) degenerates and Equation~(\ref{Eq: Deform RB operator in RBA}) becomes
 \begin{eqnarray*}
 \mu\circ (T_t \ot T_t )&=& T_t\circ \Big(\mu\circ (\id \ot T_t )+\mu \circ (T_t \ot \id)+\lambda \mu \Big).
 \end{eqnarray*}
Expanding  these equations and comparing the coefficient of $t^n$, we obtain  that  $\{T_i\}_{i\geqslant0}$ have to  satisfy: for any $n\geqslant 0$,
\begin{eqnarray}\label{Eq: deform eq for RBO}	
\sum_{i+j=n\atop i, j\geqslant 0} 	\mu \circ(T_i\ot T_{j})&=&\sum_{i+j=n\atop i, j\geqslant 0} T_{ i}\circ \mu\circ (\id\ot T_{j}) +\sum_{i+j=n\atop i, j\geqslant 0}T_{i}\circ\mu\circ (T_{j}\ot \id)+\lambda T_n\circ\mu.
\end{eqnarray}

Obviously, when $n=0$,  Equation~(\ref{Eq: deform eq for RBO}) becomes
  exactly Equation~(\ref{Eq: Rota-Baxter relation}) defining Rota-Baxter operator $T=T_0$.

When $n=1$,    Equation~(\ref{Eq: deform eq for RBO}) has the form
 $$  	\mu \circ(T\ot T_1+T_1\otimes T)= T \circ \mu\circ (\id\ot T_1) +T_1 \circ \mu\circ (\id\ot T)+ T \circ\mu\circ (T_{1}\ot \id)+  T_1 \circ\mu\circ (T\ot \id)+  \lambda T_1\circ\mu$$
 which     says exactly that $\partial^1(T_1)=0\in \C^\bullet_{\RBO}(A)$.
This proves the following result:
\begin{prop}
	Let $ T_t $ be a  1-parameter formal deformation of
	Rota-Baxter operator  $ T $ of weight $\lambda$. Then
	$ T_1 $ is a 1-cocycle in the cochain complex
	$\C_{\RBO}^\bullet(A)$.
\end{prop}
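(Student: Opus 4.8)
The statement to prove is that for a one-parameter formal deformation $T_t = \sum_{i\geqslant 0} T_i t^i$ of a Rota-Baxter operator $T = T_0$ of weight $\lambda$ on a \emph{fixed} associative algebra $(A,\mu)$, the coefficient $T_1$ is a $1$-cocycle in $\C^\bullet_{\RBO}(A)$, i.e.\ $\partial^1(T_1) = 0$. The plan is to extract the degree-one part of the deformation equations and match it term-by-term against the explicit formula for the differential $\partial^1$ given in Section~\ref{Sect: Cohomology theory of Rota-Baxter algebras}. So the argument is essentially a bookkeeping computation, and the work lies in making the bookkeeping transparent rather than in any conceptual difficulty.

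\textbf{Step 1: Write down the $n=1$ equation.} Since the product is undeformed, the only constraint is the deformed Rota-Baxter relation $\mu\circ(T_t\ot T_t) = T_t\circ\bigl(\mu\circ(\id\ot T_t)+\mu\circ(T_t\ot\id)+\lambda\mu\bigr)$, which after expansion and comparison of the coefficient of $t^1$ (this is Equation~\eqref{Eq: deform eq for RBO} with $n=1$) reads
\[
\mu\circ(T\ot T_1) + \mu\circ(T_1\ot T) = T\circ\mu\circ(\id\ot T_1) + T_1\circ\mu\circ(\id\ot T) + T\circ\mu\circ(T_1\ot\id) + T_1\circ\mu\circ(T\ot\id) + \lambda\, T_1\circ\mu.
\]
Evaluating both sides on $a\ot b$ gives, in element notation,
\[
T(a)\,T_1(b) + T_1(a)\,T(b) = T\bigl(a\,T_1(b)\bigr) + T_1\bigl(a\,T(b)\bigr) + T\bigl(T_1(a)\,b\bigr) + T_1\bigl(T(a)\,b\bigr) + \lambda\, T_1(a\cdot b).
\]

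\textbf{Step 2: Write down $\partial^1(T_1)$ explicitly.} Here $T_1 \in \C^1_{\RBO}(A) = \Hom(A, A)$, where the $\RBO$-complex is the Hochschild complex $\C^\bullet_{\Alg}(A_\star, {}_\rhd A_\lhd)$ of the twisted algebra $A_\star$ with coefficients in the twisted bimodule. Using the formula for $\partial^n$ from Section~\ref{Subsect: cohomology RB operator} with $n=1$ and $f = T_1$, and recalling $a\rhd m = T(a)m - T(am)$ and $m\lhd a = mT(a) - T(ma)$ (with $T_M = T$ on the regular bimodule) and $a\star b = aT(b) + T(a)b + \lambda ab$, one gets for $a,b\in A$:
\[
\partial^1(T_1)(a\ot b) = a\rhd T_1(b) - T_1(a\star b) + T_1(a)\lhd b
= \bigl(T(a)T_1(b) - T(a T_1(b))\bigr) - T_1\bigl(aT(b)+T(a)b+\lambda ab\bigr) + \bigl(T_1(a)T(b) - T(T_1(a)b)\bigr).
\]
Comparing with Step~1, the vanishing of $\partial^1(T_1)$ is precisely the rearrangement of the $n=1$ deformation equation, collecting the $T_1$-applied-last terms on one side and the $T$-applied-last terms on the other. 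This is exactly the assertion; one simply notes the two expressions agree.

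\textbf{The main obstacle.} There is no serious obstacle — the proof is a direct match. The only thing that requires care is keeping track of the signs and of which bimodule/algebra structures ($\star$, $\rhd$, $\lhd$, $T_M = T$) appear in the definition of $\partial^1$, since the $\RBO$-differential is defined via the \emph{twisted} structures rather than the original ones; a careless substitution there would produce phantom mismatched terms. Once the definitions are unwound correctly, the identity $\partial^1(T_1) = 0$ is literally the degree-one component of the deformed Rota-Baxter equation, so the proof is complete. (This also matches the pattern already seen in Proposition~\ref{Prop: Infinitesimal is 2-cocyle}, where the analogous degree-one analysis for simultaneous deformations produced a $2$-cocycle in $\C^\bullet_{\RBA}(A)$.)
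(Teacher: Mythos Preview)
Your proof is correct and follows essentially the same approach as the paper: both extract the $n=1$ coefficient of the deformed Rota-Baxter relation and identify the resulting identity with $\partial^1(T_1)=0$. You unpack the definitions of $\rhd$, $\lhd$, and $\star$ more explicitly than the paper does, which makes the matching transparent, but the argument is the same.
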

This means that the cochain complex $\C^\bullet_\RBO(A)$ controls formal deformations of Rota-Baxter operators.


\subsection{Formal deformations of associative product  with Rota-Baxter operator    fixed}\

Let $(A,\mu, T)$ be a Rota-Baxter algebra of weight $\lambda$. Let us consider the case where  we only deform the associative product with Rota-Baxter operator  fixed. So   the induced Rota-Baxter operator
on $A[[t]]$ is given by
$\sum_{i=0}^\infty a_i t^i\mapsto \sum_{i=0}^\infty T(a_i) t^i$, still denoted by $T$.

 In this case, a  1-parameter formal deformation $(\mu_t,T_t)$ of    Rota-Baxter algebra $(A, \mu,T)$  satisfies
 $T_i=0, \forall i\geqslant 1$. So    Equation~(\ref{Eq: deform eq for  products in RBA}) remains unchanged and Equation~(\ref{Eq: Deform RB operator in RBA}) becomes
 for any $n\geqslant 0$,
 \begin{equation} \label{Eq: def constaint when deform product but RBO fixed}
 	\mu_n \circ(T \ot T )=  T \circ \mu_n\circ (\id\ot T+T\ot \id) +\lambda  T \circ\mu_{n}.
 \end{equation}
 As usual, Equation~(\ref{Eq: deform eq for  products in RBA}) for $n=1$ says that
 $\delta^2(\mu_1)=0\in \C^\bullet_{\Alg}(A)$, but
 Equation~(\ref{Eq: def constaint when deform product but RBO fixed}) implies that
 $\mu_n$ lies in $\mathrm{Ker}(\Phi^2: \C^2_{\Alg}(A)\to \C^2_{\RBO}(A))$.

This proves the following result:
\begin{prop}
	Let $ \mu_t $ be a  1-parameter formal deformation of
	associative product   $ \mu  $ with Rota-Baxter operator $T$ fixed. Then
	$ \mu_1 $ is a 2-cocycle in the cochain complex
	$\mathrm{Ker}(\Phi^\bullet : \C^\bullet_{\Alg}(A)\to \C^\bullet_{\RBO}(A))$.
\end{prop}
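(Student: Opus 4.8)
The plan is to mimic, step by step, the argument already given for the full deformation theory (Proposition~\ref{Prop: Infinitesimal is 2-cocyle} and the formal-deformation subsection), but now restricted to the sub-situation $T_i = 0$ for $i \geqslant 1$. First I would expand the two governing equations~\eqref{Eq: deform eq for products in RBA} and~\eqref{Eq: Deform RB operator in RBA} under the hypothesis that only the product is being deformed. Equation~\eqref{Eq: deform eq for products in RBA} is unchanged, and its $n=1$ instance is the usual computation showing $\delta^2(\mu_1) = 0$ in $\C^\bullet_{\Alg}(A)$; this is literally the associative-algebra baby case and needs no new idea. Equation~\eqref{Eq: Deform RB operator in RBA} collapses, since all $T_k$ with $k \geqslant 1$ vanish, to the single family of constraints~\eqref{Eq: def constaint when deform product but RBO fixed}, namely
\[
\mu_n \circ (T \otimes T) = T \circ \mu_n \circ (\id \otimes T + T \otimes \id) + \lambda\, T \circ \mu_n, \qquad \forall n \geqslant 0.
\]

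The heart of the argument is to recognize that the $n=1$ instance of this last equation is exactly the statement $\Phi^2(\mu_1) = 0$ in $\C^2_{\RBO}(A)$. To see this I would unwind the definition of $\Phi^2$ from Section~\ref{Sect: Cohomology theory of Rota-Baxter algebras}: for $f \in \C^2_\Alg(A)$ one has
\[
\Phi^2(f)(a_1 \otimes a_2) = f(T(a_1) \otimes T(a_2)) - \lambda\, T_M \circ f(a_1 \otimes a_2) - T_M\circ f(T(a_1)\otimes a_2) - T_M \circ f(a_1 \otimes T(a_2)),
\]
(using $M = A$, $T_M = T$, so the $k=0$ term contributes the $\lambda$-summand and the $k=1$ terms the remaining two). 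Rewriting the displayed constraint with $n=1$ as
\[
\mu_1 \circ (T \otimes T) - T\circ\mu_1\circ(\id\otimes T) - T\circ\mu_1\circ(T\otimes\id) - \lambda\, T\circ\mu_1 = 0
\]
and comparing termwise shows this is precisely $\Phi^2(\mu_1) = 0$. Hence $\mu_1 \in \ker(\Phi^2 \colon \C^2_\Alg(A) \to \C^2_{\RBO}(A))$, and combined with $\delta^2(\mu_1)=0$ we get that $\mu_1$ is a $2$-cocycle in the subcomplex $\ker(\Phi^\bullet \colon \C^\bullet_\Alg(A) \to \C^\bullet_{\RBO}(A))$. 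One should note that $\ker(\Phi^\bullet)$ is indeed a subcomplex of $\C^\bullet_\Alg(A)$: since $\Phi^\bullet$ is a chain map (Proposition~\ref{Prop: Chain map Phi}), $\delta$ preserves its kernel, so the assertion ``$2$-cocycle in $\ker(\Phi^\bullet)$'' is meaningful.

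The main (and only) obstacle is the bookkeeping in matching the constraint equation against the definition of $\Phi^2$: one must be careful that the signs and the $\lambda$-power ($\lambda^{n-k-1}$ with $n=2$) come out right, and that the $k=0,1$ terms in the sum defining $\Phi^2$ reproduce exactly the three ``undesired'' terms in~\eqref{Eq: def constaint when deform product but RBO fixed}. This is entirely routine once the definitions are written out side by side; no convergence or cohomological input beyond Proposition~\ref{Prop: Chain map Phi} is needed. I would therefore present the proof in two short steps: (1) $n=1$ of~\eqref{Eq: deform eq for products in RBA} gives $\delta^2(\mu_1)=0$, quoting the associative-algebra case; (2) $n=1$ of~\eqref{Eq: def constaint when deform product but RBO fixed} unwinds to $\Phi^2(\mu_1)=0$ by direct comparison with the formula for $\Phi^2$; conclude.
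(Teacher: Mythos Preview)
Your proposal is correct and follows essentially the same approach as the paper: the paper's argument (given in the text immediately preceding the proposition) is exactly your two steps, namely that the $n=1$ associativity constraint gives $\delta^2(\mu_1)=0$ and that the constraint~\eqref{Eq: def constaint when deform product but RBO fixed} is precisely $\Phi^2(\mu_n)=0$. Your write-up is in fact slightly more thorough than the paper's, since you explicitly unwind $\Phi^2$ to verify the match and you observe that $\ker(\Phi^\bullet)$ is a subcomplex via Proposition~\ref{Prop: Chain map Phi}, a point the paper leaves implicit.
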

This means that the cochain complex $\mathrm{Ker}(\Phi^\bullet : \C^\bullet_{\Alg}(A)\to \C^\bullet_{\RBO}(A))$  controls formal deformations of associative product with Rota-Baxter operator   fixed.

\section{Abelian extensions of Rota-Baxter algebras}

 In this section, we study abelian extensions of Rota-Baxter algebras and show that they are classified by the second cohomology, as one would expect of a good cohomology theory.

 Notice that a vector space $M$ together with a linear transformation $T_M:M\to M$ is naturally a Rota-Baxter algebra where the multiplication on $M$ is defined to be $uv=0$ for all $u,v\in M.$

 \begin{defn}
 	An   abelian extension  of Rota-Baxter algebras is a short exact sequence of  morphisms of Rota-Baxter algebras
 \begin{eqnarray}\label{Eq: abelian extension} 0\to (M, T_M)\stackrel{i}{\to} (\hat{A}, \hat{T})\stackrel{p}{\to} (A, T)\to 0,
 \end{eqnarray}
 that is, there exists a commutative diagram:
 	\[\begin{CD}
 		0@>>> {M} @>i >> \hat{A} @>p >> A @>>>0\\
 		@. @V {T_M} VV @V {\hat{T}} VV @V T VV @.\\
 		0@>>> {M} @>i >> \hat{A} @>p >> A @>>>0,
 	\end{CD}\]
 where the Rota-Baxter algebra $(M, T_M)$	satisfies  $uv=0$ for all $u,v\in M.$

 We will call $(\hat{A},\hat{T})$ an abelian extension of $(A,T)$ by $(M,T_M)$.
 \end{defn}

 \begin{defn}
 	Let $(\hat{A}_1,\hat{T}_1)$ and $(\hat{A}_2,\hat{T}_2)$ be two abelian extensions of $(A,T)$ by $(M,T_M)$. They are said to be  isomorphic  if there exists an isomorphism of Rota-Baxter algebras $\zeta:(\hat{A}_1,\hat{T}_1)\rar (\hat{A}_2,\hat{T}_2)$ such that the following commutative diagram holds:
 	\begin{eqnarray}\label{Eq: isom of abelian extension}\begin{CD}
 		0@>>> {(M,T_M)} @>i >> (\hat{A}_1,{\hat{T}_1}) @>p >> (A,T) @>>>0\\
 		@. @| @V \zeta VV @| @.\\
 		0@>>> {(M,T_M)} @>i >> (\hat{A}_2,{\hat{T}_2}) @>p >> (A,T) @>>>0.
 	\end{CD}\end{eqnarray}
 \end{defn}

 A   section of an abelian extension $(\hat{A},{\hat{T}})$ of $(A,T)$ by $(M,T_M)$ is a linear map $s:A\rar \hat{A}$ such that $p\circ s=\Id_A$.

 We will show that isomorphism classes of  abelian extensions of $(A,T)$ by $(M,T_M)$ are in bijection with the second cohomology group   ${\rmH}_{\RBA}^2(A,M)$.

 \bigskip

Let    $(\hat{A},\hat{T})$ be  an abelian extension of $(A,T)$ by $(M,T_M)$ having the form Equation~\eqref{Eq: abelian extension}. Choose a section $s:A\rar \hat{A}$. We   define
 $$
 am:=s(a)m,\quad ma:=ms(a), \quad \forall a\in A, m\in M.
 $$
 \begin{prop}\label{Prop: new RB bimodules from abelian extensions}
 	With the above notations, $(M, T_M)$ is a Rota-Baxter  bimodule over $(A,T)$.
 \end{prop}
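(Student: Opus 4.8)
The plan is to verify the two Rota-Baxter bimodule axioms of Definition~\ref{Def: Rota-Baxter bimodules} directly from the structure of the abelian extension, exploiting that $M\cong\ker p$ is an ideal with zero multiplication and that $\hat T$ restricts to $T_M$ on $M$ and covers $T$ on $A$. First I would record the basic facts: since $s$ is a section, every element of $\hat A$ is uniquely $s(a)+i(m)$ for $a\in A$, $m\in M$; the associative $A$-bimodule structure on $M$ given by $am:=s(a)m$, $ma:=ms(a)$ is the standard one coming from the abelian extension (well-definedness uses that $M$ has zero product and $s(a)s(a')-s(aa')\in i(M)$ so it acts trivially on $M$). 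I would also note the key identity $\hat T(s(a))=s(T(a))+i(\nu(a))$ for some linear map $\nu:A\to M$ measuring the failure of $s$ to commute with the operators, and $\hat T(i(m))=i(T_M(m))$ because $p$ intertwines $\hat T$ and $T$ and $i$ intertwines $T_M$ and $\hat T$.

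Next I would compute $T(a)\,T_M(m)$ in the sense of the new action, i.e.\ $s(T(a))\cdot i(T_M(m))$ inside $\hat A$, and rewrite it using the above identity as $\hat T(s(a))\cdot \hat T(i(m))$ up to a correction term $i(\nu(a))\cdot i(T_M(m))$, which vanishes because $i(M)$ has zero multiplication. Then I would apply the Rota-Baxter identity Equation~\eqref{Eq: Rota-Baxter relation} for $\hat T$ in $\hat A$ to $\hat T(s(a))\hat T(i(m))$, obtaining $\hat T\big(s(a)\hat T(i(m))+\hat T(s(a))i(m)+\lambda s(a)i(m)\big)$. Unwinding each term — again discarding $i(M)\cdot i(M)$ contributions and using $\hat T(i(m))=i(T_M(m))$, $\hat T(s(a))=s(T(a))+i(\nu(a))$ with the $i(\nu(a))i(m)$ term again zero — and pulling everything back through $i$, one lands exactly on $T_M\big(aT_M(m)+T(a)m+\lambda am\big)$, which is the first axiom. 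The second axiom is obtained by the symmetric computation with $M$ acting on the right. I would state that both verifications are routine and present only one in detail.

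I do not anticipate a genuine obstacle here; the content is entirely bookkeeping. The one point deserving care is checking that the auxiliary map $\nu$ (the defect of $s$ with respect to the operators) never actually contributes, i.e.\ that all the terms in which it appears are of the form $i(\text{something})\cdot i(\text{something})$ and hence vanish by the zero-multiplication hypothesis on $M$; this is what makes the bimodule structure independent of the choice of section. A secondary point is to confirm that the left and right $A$-module structures on $M$ are compatible (associativity of the bimodule), but this is inherited directly from the associativity of $\hat A$ together with the ideal property of $i(M)$, so it requires no separate argument beyond a remark. Thus the proof is a short direct calculation, and I would keep it to a few lines, deferring the parallel second identity to "the reader" or to the symmetric argument.
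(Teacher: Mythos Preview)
Your proposal is correct and follows essentially the same approach as the paper's proof: both arguments use that $\hat T(s(a))-s(T(a))\in M$ annihilates $M$ (you name this defect $\nu$, the paper simply uses the observation directly), then apply the Rota-Baxter identity for $\hat T$ in $\hat A$ to $\hat T(s(a))\hat T(i(m))$ and unwind. The paper also checks the associativity $(ab)m=a(bm)$ first, exactly via the remark you make about $s(ab)-s(a)s(b)\in M$.
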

 \begin{proof}
 	For any $a,b\in A,\,m\in M$, since $s(ab)-s(a)s(b)\in M$ implies $s(ab)m=s(a)s(b)m$, we have
 	\[ (ab)  m=s(ab)m=s(a)s(b)m=a(bm).\]
 	Hence,  this gives a left $A$-module structure and the case of right module structure is similar.

 Moreover, ${\hat{T}}(s(a))-s(T(a))\in M$ means that  ${\hat{T}}(s(a))m=s(T(a))m$. Thus we have
 	\begin{align*}
 		T(a)T_M(m)&=s(T(a))T_M(m)\\
 		&=\hat{T}(s(a))T_M(m)\\
 		&=\hat{T}(\hat{T}(s(a))m+s(a)T_M(m)+\lambda s(a)m)\\
 		&=T_M(T(a)m+aT_M(m)+\lambda am)
 	\end{align*}
 	It is similar to see $T_M(m)T(a)=T_M(T_M(m)a+mT(a)+\lambda ma)$.

 	Hence, $(M, T_M)$ is a  Rota-Baxter  bimodule over $(A,T)$.
 \end{proof}

 We  further  define linear maps $\psi:A\ot A\rar M$ and $\chi:A\rar M$ respectively by
 \begin{align*}
 	\psi(a\ot b)&=s(a)s(b)-s(ab),\quad\forall a,b\in A,\\
 	\chi(a)&={\hat{T}}(s(a))-s(T(a)),\quad\forall a\in A.
 \end{align*}

 \begin{prop}\label{prop:2-cocycle}
 	 The pair
 	$(\psi,\chi)$ is a 2-cocycle  of   Rota-Baxter algebra $(A,T)$ with  coefficients  in the Rota-Baxter bimodule $(M,T_M)$ introduced in Proposition~\ref{Prop: new RB bimodules from abelian extensions}.
 \end{prop}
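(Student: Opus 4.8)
The plan is to unwind the definition of the differential $d^2$ on $\C^2_{\RBA}(A,M) = \C^2_{\Alg}(A,M) \oplus \C^1_{\RBO}(A,M)$ and check that $d^2(\psi,\chi) = (\delta^2(\psi), -\partial^1(\chi) - \Phi^2(\psi)) = 0$. So there are two identities to verify: first that $\delta^2(\psi) = 0$ in $\C^\bullet_{\Alg}(A,M)$, and second that $\Phi^2(\psi) = -\partial^1(\chi)$ in $\C^\bullet_{\RBO}(A,M)$. The first is the classical computation that the Hochschild $2$-cocycle attached to an abelian (algebra) extension is closed: starting from associativity of the multiplication in $\hat A$, apply it to $s(a)s(b)s(c)$, expand each product $s(x)s(y) = s(xy) + \psi(x\ot y)$ (using that $M$ is a square-zero ideal so that $\psi(\cdot)$ absorbed into further products with $s(\cdot)$ only sees the $A$-bimodule action defined in Proposition~\ref{Prop: new RB bimodules from abelian extensions}), and collect terms; the result is exactly $\delta^2(\psi)(a\ot b\ot c) = a\psi(b\ot c) - \psi(ab\ot c) + \psi(a\ot bc) - \psi(a\ot b)c = 0$.

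For the second identity I would compute both sides on $a\ot b \in A^{\ot 2}$ directly from the defining formula of the Rota-Baxter operator in $\hat A$: the relation $\hat T(s(a))\hat T(s(b)) = \hat T\big(s(a)\hat T(s(b)) + \hat T(s(a))s(b) + \lambda s(a)s(b)\big)$. Now substitute $\hat T(s(x)) = s(T(x)) + \chi(x)$ and $s(a)s(b) = s(ab) + \psi(a\ot b)$ everywhere, and use the bimodule identities for $(M,T_M)$ over $(A,T)$ together with $M^2 = 0$ to kill all quadratic-in-$M$ terms. On the left one gets $s(T(a)T(b)) + \psi(T(a)\ot T(b)) + T(a)\chi(b) + \chi(a)T(b)$. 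On the right, expanding $\hat T$ applied to the three-term sum and peeling off the $s$-part from the $M$-part, one gets $s\big(T(aT(b) + T(a)b + \lambda ab)\big) + \chi(aT(b)+T(a)b+\lambda ab) + T_M\big(a\chi(b) + T(a)\psi_M\text{-terms} + \ldots\big)$. Equating the $A$-components recovers (after applying $s$) the Rota-Baxter relation for $T$, which holds automatically; equating the $M$-components and rearranging should produce precisely the equation $\Phi^2(\psi)(a\ot b) + \partial^1(\chi)(a\ot b) = 0$, once one recalls that $\Phi^2(\psi)(a\ot b) = \psi(T(a)\ot T(b)) - T_M\psi(T(a)\ot b) - T_M\psi(a\ot T(b)) - \lambda T_M\psi(a\ot b)$ and $\partial^1$ is the differential of $\C^\bullet_{\RBO}(A,M)$ written out with the $\star$-action on $M$ unravelled via $\rhd,\lhd$ as in Section~\ref{Subsect: cohomology RB operator}.

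The main obstacle is purely bookkeeping: matching the many terms coming from the $\rhd$/$\lhd$ actions (each of which is itself a two-term expression $T(a)m - T_M(am)$, etc.) against the terms produced by expanding $\hat T$ of a sum, and making sure every sign and every instance of $T_M$ lands in the right place. A clean way to organize this is to first record the explicit formula for $\partial^1 : \C^1_{\RBO}(A,M)\to \C^2_{\RBO}(A,M)$ with the $\star$-structure substituted in (so that it only involves $T$, $T_M$, $\mu$, and $\chi$), then expand the $\hat T$-relation in $\hat A$ as above, and finally read off the $M$-component. Since all the needed compatibilities — that $(M,T_M)$ is a Rota-Baxter bimodule and that $M$ is square-zero — are already established in Proposition~\ref{Prop: new RB bimodules from abelian extensions}, no new structural input is required; the verification is a finite, if somewhat lengthy, identity check, and I would present it by exhibiting the two sides and indicating the cancellations rather than writing every intermediate line.
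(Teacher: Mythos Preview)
Your proposal is correct and is precisely the direct computation the paper has in mind: the paper's own proof of this proposition is simply ``The proof is by direct computations, so it is left to the reader.'' In fact, the very computation you outline (associativity gives $\delta^2(\psi)=0$, and the Rota-Baxter identity for $\hat T$ unwinds to $\partial^1(\chi)+\Phi^2(\psi)=0$) is carried out explicitly in the paper a few lines later, in the proof of the converse statement (Proposition~\ref{prop:2-cocycle} for $A\oplus M$), so your plan matches the paper exactly.
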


 	The proof is by direct computations, so it is left to the reader.

 The choice of the section $s$ in fact determines a splitting
 $$\xymatrix{0\ar[r]&  M\ar@<1ex>[r]^{i} &\hat{A}\ar@<1ex>[r]^{p} \ar@<1ex>[l]^{t}& A \ar@<1ex>[l]^{s} \ar[r] & 0}$$
 subject to $t\circ i=\Id_M, t\circ s=0$ and $ it+sp=\Id_{\hat{A}}$.
 Then there is an induced isomorphism of vector spaces
 $$\left(\begin{array}{cc} p& t\end{array}\right): \hat{A}\cong   A\oplus M: \left(\begin{array}{c} s\\ i\end{array}\right).$$
We can  transfer the Rota-Baxter algebra structure on $\hat{A}$ to $A\oplus M$ via this isomorphism.
  It is direct to verify that this  endows $A\oplus M$ with a multiplication $\cdot_\psi$ and an Rota-Baxter operator $T_\chi$ defined by
 \begin{align}
 	\label{eq:mul}(a,m)\cdot_\psi(b,n)&=(ab,an+mb+\psi(a,b)),\,\forall a,b\in A,\,m,n\in M,\\
 	\label{eq:dif}T_\chi(a,m)&=(T(a),\chi(a)+T_M(m)),\,\forall a\in A,\,m\in M.
 \end{align}
 Moreover, we get an abelian extension
 $$0\to (M, T_M)\stackrel{\left(\begin{array}{cc} s& i\end{array}\right) }{\to} (A\oplus M, T_\chi)\stackrel{\left(\begin{array}{c} p\\ t\end{array}\right)}{\to} (A, T)\to 0$$
 which is easily seen to be  isomorphic to the original one \eqref{Eq: abelian extension}.

 \medskip

 Now we investigate the influence of different choices of   sections.

 \begin{prop}\label{prop: different sections give}
 \begin{itemize}
 \item[(i)] Different choices of the section $s$ give the same  Rota-Baxter bimodule structures on $(M, T_M)$;

 \item[(ii)]   the cohomological class of $(\psi,\chi)$ does not depend on the choice of sections.

 \end{itemize}

 \end{prop}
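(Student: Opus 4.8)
The plan is to prove both parts by a direct comparison of the structures obtained from two different sections, using the same transfer-of-structure idea already employed in the passage. Let $s, s': A \to \hat{A}$ be two sections of the abelian extension \eqref{Eq: abelian extension}. Since $p \circ (s - s') = 0$, there is a linear map $h: A \to M$ with $s'(a) = s(a) + i(h(a))$ for all $a \in A$; I will identify $M$ with its image under $i$ and simply write $s'(a) = s(a) + h(a)$.

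\medskip

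For part (i), I would compute the bimodule actions coming from $s'$. For $a \in A$, $m \in M$, we have $s'(a)m = (s(a) + h(a))m = s(a)m$, because $h(a) \in M$ and $M \cdot M = 0$ in $\hat{A}$ (recall $M$ is an abelian ideal, i.e. the image $i(M)$ squares to zero in $\hat A$ since $(M,T_M)$ has zero multiplication and $i$ is an algebra map). Similarly $m\, s'(a) = m\, s(a)$. For the operator action, $\hat{T}(s'(a)) = \hat{T}(s(a)) + \hat{T}(h(a)) = \hat{T}(s(a)) + T_M(h(a))$, and the first term is what enters the definition of the bimodule operator $T_M$ on $M$ — but $T_M$ itself is intrinsic to $(M,T_M)$ and does not change. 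Hence the left action $am = s(a)m$, the right action $ma = m s(a)$, and the operator $T_M$ are all unaffected by replacing $s$ by $s'$. This is the easy part.

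\medskip

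For part (ii), let $(\psi, \chi)$ and $(\psi', \chi')$ be the $2$-cocycles associated to $s$ and $s'$ respectively, as in Proposition~\ref{prop:2-cocycle}. I would substitute $s'(a) = s(a) + h(a)$ into the defining formulas $\psi'(a\ot b) = s'(a)s'(b) - s'(ab)$ and $\chi'(a) = \hat{T}(s'(a)) - s'(T(a))$ and expand using $M \cdot M = 0$ and $\hat{T}|_M = T_M$. This yields
\[
\psi'(a\ot b) = \psi(a\ot b) + a\, h(b) + h(a)\, b - h(ab),
\]
\[
\chi'(a) = \chi(a) + T_M(h(a)) - h(T(a)) + \big(\hat T(h(a)) - T_M(h(a))\big),
\]
but since $\hat T(h(a)) = T_M(h(a))$ the last parenthesis vanishes, giving $\chi'(a) - \chi(a) = T_M(h(a)) - h(T(a))$. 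Comparing with the differential $d^1$ of the Rota-Baxter complex, namely $d^1(h) = (\delta^1 h, -\Phi^1(h))$ with $\delta^1(h)(a\ot b) = a h(b) - h(ab) + h(a) b$ and $\Phi^1(h)(a) = h(T(a)) - T_M(h(a))$ (this is the $n=1$ instance of $\Phi^\bullet$; note $\partial^0$ need not appear since $h$ is genuinely a $1$-cochain of the mapping cone, not merely of $\C^\bullet_\Alg$), I get exactly $(\psi' , \chi') - (\psi, \chi) = d^1(h, 0)$ — with the understanding that the Rota-Baxter-operator component of a $1$-cochain $(h,x)$ contributes $\partial^0(x)$, which is zero here because our $1$-cochain is $(h,0)$. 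Thus the two cocycles are cohomologous in $\C^\bullet_{\RBA}(A,M)$, proving (ii).

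\medskip

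The only genuine subtlety — and the step I would be most careful about — is bookkeeping the sign and component conventions in $d^1$ and $\Phi^1$ so that the difference comes out as an exact coboundary $d^1(h,0)$ rather than $\pm d^1$ of something, and checking that no $\partial^0$-term is secretly needed. This is precisely the kind of check already carried out in the proof of the rigidity theorem in Section~6 (where one sees $\Phi^1(\delta^0(x)) = \partial^0(x)$), so I would invoke that computation to fix signs. Everything else is a routine expansion exploiting $M\cdot M = 0$ and $\hat T|_M = T_M$; no convergence or structural obstacle arises since there are no infinite sums here.
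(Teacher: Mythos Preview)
Your proposal is correct and follows essentially the same approach as the paper's own proof: introduce the difference map $h = s' - s$ (the paper calls it $\gamma$), use $M\cdot M=0$ to show the bimodule actions agree, and then expand $\psi'$ and $\chi'$ to obtain $(\psi',\chi') - (\psi,\chi) = (\delta^1(h), -\Phi^1(h)) = d^1(h,0)$. Your treatment is in fact slightly more explicit than the paper's in noting that the $1$-cochain is $(h,0)\in \C^1_{\RBA}(A,M)$ so that no $\partial^0$-term appears.
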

 \begin{proof}Let $s_1$ and $s_2$ be two distinct sections of $p$.
  We define $\gamma:A\rar M$ by $\gamma(a)=s_1(a)-s_2(a)$.

  Since the Rota-Baxter algebra $(M, T_M)$	satisfies  $uv=0$ for all $u,v\in M$,
  $$s_1(a)m= s_2(a)m+\gamma(a)m=s_2(a)m.$$ So different choices of the section $s$ give the same  Rota-Baxter bimodule structures on $(M, T_M)$;

  We   show that the cohomological class of $(\psi,\chi)$ does not depend on the choice of sections.   Then
 	\begin{align*}
 		\psi_1(a,b)&=s_1(a)s_1(b)-s_1(ab)\\
 		&=(s_2(a)+\gamma(a))(s_2(b)+\gamma(b))-(s_2(ab)+\gamma(ab))\\
 		&=(s_2(a)s_2(b)-s_2(ab))+s_2(a)\gamma(b)+\gamma(a)s_2(b)-\gamma(ab)\\
 		&=(s_2(a)s_2(b)-s_2(ab))+a\gamma(b)+\gamma(a)b-\gamma(ab)\\
 		&=\psi_2(a,b)+\delta(\gamma)(a,b)
 	\end{align*}
 	and
 	\begin{align*}
 		\chi_1(a)&={\hat{T}}(s_1(a))-s_1(T(a))\\
 		&={\hat{T}}(s_2(a)+\gamma(a))-(s_2(T(a))+\gamma(T(a)))\\
 		&=({\hat{T}}(s_2(a))-s_2(T(a)))+{\hat{T}}(\gamma(a))-\gamma(T(a))\\
 		&=\chi_2(a)+T_M(\gamma(a))-\gamma(T_A(a))\\
 		&=\chi_2(a)-\Phi^1(\gamma)(a).
 	\end{align*}
 	That is, $(\psi_1,\chi_1)=(\psi_2,\chi_2)+d^1(\gamma)$. Thus $(\psi_1,\chi_1)$ and $(\psi_2,\chi_2)$ form the same cohomological class  {in $\rmH_{\RBA}^2(A,M)$}.

 \end{proof}

 We show now the isomorphic abelian extensions give rise to the same cohomology classes.
 \begin{prop}Let $M$ be a vector space and  $T_M\in\End_\bfk(M)$. Then $(M, T_M)$ is a Rota-Baxter algebra with trivial multiplication.
 Let $(A,T)$ be a  Rota-Baxter algebra.
 Two isomorphic abelian extensions of Rota-Baxter algebra $(A, T)$ by  $(M, T_M)$  give rise to the same cohomology class  in $\rmH_{\RBA}^2(A,M)$.
 \end{prop}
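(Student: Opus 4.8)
The plan is to reduce everything to the section-independence result already proved in Proposition~\ref{prop: different sections give}(ii). Suppose we are given two isomorphic abelian extensions $(\hat{A}_1,\hat{T}_1)$ and $(\hat{A}_2,\hat{T}_2)$ of $(A,T)$ by $(M,T_M)$, with $\zeta\colon(\hat{A}_1,\hat{T}_1)\to(\hat{A}_2,\hat{T}_2)$ the isomorphism of Rota-Baxter algebras fitting into the commutative diagram \eqref{Eq: isom of abelian extension}; in particular $p_2\circ\zeta=p_1$ and $\zeta\circ i_1=i_2$. First I would transport a section along $\zeta$: choose any section $s_1\colon A\to\hat{A}_1$ of $p_1$ and set $s_2:=\zeta\circ s_1$. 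Then $p_2\circ s_2=p_2\circ\zeta\circ s_1=p_1\circ s_1=\Id_A$, so $s_2$ is a section of $p_2$.

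Next I would check that the two Rota-Baxter bimodule structures induced on $(M,T_M)$ — the one coming from $(\hat{A}_1,s_1)$ and the one coming from $(\hat{A}_2,s_2)$ via Proposition~\ref{Prop: new RB bimodules from abelian extensions} — agree. Under the identifications of $M$ with its images via $i_1$ and $i_2$, the relation $\zeta\circ i_1=i_2$ says precisely that $\zeta$ restricts to the identity on $M$; since $\zeta$ is an algebra morphism, $s_2(a)\cdot m=\zeta(s_1(a))\cdot\zeta(m)=\zeta(s_1(a)\cdot m)$, which is $s_1(a)\cdot m$ after the identification, and similarly on the right, while $T_M$ is unchanged by hypothesis. (This is also a special case of Proposition~\ref{prop: different sections give}(i).) This step matters so that ``the same cocycle'' below is interpreted with respect to matching bimodule structures.

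The core of the argument is then a short computation using that $\zeta$ is a morphism of Rota-Baxter algebras, i.e. $\zeta(xy)=\zeta(x)\zeta(y)$ and $\hat{T}_2\circ\zeta=\zeta\circ\hat{T}_1$. With the cocycle formulas
\[
\psi_j(a\ot b)=s_j(a)s_j(b)-s_j(ab),\qquad \chi_j(a)=\hat{T}_j(s_j(a))-s_j(T(a)),
\]
one gets
\[
\psi_2(a\ot b)=\zeta(s_1(a))\zeta(s_1(b))-\zeta(s_1(ab))=\zeta\big(s_1(a)s_1(b)-s_1(ab)\big)=\zeta(\psi_1(a\ot b)),
\]
\[
\chi_2(a)=\hat{T}_2(\zeta(s_1(a)))-\zeta(s_1(T(a)))=\zeta\big(\hat{T}_1(s_1(a))-s_1(T(a))\big)=\zeta(\chi_1(a)).
\]
Since $\psi_1(a\ot b),\chi_1(a)\in M$ and $\zeta|_M=\Id_M$, this gives $(\psi_2,\chi_2)=(\psi_1,\chi_1)$ as cochains in $\C^2_{\RBA}(A,M)$.

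Finally I would conclude: by Proposition~\ref{prop: different sections give}(ii) the class in $\rmH^2_{\RBA}(A,M)$ attached to an abelian extension does not depend on the chosen section, so the extension $(\hat{A}_1,\hat{T}_1)$ determines the class of $(\psi_1,\chi_1)$ and the extension $(\hat{A}_2,\hat{T}_2)$ determines the class of $(\psi_2,\chi_2)=(\psi_1,\chi_1)$; hence the two isomorphic extensions give the same cohomology class. I do not expect a genuine obstacle here: the only care needed is consistent bookkeeping of the inclusions $i_1,i_2$, the projections $p_1,p_2$, and the induced vector-space identifications $\hat{A}_j\cong A\oplus M$, so that the equality $(\psi_2,\chi_2)=(\psi_1,\chi_1)$ is read off correctly.
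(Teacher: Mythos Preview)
Your proposal is correct and follows essentially the same approach as the paper: transport a section along $\zeta$, verify it is a section of the second extension, check that the induced bimodule structures agree, and then compute directly that $(\psi_2,\chi_2)=\zeta(\psi_1,\chi_1)=(\psi_1,\chi_1)$ using $\zeta|_M=\Id_M$. Your explicit invocation of Proposition~\ref{prop: different sections give}(ii) at the end is a useful clarification that the paper leaves implicit.
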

 \begin{proof}
  Assume that $(\hat{A}_1,{\hat{T}_1})$ and $(\hat{A}_2,{\hat{T}_2})$ are two isomorphic abelian extensions of $(A,T)$ by $(M,T_M)$ as is given in \eqref{Eq: isom of abelian extension}. Let $s_1$ be a section of $(\hat{A}_1,{\hat{T}_1})$. As $p_2\circ\zeta=p_1$, we have
 	\[p_2\circ(\zeta\circ s_1)=p_1\circ s_1=\Id_{A}.\]
 	Therefore, $\zeta\circ s_1$ is a section of $(\hat{A}_2,{\hat{T}_2})$. Denote $s_2:=\zeta\circ s_1$. Since $\zeta$ is a homomorphism of Rota-Baxter  algebras such that $\zeta|_M=\Id_M$, $\zeta(am)=\zeta(s_1(a)m)=s_2(a)m=am$, so $\zeta|_M: M\to M$ is compatible with the induced  Rota-Baxter bimodule structures.
 We have
 	\begin{align*}
 		\psi_2(a\ot b)&=s_2(a)s_2(b)-s_2(ab)=\zeta(s_1(a))\zeta(s_1(b))-\zeta(s_1(ab))\\
 		&=\zeta(s_1(a)s_1(b)-s_1(ab))=\zeta(\psi_1(a,b))\\
 		&=\psi_1(a,b)
 	\end{align*}
 	and
 	\begin{align*}
 		\chi_2(a)&={\hat{T}_2}(s_2(a))-s_2(T(a))={\hat{T}_2}(\zeta(s_1(a)))-\zeta(s_1(A(a)))\\
 		&=\zeta({\hat{T}_1}(s_1(a))-s_1(T(a)))=\zeta(\chi_1(a))\\
 		&=\chi_1(a).
 	\end{align*}
 	Consequently, two isomorphic abelian extensions give rise to the same element in {$\rmH_{\RBA}^2(A,M)$}.
\end{proof}
 \bigskip

 Now we consider the reverse direction.

 Let $(M, T_M)$ be a Rota-Baxter bimodule over Rota-Baxter algebra $(A, T)$, given two linear maps  $\psi:A\ot A\rar M$ and $\chi:A\rar M$, one can define  a multiplication $\cdot_\psi$ and an operator $T_\chi$  on  $A\oplus M$ by Equations~(\ref{eq:mul})(\ref{eq:dif}).
 The following fact is important:
 \begin{prop}\label{prop:2-cocycle}
 	The triple $(A\oplus M,\cdot_\psi,T_\chi)$ is a Rota-Baxter algebra   if and only if
 	$(\psi,\chi)$ is a 2-cocycle  of the Rota-Baxter algebra $(A,T)$ with  coefficients  in $(M,T_M)$.
 In this case,    we obtain an abelian extension
  $$0\to (M, T_M)\stackrel{\left(\begin{array}{cc} 0& \Id  \end{array}\right) }{\to} (A\oplus M, T_\chi)\stackrel{\left(\begin{array}{c} \Id\\ 0\end{array}\right)}{\to} (A, T)\to 0,$$
  and the canonical section $s=\left(\begin{array}{cc}   \Id  & 0\end{array}\right): (A, T)\to (A\oplus M, T_\chi)$ endows $M$ with the original Rota-Baxter bimodule structure.
 \end{prop}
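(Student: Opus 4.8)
The plan is to verify the two halves of the statement separately, matching the direct–sum decomposition $\C^2_{\RBA}(A,M)=\C^2_{\Alg}(A,M)\oplus\C^1_{\RBO}(A,M)$ and the formula $d^2(\psi,\chi)=(\delta^2(\psi),\,-\partial^1(\chi)-\Phi^2(\psi))$. First I would recall the classical fact that the multiplication $\cdot_\psi$ on $A\oplus M$ given by \eqref{eq:mul} is associative precisely when $\psi$ is a Hochschild $2$-cocycle: expanding $\big((a,m)\cdot_\psi(b,n)\big)\cdot_\psi(c,p)=(a,m)\cdot_\psi\big((b,n)\cdot_\psi(c,p)\big)$, the $A$-component is the associativity of $\mu$ and the $M$-component is exactly $a\psi(b,c)-\psi(ab,c)+\psi(a,bc)-\psi(a,b)c=0$, i.e. $\delta^2(\psi)=0$ in $\C^\bullet_{\Alg}(A,M)$. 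So one may assume $\delta^2(\psi)=0$ from here on.

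Next, with $\cdot_\psi$ fixed, I would write out the weight-$\lambda$ Rota--Baxter identity for $T_\chi$ on pairs $x=(a,m),\,y=(b,n)$, namely $T_\chi(x)\cdot_\psi T_\chi(y)=T_\chi\big(x\cdot_\psi T_\chi(y)+T_\chi(x)\cdot_\psi y+\lambda\,x\cdot_\psi y\big)$, and expand both sides using \eqref{eq:mul} and \eqref{eq:dif}. The $A$-component reduces to the Rota--Baxter identity $T(a)T(b)=T\big(aT(b)+T(a)b+\lambda ab\big)$ for $T$ on $A$, which holds by hypothesis. In the $M$-component, all terms involving only the $A$-actions on $M$ and the operator $T_M$ cancel, by the two defining identities of a Rota--Baxter bimodule (Definition~\ref{Def: Rota-Baxter bimodules}); what remains is a linear relation among $\psi$ evaluated at $T$-twisted arguments, $T_M\psi$, and $\chi$ evaluated at products. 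Collecting the pure $\chi$-terms yields $-\partial^1(\chi)(a\otimes b)$, where $\partial^1$ is the differential of $\C^\bullet_{\RBO}(A,M)$, and collecting the $\psi$- and $T_M\psi$-terms yields $-\Phi^2(\psi)(a\otimes b)$, where $\Phi^2$ is the chain map of Proposition~\ref{Prop: Chain map Phi} (explicitly $\Phi^2(\psi)(a\otimes b)=\psi(T(a)\otimes T(b))-\lambda\,T_M\psi(a\otimes b)-T_M\psi(T(a)\otimes b)-T_M\psi(a\otimes T(b))$). Hence the $M$-component of the Rota--Baxter identity for $T_\chi$ is equivalent to $\Phi^2(\psi)+\partial^1(\chi)=0$. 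Combining the two components, $(A\oplus M,\cdot_\psi,T_\chi)$ is a Rota--Baxter algebra if and only if $\delta^2(\psi)=0$ and $\partial^1(\chi)+\Phi^2(\psi)=0$, that is, if and only if $d^2(\psi,\chi)=0$, that is, if and only if $(\psi,\chi)$ is a $2$-cocycle of $(A,T)$ with coefficients in $(M,T_M)$.

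For the final assertions, I would note directly from \eqref{eq:mul} and \eqref{eq:dif} that the inclusion $M\to A\oplus M,\ m\mapsto(0,m)$, and the projection $A\oplus M\to A,\ (a,m)\mapsto a$, are morphisms of Rota--Baxter algebras (recall that on $M$ the multiplication is zero and the operator is $T_M$, and that $\psi$ and $\chi$ are linear, hence vanish on zero), that they form a short exact sequence, and therefore constitute an abelian extension of $(A,T)$ by $(M,T_M)$. Finally, feeding the canonical section $s=\big(a\mapsto(a,0)\big)$ into the recipe of Proposition~\ref{Prop: new RB bimodules from abelian extensions}, one computes $s(a)\cdot_\psi(0,m)=(0,am)$, $(0,m)\cdot_\psi s(a)=(0,ma)$ and $T_\chi(0,m)=(0,T_M(m))$, so the induced Rota--Baxter bimodule structure on $M$ is the one we started with.

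The main obstacle is purely the bookkeeping in the $M$-component of the Rota--Baxter identity for $T_\chi$: one has to expand $T_\chi(x)\cdot_\psi T_\chi(y)$ and $T_\chi$ of the three-term sum, keep track of which terms are pure bimodule/$T_M$ expressions (and see them cancel via the Rota--Baxter bimodule axioms), and then recognize the surviving $\chi$- and $\psi$-contributions as $-\partial^1(\chi)$ and $-\Phi^2(\psi)$ respectively. This is routine but index- and sign-sensitive, so it must be carried out carefully. One could alternatively deduce the proposition from the $L_\infty$-structure on $\C^\bullet_{\RBA}(A,M)$ constructed later in the paper, but a self-contained direct computation is more natural at this point.
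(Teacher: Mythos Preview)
Your proposal is correct and follows essentially the same approach as the paper's proof: a direct expansion of the associativity and Rota--Baxter identities on $A\oplus M$, identifying the $A$-component as the known structure on $A$ and the $M$-component as the cocycle conditions $\delta^2(\psi)=0$ and $\partial^1(\chi)+\Phi^2(\psi)=0$. You are slightly more explicit than the paper in noting that the $m,n$-dependent terms in the $M$-component cancel precisely by the Rota--Baxter bimodule axioms, which is a helpful observation but not a different method.
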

 \begin{proof}
 	If $(A\oplus M,\cdot_\psi,T_\chi)$ is a Rota-Baxter algebra, then the associativity of $\cdot_\psi$ implies
 	\begin{equation}
 		\label{eq:mc}a\psi(b\ot c)-\psi(ab\ot c)+\psi(a\ot bc)-\psi(a\ot b)c=0,
 	\end{equation}
 	which means $\delta^2(\phi)=0$ in $\C^\bullet(A,M)$.
 	Since $T_\chi$ is an Rota-Baxter operator,
 	for any $a,b\in A, m,n\in M$, we have
 	$$T_\chi((a,m))\cdot_\psi T_\chi((b,n))=T_\chi\Big(T_\chi(a,m)\cdot_\psi(b,n)+(a,m)\cdot_\psi T_\chi(b,n)+\lambda(a,m)\cdot_\psi(b,n)\Big)$$
 	Then $\chi,\psi$ satisfy the following equations:
 	\begin{align*}
 		&T(a)\chi(b)+\chi(a)T(b)+\psi(T(a)\ot T(b))\\
 		=&T_M(\chi(a)b)+T_M(\psi(T(a)\ot b))+\chi(T(a)b)\\
 		&+T_M(a\chi(b))+T_M(\psi(a\ot T(b)))+\chi(aT(b))\\
 		&+\lambda T_M(\psi(a\ot b))+\lambda \chi(ab)
 	\end{align*}
 	
 	That is,
 	\[ \partial^1(\chi)+\Phi^2(\psi)=0.\]
 	Hence, $(\psi,\chi)$ is a  2-cocycle.
 	
 	 Conversely, if $(\psi,\chi)$ is a 2-cocycle, one can easily check that $(A\oplus M,\cdot_\psi,T_\chi)$ is a  Rota-Baxter algebra.

  The last statement is clear.
 \end{proof}

 Finally, we show the following result:
 \begin{prop}
 	Two cohomologous $2$-cocyles give rise to isomorphic abelian extensions.
 \end{prop}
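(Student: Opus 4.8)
The statement to prove is: two cohomologous $2$-cocycles give rise to isomorphic abelian extensions. So suppose $(\psi_1,\chi_1)$ and $(\psi_2,\chi_2)$ are two $2$-cocycles of $(A,T)$ with coefficients in $(M,T_M)$ satisfying $(\psi_1,\chi_1)-(\psi_2,\chi_2)=d^1(\gamma)$ for some $1$-cochain $\gamma\in\C^1_{\RBA}(A,M)=\C^1_\Alg(A,M)$, i.e.\ $\gamma:A\to M$ is a linear map. Unwinding the definition of $d^1$, this means $\psi_1-\psi_2=\delta^1(\gamma)$ and $\chi_1-\chi_2=-\Phi^1(\gamma)$; explicitly $\psi_1(a\ot b)-\psi_2(a\ot b)=a\gamma(b)+\gamma(a)b-\gamma(ab)$ and $\chi_1(a)-\chi_2(a)=-T_M(\gamma(a))+\gamma(T(a))$ for all $a,b\in A$. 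By Proposition~\ref{prop:2-cocycle} each pair $(\psi_i,\chi_i)$ yields an abelian extension on the space $A\oplus M$ with multiplication $\cdot_{\psi_i}$ and Rota-Baxter operator $T_{\chi_i}$ given by Equations~\eqref{eq:mul} and \eqref{eq:dif}.

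The plan is to write down the obvious candidate isomorphism $\zeta:(A\oplus M,\cdot_{\psi_1},T_{\chi_1})\to(A\oplus M,\cdot_{\psi_2},T_{\chi_2})$ defined by $\zeta(a,m)=(a,m+\gamma(a))$, which is visibly a linear isomorphism (with inverse $(a,m)\mapsto(a,m-\gamma(a))$), restricts to the identity on $M$ (since $\gamma$ vanishes on the $M$-component in the sense that $\zeta(0,m)=(0,m)$), and is compatible with the projections to $A$. It remains to check that $\zeta$ is a morphism of Rota-Baxter algebras, i.e.\ $\zeta$ is multiplicative and commutes with the Rota-Baxter operators. First I would verify multiplicativity: compute $\zeta((a,m)\cdot_{\psi_1}(b,n))$ and $\zeta(a,m)\cdot_{\psi_2}\zeta(b,n)$ and observe that the difference of the $M$-components is exactly $\psi_1(a\ot b)-\psi_2(a\ot b)-\bigl(a\gamma(b)+\gamma(a)b-\gamma(ab)\bigr)$, which is zero by the $\delta^1(\gamma)$-relation (here one uses that the $A$-module actions entering \eqref{eq:mul} are the fixed bimodule actions of $M$, unchanged by the choice of cocycle, cf.\ Proposition~\ref{prop: different sections give}(i)). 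Then I would verify compatibility with the operators: compute $\zeta(T_{\chi_1}(a,m))=(T(a),\chi_1(a)+T_M(m)+\gamma(T(a)))$ and $T_{\chi_2}(\zeta(a,m))=(T(a),\chi_2(a)+T_M(m+\gamma(a)))=(T(a),\chi_2(a)+T_M(m)+T_M(\gamma(a)))$, whose $M$-components differ by $\chi_1(a)-\chi_2(a)+\gamma(T(a))-T_M(\gamma(a))$, which vanishes by the $-\Phi^1(\gamma)$-relation. This shows $\zeta$ is an isomorphism of Rota-Baxter algebras making the diagram \eqref{Eq: isom of abelian extension} commute, so the two extensions are isomorphic.

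Combining this with the previous two propositions (isomorphic extensions give the same cohomology class, and cohomologous cocycles give isomorphic extensions) and Proposition~\ref{prop:2-cocycle} (cocycle $\leftrightarrow$ extension), one obtains the desired bijection between $\rmH^2_{\RBA}(A,M)$ and isomorphism classes of abelian extensions of $(A,T)$ by $(M,T_M)$; I would state this conclusion explicitly at the end.

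The computations are entirely routine bookkeeping, so there is no genuine obstacle; the only mild subtlety worth flagging is to make sure the bimodule structure on $M$ used in both $\cdot_{\psi_1}$ and $\cdot_{\psi_2}$ is literally the same fixed one (it is, since $(\psi_i,\chi_i)$ are cocycles \emph{for the same bimodule} $(M,T_M)$), so that the multiplicative check reduces cleanly to the $\delta^1$-identity rather than dragging in extra terms.
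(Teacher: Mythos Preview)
Your argument is essentially the paper's, with one genuine oversight: you claim $\C^1_{\RBA}(A,M)=\C^1_\Alg(A,M)$, but by definition $\C^1_{\RBA}(A,M)=\C^1_\Alg(A,M)\oplus\C^0_{\RBO}(A,M)=\Hom(A,M)\oplus M$. So a general $1$-cochain is a pair $(\gamma_1,\gamma_0)$ with $\gamma_1:A\to M$ and $\gamma_0\in M$, and ``cohomologous'' means
\[
(\psi_1,\chi_1)-(\psi_2,\chi_2)=d^1(\gamma_1,\gamma_0)=\bigl(\delta^1(\gamma_1),\,-\Phi^1(\gamma_1)-\partial^0(\gamma_0)\bigr).
\]
Thus you do not a priori have $\chi_1-\chi_2=-\Phi^1(\gamma)$ for a single map $\gamma$; there is an extra $-\partial^0(\gamma_0)$ term you have not accounted for.

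The fix is the one the paper uses: set $\gamma:=\gamma_1+\delta^0(\gamma_0)$. Then $\delta^1(\gamma)=\delta^1(\gamma_1)$ (since $\delta^1\circ\delta^0=0$) and $\Phi^1(\gamma)=\Phi^1(\gamma_1)+\Phi^1(\delta^0(\gamma_0))=\Phi^1(\gamma_1)+\partial^0(\gamma_0)$ (using that $\Phi^\bullet$ is a chain map with $\Phi^0=\Id$), so indeed $(\psi_1,\chi_1)-(\psi_2,\chi_2)=(\delta^1(\gamma),-\Phi^1(\gamma))$ and your computations go through verbatim. After this correction your $\zeta(a,m)=(a,m+\gamma(a))$ is precisely the inverse of the paper's $\zeta(a,m)=(a,-\gamma(a)+m)$, and the multiplicativity and Rota-Baxter checks you wrote are correct.
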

 \begin{proof}

 	  Given two 2-cocycles $(\psi_1,\chi_1)$ and $(\psi_2,\chi_2)$, we can construct two abelian extensions $(A\oplus M,\cdot_{\psi_1},T_{\chi_1})$ and  $(A\oplus M,\cdot_{\psi_2},T_{\chi_2})$ via Equations~\eqref{eq:mul} and \eqref{eq:dif}. If they represent the same cohomology  class {in $\rmH_{\RBA}^2(A,M)$}, then there exists two linear maps $\gamma_0:k\rightarrow M, \gamma_1:A\to M$ such that $$(\psi_1,\chi_1)=(\psi_2,\chi_2)+(\delta^1(\gamma_1),-\Phi^1(\gamma_1)-\partial^0(\gamma_0)).$$
 	Notice that $\partial^0=\Phi^1\circ\delta^0$. Define $\gamma: A\rightarrow M$ to be $\gamma_1+\delta^0(\gamma_0)$. Then $\gamma$ satisfies
 	\[(\psi_1,\chi_1)=(\psi_2,\chi_2)+(\delta^1(\gamma),-\Phi^1(\gamma)).\]
 	
 	Define $\zeta:A\oplus M\rar A\oplus M$ by
 	\[\zeta(a,m):=(a, -\gamma(a)+m).\]
 	Then $\zeta$ is an isomorphism of these two abelian extensions $(A\oplus M,\cdot_{\psi_1},T_{\chi_1})$ and  $(A\oplus M,\cdot_{\psi_2},T_{\chi_2})$.
 \end{proof}

\section{$L_\infty$-algebra structure on the  cochain complex of a Rota-Baxter algebra }

In this section, we will  consider  $L_\infty$-algebra structures controling deformations of Rota-Baxter algebras.
     Rota-Baxter
algebra structures on  a vector space will be realised  as
Maurer-Cartan elements in an explicitly constructed    $L_\infty$-algebra and it will be seen  that the shift of  the cochain
complex of a Rota-Baxter algebra is exactly the underlying complex
of the twisted  $L_\infty$-algebra   by the corresponding Maurer-Cartan
element corresponding to the Rota-Baxter algebra structure.

\subsection{ $L_\infty$-algebra structure on $\mathfrak{C}_{\RBA}(V)$}\

   Let $V$ be a graded vector space.  We define
a graded space $\mathfrak{C}_{\RBA}(V)$ as :
$$\mathfrak{C}_{\RBA}(V)=\mathfrak{C}_\Alg(V)\oplus \mathfrak{C}_{\RBO}(V),$$
where $$\mathfrak{C}_{\Alg}(V)=\Hom(T^c(sV),sV) \quad \mathrm{and}  \quad  \mathfrak{C}_{\RBO}(V)=\Hom(T^c(sV),V).$$

Notice that for a Rota-Baxter algebra $A$,   $\mathfrak{C}_{\RBA}(A)$ is just the underlying space of   the cochain complex of  Rota-Baxter algebra $A$ up to shift.

Now, we are going to build an $L_\infty$-algebra structure on $\mathfrak{C}_{\RBA}(V)$.
 The   operators $\{l_n\}_{n\geqslant 1}$ on $\mathfrak{C}_{\RBA}(V)$ are defined as follows:
\begin{itemize}
	\item[(I)] For $sh\in sV=\Hom(\bfk,sV)\subset \mathfrak{C}_{\Alg}(V)$ with $h\in V$, define $$l_1(sh)=h \in V=\Hom(\bfk,V) \subset\mathfrak{C}_{\RBO}(V).$$

	\item[(II)] For homogeneous elements $sf, sh\in \mathfrak{C}_{\Alg}(V)$, define $$l_2(sf\ot sh):= [sf, sh]_G\in\mathfrak{C}_{\Alg}(V),$$  where $[-, -]_G$ is the Gerstenhaber bracket defined in Equation~(\ref{Eq: Gerstahaber bracket}).

	\item[(III)]
	\begin{itemize}	
		\item[(i)] Let $n\geqslant 1$.  For homogeneous elements $sh\in \Hom((sV)^{\ot n},sV)\subset \mathfrak{C}_{\Alg}(V)$ and $g_1,\dots, g_n\in \mathfrak{C}_{\RBO}(V)$,	define $$l_{n+1}(sh\ot g_1\ot \cdots \ot g_n)\in \mathfrak{C}_{\RBO}(V)$$  as :
\begin{align*}&l_{n+1}(sh\ot g_1\ot \cdots \ot g_n)=\\
			&  \sum_{\sigma\in S_n}(-1)^{\eta}\Big(h\circ(sg_{\sigma(1)}\ot \cdots \ot sg_{\sigma(n)})-(-1)^{(|g_{\sigma(1)}|+1)(|h|+1)}s^{-1}(sg_{\sigma(1)})\big\{sh\big\{sg_{\sigma(2)},\dots,sg_{\sigma(n)}\big\}\big\}\Big),
		\end{align*}
		where $(-1)^{\eta}=\chi(\sigma; g_1,\dots,g_n)(-1)^{n(|h|+1)+\sum\limits_{k=1}^{n-1}\sum\limits_{j=1}^k|g_{\sigma(j)}|}$.

		\item[(ii)]  Let $n\geqslant 2$.  For homogeneous elements $sh\in \Hom((sV)^{\ot n},sV)\subset \mathfrak{C}_{\Alg}(V)$ and $g_1,\dots ,g_m\in \mathfrak{C}_{\RBO}(V)$ with $1\leqslant m\leqslant n-1$, define
$$l_{m+1}(sh\ot g_1\ot \cdots\ot g_m)\in \mathfrak{C}_{\RBO}(V)$$ to be:
		\[l_{m+1}(sh\ot g_1\ot \cdots\ot g_m)=\sum_{\sigma\in S_m}(-1)^\xi\lambda^{n-m}  s^{-1} (sg_{\sigma(1)})\big\{sh\big\{sg_{\sigma(2)},\dots,sg_{\sigma(m)}\big\}\big\},\]
		where $(-1)^\xi=\chi(\sigma; g_1,\dots,g_m)(-1)^{1+m(|h|+1)+\sum\limits_{k=1}^{m-1}\sum\limits_{j=1}^k|g_{\sigma(j)}|+(|h|+1)(|g_{\sigma(1)}|+1)}$.
	\end{itemize}
	
	\smallskip
	
	\item[(IV)]  Let $m\geqslant 1$.  For homogeneous elements $sh\in \Hom((sV)^{\ot n},sV)\subset \mathfrak{C}_{\Alg}(V), g_1,\dots,g_m\in \Hom(T^c(sV),V)\subset \mathfrak{C}_{\RBO}(V)$ with $1\leqslant m\leqslant n$, for $1\leqslant k\leqslant m$,  define $$l_{m+1}(g_1\ot \cdots\ot g_k\ot sh \ot g_{k+1}\ot \cdots\ot g_m)\in \mathfrak{C}_{\RBO}(V)$$ to be
	$$l_{m+1}(g_1\ot \cdots\ot g_k\ot sh \ot g_{k+1}\ot \cdots\ot g_m)=(-1)^{(|h|+1)(\sum\limits_{j=1}^k|g_j|)+k}l_{m+1}(sh\ot g_1\ot \cdots \ot g_m),$$
	where the RHS has been introduced in (III) (i) and (ii).

	\item[(V)] All other  components of operators $\{l_n\}_{n\geqslant 1}$ vanish.
\end{itemize}

\medskip

\begin{thm}\label{Thm: rb-L-infty}
	Given a graded space $V$  and a scalar $\lambda\in \bfk$, the graded space ${\mathfrak{C}_{\RBA}}(V)$ endowed with operations $\{l_n\}_{n\geqslant 1}$ defined above forms an $L_\infty$-algebra.
\end{thm}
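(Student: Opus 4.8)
The plan is to verify directly the two conditions of Definition~\ref{Def:L-infty}: the generalised anti-symmetry of each $l_n$ and the generalised Jacobi identity. The anti-symmetry is essentially sign-bookkeeping. For $l_1$ and for the restriction $l_2|_{\mathfrak{C}_{\Alg}(V)^{\ot 2}}=[-,-]_G$ it is immediate from the graded Lie algebra structure on $\mathfrak{C}_{\Alg}(V)$ (the theorem of \cite{Ge63}). For $l_{m+1}$ with one $\mathfrak{C}_{\Alg}$-slot $sh$ and $m$ slots $g_1,\dots,g_m\in\mathfrak{C}_{\RBO}(V)$: permuting the $g_j$ amongst themselves reproduces the Koszul factor $\chi$ because the defining formulae in (III) are themselves $\chi$-weighted sums over $S_m$, so one only re-indexes the summation and uses \eqref{Eq: sign formulae for shuffles}; and moving $sh$ across a block of $g_j$'s is precisely clause~(IV), whose explicit sign $(-1)^{(|h|+1)(\sum_{j\le k}|g_j|)+k}$ is tailored to the degree of $sh$ in $\mathfrak{C}_{\Alg}(V)$ and to \eqref{Eq: chi sign}.

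For the generalised Jacobi identity the organising observation is a ``type count'': every nonvanishing component of $l_n$ with $n\ge 2$ takes exactly one argument from $\mathfrak{C}_{\Alg}(V)$ and the remaining $n-1$ from $\mathfrak{C}_{\RBO}(V)$ — the sole exception being $l_2|_{\mathfrak{C}_{\Alg}(V)^{\ot 2}}=[-,-]_G$, with two $\mathfrak{C}_{\Alg}(V)$-arguments — while $l_1$ is nonzero only on $sV\subset\mathfrak{C}_{\Alg}(V)$, with image in $\mathfrak{C}_{\RBO}(V)$. Since, once anti-symmetry holds, the Jacobi identity transforms covariantly under permutation of its inputs, it suffices to test it on inputs ordered so that the $\mathfrak{C}_{\Alg}(V)$-arguments (and among them the $sV$-ones) come first; I then split according to the number $k$ of inputs lying in $\mathfrak{C}_{\Alg}(V)$. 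A short count of how many $\mathfrak{C}_{\Alg}(V)$-arguments an inner operation can absorb and an outer operation can then accept shows: for $k=0$ and $k=1$ every summand already vanishes (a nonzero inner operation on such a list has image in $\mathfrak{C}_{\RBO}(V)$ and leaves the outer operation with only $\mathfrak{C}_{\RBO}(V)$-arguments), $k\ge 4$ is likewise vacuous, and for $k=3$ the only surviving instance is $n=3$ with all three inputs in $\mathfrak{C}_{\Alg}(V)$: the $i=1$ and $i=3$ terms drop out (they would require $l_3$ on two or three $\mathfrak{C}_{\Alg}(V)$-arguments), and what remains is exactly the graded Jacobi identity for $[-,-]_G$, true by \cite{Ge63}.

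Everything therefore reduces to $k=2$: two inputs $sh_1,sh_2\in\mathfrak{C}_{\Alg}(V)$ and $n-2$ inputs in $\mathfrak{C}_{\RBO}(V)$. The surviving summands come in three shapes — (a) the inner operation is $[sh_1,sh_2]_G$ and the outer one is of type~(III)/(IV) applied to $[sh_1,sh_2]_G$ and some of the $\mathfrak{C}_{\RBO}(V)$-inputs; (b) the inner operation is of type~(III)/(IV) applied to one $sh_a$ and some $\mathfrak{C}_{\RBO}(V)$-inputs, with value in $\mathfrak{C}_{\RBO}(V)$, and the outer one is of type~(III)/(IV) applied to the other $sh_b$, that value, and the rest; (c) when $sh_a\in sV$, the piece in which the inner operation is $l_1(sh_a)$ and the outer one is of type~(III)/(IV) on $sh_b$ and the $\mathfrak{C}_{\RBO}(V)$-inputs. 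I would substitute the explicit formulae of~(II)--(IV), rewrite each composite purely in terms of the brace operations $\{-\}$ on $\mathfrak{C}_{\Alg}(V)$, and then show the resulting large alternating sum collapses by repeated use of the pre-Jacobi identity~\eqref{Eq: pre-jacobi} together with \eqref{Eq: sign formulae for shuffles}. Because clause~(III)(ii) carries the factor $\lambda^{n-m}$, the sum is inhomogeneous in $\lambda$, so one must track powers of $\lambda$ and check the cancellation separately in each $\lambda$-degree; combined with the Koszul and shuffle signs, this is where essentially all the length lives. The main obstacle is exactly this $k=2$ verification — pairing off the many composite terms and making the signs and $\lambda$-powers line up — which is why the full argument is deferred to Appendix~A, the cases $k\ne 2$ being either trivial or the classical Gerstenhaber identity.
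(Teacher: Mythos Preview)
Your proposal is correct and follows essentially the same approach as the paper's Appendix~A: reduce by a type count to the case of exactly two arguments in $\mathfrak{C}_{\Alg}(V)$, split the surviving Jacobi summands into the three shapes (your (a), (b), (c) are the paper's $A$, $B\!+\!C$, and the $n_1=0$ or $n_2=0$ subcase), and cancel via the pre-Jacobi identity~\eqref{Eq: pre-jacobi} together with the shuffle/Koszul sign formulae. The only organisational difference is that the paper further stratifies the $k=2$ case by the arities $n_1,n_2$ of $sh_1,sh_2$ versus the number $n$ of $\mathfrak{C}_{\RBO}(V)$-inputs (its cases (I)--(IV)), which governs whether clause~(III)(i) or~(III)(ii) is invoked at each step; your remark that one must ``track powers of $\lambda$ and check the cancellation separately in each $\lambda$-degree'' is the same bookkeeping in disguise, since (III)(i) carries no $\lambda$ while (III)(ii) carries $\lambda^{n-m}$.
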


This theorem is one of the main results in this paper, whose  proof requires quite a lot of technical details, so we postpone it to Appendix A.

\subsection{Realising Rota-Baxter algebra structures as Maurer-Cartan elements}\


\begin{thm}\label{Thm: RB is MC element}  Let $V$ be a ungraded space considered  as a graded space concentrated in degree 0.  Then a Rota-Baxter algebra structure of weight $\lambda$ on $V$ is equivalent to a Maurer-Cartan element in the $L_\infty$-algebra ${\mathfrak{C}_{\RBA}}(V)_\lambda$ introduced above.
\end{thm}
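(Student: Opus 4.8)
The strategy is to unwind the Maurer-Cartan equation for the explicit $L_\infty$-structure of Theorem~\ref{Thm: rb-L-infty} and match it term by term with the defining relations of a Rota-Baxter algebra of weight $\lambda$. Since $V$ is concentrated in degree $0$, the graded space $sV$ is concentrated in degree $1$, so a degree $-1$ element $\alpha \in \mathfrak{C}_{\RBA}(V)_{-1} = \mathfrak{C}_{\Alg}(V)_{-1} \oplus \mathfrak{C}_{\RBO}(V)_{-1}$ decomposes as $\alpha = s\beta + \gamma$, where $s\beta \in \Hom((sV)^{\ot 2}, sV)$ (so $\beta = \widetilde{s\beta} = s^{-1}\circ s\beta \circ (s\ot s): V^{\ot 2}\to V$ is a binary operation $\mu$ on $V$) and $\gamma \in \Hom(sV, V)$ (so $T := \widehat{\gamma} = \gamma \circ s: V\to V$ is a unary operation). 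The first step is to record this correspondence precisely, fixing the signs via the isomorphisms~\eqref{Eq: first can isom} and \eqref{Eq: second can isom}, exactly as in the proof of Proposition~\ref{Prop: bijection between associative product and MC elements}.

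\textbf{Computing the Maurer-Cartan equation.} Next I would substitute $\alpha = s\beta + \gamma$ into the Maurer-Cartan equation~\eqref{Eq: mc-equation}, $\sum_{n\geqslant 1}\frac{1}{n!}(-1)^{\frac{n(n-1)}{2}}l_n(\alpha^{\ot n}) = 0$, and separate the output by target: the $\mathfrak{C}_{\Alg}(V)$-component and the $\mathfrak{C}_{\RBO}(V)$-component. Because of the vanishing clause (V) and the fact that $l_n$ only accepts at most one $\mathfrak{C}_{\Alg}$-input together with $\mathfrak{C}_{\RBO}$-inputs (clauses (III)(IV)) or exactly two $\mathfrak{C}_{\Alg}$-inputs (clause (II)) or one $\mathfrak{C}_{\Alg}$-input alone (clause (I)), only finitely many terms survive. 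The $\mathfrak{C}_{\Alg}$-part collects $l_2(s\beta\ot s\beta) = [s\beta, s\beta]_G$, whose vanishing is the associativity of $\mu$ (this is literally Proposition~\ref{Prop: bijection between associative product and MC elements}). The $\mathfrak{C}_{\RBO}$-part is more involved: it collects $l_1(s\beta)$ from clause (I), the $n=1$ term of clause (III)(i), namely $l_2(s\beta \ot \gamma)$ summed appropriately, and the $n=2$, $m=1$ term of clause (III)(ii), namely the $\lambda$-weighted term $l_2(s\beta\ot\gamma)$, plus the clause (IV) reindexings; the claim is that the vanishing of this sum is equivalent to $\mu\circ(T\ot T) = T\circ(\Id\ot T + T\ot\Id) + \lambda\, T\circ\mu$, i.e. Equation~\eqref{Eq: Rota-Baxter relation in terms of maps}.

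\textbf{Main obstacle and how to handle it.} The delicate point is the sign bookkeeping in the $\mathfrak{C}_{\RBO}$-component: one must verify that the combinatorial factors $(-1)^\eta$, $(-1)^\xi$ and the clause (IV) signs, after specialising all degrees of elements of $V$ to $0$ and all degrees of elements of $sV$ to $1$, collapse to exactly the coefficients appearing in the Rota-Baxter relation, with the factor $\frac{1}{n!}(-1)^{n(n-1)/2}$ from the Maurer-Cartan equation cancelling the sum over $S_n$. When $V$ is concentrated in degree $0$ the Koszul signs $\chi(\sigma; g_1,\dots,g_n)$ become plain signs $\sgn(\sigma)$ (all $g_i$ have the same even degree among their $s^{-1}$-shifts), the telescoping double sums $\sum_{k}\sum_{j}|g_{\sigma(j)}|$ simplify drastically, and the brace operation $s^{-1}(sg_1)\{sh\{sg_2,\dots,sg_n\}\}$ unwinds to a composite of $\mu$'s and $T$'s. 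I expect that after this specialisation the $n=2$ contribution of clause (III)(i) yields precisely $\mu\circ(T\ot T) - T\circ(\Id\ot T) - T\circ(T\ot\Id)$ (the non-$\lambda$ part of the Rota-Baxter relation), that clause (III)(ii) with $m=1$ yields the $-\lambda\,T\circ\mu$ part, and that clause (I) together with the associativity already established makes the remaining terms consistent. The reverse direction — given a Rota-Baxter algebra structure $(\mu, T)$, setting $\beta := s\circ\mu\circ(s^{-1}\ot s^{-1})$ (up to the sign convention used in Proposition~\ref{Prop: Twisted cx is orginal cx}) and $\gamma := T\circ s^{-1}$ produces a Maurer-Cartan element — follows by reading the same identities backwards. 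This is a finite but sign-sensitive computation; the honest way to present it is to first handle the purely associative part by citing Proposition~\ref{Prop: bijection between associative product and MC elements}, then carefully expand only the $\mathfrak{C}_{\RBO}$-valued terms, which involve at most three tensor factors of $\alpha$ and hence at most $l_3$.
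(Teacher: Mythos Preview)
Your proposal is correct and follows essentially the same route as the paper: identify the degree $-1$ part as $\Hom((sV)^{\ot 2}, sV)\oplus\Hom(sV,V)$, observe that only $l_2$ and $l_3$ contribute nontrivially to the Maurer-Cartan equation, and match the $\mathfrak{C}_{\Alg}$- and $\mathfrak{C}_{\RBO}$-components to associativity and the Rota-Baxter relation respectively. Two small bookkeeping corrections to your middle paragraph: first, $l_1(s\beta)$ vanishes since clause~(I) applies only to arity-zero elements of $\mathfrak{C}_{\Alg}(V)$; second, there is no ``$n=1$ term of clause (III)(i)'' contributing $l_2(s\beta\ot\gamma)$---clause (III)(i) requires the number of $g$'s to equal the arity of $sh$, so for $s\beta$ of arity $2$ it produces $l_3(s\beta\ot\gamma\ot\gamma)$, while the single $\lambda$-weighted $l_2(s\beta\ot\gamma)$ comes only from clause (III)(ii) with $(n,m)=(2,1)$---exactly as your final paragraph correctly has it.
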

\begin{proof}
	Since $V$ is concentrated in degree 0, the degree $-1$ part of ${\mathfrak{C}_{\RBA}}(V) $ is $\Hom((sV)^{\ot 2},sV)\oplus \Hom(sV,V)$. Let $\alpha=(m,\tau) \in \mathfrak{C}_{\RBA}(V)_{-1}$.
	Then
	\begin{eqnarray*}
		 l_2(\alpha\ot \alpha)&=&(l_2(m\ot m), l_2(m\ot \tau )+l_2(\tau\ot m))\\
		&=&(l_2(m\ot m), 2l_2(m\ot \tau))\\
&=&( [m, m]_G, 2\lambda \tau\circ m),\\
		 l_3(\alpha^{\ot 3})&=&(0,l_3(m\ot \tau\ot \tau )+l_3(\tau\ot m\ot \tau)+l_3(\tau\ot \tau\ot m))\\
&=&(0,3l_3(m\ot \tau\ot \tau))\\
&=&\Big(0, -6 \Big((s^{-1}m)\circ (s\tau\ot s\tau)-s^{-1} (s\tau) \{ m\big\{ s\tau\}\} \Big)\Big),\\
&=& \Big(0, -6  (s^{-1}m)\circ (s\tau\ot s\tau)+6  \tau\circ  m\circ (  s\tau \ot \Id+\Id\ot s\tau)\Big)
	\end{eqnarray*}
	and $l_n(\alpha^{\ot n})=0$ for $n\ne 2,3$. 
	By expanding the Maurer-Cartan Equation~(\ref{Eq: mc-equation})
	\[\sum_{i=1}^\infty\frac{1}{n!}(-1)^{\frac{n(n-1)}{2}}l_n(\alpha^{\ot n})=0,\]
	we get:
	\begin{eqnarray}
		\label{Eq: ass-mc}	&& [ m,  m]_G=0,\\
		\label{Eq: rb-mc}&&	-\lambda \tau\circ m+(s^{-1}  m)\circ(s\tau\ot s\tau)- \tau   \circ   m \circ (s\tau\ot \id+ \id\ot s\tau )   =0.
	\end{eqnarray}
	Set $\mu =\tilde{m}=s^{-1}\circ m\circ s^{\ot 2}: V^{\ot 2}\rightarrow V$  and $ T=\hat{\tau}=\tau\circ s:V\rightarrow  V $ via the  fixed isomorphisms (\ref{Eq: first can isom}) and (\ref{Eq: second can isom}). Equation~(\ref{Eq: ass-mc}) is equivalent to saying that  $\mu$ is associative, see also Proposition~\ref{Prop: bijection between associative product and MC elements};   Equation~(\ref{Eq: rb-mc}) is equivalent to
\begin{eqnarray*}
		  \lambda T\circ \mu- \mu  \circ (T\ot T) +T  \circ   \mu\circ (T\ot \id+ \id\ot T )   =0,
	\end{eqnarray*}
which says exactly  that $T$ is a Rota-Baxter operator   of weight $\lambda$ on associative algebra $(V,\mu)$.
	
	Conversely, Given a Rota-Baxter algebra structure $(\mu,T)$ of weight $\lambda$  on vector space $V$,   define $$m=-s\circ\mu\circ (s^{-1})^{\ot 2}: (sV)^{\ot 2}\rightarrow sV  \ \mathrm{and}\ \tau=T\circ s^{-1}: sV\rightarrow V.$$ Then $(m,\tau)$ is a Maurer-Cartan element in ${\mathfrak{C}_{\RBA}}(V)$.
\end{proof}

\begin{prop}\label{Prop: cohomlogy complex as underlying complex of L infinity algebra}
	Let $(A,\mu,T)$ be a Rota-Baxter algebra of weight $\lambda$. Twist the $L_\infty$-algebra $\mathfrak{C}_{\RBA}(A)$ by the Maurer-Cartan element corresponding to the Rota-Baxter algebra structure $(A,\mu,T)$, then its  underlying complex is exactly $s\C^\bullet_{\RBA}(A)$, the shift of the cochain complex of Rota-Baxter algebra $(A, \mu, T)$  defined in Section \ref{Subsec:chomology RB}.
\end{prop}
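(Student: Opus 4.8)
The plan is to compute the twisted differential $l_1^\alpha$ of Proposition~\ref{Prop: deformed-L-infty} explicitly and to identify it, block by block along $\mathfrak{C}_{\RBA}(A)=\mathfrak{C}_\Alg(A)\oplus\mathfrak{C}_{\RBO}(A)$, with the differential $d^\bullet$ of $\C^\bullet_{\RBA}(A)$, using the canonical isomorphisms \eqref{Eq: first can isom} and \eqref{Eq: second can isom}. On graded pieces, the homological degree $-k$ part of $\mathfrak{C}_{\RBA}(A)$ is $\Hom((sA)^{\ot k+1},sA)\oplus\Hom((sA)^{\ot k},A)$, which under $\widetilde{(\cdot)}$ and $\widehat{(\cdot)}$ becomes $\C^{k+1}_\Alg(A)\oplus\C^{k}_{\RBO}(A)=\C^{k+1}_{\RBA}(A)$; so the underlying graded space already is $s\C^\bullet_{\RBA}(A)$ and $l_1^\alpha$ has the right degree. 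Recall the Maurer-Cartan element is $\alpha=(m,\tau)$ with $m=-s\circ\mu\circ(s^{-1})^{\ot 2}$ of arity $2$ and degree $-1$ and $\tau=T\circ s^{-1}$ of arity $1$ and degree $-1$, and that $l_1^\alpha(x)=\sum_{i\ge 0}\tfrac{1}{i!}(-1)^{i+i(i-1)/2}\,l_{i+1}(\alpha^{\ot i}\ot x)$.

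Next I would cut this sum down. Expanding $\alpha^{\ot i}=(m+\tau)^{\ot i}$ and using that $m$ and $\tau$ both have odd degree, the generalised anti-symmetry (Definition~\ref{Def:L-infty}(i)) makes $l_{i+1}$ blind to the order of the $m$'s and $\tau$'s, so $l_{i+1}(\alpha^{\ot i}\ot x)=\sum_{a}\binom{i}{a}\,l_{i+1}(m^{\ot a}\ot\tau^{\ot i-a}\ot x)$. By the shape of the operations (I)--(V), such a summand is nonzero only when either exactly one input comes from $\mathfrak{C}_\Alg$ and all others from $\mathfrak{C}_{\RBO}$, or it is the binary Gerstenhaber bracket $l_2(m\ot f)$; together with the arity constraint on the unique $\mathfrak{C}_\Alg$-input (arity $2$ for $m$, arity $p$ for $x\in\C^p_\Alg(A)$) this leaves only finitely many $i$ in each block.

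The four blocks then are:
\begin{itemize}
\item $\mathfrak{C}_\Alg\to\mathfrak{C}_\Alg$: only the Gerstenhaber term $-l_2(m\ot f)=-[m,f]_G$ survives, which (up to the usual sign) is the Hochschild differential $\delta$, by the computation in Proposition~\ref{Prop: Twisted cx is orginal cx}.
\item $\mathfrak{C}_{\RBO}\to\mathfrak{C}_\Alg$: no operation (I)--(V) lands in $\mathfrak{C}_\Alg$ when fed only $\mathfrak{C}_{\RBO}$-inputs, so this block vanishes -- matching the absence of a $g$-contribution in the first slot of $d^n(f,g)$.
\item $\mathfrak{C}_{\RBO}\to\mathfrak{C}_{\RBO}$: since the single $\mathfrak{C}_\Alg$-input $m$ has arity $2$, only $i=1$ ($l_2(m\ot g)$ via (III)(ii)) and $i=2$ ($l_3(m\ot\tau\ot g)$ via (III)(i)) contribute; after evaluating the internal $S_1$- and $S_2$-sums, the $i=1$ term gives the weight-$\lambda$ inner terms $\lambda\,g(\dots a_ja_{j+1}\dots)$ of $\partial$, and the $i=2$ term gives the boundary terms $T(a_1)g(\dots)-T_M(a_1g(\dots))$ and $g(\dots)T(a_{q+1})-T_M(g(\dots)a_{q+1})$ together with the remaining inner terms $g(\dots a_jT(a_{j+1})\dots)$, $g(\dots T(a_j)a_{j+1}\dots)$; the total is $-\partial^q(\widehat g)$.
\item $\mathfrak{C}_\Alg\to\mathfrak{C}_{\RBO}$: for $f\in\C^p_\Alg(A)$ the surviving terms are $i=1,\dots,p$ (with the $l_1$-term of (I) in degree $p=0$, where $\Phi^0=\Id$). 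The internal $S_i$-sum collapses to $i!$ equal summands (the $\tau$'s all share the same odd degree) and cancels the $1/i!$; the term $i=p$ (via (III)(i)) contributes $f(T(a_1)\ot\cdots\ot T(a_p))$ together with the $k=p-1$ part of $\Phi^p$, while $i<p$ (via (III)(ii)) contributes the $k=i-1$ part with weight $\lambda^{p-i}$ coming from the power of $\lambda$ in (III)(ii); summing over $i$ and matching $\lambda$-powers and subsets of $\{1,\dots,p\}$ yields $-\Phi^p(\widehat f)$.
\end{itemize}

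Assembling the four blocks -- and, if needed, adjusting the identification by the customary degreewise signs to pass from $-\delta,-\partial$ to $\delta,\partial$ -- shows $l_1^\alpha=d^\bullet$, so the twisted $L_\infty$-algebra has $s\C^\bullet_{\RBA}(A)$ as underlying complex. As a bonus, $(l_1^\alpha)^2=0$ (which holds because twisting an $L_\infty$-algebra by a Maurer-Cartan element is again an $L_\infty$-algebra, Proposition~\ref{Prop: deformed-L-infty}) unfolds into $\delta^2=0$, $\partial^2=0$ and the mixed relation $\partial\circ\Phi^\bullet=\Phi^\bullet\circ\delta$, which is the content of Proposition~\ref{Prop: Chain map Phi}. \textbf{The hard part} is not the structural reduction above, which is short, but the sign bookkeeping in the last two blocks: one must verify that the internal $S_n$-sums of (III)--(IV), the Koszul signs $(-1)^\eta$, $(-1)^\xi$, and the twisting coefficients $(-1)^{i+i(i-1)/2}/i!$ conspire to reproduce, exactly and with the correct overall sign, the alternating $\lambda$-weighted sums defining $\partial$ and $\Phi$.
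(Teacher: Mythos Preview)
Your proposal is correct and follows essentially the same route as the paper: expand $l_1^\alpha$ via Proposition~\ref{Prop: deformed-L-infty}, use the shape of the operations (I)--(V) together with the arity of $m$ (which is $2$) and of $f$ to cut the sum to finitely many terms, and then identify each surviving piece with $\delta$, $\partial$, $\Phi$ under the isomorphisms \eqref{Eq: first can isom}--\eqref{Eq: second can isom}. Your four-block organisation is just a repackaging of the paper's two cases ($sf\in\mathfrak{C}_\Alg$ versus $g\in\mathfrak{C}_{\RBO}$). One small correction on the signs: the paper's explicit computation gives $+\Phi^n(\tilde f)$ and $+\partial^{n-1}(\hat g)$ in the $\mathfrak{C}_{\RBO}$-component (not $-\Phi$, $-\partial$), so that $l_1^\alpha$ corresponds to $(-\delta,\ \Phi+\partial)=-d$, which is exactly the differential of the shifted complex $s\C^\bullet_{\RBA}(A)$; your hedge about ``customary degreewise signs'' is precisely this shift.
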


\begin{proof}
	By Theorem~\ref{Thm: RB is MC element}, the Rota-Baxter algebra structure $(A,\mu,T)$ is equivalent to a Maurer-Cartan element $\alpha=(m,\tau)$ in the $L_\infty$-algebra $\mathfrak{C}_{\RBA}(A)$ with
$$m=-s\circ\mu\circ (s^{-1})^{\ot 2}: (sV)^{\ot 2}\rightarrow sV  \ \mathrm{and}\ \tau=T\circ s^{-1}: sV\rightarrow V.$$

 By Proposition~\ref{Prop: deformed-L-infty}, the Maurer-Cartan element induces a new $L_\infty$-algebra structure $\{l_n^\alpha\}_{n\geqslant 1}$ on the graded space $\mathfrak{C}_{\RBA}(A)$.
	By definition,
	for any $sf\in\Hom((sA)^{\ot n}, sA)\subset \mathfrak{C}_{\Alg}(A)$,
	\begin{align*}l_1^\alpha(sf)&=\sum_{i=0}^\infty\frac{1}{i!}(-1)^{i+\frac{i(i-1)}{2}}l_{i+1}(\alpha^{\ot i}\ot sf)\\
		&=\Big(-l_2(m\ot sf),\  \sum_{i=1}^n\frac{1}{i!}(-1)^{ \frac{i(i+1)}{2}}l_{i+1}(\tau^{\ot i}\ot sf)\Big).	
	\end{align*}
	By definition of $\{l_n\}_{n\geqslant 1}$ on $\mathfrak{C}_{\RBA}(A)$, $-l_2(m\ot sf)=-[m,sf]_G$, which corresponds to  $-\delta^n(\tilde{f})$ in  $s\C^\bullet_\Alg(A)$ under the fixed  isomorphism  (\ref{Eq: first can isom});  for details, see Proposition~\ref{Prop: Twisted cx is orginal cx}.

	On the other hand, we have \begin{align*}
		& \sum_{i=0}^\infty\frac{1}{i!}(-1)^{ \frac{i(i+1)}{2}}l_{i+1}(\tau^{\ot i}\ot sf)\\
		=&\frac{1}{n!}(-1)^{\frac{n(n+1)}{2}}l_{n+1}(\tau^{\ot n}\ot sf)+\sum_{i=1}^{n-1}\frac{1}{i!}(-1)^{\frac{i(i+1)}{2}}l_{i+1}(\tau^{\ot i}\ot sf)\\
		\stackrel{(IV)}{=}&\frac{1}{n!}(-1)^{\frac{n(n+1)}{2}}(-1)^{n(|f|+1)+n}l_{n+1}(sf\ot \tau^{\ot n})+\sum_{i=1}^{n-1}\frac{1}{i!}(-1)^{\frac{i(i+1)}{2}}(-1)^{i(|f|+1)+i}l_{i+1}(sf \ot \tau^{\ot i})\\
		\stackrel{(III)}{=} &\frac{1}{n!}(-1)^{\frac{n(n+1)}{2}+n|f|}(-1)^{n(|f|+1)+\frac{n(n-1)}{2}}n!\Big(f\circ(s\tau)^{\ot n}-\tau \circ \big(sf \big\{\underbrace{s\tau,\dots,s\tau}_{n-1}\big\}\big)\Big)\\
		&\ \ \ \ \ +\sum_{i=1}^{n-1}\frac{1}{i!}(-1)^{\frac{i(i+1)}{2}+i|f|+i(|f|+1)+\frac{i(i-1)}{2}+1}i!\lambda^{n-i}\tau\circ(sf\big\{\underbrace{s\tau,\dots,s\tau}_{i-1}\big\})\\
		= &f\circ (s\tau)^{\ot n} -\sum_{k=1}^n\lambda^{n-k}\tau\circ(sf\big\{ \underbrace{s\tau,\dots,s\tau}_{k-1})\big\},
	\end{align*}
which can be seen to be correspondent  to  $\Phi^n(\tilde{f})$ via the  fixed isomorphism  (\ref{Eq: second can isom}).
	
	For any $g\in \Hom((sA)^{\ot (n-1)},A)\subset \mathfrak{C}_{\RBO}(A)$, we have
	\begin{align*}
		l_1^{\alpha}(g)=&\sum_{k=0}^\infty\frac{1}{i!}(-1)^{\frac{i(i+1)}{2}}l_{i+1}(\alpha^{\ot i}\ot  g)\\
		=&-l_2(m\ot g)-\frac{1}{2!}\Big(l_3(m\ot \tau\ot g)+l_3(\tau\ot m\ot g)\Big)\\
		=&(-1)^{n}\lambda g \circ m+ s^{-1} m\circ(s\tau\ot sg)-\tau\circ\big(m\big\{sg\big\}\big) + s^{-1} m\circ(sg\ot s\tau)-(-1)^{n}g\big\{m\big\{s\tau\big\}\big\}. \end{align*}
	which  corresponds to  $ \partial^{n-1}(\hat{g})$   via the  fixed isomorphism    (\ref{Eq: second can isom}).
	
	We have shown that the underlying complex  of twisted  $L_\infty$-algebra $\mathfrak{C}_{\RBA}(A)$ by Maurer-Cartan element $\alpha$ is exactly the complex $s\C^\bullet_{\RBA}(A)$, the shift of the complex $\C^\bullet_{\RBA}(A)$ defined in Section~\ref{Subsec:chomology RB}.
\end{proof}

Although $\mathfrak{C}_{\RBA}(A)$ is an $L_\infty$-algebra, the next result shows that once the associative algebra structure $\mu$ over $A$ is fixed, the graded space  $\mathfrak{C}_{\RBO}(A)$, which , after twisting procedure, controls deformations of Rota-Baxter operators, is a genuine differential graded Lie algebra.

\begin{prop}
	Let $(A,\mu)$ be an associative algebra. Then the graded space  $\mathfrak{C}_{\RBO}(A)$ can be endowed with a   dg Lie algebra structure, and a Rota-Baxter operator $T$ of weight $\lambda$ on $(A,\mu)$ is equivalent to a Maurer-Cartan element in this dg Lie algebra.  Given a Rota-Baxter operator $T$ on associative algebra $(A, \mu)$, the underlying complex of the twisted dg Lie algebra $\mathfrak{C}_{\RBO}(A)$ by the corresponding Maurer-Cartan element   is exactly   the cochain complex of Rota-Baxter operator $C_{\RBO}^\bullet(A)$.
\end{prop}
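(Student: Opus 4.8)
The plan is to obtain all three assertions by restriction and twisting from the $L_\infty$-algebra $\mathfrak{C}_{\RBA}(A)$ of Theorem~\ref{Thm: rb-L-infty}. Set $\mathfrak{m}=-s\circ\mu\circ(s^{-1})^{\ot 2}\in\mathfrak{C}_{\Alg}(A)$; since $A$ is concentrated in degree $0$ this element has degree $-1$, and because $\mu$ is associative the $\mathfrak{C}_{\Alg}$-component of the proof of Theorem~\ref{Thm: RB is MC element} (namely Equation~\eqref{Eq: ass-mc}) shows that $\alpha_0:=(\mathfrak{m},0)$ is a Maurer-Cartan element of $\mathfrak{C}_{\RBA}(A)$. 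Twisting by $\alpha_0$ via Proposition~\ref{Prop: deformed-L-infty} produces a new $L_\infty$-structure $\{l_n^{\alpha_0}\}_{n\ge1}$ on $\mathfrak{C}_{\RBA}(A)$; the first step is to show that $\mathfrak{C}_{\RBO}(A)$ is closed under it and that the induced structure is a dg Lie algebra.

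This rests on a single combinatorial remark about the operators (I)--(V): if a bracket $l_k$ has at least one argument in $\mathfrak{C}_{\RBO}(A)$, then it can be nonzero only if \emph{at most one} of its arguments lies in $\mathfrak{C}_{\Alg}(A)$, and when exactly one such argument $sh$ of arity $p$ occurs the number of $\mathfrak{C}_{\RBO}(A)$-arguments must lie between $1$ and $p$. Since $\alpha_0^{\ot i}$ contributes only the string $\mathfrak{m}^{\ot i}$ with $\mathfrak{m}$ of arity $2$, the twisting formula~\eqref{Eq: twisted L infinity algebra} evaluated on $g_1,\dots,g_n\in\mathfrak{C}_{\RBO}(A)$ retains only the summands $i=0$ and $i=1$, and the $i=1$ summand survives only for $n\le2$. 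Hence $l_n^{\alpha_0}$ vanishes on $\mathfrak{C}_{\RBO}(A)^{\ot n}$ for all $n\ge3$, while $l_1^{\alpha_0}(g)=-l_2(\mathfrak{m}\ot g)$ and $l_2^{\alpha_0}(g\ot g')=l_3(\mathfrak{m}\ot g\ot g')$ both lie in $\mathfrak{C}_{\RBO}(A)$ by (III). By Remark~\ref{Rem: L-infinity for small n} this makes $(\mathfrak{C}_{\RBO}(A),l_1^{\alpha_0},l_2^{\alpha_0})$ a dg Lie algebra, and this is the structure asserted in the statement.

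For the remaining two claims I would use the $L_\infty$-analogue of Lemma~\ref{Lem: twist dgla}: if $\alpha_0$ is Maurer-Cartan in $L$ and $\beta\in L_{-1}$, then $\alpha_0+\beta$ is Maurer-Cartan in $L$ iff $\beta$ is Maurer-Cartan in the twist $L^{\alpha_0}$, and in that case $(L^{\alpha_0})^\beta=L^{\alpha_0+\beta}$. Here $\mathfrak{C}_{\RBO}(A)_{-1}=\Hom(sA,A)$, so an element $\tau$ of it is the same datum as $\beta=(0,\tau)$, with $\alpha_0+\beta=(\mathfrak{m},\tau)$; the $\mathfrak{C}_{\RBO}$-component of the Maurer-Cartan equation for $(\mathfrak{m},\tau)$ in $\mathfrak{C}_{\RBA}(A)$ is literally the equation $l_1^{\alpha_0}(\tau)-\tfrac12 l_2^{\alpha_0}(\tau\ot\tau)=0$ of the dg Lie algebra, as one sees from the evaluations of $l_2(\alpha^{\ot2})$ and $l_3(\alpha^{\ot3})$ in the proof of Theorem~\ref{Thm: RB is MC element}. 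That theorem then identifies $(\mathfrak{m},\tau)$-Maurer-Cartan elements with Rota-Baxter operators $T=\tau\circ s$ of weight $\lambda$ on $(A,\mu)$ (its $\mathfrak{C}_{\RBO}$-part, Equation~\eqref{Eq: rb-mc}), which gives the desired bijection. Finally, twisting the dg Lie algebra $\mathfrak{C}_{\RBO}(A)$ by such a $\tau$ is, via $(L^{\alpha_0})^\beta=L^{\alpha_0+\beta}$, the restriction to $\mathfrak{C}_{\RBO}(A)$ of $\mathfrak{C}_{\RBA}(A)^{(\mathfrak{m},\tau)}$, whose underlying complex was computed in Proposition~\ref{Prop: cohomlogy complex as underlying complex of L infinity algebra}: there $l_1^{(\mathfrak{m},\tau)}$ restricted to $g\in\Hom((sA)^{\ot(n-1)},A)$ corresponds under the isomorphism~\eqref{Eq: second can isom} to $\partial^{n-1}$, so the underlying complex is exactly $C_{\RBO}^\bullet(A)$.

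The only genuine work is the combinatorial ``at most one $\mathfrak{C}_{\Alg}$-slot'' remark of the second paragraph together with the sign bookkeeping through the suspension isomorphisms~\eqref{Eq: first can isom}--\eqref{Eq: second can isom} needed to match $l_1^{(\mathfrak{m},\tau)}|_{\mathfrak{C}_{\RBO}(A)}$ with $\partial^\bullet$ on the nose; both are essentially already carried out inside the proofs of Theorems~\ref{Thm: rb-L-infty} and \ref{Thm: RB is MC element} and of Proposition~\ref{Prop: cohomlogy complex as underlying complex of L infinity algebra}. I expect the additivity-of-twisting statement --- standard, but whose sign conventions must be checked against Definition~\ref{Def: MC element in L infinity algebra} and Proposition~\ref{Prop: deformed-L-infty} --- to be the only mildly delicate point.
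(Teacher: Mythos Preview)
Your proposal is correct and follows essentially the same approach as the paper: both twist $\mathfrak{C}_{\RBA}(A)$ by the Maurer--Cartan element $(\mathfrak{m},0)$ and observe that, since $\mathfrak{m}$ has arity~$2$, the restriction of $\{l_n^{\alpha_0}\}$ to $\mathfrak{C}_{\RBO}(A)$ kills all brackets with $n\ge3$, yielding a dg Lie algebra. The only minor difference is in the last two claims: the paper verifies the Maurer--Cartan equation for $\tau$ and the identification of $(l_1^{\alpha_0})^\beta$ with $\partial^\bullet$ by direct computation, whereas you invoke the additivity-of-twisting identity $(L^{\alpha_0})^\beta=L^{\alpha_0+\beta}$ and then cite Theorem~\ref{Thm: RB is MC element} and Proposition~\ref{Prop: cohomlogy complex as underlying complex of L infinity algebra}; this is a legitimate and slightly more conceptual shortcut, at the cost of having to check that the additivity statement holds with the sign conventions of Proposition~\ref{Prop: deformed-L-infty}.
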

\begin{proof}Consider $A$ as graded space concentrated in degree 0. Define $m=- s\circ \mu\circ (s^{-1}\ot s^{-1}): (sA)^{\ot 2}\rightarrow sA$. Then by Equations~(\ref{Eq: ass-mc})(\ref{Eq: rb-mc}),  $\alpha=(m,0)$ is naturally a Maurer-Cartan element in $L_\infty$-algebra $\mathfrak{C}_{\RBA}(A)$.
	By the construction of   $l_n$ on $\mathfrak{C}_{\RBA}(A)$,    the graded subspace $\mathfrak{C}_{\RBO}(A)$ is closed under the action of operators $\{l_n^\alpha\}_{n\geqslant 1}$. Since the arity of $m$ is 2,  the restriction of $l_n^\alpha$ on $\mathfrak{C}_{\RBO}(A)$ is $0$ for $n\geqslant 3$. Thus $(\mathfrak{C}_{\RBO},\{l_n^\alpha\}_{n=1,2})$ forms a dg Lie algebra.
	More explicitly, for $f\in \Hom((sA)^{\ot n},A),g\in \Hom((sA)^{\ot k},A)$,
	\begin{align*}
		l_1^\alpha(f)=&-l_2(m\ot f)=-(-1)^{|f|+1}\lambda f\big\{ m\big\} =(-1)^n\lambda f\big\{ m\big\}\\
		l_2^\alpha(f\ot g)=&l_3(m\ot f\ot g)\\
		=&(-1)^{|f|}\Big(s^{-1}m\circ (sf\ot sg)-(-1)^{|f|+1}f \big\{m \big\{sg\big\} \big\}\Big)\\
		&+(-1)^{|f||g|+1+|g|}\Big(s^{-1}m\circ(sg\ot sf)-(-1)^{|g|+1}g\big\{m\big\{ sf\big\} \big\}\Big)\\
		=&(-1)^n s^{-1}m\circ (sf\ot sg)+ f \big\{m\big\{ sg\big\} \big\} \\
		&+(-1)^{nk+1+k} s^{-1}m\circ(sg\ot sf)-(-1)^{nk}g\big\{m\big\{ sf\big\}\big\} .
	\end{align*}
	
	Since $A$ is concentrated in degree 0, we have $\mathfrak{C}_{\RBO}(A)_{-1}=\Hom(sA,A)$. Take an element  $\tau\in \Hom(sA,A)_{-1}$  Then $\tau$
	satisfies the  Maurer-Cartan equation:
	$$l_1^\alpha(\tau)-\frac{1}{2}l_2^{\alpha}(\tau\ot \tau)=0,$$
	if and only if
	$$-\lambda \tau\circ m+s^{-1}m\circ (s\tau\ot s\tau)-\tau\circ(m\big\{s\tau\big\})=0.$$
	Define $T=\tau\circ s:A\rightarrow A$. The above equation is exactly the statement  that $T$ is a Rota-Baxter operator of weight $\lambda$ on associative algebra $(A,\mu)$.

Now let $T$ be a Rota-Baxter oeprator on    associative algebra $(A, \mu)$. By the first statement, it corresponds to a Maurer-Cartan element $\beta$ in the dg Lie algebra $(\mathfrak{C}_{\RBA}(A), l_1^\alpha, l_2^\alpha)$. More precisely, $\beta\in \mathfrak{C}_{\RBO}(A)_{-1}=\Hom(sA,A)$ is defined to be
$\beta=T\circ s^{-1}$.
 For $f\in   \Hom((sA)^{\ot n}, A)$, we compute $(l_1^\alpha)^\beta(f)$.  In fact,
 $$\begin{array}{rcl} (l_1^\alpha)^\beta(f)&=& l_1^\alpha(f)-l_2^\alpha(\beta\ot f)\\
 &=& (-1)^n \lambda f\{m\}+s^{-1} m \circ (sf\ot s\beta)-\beta\{m\{sf\}\}\\
 &&+s^{-1}m \circ (sf\ot s\beta)+(-1)^n f\{m\{s\beta\}\}, \end{array}$$
which  corresponds to  $ \partial^{n}(\hat{f})$   via the  fixed isomorphism    (\ref{Eq: second can isom}). So
 the underlying complex of the twisted dg Lie algebra $\mathfrak{C}_{\RBO}(A)$ by the corresponding Maurer-Cartan element $\beta$   is exactly   the cochain complex of Rota-Baxter operator $C_{\RBO}^\bullet(A)$.
	
\end{proof}

\bigskip

 \section{Homotopy Rota-Baxter Algebras}

In this subsection, we will introduce the notion of homotopy Rota-Baxter algebras of any weight.

Recall $\overline{T^c}(sV)=\bigoplus\limits_{n=1}^\infty(sV)^{\ot n}$ and $\overline{\mathfrak{C}_{\Alg}}(V)=\Hom(\overline{T^c}(sV),sV)\subset \mathfrak{C}_{\Alg}(V)$. Denote
$$ \overline{\mathfrak{C}_{\RBO}}(V)=\Hom(\overline{T^c}(sV),V)\subset \mathfrak{C}_{\RBO}(V), $$ and set $$\overline{\mathfrak{C}_{\RBA}}(V) =\overline{\mathfrak{C}_{\Alg}}(V)\oplus\overline{\mathfrak{C}_{\RBO}}(V) \subset \mathfrak{C}_\RBA(V).$$  It is not difficult to see that    $\overline{{\mathfrak{C}_{\RBA}}}(V)$ is  an $L_\infty$-subalgebra of ${\mathfrak{C}_{\RBA}}(V)$.

\begin{defn}\label{Def: homotopy RB algebras}
	Let $V=\bigoplus\limits_{i\in \mathbb{Z}}V_i$ be a graded space. Then a homotopy Rota-Baxter algebra structure of weight $\lambda$ on $V$ is defined to be a Maurer-Cartan element in the $L_\infty$-algebra $\overline{{\mathfrak{C}_{\RBA}}}(V)$.
\end{defn}

\medskip

Let's make the definition explicit. Given  an element $$\alpha=(\{b_i\}_{i\geqslant 1}, \{R_i\}_{i\geqslant 1})\in \overline{{\mathfrak{C}_{\RBA}}}(V)_{-1}=\Hom(\overline{T^c}(sV),sV)_{-1}\oplus \Hom(\overline{T^c}(sV),V)_{-1} $$ with  $b_i:(sV)^{\ot i}\rightarrow sV$ and $R_i:(sV)^{\ot i}\rightarrow V$,   $\alpha$ satisfies the Maurer-Cartan equation if and only if  for each $n\geqslant 1$, the following equalities hold:
\begin{eqnarray}
	\label{Eq: A infinity}   \sum_{i+j+k= n,\atop i,   k\geq 0, j\geq 1  }b_{i+1+k}\circ(\id^{\ot i}\ot b_j\ot \id^{\ot k})=0,\end{eqnarray}
\begin{eqnarray} \label{Eq: homotopy-rb-operator}
	     \sum_{ l_1+\dots+l_k=n,\atop
 l_1, \dots, l_k\geqslant 1 } b_k\circ(sR_{l_1}\ot \cdots\ot sR_{l_k}) = \sum_{ 1\leqslant q\leqslant p} \sum_{ r_1+\dots+r_q+p-q=n \atop
r_1, \dots, r_q\geqslant 1   }\lambda^{p-q}\  (s R_{r_1})\big\{ b_p\{sR_{r_2},\dots,sR_{r_q}\}\big\}.
\end{eqnarray}

Define  two family of operators $\{m_n \}_{n\geqslant 1}$ and $ \{T_n\}_{n\geqslant1}$ as:
$$ m_n=s^{-1}\circ b_n\circ s^{\ot n}: V^{\ot n}\rightarrow V \quad \mathrm{and}\quad  T_n=R_n\circ s^{\ot n}: V^{\ot n}\rightarrow V.$$
For each $n\geqslant 1$, Equations~(\ref{Eq: A infinity}) and (\ref{Eq: homotopy-rb-operator})    are equivalent, respectively,  to:
\begin{eqnarray}\label{Eq: stasheff-id}
	\sum_{    i+j+k= n,\atop
i, k\geqslant 0, j\geqslant 1 } (-1)^{i+jk}m_{i+1+k}\circ\Big(\id^{\ot i}\ot m_j\ot \id^{\ot k}\Big)=0
\end{eqnarray} and
\begin{eqnarray} \label{Eq: homotopy RB-operator-version-2}
	   \sum\limits_{ l_1+\dots+l_k=n,\atop
 l_1, \dots, l_k\geqslant 1 } (-1)^{\alpha}m_k\circ\Big(T_{l_1}\ot \cdots \ot T_{l_k}\Big)=\sum\limits_{1\leqslant q\leqslant p}\sum\limits_{  r_1+\dots+r_q+p-q=n,\atop
  r_1, \dots, r_q\geqslant 1 } \sum\limits_{ i+1+k=r_1,\atop
   i, k\geqslant 0 }\sum\limits_{  j_1+\dots+j_q+q-1=p,\atop
j_1, \dots, j_q\geqslant 0  }
\end{eqnarray}
	$$ \quad  \quad\quad\quad\quad\quad  (-1)^\beta\lambda^{p-q} T_{r_1}\circ\Big(\id^{\ot i}\ot m_p\circ(\id^{\ot j_1}\ot T_{r_2}\ot \id^{\ot j_2}\ot \cdots \ot T_{r_q}\ot \id^{\ot j_q})\ot \id^{\ot k}\Big),
$$
where
\begin{align*}\alpha&=\frac{k(k-1)}{2}+\frac{n(n-1)}{2}+\sum_{j=1}^k(k-j)l_j,\\
	\beta&=\frac{p(p-1)}{2} +\sum_{j=1}^q\frac{r_j(r_j-1)}{2}+k+\sum_{l=2}^q\big(r_l-1\big)\big(i+\sum_{r=1}^{l-1}j_{r}+\sum_{t=2}^{l-1}r_t\big)+pi\\
	&=\frac{n(n-1)}{2}+i+(p+\sum\limits_{j=2}^q(r_j-1))k+\sum\limits_{l=2}^q(r_l-1)(\sum\limits_{r=l}^qj_r+q-l)
\end{align*}

As introduced in Subsection \ref{A-infinity algebras}, Equation~(\ref{Eq: stasheff-id}) is exactly the Stasheff identity (\ref{Eq: Stasheff}) in the definition of $A_\infty$-algebras. In particular,  the operator $m_1$ is a differential on $V$, and the operator $m_2$ induces an associative algebra structure on the homology space $\rmH_\bullet(V, m_1)$.

Equation~(\ref{Eq: homotopy RB-operator-version-2}) for $n=1,2$  gives
\begin{eqnarray}
	\label{operator-differential}   m_1\circ T_1=T_1\circ m_1,\end{eqnarray}
and \begin{eqnarray}
	\label{rbo-homotopy}  &  m_2\circ(T_1\ot T_1)-T_1\circ m_2\circ (\id\ot T_1)-T_1\circ m_2\circ (T_1\ot \id)-\lambda T_1\circ m_2 \\
	\notag   &= -\big(m_1\circ T_2+T_2\circ (\id\ot m_1)+T_2\circ(m_1\ot \id)\big).
\end{eqnarray}
Equation~(\ref{operator-differential}) implies that $T_1: (V, m_1)\to (V, m_1)$ is a chain map,  thus $T_1$ is well-defined on the $\rmH_\bullet(V, m_1)$;  Equation~(\ref{rbo-homotopy}) indicates that $T_1$ is a Rota-Baxter operator of weight $\lambda$ with respect to $m_2$ up to homotopy, whose  obstruction is just operator $T_2$. As a consequence, $(\rmH_\bullet(V, m_1), m_2, T_1)$ is  a Rota-Baxter algebra.

\bigskip

Now, we will give a homotopy version of  Proposition~\ref{Prop: new RB algebra}.

Let $V$ be a graded vector space.
Let $(\{b_n\}_{n\geqslant 1}, \{R_n\}_{n\geqslant 1})$ be a Maurer-Cartan element in $\overline{{\mathfrak{C}_{\RBA}}}(V)$.
 So we have the corresponding operators $\{m_n \}_{n\geqslant 1}$ and $ \{T_n\}_{n\geqslant1}$ which define a homotopy Rota-Baxter algebra structure on $V$.

 Define a new family of operators $\{\widetilde{b}_n\}_{n\geqslant 1}$
\[\widetilde{b}_n=\sum_{p=1}^n\sum_{q=0}^{p-1}\sum_{l_1+\dots+l_q+p-q=n,\atop l_1, \dots, l_q\geqslant 1}\lambda^{p-q-1}b_p\{sR_{l_1},\dots,sR_{l_q}\}\]
and  set  $\widetilde{m}_n:=s^{-1}\circ\widetilde{b}_n\circ s^{\ot n}:V^{\ot} \rightarrow V$.
Introduce   another family of operators $\{\widetilde{R}_n:(sV)^{\ot n}\rightarrow V\}_{n\geqslant 1}$  as follows:
	\begin{itemize}
		\item[(i)] put $R_n^1:=\lambda^{n-1}R_n:(sV)^{\ot n}\rightarrow V$ for any $n\geqslant 1$;
		\item[(ii)] define $R_n^2:=\sum\limits_{1\leqslant q\leqslant p-1}\sum\limits_{l_1+\dots+l_q+p-q=n}\lambda^{p-q-1}s^{-1}(sR_p)\{sR^1_{l_1},\dots,sR^1_{l_q}\}$;

		\item[(iii)] taking induction, define $R_n^k=\sum\limits_{1\leqslant q\leqslant p-1,t_j\leqslant k-1}\sum\limits_{l_1+\dots+l_q+p-q=n}\lambda^{p-q-1}s^{-1}(sR_p)\{sR^{t_1}_{l_1},\dots,sR^{t_{q}}_{l_q}\}$;

		\item[(iv)] define $\widetilde{R}_n=\sum\limits_{k=1}^\infty R_n^k$.
\end{itemize}
Note that for any given $n\geqslant 1$, this is always a finite sum, thus $\widetilde{R}_n$ is a well-defined map of degree $-1$ in  $ \Hom((sV)^{\ot n}, V)$. Impose  $\widetilde{T}_n=\widetilde{R}_n\circ s^{\ot n}: V^{\ot n}\rightarrow V.$

\begin{prop}{\label{Prop: homotopy RB-arising-from-homotopy-Rota-Baxter}}
\begin{itemize}
\item[(i)]
	The pair $(V,\{\widetilde{m}_n\}_{n\geqslant 1})$     forms an $A_\infty$-algebra. And the family of operators $\{T_n\}_{n\geqslant 1}$ defines an $A_\infty$-morphism from $(V,\{\widetilde{m}_n\}_{n\geqslant 1})$ to $(V,\{m_n\}_{n\geqslant 1})$.

 \item[(ii)]  These two family of operators $\{\widetilde{b}_n\}_{n\geqslant 1}\bigcup\{\widetilde{R}_n\}_{n\geqslant 1}$ is also a Maurer-Cartan element in $\mathfrak{C}_\RBA(V)$, thus a homotopy Rota-Baxter algebra structure of weight $\lambda$ on $V$.


     \end{itemize}
\end{prop}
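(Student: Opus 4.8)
The plan is to translate everything into the cofree-coalgebra language underlying Definition~\ref{Def: A infinity algebras} and Definition~\ref{Def: A-infinity morphism}: an $A_\infty$-structure on $V$ is the same as a square-zero coderivation of $\overline{T^c}(sV)$, an $A_\infty$-morphism the same as a morphism of the associated dg coalgebras, and brace operations become composites of (co)derivations and coalgebra maps. In these terms the given Maurer--Cartan element $(\{b_n\},\{R_n\})\in\overline{\mathfrak{C}_{\RBA}}(V)_{-1}$ packages a coderivation $D$ with corestriction $b=\sum_n b_n$ with $D^2=0$ (Equation~\eqref{Eq: A infinity}), a coalgebra endomorphism $\Psi$ of $\overline{T^c}(sV)$ with corestriction $\sum_n sR_n$, and the relation~\eqref{Eq: homotopy-rb-operator}. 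The point I would record first is that the right-hand side of~\eqref{Eq: homotopy-rb-operator} is precisely $\sum_r(sR_r)\{\widetilde b\}$ with $\widetilde b=\sum_n\widetilde b_n$, and its left-hand side is the corestriction of $D\circ\Psi$; so, writing $\widetilde D$ for the coderivation with corestriction $\widetilde b$, Equation~\eqref{Eq: homotopy-rb-operator} is equivalent to the operator identity $D\circ\Psi=\Psi\circ\widetilde D$ (both sides are $\Psi$-coderivations into a cofree coalgebra, hence determined by their corestrictions).

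For part~(i) I would first prove $\widetilde D^2=0$, i.e.\ that $\{\widetilde m_n\}$ is an $A_\infty$-algebra. From $D\circ\Psi=\Psi\circ\widetilde D$ and $D^2=0$ one gets $\Psi\circ\widetilde D^2=D\circ\Psi\circ\widetilde D=D^2\circ\Psi=0$, hence, corestricting, $\sum_r(sR_r)\{\widetilde b\{\widetilde b\}\}=0$. This alone does not force $\widetilde b\{\widetilde b\}=0$, because $\Psi$ need not be injective (there is nothing forcing its linear part $sR_1=s\,T_1\,s^{-1}$ to be invertible). So $\widetilde b\{\widetilde b\}=0$ must be obtained by a direct computation in the brace calculus: substitute $\widetilde b_n=\sum\lambda^{p-q-1}b_p\{sR_{l_1},\dots,sR_{l_q}\}$ into both slots of $\widetilde b\{\widetilde b\}$, expand using the pre-Jacobi identity~\eqref{Eq: pre-jacobi}, and regroup so that one family of terms is recognized---through Equation~\eqref{Eq: homotopy-rb-operator}---as the image of $b\{b\}$ under the $\star$-twisting and hence vanishes by Equation~\eqref{Eq: A infinity}, while the remaining terms cancel in pairs; the signs are controlled by the Koszul conventions of Section~\ref{Sect: Preliminaries}. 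Once $\widetilde D^2=0$ is established, $(\overline{T^c}(sV),\widetilde D)$ is a genuine dg coalgebra, and the identity $D\circ\Psi=\Psi\circ\widetilde D$ already in hand says exactly that $\Psi:(\overline{T^c}(sV),\widetilde D)\to(\overline{T^c}(sV),D)$ is a morphism of dg coalgebras; unwinding, $\{T_n\}$ is an $A_\infty$-morphism from $(V,\{\widetilde m_n\})$ to $(V,\{m_n\})$. (This is the homotopy version of Proposition~\ref{Prop: new RB algebra}(i),(iii): $\widetilde m$ is the product $\star$ up to homotopy and $T$ is the comparison morphism.)

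For part~(ii), given~(i) the only thing left is the homotopy Rota--Baxter operator relation~\eqref{Eq: homotopy-rb-operator} for the pair $(\{\widetilde b_n\},\{\widetilde R_n\})$ (its $A_\infty$ part being $\widetilde D^2=0$ from~(i)), which exhibits $(\{\widetilde b_n\},\{\widetilde R_n\})$ as a Maurer--Cartan element of $\mathfrak{C}_{\RBA}(V)$, i.e.\ a homotopy Rota--Baxter algebra structure on $V$. The iterated definition $\widetilde R_n=\sum_k R^k_n$ is the fixed point designed to make this hold: substituting the recursion into both sides of the relation for $(\widetilde b,\widetilde R)$ and re-expanding in the original $b$'s and $R$'s, the identity collapses---by repeated use of the pre-Jacobi identity together with the facts $D^2=0$ and $D\circ\Psi=\Psi\circ\widetilde D$ already used in~(i)---to an identity valid term by term. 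Structurally this runs parallel to the classical check that $(A_\star,T)$ is again a Rota--Baxter algebra in Proposition~\ref{Prop: new RB algebra}(ii), with the tower $\{R^k_n\}$ keeping track of the nested homotopies that arise because $\star$ is only associative up to homotopy.

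The step I expect to be the real obstacle is the identity $\widetilde b\{\widetilde b\}=0$ in~(i), together with its analogue in~(ii): the conceptual argument yields only $\sum_r(sR_r)\{\widetilde b\{\widetilde b\}\}=0$, and upgrading this to the full vanishing requires the complete combinatorial cancellation---a long, sign-sensitive manipulation of iterated braces balancing the contributions governed by~\eqref{Eq: A infinity} against those governed by~\eqref{Eq: homotopy-rb-operator}---which is the computation to be carried out in an appendix.
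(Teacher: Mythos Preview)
Your proposal is on the right track and aligns with the paper's approach; the cofree-coalgebra reformulation $D\circ\Psi=\Psi\circ\widetilde D$ is a clean way to package Equation~\eqref{Eq: homotopy-rb-operator} and immediately delivers the $A_\infty$-morphism statement once $\widetilde D^2=0$ is known. Two points deserve correction.

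For part~(i), your description of the direct computation is slightly off. After substituting $\widetilde b=\sum\lambda^{p-q-1}b_p\{sR_{l_1},\dots,sR_{l_q}\}$ into $\widetilde b\{\widetilde b\}$ and applying pre-Jacobi~\eqref{Eq: pre-jacobi}, one obtains \emph{two} families: terms of the form $b_p\{\dots,b_q\{sR,\dots,sR\},\dots\}$ and terms of the form $b_p\{\dots,sR_{l_r}\{b_q\{sR,\dots,sR\}\},\dots\}$. Nothing cancels in pairs. Instead, the inner expression $\sum\lambda^{q-s}sR_{l_r}\{b_q\{sR,\dots,sR\}\}$ in the second family is precisely the right-hand side of~\eqref{Eq: homotopy-rb-operator}, so it may be replaced by the left-hand side $\sum b_k\circ(sR\otimes\cdots\otimes sR)$. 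After this substitution and reindexing, the two families together assemble into $\sum\lambda^{t-m-2}\bigl(\sum_{p+q=t}b_p\{b_q\}\bigr)\{sR_{l_1},\dots,sR_{l_m}\}$, which vanishes by~\eqref{Eq: A infinity}. So the whole expression collapses to zero via~\eqref{Eq: homotopy-rb-operator} and~\eqref{Eq: A infinity}, not by pairwise cancellation.

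For part~(ii), the paper's argument is organized by induction on the arity $n$, and you should make this explicit. One first checks from the recursive definition that $\widetilde R_n=\sum_{0\le q\le p-1}\lambda^{p-q-1}s^{-1}(sR_p)\{s\widetilde R_{l_1},\dots,s\widetilde R_{l_q}\}$ (a closed-form consequence of the tower $R^k_n$). Then, expanding both sides of the desired relation~\eqref{Eq: homotopy-rb-operator} for $(\widetilde b,\widetilde R)$ in arity $n$ and applying pre-Jacobi, one isolates inside the right-hand side a sub-expression $\sum_{0\le q\le p-1}\lambda^{p-q-1}s\widetilde R_{i_j}\{\widetilde b_p\{s\widetilde R_{i_{j+1}},\dots,s\widetilde R_{i_{j+q}}\}\}$ of strictly smaller total arity $m<n$; by the inductive hypothesis this equals $\sum\widetilde b_p\circ(s\widetilde R\otimes\cdots\otimes s\widetilde R)$. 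Substituting and reindexing, both sides reduce to the same expression built from $\sum_k sR_k\{\widetilde b_{p-k+1}\}=\sum_k b_k\circ(sR\otimes\cdots\otimes sR)$, which is again~\eqref{Eq: homotopy-rb-operator} for the original data. Your sketch (``collapses term by term using $D^2=0$ and $D\circ\Psi=\Psi\circ\widetilde D$'') hides this inductive mechanism, which is what makes the recursive definition of $\widetilde R$ tractable.
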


 For a proof, see Appendix B.

\section{The Minimal model for the operad of Rota-Baxter algebras}
In the last section, we have defined the notion of homotopy Rota-Baxter algebras of any weight. In this section,  we will prove that the dg operad governing  homotopy Rota-Baxter algebras of weight $\lambda$ is a minimal model of the operad for Rota-Baxter algebras of weight $\lambda$.  Therefore,  the cohomology theory for Rota-Baxter algebras  defined before is  the right cohomology theory for Rota-Baxter algebras in the sense of operad theory.

\bigskip

For basic  theory of operads, we refer the reader to the textbooks \cite{LV, BD}.
As we will only care about nonsymmetric operads in this paper, we will delete the adjective ``nonsymmetric" everywhere.
For a collection $M=\{M(n)\}_{n\geqslant 1} $ of (graded) vector spaces, denote by $ \mathcal{F}(M)$ the free (graded) operad generated by $M$. Recall that a dg operad is called quasi-free if its underlying graded operad is free.

\begin{defn}[\cite{DCV13}] \label{Def: Minimal model of operads} A minimal model for an operad $P$  is a quasi-free dg operad $ (\mathcal{F}(M),d)$ together with a surjective quasi-isomorphism of operads $(\mathcal{F}(M), \partial)\overset{\sim}{\twoheadrightarrow}P$, where the dg operad $(\mathcal{F}(M),  \partial)$  satisfies the following conditions:
	\begin{itemize}
		\item[(i)] the differential $d$ is decomposable, i.e. $\partial$ takes $M$ to $\mathcal{F}(M)^{\geqslant 2}$, the subspace of $\mathcal{F}(M)$ consisting of elements with weight $\geqslant 2$;
		\item[(ii)] the generating collection $M$ admits a decomposition $M=\bigoplus\limits_{i\geqslant 1}M_{(i)}$  such that $\partial(M_{(k+1)})\subset \mathcal{F}\Big(\bigoplus\limits_{i=1}^kM_{(i)}\Big)$ for any $k\geqslant 1$
(usually $M_{(i)}$ is the degree $i$ part).  \end{itemize}
\end{defn}
\begin{thm}[\cite{DCV13}] When an operad $P$ admits a minimal model, it is unique up to isomorphisms.
\end{thm}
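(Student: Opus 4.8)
The plan is to combine the model-categorical rigidity of cofibrant replacements with the extra rigidity forced by the minimality conditions (i)--(ii). Let $(\mathcal{F}(M),\partial_M)\xrightarrow{\,p\,}P$ and $(\mathcal{F}(N),\partial_N)\xrightarrow{\,q\,}P$ be two minimal models. The argument runs in three stages: (a) produce a morphism of dg operads $\phi\colon(\mathcal{F}(M),\partial_M)\to(\mathcal{F}(N),\partial_N)$ with $q\circ\phi=p$; (b) observe that $\phi$ is automatically a quasi-isomorphism; (c) use minimality to improve $\phi$ to an isomorphism. Stages (a) and (b) are formal; stage (c) is the crux.

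For (a), recall that over a field of characteristic zero the category of (nonsymmetric) dg operads carries a cofibrantly generated model structure (Hinich; Berger--Moerdijk) in which weak equivalences are quasi-isomorphisms, fibrations are the arity-wise and degree-wise surjections, and a quasi-free dg operad whose generating collection admits an exhaustive filtration along which the differential decreases is cofibrant; conditions (i)--(ii), via the decomposition $M=\bigoplus_{i\geqslant1}M_{(i)}$ with $\partial(M_{(k+1)})\subseteq\mathcal{F}(\bigoplus_{i\leqslant k}M_{(i)})$, guarantee that $(\mathcal{F}(M),\partial_M)$ is cofibrant. Since $q$ is a surjective quasi-isomorphism, hence an acyclic fibration, the lifting property of $0\to(\mathcal{F}(M),\partial_M)$ against $q$ produces $\phi$; moreover, performing the lift one filtration level at a time, $\phi$ can be taken to respect the filtrations, $\phi\big(\mathcal{F}(\bigoplus_{i\leqslant k}M_{(i)})\big)\subseteq\mathcal{F}(\bigoplus_{i\leqslant k}N_{(i)})$. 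For (b), the maps $p,q,\phi$ form a commuting triangle with $p$ and $q$ quasi-isomorphisms, so $\phi$ is a quasi-isomorphism by two-out-of-three.

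For (c), write $Q$ for the functor sending a quasi-free operad $\mathcal{F}(R)$ to its collection of indecomposables, identified with the weight-one part, so that $Q\big(\mathcal{F}(R)\big)=R$ and $Qf$ is the weight-one component of $f|_R$; this is functorial, and because $\partial_M$ and $\partial_N$ are decomposable they induce the zero differential on $QM=M$ and $QN=N$, while $Q$ sends homotopic morphisms of dg operads to equal morphisms. A quasi-isomorphism between cofibrant dg operads (every dg operad being fibrant) is a homotopy equivalence; applying $Q$, the induced map $\bar\phi\colon M\to N$ becomes a homotopy equivalence of collections with zero differentials, hence an isomorphism, and by filtration-compatibility it carries $M_{(i)}$ isomorphically onto $N_{(i)}$. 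It remains to homotope $\phi$ into the free extension $\mathcal{F}(\bar\phi)$: inducting on $k$, having arranged $\phi|_{M_{(<k)}}=\bar\phi$, the discrepancy of $\phi$ and $\bar\phi$ on $M_{(k)}$ is, by condition (ii) and the chain-map property, a cocycle of $\mathcal{F}(N)$ of positive weight; its class in $H_\bullet(\mathcal{F}(N),\partial_N)\cong P$ is identified via $q\circ\phi=p$ with the zero class, so it is a boundary and can be absorbed into a homotopy correction of $\phi$. In the limit $\phi$ equals $\mathcal{F}(\bar\phi)$, which, being the free operad applied to an isomorphism of collections and a chain map, is an isomorphism of dg operads; hence the two minimal models are isomorphic.

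The main obstacle is the inductive step in stage (c): controlling the discrepancy of $\phi$ on each new batch of generators $M_{(k)}$ and proving that the corresponding obstruction class vanishes. This is exactly where the two minimality hypotheses are jointly indispensable --- decomposability of the differential, which makes the zero differential on indecomposables available and forces $\bar\phi$ to be an isomorphism, and the filtration $M=\bigoplus_i M_{(i)}$ with $\partial(M_{(k+1)})\subseteq\mathcal{F}(\bigoplus_{i\leqslant k}M_{(i)})$, which makes the induction on $k$ well-founded and places each obstruction in a homology group computable through the quasi-isomorphism to $P$ --- together with the conclusion of stage (b) that $\phi$ is a quasi-isomorphism over $P$. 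A secondary technical point, already noted, is that the lift in (a) must be chosen filtration-compatible, which is handled by the obstruction-theoretic step-by-step extension of the lift.
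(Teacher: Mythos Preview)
The paper does not prove this theorem; it is quoted from \cite{DCV13} without argument, so there is no proof in the paper to compare your attempt against.

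On its own merits, your three-stage plan is the right one, and stages (a) and (b) are fine. Stage (c), however, contains a genuine gap. Your inductive step asserts that, once $\phi|_{M_{(<k)}}=\bar\phi$, the discrepancy $\phi(m)-\bar\phi(m)$ for $m\in M_{(k)}$ is a cocycle in $\mathcal{F}(N)$. Computing, $\partial_N\big(\phi(m)-\bar\phi(m)\big)=\phi(\partial_M m)-\partial_N\bar\phi(m)=\mathcal{F}(\bar\phi)(\partial_M m)-\partial_N\bar\phi(m)$, and for this to vanish you need $\bar\phi$ to intertwine the differentials at level $k$; but $\bar\phi$ is merely the weight-one component of $\phi$ on generators, and there is no reason it should commute with $\partial$. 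Consequently $\mathcal{F}(\bar\phi)$ is in general \emph{not} a morphism of dg operads, so ``homotoping $\phi$ into $\mathcal{F}(\bar\phi)$'' is not a meaningful goal, and the obstruction you describe need not be a cocycle at all.

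The repair is to abandon the attempt to straighten $\phi$ and instead argue that $\phi$ is \emph{already} an isomorphism once $\bar\phi$ is. Filter $\mathcal{F}(M)$ and $\mathcal{F}(N)$ by operadic weight; since the differentials are decomposable they strictly raise weight, so the associated graded complexes have zero differential, and $\mathrm{gr}\,\phi=\mathcal{F}(\bar\phi)$. A quasi-isomorphism between complexes with zero differential is an isomorphism, and by a (convergent, since each arity is bounded in weight) spectral-sequence or direct recursion-on-weight argument, $\phi$ is then an isomorphism in each arity. This is essentially the route taken in \cite{DCV13}. Your justification that $\bar\phi$ is an isomorphism via ``$Q$ sends homotopies to equalities'' is also underexplained; the weight-filtration argument just sketched yields this directly and bypasses the need to analyse cylinder objects.
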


The  operad for Rota-Baxter algebras of weight $\lambda$, denoted by $\RB^\lambda$,  is generated by a unary operator $T$ and a binary operatore $\mu$ with the operadic relation generated by $$ \mu\circ_1\mu-\mu\circ_2\mu\quad \mathrm{and}\quad (\mu\circ_1T)\circ_2T-(T\circ_1\mu)\circ_1T-(T\circ_1\mu)\circ_2T-\lambda T\circ_1\mu.$$

Recall that a homotopy Rota-Baxter algebra structure on a graded space $V$ consists of two families of operators
$\{m_n\}_{n\geqslant 1}$ and $\{T_n\}_{n\geqslant 1}$ satisfying Equations~(\ref{Eq: stasheff-id})(\ref{Eq: homotopy RB-operator-version-2}). As  operator $-m_1$ makes $V$ into a complex, it induces a differential operator  $\partial$ on graded space $\Hom(V^{\ot n},V)$ containing $m_n, T_n$. Rewriting  Equations (\ref{Eq: stasheff-id})(\ref{Eq: homotopy RB-operator-version-2}) gives:
\begin{eqnarray*}
	\partial{m_n} = (-m_1)\circ m_n-(-1)^{n-2}\sum_{i=1}^n m_n\circ_i (-m_1)=\sum_{j=2}^{n-1}\sum_{i=1}^{n-j+1}(-1)^{i+1+j(n-i)}m_{n-j+1}\circ_i m_j
\end{eqnarray*}
\begin{eqnarray*}   \partial T_n&=&(-m_1)\circ T_n-(-1)^{n-1}\sum_{i=1}^n T_n\circ_i (-m_1)\\
	\notag  &=&\sum_{k=2}^n\sum_{l_1+\cdots+l_k=n\atop l_1, \dots, l_k\geqslant 1}(-1)^{\alpha'}\Big(\cdots\big((m_k\circ_1 T_{l_1})\circ_{l_1+1}T_{l_2}\big)\cdots\Big)\circ_{l_1+\cdots+l_{k-1}+1}T_{l_k}+\\
	\notag  & &\sum\limits_{{\small\substack{2\leqslant p\leqslant n \\ 1\leqslant q\leqslant p
				 }}}\sum\limits_{\small\substack{ r_1+\dots+r_q+p-q=n\\r_1, \dots, r_q\geqslant 1\\1\leqslant i\leqslant r_1\\1\leqslant k_1<\dots< k_{q-1}\leqslant p }}(-1)^{\beta'}\lambda^{p-q}\Big(T_{r_1}\circ_i \Big(\big((\cdots(( m_p\circ_{k_1}T_{r_2})\circ_{k_2+r_2-1} T_{r_3}))\cdots \big)\circ_{k_{q-1}+r_2+\dots+r_{q-1}-q+2}T_{r_q}\Big)\Big),
\end{eqnarray*}
where
\begin{eqnarray}\label{Eq: sign   alpha'}
	\alpha'&=&\frac{k(k-1)}{2}+\sum_{j=1}^k(k-j)l_j=\sum_{j=1}^k(k-j)(l_j-1),\\
	\label{Eq: sign   beta'}\beta'&=&i+\big(p+\sum\limits_{j=2}^q(r_j-1)\big)\big(r_1-i\big)+\sum\limits_{j=2}^q(r_j-1)(p-k_{j-1})	\end{eqnarray}

Now we introduce the dg operad of homotopy Rota-Baxter algebras of weight $\lambda$.
\begin{defn}
Let $M=(M(0), M(1),\dots, M(n),\dots)$ be the graded collection where $M(0)=0$, arity $1$ part $M(1)$ is the one-dimensional graded  space spanned by $T_1$ with $|T_1|=0$, and for $ n\geqslant 2$,  arity $n$ part $M(n)$  is the two-dimensional graded  space  spanned by $T_n, m_n$ with $|T_n|=n-1$, $|m_n|=n-2$.
The dg operad for homotopy Rota-Baxter  algebras of weight $\lambda$, denoted by $\RB_\infty^\lambda$, is the free graded operad generated by $M$   endowed with differential   $\partial$ subject to
\begin{eqnarray}\label{Eq: defining HRB 1}
	\partial{m_n} =  \sum_{j=2}^{n-1}\sum_{i=1}^{n-j+1}(-1)^{i+1+j(n-i)}m_{n-j+1}\circ_i m_j
\end{eqnarray} and
\begin{eqnarray} \label{Eq: defining HRB 2}   & &   \\
  \notag   &\partial T_n =&\sum\limits_{k=2}^n\sum_{l_1+\cdots+l_k=n\atop l_1, \dots, l_k\geqslant 1}(-1)^{\alpha'}\Big(\cdots\big((m_k\circ_1 T_{l_1})\circ_{l_1+1}T_{l_2}\big)\cdots\Big)\circ_{l_1+\cdots+l_{k-1}+1}T_{l_k}+\\
	\notag    & &\sum\limits_{{\small\substack{2\leqslant p\leqslant n \\ 1\leqslant q\leqslant p
				 }}}\sum\limits_{\small\substack{ r_1+\dots+r_q+p-q=n\\r_1, \dots, r_q\geqslant 1\\1\leqslant i\leqslant r_1\\1\leqslant k_1<\dots< k_{q-1}\leqslant p }}(-1)^{\beta'}\lambda^{p-q}\Big(T_{r_1}\circ_i \Big(\big((\cdots(( m_p\circ_{k_1}T_{r_2})\circ_{k_2+r_2-1} T_{r_3}))\cdots \big)\circ_{k_{q-1}+r_2+\dots+r_{q-1}-q+2}T_{r_q}\Big)\Big).
\end{eqnarray}
where the signs $(-1)^{\alpha'}$ and $(-1)^{\beta'}$ are determined by Equations~\eqref{Eq: sign   alpha'}\eqref{Eq: sign   beta'} respectively.
\end{defn}

We will use planar rooted  trees to display  elements in the dg operad $\RB_\infty^\lambda$.
  We use   the corolla with $n$ leaves and a black vertex to represent generators $m_n, n\geqslant 2$ and  the corolla with $n$ leaves and a white vertex to represent generators $T_n, n\geqslant 1$:
  \begin{eqnarray*}
  		\begin{tikzpicture}[grow'=up]
  			\tikzstyle{every node}=[thick,minimum size=6pt, inner sep=1pt]
  			 \node(r)[fill=black,circle,label=right:$m_n$]{}
  				child{node(1){1}}
  				child{node(i) {i}}
  				child{node(n){n}};
  			\draw [dotted,line width=0.5pt] (1)--(i);
  			\draw [dotted, line width=0.5pt] (i)--(n);
  		\end{tikzpicture}
  	\hspace{8mm}
  	\begin{tikzpicture}[grow'=up]
  		\tikzstyle{every node}=[thick,minimum size=6pt, inner sep=1pt]
  	\node[draw,circle,label=right:$T_n$]{}
  	child{node(1){1}}
  	child{node(i){i}}
  	child{node(n){n}};
	\draw [dotted,line width=0.5pt] (1)--(i);
\draw [dotted, line width=0.5pt] (i)--(n);
  	\end{tikzpicture}
  	\end{eqnarray*}

By \cite[Chapter 3]{BD}, a planar rooted  tree with all internal vertices dyed white or black (such a tree will be called a tree monomial) gives an element in $\RB_\infty^\lambda$ by composing its vertices clockwisely.
 Conversely, any element in $\RB_\infty^\lambda$ can be represented by such a unique planar rooted  tree in this way.
 For example, the element   $$(((m_3\circ_{1}T_4)\circ_3m_4)\circ_9m_2)\circ_{10}T_2$$ can be represented by the following tree:
 \begin{eqnarray*}\footnotesize
 	\begin{tikzpicture}[grow'=up]
\tikzstyle{every node}=[level distance=30mm,sibling distance=10em,thick,minimum size=5pt, inner sep=2pt]
\node[fill=black,draw,circle,label=right:$m_3$]{}
child{
	node[draw, circle, label=left:$T_4$](1){}
  child{node(1-1){}}
  child{node(1-2){}}
  child{
  	node[fill=black, draw, circle,label=left:$m_4$](1-3){}
  child{node(1-3-1){}}
  child{node(1-3-2){}}
  child{node(1-3-3){}}
  child{node(1-3-4){}}
}
  child{node(1-4){}}
}
child{node(2){}}
child{
	node[fill=black,draw,circle,label=right:$m_2$](3){}
    child{node(3-1){}}
child{
	node[circle, draw, label=right:$T_2$](3-2){}
child{node(3-2-1){}}
child{node(3-2-2){}}}
};
 	\end{tikzpicture}
 \end{eqnarray*}

In this means, the action of differential operator $\partial$ on generators can be expressed by trees as follows:

\begin{eqnarray*}
	\begin{tikzpicture}[scale=0.6]
		\tikzstyle{every node}=[thick,minimum size=6pt, inner sep=1pt]
		\node(a) at (-4,0.5){\begin{huge}$\partial$\end{huge}};
		\node[circle, fill=black, label=right:$m_n$] (b0) at (-2,-0.5)  {};
		\node (b1) at (-3.5,1.5)  [minimum size=0pt,label=above:$1$]{};
		\node (b2) at (-2,1.5)  [minimum size=0pt,label=above:$i$]{};
		\node (b3) at (-0.5,1.5)  [minimum size=0pt,label=above:$n$]{};
		\draw        (b0)--(b1);
		\draw        (b0)--(b2);
		\draw        (b0)--(b3);
		\draw [dotted,line width=1pt] (-3,1)--(-2.2,1);
		\draw [dotted,line width=1pt] (-1.8,1)--(-1,1);
	\end{tikzpicture}
	&
	\begin{tikzpicture}
		\node(0){{\Large$=\sum\limits_{j=2}^{n-1} \sum\limits_{i=1}^{n-j+1}(-1)^{i+1+j(n-i)}$}};
	\end{tikzpicture}
&
	\begin{tikzpicture}
		\tikzstyle{every node}=[thick,minimum size=6pt, inner sep=1pt]
		\node(e0) at (0,-1.5)[circle, fill=black,label=right:$m_{n-j+1}$]{};
		\node(e1) at(-1.5,0){{\tiny$1$}};
		\node(e2-0) at (0,-0.5){{\tiny$i$}};
		\node(e3) at (1.5,0){{\tiny{$n-j+1$}}};
		\node(e2-1) at (0,0.5) [circle,fill=black,label=right: $m_j$]{};
		\node(e2-1-1) at (-1,1.5){{\tiny$1$}};
		\node(e2-1-2) at (1, 1.5){{\tiny $j$}};
		\draw [dotted,line width=1pt] (-0.7,-0.5)--(-0.2,-0.5);
		\draw [dotted,line width=1pt] (0.3,-0.5)--(0.8,-0.5);
		\draw [dotted,line width=1pt] (-0.4,1)--(0.4,1);
		\draw        (e0)--(e1);
		\draw         (e0)--(e3);
		\draw         (e0)--(e2-0);
		\draw         (e2-0)--(e2-1);
		\draw        (e2-1)--(e2-1-1);
		\draw        (e2-1)--(e2-1-2);
		\end{tikzpicture}	
\end{eqnarray*}

\begin{eqnarray*}
	\begin{tikzpicture}[scale=0.6]
		\tikzstyle{every node}=[thick,minimum size=6pt, inner sep=1pt]
		\node(a) at (-4,0.5){\begin{huge}$\partial$\end{huge}};
		\node[circle, draw, label=right:$T_n$] (b0) at (-2,-0.5)  {};
		\node (b1) at (-3.5,1.5)  [minimum size=0pt,label=above:$1$]{};
		\node (b2) at (-2,1.5)  [minimum size=0pt,label=above:$i$]{};
		\node (b3) at (-0.5,1.5)  [minimum size=0pt,label=above:$n$]{};
		\draw        (b0)--(b1);
		\draw        (b0)--(b2);
		\draw        (b0)--(b3);
		\draw [dotted,line width=1pt] (-3,1)--(-2.2,1);
		\draw [dotted,line width=1pt] (-1.8,1)--(-1,1);
	\end{tikzpicture}
	&
	\begin{tikzpicture}
		\node(0){{\Large$= \sum\limits_{k=2}^n\sum\limits_{l_1+\cdots+l_k=n \atop l_1, \dots, l_k\geqslant 1}(-1)^{\alpha'}$}};
	\end{tikzpicture}
	&
	\begin{tikzpicture}
		\tikzstyle{every node}=[thick,minimum size=6pt, inner sep=1pt]
		\node(e0) at (0,-1.5)[circle, fill=black,label=right:$m_{k}$]{};
		\node(e1) at(-1.2,-0.3)[circle, draw, label=left:$T_{r_1}$]{};
		\node(e1-1) at(-2,0.8){};
		\node(e1-2) at (-1,0.8){};
		\node(e2-0) at (0,-0.7){{\tiny$i$}};
		\node(e3) at (1.2,-0.3)[draw, circle, label=right: $T_{r_k}$]{};
		\node(e3-1) at (1,0.8){};
		\node(e3-2) at (2,0.8){};
		\node(e2-1) at (0,0) [draw,circle,label=right: $T_{r_i}$]{};
		\node(e2-1-1) at (-0.5,1){};
		\node(e2-1-2) at (0.5, 1){};
		\draw [dotted,line width=1pt] (-0.7,-0.5)--(-0.2,-0.5);
		\draw [dotted,line width=1pt] (0.3,-0.5)--(0.8,-0.5);
		\draw [dotted,line width=1pt] (-0.2,0.5)--(0.2,0.5);
		\draw [dotted,line width=1pt] (-1.6,0.5)--(-1.2, 0.5);
		\draw [dotted, line width=1pt] (1.2,0.5)--(1.6,0.5);
		\draw        (e0)--(e1);
		\draw         (e0)--(e3);
		\draw         (e1)--(e1-1);
		\draw          (e1)--(e1-2);
		\draw         (e0)--(e2-0);
		\draw         (e2-0)--(e2-1);
		\draw        (e2-1)--(e2-1-1);
		\draw        (e2-1)--(e2-1-2);
		\draw        (e3)--(e3-1);
		\draw        (e3)--(e3-2);
		
	\end{tikzpicture}	\\
&\begin{tikzpicture}
	\node(0){{\Large$+\sum\limits_{{\small\substack{2\leqslant p\leqslant n \\ 1\leqslant q\leqslant p
				 }}}\sum\limits_{\small\substack{ r_1+\dots+r_q+p-q=n\\r_1, \dots, r_q\geqslant 1\\1\leqslant i\leqslant r_1\\1\leqslant k_1<\dots< k_{q-1}\leqslant p }}(-1)^{\beta'}\lambda^{p-q}$}};
\end{tikzpicture}&
\begin{tikzpicture}
		\tikzstyle{every node}=[thick,minimum size=4pt, inner sep=1pt]
	\node(e0) at (0,-1.5)[circle, draw,label=right:{\footnotesize $\ T_{r_1}$}]{};
	\node(e1) at(-1.8,-0.3){};
	\node(e1-1) at(-2,0.8){};
	\node(e1-2) at (-1,0.8){};
	\node(e2-0) at (0,-0.9){{\tiny$i$}};
	\node(e3) at (1.8,-0.3){};
	\node(e3-1) at (1,0.8){};
	\node(e3-2) at (2,0.8){};
	\node(e2-1) at (0,-0.3) [fill=black,draw,circle,label=right: {\scriptsize $\ \ m_p$}]{};
	\node(e2-1-1) at (-1.9,1){};
	\node(e2-1-2) at(-0.6,0.6){{\tiny$k_1$}};
	\node(e2-1-2-0) at (-1,1.2)[draw, circle,label=left:{\tiny $T_{r_2}$}]{};
	\node(e2-1-2-1) at (-1.4,1.9){};
	\node(e2-1-2-2) at (-0.6,1.9){};
	\node(e2-1-3) at(-0.4,1.5){};
	\node(e2-1-4) at (0.4,1.5){};
	\node(e2-1-5) at (0.6,0.6){{\tiny $k_{q-1}$}};
	\node(e2-1-5-0) at (1,1.2) [circle, draw,label=right: {\tiny $\ T_{r_q}$}]{};
	\node(e2-1-5-1) at (0.6,1.9){};
	\node(e2-1-5-2) at (1.4,1.9){};
	\node(e2-1-6) at (1.9, 1){};
	\draw [dotted,line width=1pt] (-0.7,-0.7)--(-0.2,-0.7);
	\draw [dotted,line width=1pt] (0.3,-0.7)--(0.8,-0.7);
	\draw [dotted, line width=1pt] (-1.3,0.9)--(1.3,0.9);
	\draw        (e0)--(e1);
	\draw [dotted, line width =1pt](-1.2,1.7)--(-0.7,1.7);
	\draw [dotted, line width=1pt](0.7,1.7)--(1.2,1.7);
	\draw         (e0)--(e3);
	\draw         (e0)--(e2-0);
	\draw         (e2-0)--(e2-1);
	\draw        (e2-1)--(e2-1-1);
	\draw      (e2-1-2)--(e2-1-2-0);
	\draw        (e2-1)--(e2-1-2);
		\draw        (e2-1)--(e2-1-3);
				\draw        (e2-1)--(e2-1-4);	
				\draw        (e2-1)--(e2-1-5);
				\draw         (e2-1-5)--(e2-1-5-0);
					\draw        (e2-1)--(e2-1-6);
					\draw (e2-1-2-0)--(e2-1-2-1);
					\draw (e2-1-2-0)--(e2-1-2-2);
					\draw (e2-1-5-0)--(e2-1-5-1);
					\draw (e2-1-5-0)--(e2-1-5-2);
\end{tikzpicture}
\end{eqnarray*}

The following result is the main result of this section, whose proof occupies the rest of this section.
\begin{thm}\label{Thm: Minimal model}
	The dg operad $\RB_\infty^\lambda$ is the minimal model of the operad $\RB^\lambda$.
\end{thm}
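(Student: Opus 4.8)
The plan is to verify directly that $\RB_\infty^\lambda$ satisfies Definition~\ref{Def: Minimal model of operads} via a surjective quasi-isomorphism $\pi\colon\RB_\infty^\lambda\twoheadrightarrow\RB^\lambda$, following the strategy of \cite{DK13}. The formal part is quick: by construction $\RB_\infty^\lambda=(\mathcal{F}(M),\partial)$ is quasi-free, its differential squares to zero (this is underpinned by the $L_\infty$-relations of Theorem~\ref{Thm: rb-L-infty}), and $\partial$ is decomposable because every tree monomial occurring on the right-hand side of \eqref{Eq: defining HRB 1} and \eqref{Eq: defining HRB 2} has at least two internal vertices. For condition (ii) I would take the decomposition $M=\bigoplus_{i\geqslant1}M_{(i)}$ with $M_{(2n-1)}=\bfk\,T_n$ for $n\geqslant1$, $M_{(2n-2)}=\bfk\,m_n$ for $n\geqslant2$, and all other $M_{(i)}=0$. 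Then \eqref{Eq: defining HRB 1} shows that every vertex appearing in $\partial m_n$ is some $m_j$ with $j\leqslant n-1$, while \eqref{Eq: defining HRB 2} shows that every vertex appearing in $\partial T_n$ is either an $m_k$ with $k\leqslant n$ or a $T_l$ with $l\leqslant n-1$; in all cases the weight strictly drops, so $\partial(M_{(k+1)})\subset\mathcal{F}\bigl(\bigoplus_{i=1}^{k}M_{(i)}\bigr)$.

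Next I would define $\pi$ on generators by $\pi(m_2)=\mu$, $\pi(T_1)=T$, $\pi(m_n)=0$ for $n\geqslant3$ and $\pi(T_n)=0$ for $n\geqslant2$, extend it as an operad morphism, and equip $\RB^\lambda$ with the zero differential. The only homological-degree-$0$ generators are $m_2$ and $T_1$, so the degree-$0$ part of $\RB_\infty^\lambda$ is the free operad $\mathcal{F}(m_2,T_1)$, on which $\partial$ vanishes; the degree-$1$ part is generated over $\mathcal{F}(m_2,T_1)$ by $m_3$ and $T_2$, and there $\partial$ is a morphism of $\mathcal{F}(m_2,T_1)$-bimodules. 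Hence $\pi$ is a chain map once $\pi(\partial m_3)=\pi(\partial T_2)=0$, and a direct evaluation of \eqref{Eq: defining HRB 1}--\eqref{Eq: defining HRB 2} gives $\partial m_3=m_2\circ_1m_2-m_2\circ_2m_2$ and $\partial T_2=(m_2\circ_1T_1)\circ_2T_1-(T_1\circ_1m_2)\circ_1T_1-(T_1\circ_1m_2)\circ_2T_1-\lambda\,T_1\circ_1m_2$, which $\pi$ sends exactly to the two defining relators of $\RB^\lambda$. The same bimodule computation identifies $\mathrm{im}(\partial)$ in degree $0$ with the operadic ideal $\langle\partial m_3,\partial T_2\rangle$, so $H_0(\RB_\infty^\lambda)=\mathcal{F}(m_2,T_1)/\langle\partial m_3,\partial T_2\rangle\cong\RB^\lambda$ via $\pi$; in particular $\pi$ is surjective and an isomorphism on $H_0$.

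It remains to prove $H_n(\RB_\infty^\lambda(r))=0$ for all $n\geqslant1$ and all arities $r$, which is the heart of the argument. Fix a suitable admissible order on planar tree monomials in $\mathcal{F}(\mu,T)$. One first records that the associativity relator and the Rota--Baxter relator already form a complete (inhomogeneous) Gröbner basis of the defining ideal of $\RB^\lambda$ --- i.e., every overlap between them reduces to zero --- so that the monomial operad $\overline{\RB}$ with the two leading monomials as relations has the same graded dimension as $\RB^\lambda$ arity by arity. By \cite{DK10,DK13}, $\overline{\RB}$ admits an explicit minimal resolution $(\mathcal{F}(\overline M),\overline\partial)$ whose generating collection $\overline M$ is precisely the collection $M$ above (this is how \cite{DK13} obtained the generators of homotopy Rota--Baxter algebras) and whose differential $\overline\partial$ is the ``leading part'' of the tree differential. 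I would then filter $\RB_\infty^\lambda$ by the leading-monomial filtration (refined by weight) so that the induced differential on the associated graded is exactly $\overline\partial$, and observe that the $E_1$-page of the resulting spectral sequence is $H_\bullet(\mathcal{F}(M),\overline\partial)=\overline{\RB}$, concentrated in homological degree $0$. For degree reasons ($d_r$ drops homological degree by one while $E_1$ sits in degree $0$) the spectral sequence collapses at $E_1$; since the filtration is exhaustive and bounded below in each fixed arity, it converges, and we conclude $H_n(\RB_\infty^\lambda)=0$ for $n\geqslant1$. Combined with the previous paragraph, $\pi$ is a surjective quasi-isomorphism satisfying all the requirements of Definition~\ref{Def: Minimal model of operads}.

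The main obstacle is this last step: verifying that the two relators constitute a complete Gröbner basis (equivalently, that the rewriting system on tree monomials is confluent), and then matching the explicit differential \eqref{Eq: defining HRB 1}--\eqref{Eq: defining HRB 2} --- which was produced indirectly from the $L_\infty$-structure on $\mathfrak{C}_{\RBA}(V)$ rather than by operadic rewriting --- with the Dotsenko--Khoroshkin monomial resolution $\overline\partial$ on the associated graded. The bookkeeping of which tree monomials occur, together with the signs recorded in \eqref{Eq: sign   alpha'}--\eqref{Eq: sign   beta'}, is what makes this identification delicate; once it is in place, the transfer of acyclicity through the spectral sequence is formal, and the degree-$0$ comparison has already been carried out directly.
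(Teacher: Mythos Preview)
Your argument is correct, and in fact coincides almost verbatim with the alternative proof that the paper records in Remark~\ref{rem: dotsenko} (attributed to Dotsenko): filter $\RB_\infty^\lambda(n)$ by the image under the replacement map $\omega\colon\RB_\infty^\lambda\to\mathfrak{Tr}$, identify the associated graded with the Dotsenko--Khoroshkin monomial resolution ${}_m\RB_\infty^\lambda$, and collapse the spectral sequence. Your identification of the ``main obstacle'' --- showing that the associated graded of $\partial$ really is $\overline\partial$ --- is exactly the step that the remark flags as needing verification.

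The paper's own proof, however, does \emph{not} go through the Gr\"obner-basis/spectral-sequence route. Instead it is entirely self-contained: after computing $H_0$ as you do, it builds an explicit contracting homotopy $\mathfrak{H}$ on positive degrees. Concretely, it introduces a graded path-lexicographic order on tree monomials, singles out for each generator $\mathcal{S}$ the leading monomial $\widehat{\mathcal{S}}$ of $\partial\mathcal{S}$ (a ``typical'' divisor), defines an ``effective'' tree monomial as one whose left-upper-most nontrivial feature is such a typical divisor, and sets $\mathfrak{H}(\mathcal{T})$ to replace that divisor $\widehat{\mathcal{S}}$ by $\mathcal{S}$, recursing on the remainder. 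A direct combinatorial argument (Lemma~\ref{Lem: Induction} and the subsequent proposition) then shows $\partial\mathfrak{H}+\mathfrak{H}\partial=\mathrm{Id}$ in positive degrees. This approach avoids any appeal to the acyclicity of ${}_m\RB_\infty^\lambda$ from \cite{DK13} and to the Gr\"obner-basis completeness of the Rota--Baxter relations, at the price of a more hands-on case analysis on trees. Your approach is shorter and more conceptual but imports those external results; the paper's main proof is longer but internal to the paper.
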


\bigskip

 Now, we are going to prove that there exists a quasi-isomorphism of dg operads $\RB_\infty^\lambda\xrightarrow{~}\RB^\lambda$, where $\RB^\lambda$ is considered as a dg operad concentrated in degree 0.

  The degree zero part of $\RB_\infty^\lambda$ is the free graded operad  generated by $\{m_2\}\cup\{T_1\}$. The image of $\partial$  in this  degree zero part is the operadic ideal   generated by $\partial T_2, \partial m_3$. By definition, we have:
   \begin{eqnarray*}
 	\partial(m_3)&=&m_2\circ_1 m_2-m_2\circ_2m_2,\\
 	\partial (T_2)&=&-T_1\circ_1(m_2\circ_1 T_1)-T_1\circ_1(m_2\circ_2 T_1)-\lambda T_1\circ m_2+(m_2\circ_1T_1)\circ_2 T_1.
 	\end{eqnarray*}
Thus $\rmH_0(\RB_\infty^\lambda)  \cong \RB^\lambda$.

To prove the natural map $\phi: \RB_\infty^\lambda\twoheadrightarrow \RB^\lambda$ is a quasi-isomorphism, we just need to prove that $\rmH_i(\RB_\infty^\lambda)=0$ for all $i\geqslant 1$.

\medskip

  We need the following notion of graded  path-lexicographic ordering on $\RB_\infty^\lambda$.

  Each tree monomial gives rise to a path sequence; for details, see \cite[Chapter 3]{BD}.  More precisely,
 to any tree monomial $\mathcal{T}$ with  $n$ leaves (written as $\mbox{arity}(\mathcal{T})=n$), we can associate   with a sequence   $(x_1, \dots, x_n)$  where  $x_i$ is the word formed by   generators of $\RB_\infty^\lambda$ corresponding to the vertices along the unique path from the root of $\mathcal{T}$  to its $i$-th leaf.

  For two graded tree monomials $\mathcal{T},\mathcal{T}'$, we compare $\mathcal{T},\mathcal{T}'$ in the following way:
 \begin{itemize}
 	\item[(i)] If $\mbox{arity}(\mathcal{T})>\mbox{arity}(\mathcal{T}')$, then  $\mathcal{T}>\mathcal{T}'$;
 	\item[(ii)] if $\mbox{arity}(\mathcal{T})=\mbox{arity}(\mathcal{T}')$, and $\deg(\mathcal{T})>\deg(\mathcal{T}')$, then $\mathcal{T}>\mathcal{T}'$, where $\deg(\mathcal{T})$ is the sum of the degrees of all generators of $\RB_\infty^\lambda$ appearing in   tree monomial $\mathcal{T}$;
 	\item[(iii)] if $\mbox{arity}(\mathcal{T})=\mbox{airty}(\mathcal{T}')(=n), \deg(\mathcal{T})=\deg(\mathcal{T}')$, then $\mathcal{T}>\mathcal{T}'$ if the path sequences   $(x_1,\dots,x_n), (x'_1,\dots,x_n')$ associated to $\mathcal{T}, \mathcal{T}'$ satisfies $(x_1,\dots,x_n)>(x_1',\dots,x_n')$ with respect to the length-lexicographic order of words induced by $$T_1<m_2<T_2<m_3<\cdots<T_n<m_{n+1}<T_{n+1}<\cdots.$$

 \end{itemize}
 It is ready to see that this is a well order.
Under this order, the leading term in the expansion of $\partial(m_n), \partial(T_n)$ are the following trees respectively:
\begin{figure}[h]
	\begin{tikzpicture}
		\tikzstyle{every node}=[thick,minimum size=4pt, inner sep=1pt]
		\node(1) at (0,0) [draw, circle, fill=black, label=right:$\ m_{n-1}$]{};
		\node(2-1) at (-1,1) [draw, circle, fill=black, label=right: $\ m_2$]{};
		\node(3-1) at (-2,2){};
		\node(3-2) at (0,2){};
		\node(2-2) at (0,1){};
		\node(2-3) at (1,1){};
		\draw (1)--(2-1);
		\draw  (1)--(2-2);
		\draw  (1)--(2-3);
		\draw (2-1)--(3-1);
		\draw (2-1)--(3-2);
		\draw [dotted,line width=1pt](-0.4,0.5)--(0.4,0.5);
	\end{tikzpicture}
\hspace{8mm}
\begin{tikzpicture}
	\tikzstyle{every node}=[thick,minimum size=4pt, inner sep=1pt]
	\node(1) at (0,0) [draw, circle, label=right:$\ T_{n-1}$]{};
	\node(2-1) at (-1,1) [draw, circle, fill=black, label=right: $\ m_2$]{};
	\node(3-1) at (-2,2)[draw, circle, label=right: $T_1$]{};
	\node(3-2) at (0,2){};
	\node(2-2) at (0,1){};
	\node(2-3) at (1,1){};
	\node(4) at (-3,3){};
	\draw (1)--(2-1);
	\draw  (1)--(2-2);
	\draw  (1)--(2-3);
	\draw (2-1)--(3-1);
	\draw (2-1)--(3-2);
	\draw (3-1)--(4);
	\draw [dotted,line width=1pt](-0.4,0.5)--(0.4,0.5);
\end{tikzpicture}
\end{figure}

\begin{defn}
	Let $\mathcal{S}$ be a generator of degree $\geqslant 1$ in $ \RB_\infty^\lambda$. Denote the leading monomial of $\partial \mathcal{S}$ by $\widehat{\mathcal{S}}$ and the  coefficient of $\widehat{\mathcal{S}}$ in $\partial$ is written  as $l_\mathcal{S}$.  A tree monomial of the form $\widehat{\mathcal{S}}$ is called typical.
\end{defn}
It can be easily seen that the coefficient $l_\mathcal{S}$ is always $\pm 1$.

 To prove that $\rmH_i(\RB_\infty^\lambda)=0, i\geqslant 1$, we are going to construct a homotopy map $\mathfrak H$, i.e., a map of degree 1, $\mathfrak{H}: \RB_\infty^\lambda\rightarrow \RB_\infty^\lambda$ satisfying  $\partial \mathfrak{H}+\mathfrak{H}\partial =\mathrm{Id}$ in positive degrees.

 \begin{defn}\label{Def: effective tree monomials}  A tree monomial $\mathcal{T}$ in $\RB_\infty^\lambda$ is called effective if $\mathcal{T}$ satisfies the following conditions:
 	\begin{itemize}
 		\item[(i)] There exists a typical divisor $\mathcal{T}'=\widehat{\mathcal{S}}$ in $\mathcal{T}$ such that: on the path from the root of $\mathcal{T}'$ to the leftmost leaf $l$ of $\mathcal{T}$ above the root of $\mathcal{T}'$, there are no other typical divisors, and there are no vertex of positive degree on this path except the root of $\mathcal{T}'$ possibly.
 		\item[(ii)] For any leaf $l'$ of $\mathcal{T}$ which lies on the left of $l$, there are no vertices of positive degree and no typical divisors  on the path from the root of $\mathcal{T}$ to $l'$.
 	\end{itemize}
The typical divisor $\mathcal{T}'$ is called the effective divisor of $\mathcal{T}$ and  the leaf $l$ is called the typical leaf of $\mathcal{T}$.
\end{defn}

Morally, the effective divisor of a tree monomial $\mathcal{T}$ is the left-upper-most typical divisor of $\mathcal{T}$.
It can be easily see that for the effective divisor $\mathcal{T}'$ in $\mathcal{T}$ with effective leaf $l$, any vertex in $\mathcal{T}'$ doesn't belong to the path from root of $\mathcal{T}$ to any leaf $l'$ located on the left of $l$.

\begin{exam}Consider three tree monomials with the same underlying tree:
\begin{eqnarray*}
\begin{tikzpicture}
	\tikzstyle{every node}=[thick,minimum size=4pt, inner sep=1pt]
	\node(1)at (0,0)[circle, draw, fill=black]{};
	\node(2-1) at (-0.5,0.5){};
	\node(2-2) at (0.5,0.5)[circle,draw, fill=black]{};
	\node(3-1) at (0,1)[circle, draw]{};
	\node(3-2) at (0.3,1){};
	\node(3-3) at (0.7,1){};
	\node(3-4) at (1,1){};
	\node(4-1) at (-0.5,1.5)[circle, draw,fill=black]{};
	\node (4-2) at(0,1.5){};
	\node(4-3) at (0.5,1.5){};
	\node(5-1) at (-1,2)[circle, draw]{};
	\node(5-2) at(0,2)[circle, draw, fill=black] {};
	\node (6-1) at (-1.5, 2.5)[circle,draw, fill=black]{};
	\node(6-2) at (-0.5, 2.5)[circle, draw, fill=black]{};
	\node(6-3) at(0,2.5){};
	\node(6-4) at (0.5,2.5){};
	\node(7-1) at (-2,3)[minimum size=0pt, label=above:$l$]{};
	\node(7-2) at (-1,3){};
	\node(7-3) at(-0.9,3){};
	\node(7-4) at (0,3){};
	\draw (1)--(2-1);
	\draw (1)--(2-2);
	\draw (2-2)--(3-1);
	\draw (2-2)--(3-2);
	\draw (2-2)--(3-3);
	\draw (2-2)--(3-4);
	\draw (3-1)--(4-1);
	\draw (3-1)--(4-2);
	\draw (3-1)--(4-3);
	\draw (4-1)--(5-1);
	\draw (4-1)--(5-2);
	\draw (5-1)--(6-1);
	\draw (6-1)--(7-1);
	\draw (6-1)--(7-2);
	\draw (5-2)--(6-2);
	\draw (5-2)--(6-3);
	\draw (5-2)--(6-4);
	\draw(6-2)--(7-3);
	\draw(6-2)--(7-4);
\draw[dashed,blue](-1.3,1.7) to [in=150, out=120] (-0.7,2.3) ;
\draw[dashed,blue](-1.3,1.7)--(-0.3,0.7);
\draw[dashed,blue] (0.3,1.3)to [in=-30, out=-60] (-0.3,0.7);
	\node[minimum size=0pt,inner sep=0pt,label=below:$(\mathcal{T}_1)$] (name) at (0,-0.3){};
	\draw[dashed, blue](0.3,1.3)--(-0.7,2.3);
\end{tikzpicture}
\hspace{4mm}
\begin{tikzpicture}
	\tikzstyle{every node}=[thick,minimum size=4pt, inner sep=1pt]
	\node(1)at (0,0)[circle, draw]{};
	\node(2-1) at (-0.5,0.5)[minimum size=0pt, label=left:$\red \times$]{};
	\node(2-2) at (0.5,0.5)[circle,draw, fill=black]{};
	\node(3-1) at (0,1)[circle, draw]{};
	\node(3-2) at (0.3,1){};
	\node(3-3) at (0.7,1){};
	\node(3-4) at (1,1){};
	\node(4-1) at (-0.5,1.5)[circle, draw,fill=black]{};
	\node (4-2) at(0,1.5){};
	\node(4-3) at (0.5,1.5){};
	\node(5-1) at (-1,2)[circle, draw]{};
	\node(5-2) at(0,2)[circle, draw, fill=black] {};
	\node (6-1) at (-1.5, 2.5)[circle,draw, fill=black]{};
	\node(6-2) at (-0.5, 2.5)[circle, draw, fill=black]{};
	\node(6-3) at(0,2.5){};
	\node(6-4) at (0.5,2.5){};
	\node(7-1) at (-2,3){};
	\node(7-2) at (-1,3){};
	\node(7-3) at (-0.9,3){};
	\node(7-4) at(0,3){};
	\draw(6-2)--(7-3);
	\draw(6-2)--(7-4);
	\draw (1)--(2-1);
	\draw (1)--(2-2);
	\draw (2-2)--(3-1);
	\draw (2-2)--(3-2);
	\draw (2-2)--(3-3);
	\draw (2-2)--(3-4);
	\draw (3-1)--(4-1);
	\draw (3-1)--(4-2);
	\draw (3-1)--(4-3);
	\draw (4-1)--(5-1);
	\draw (4-1)--(5-2);
	\draw (5-1)--(6-1);
	\draw (6-1)--(7-1);
	\draw (6-1)--(7-2);
	\draw (5-2)--(6-2);
	\draw (5-2)--(6-3);
	\draw (5-2)--(6-4);
	\node[minimum size=0pt,inner sep=0pt,label=below:$(\mathcal{T}_2)$] (name) at (0,-0.3){};
\end{tikzpicture}
\hspace{4mm}\begin{tikzpicture}
	\tikzstyle{every node}=[thick,minimum size=4pt, inner sep=1pt]
	\node(1)at (0,0)[circle, draw, fill=black]{};
	\node(2-1) at (-0.5,0.5){};
	\node(2-2) at (0.5,0.5)[circle,draw, fill=black]{};
	\node(3-1) at (0,1)[circle, draw]{};
	\node(3-2) at (0.3,1){};
	\node(3-3) at (0.7,1){};
	\node(3-4) at (1,1){};
	\node(4-1) at (-0.5,1.5)[circle, draw,fill=black]{};
	\node (4-2) at(0,1.5){};
	\node(4-3) at (0.5,1.5){};
	\node(5-1) at (-1,2)[circle, draw]{};
	\node(5-2) at(0,2)[circle, draw, fill=black] {};
	\node (6-1) at (-1.5, 2.5)[circle,draw, fill=black,label=left:$\red \times$]{};
	\node(6-2) at (-0.5, 2.5)[circle,draw,fill=black]{};
	\node(6-3) at(0,2.5){};
	\node(6-4) at (0.5,2.5){};
	\node(7-1) at (-2,3){};
	\node(7-2) at (-1.5,3){};
	\node(7-3) at(-1,3){};
	\node(7-4) at(-0.9,3){};
	\node(7-5) at(0,3){};
	\draw(6-2)--(7-4);
	\draw(6-2)--(7-5);
	\draw (1)--(2-1);
	\draw (1)--(2-2);
	\draw (2-2)--(3-1);
	\draw (2-2)--(3-2);
	\draw (2-2)--(3-3);
	\draw (2-2)--(3-4);
	\draw (3-1)--(4-1);
	\draw (3-1)--(4-2);
	\draw (3-1)--(4-3);
	\draw (4-1)--(5-1);
	\draw (4-1)--(5-2);
	\draw (5-1)--(6-1);
	\draw (6-1)--(7-1);
	\draw (6-1)--(7-2);
	\draw (6-1)--(7-3);
	\draw (5-2)--(6-2);
	\draw (5-2)--(6-3);
	\draw (5-2)--(6-4);
	\node[minimum size=0pt,inner sep=0pt,label=below:$(\mathcal{T}_3)$] (name) at (0,-0.3){};

\draw[dashed,blue](-1.3,1.7) to [in=150, out=120] (-0.7,2.3) ;
\draw[dashed,blue](-1.3,1.7)--(-0.3,0.7);
\draw[dashed,blue] (0.3,1.3)to [in=-30, out=-60] (-0.3,0.7);
	
	\draw[dashed, blue](0.3,1.3)--(-0.7,2.3);
\end{tikzpicture}
\end{eqnarray*}

For the three trees displayed above, each has two typical divisors.
\begin{itemize}
	\item $\mathcal{T}_1$ is effective and the divisor in the blue  dashed circle is its effective divisor and $l$ is its effective leaf.
	\item $\mathcal{T}_2$ is not effective, since the first leaf belongs to a vertex of degree 1, say the root of $\mathcal{T}_2$, which violates Condition (ii) in Definition \ref{Def: effective tree monomials}.
	\item $\mathcal{T}_3$ is not effective since there is a vertex of degree 1 on the path from the root of the typical divisor in the blue  dashed circle to the leftmost leaf above it, which violates Condition (i) in Definition \ref{Def: effective tree monomials}.
\end{itemize}
\end{exam}

\begin{defn}
Let $\mathcal{T}$ be an effective tree monomial in $\RB_\infty^\lambda$ and $\mathcal{T}'$ be its effective divisor. Assume that $\mathcal{T}'=\widehat{\mathcal{S}}$, where $\mathcal{S}$ is a generator of positive degree. Then define $$\overline{\mathfrak{H}}(\mathcal{T})=(-1)^\omega \frac{1}{l_\mathcal{S}}m_{\mathcal{T}', \mathcal{S}}(\mathcal{T}),$$ where $m_{\mathcal{T}',\mathcal{S}}(\mathcal{T})$ is the tree monomial obtained from $\mathcal{T}$ by replacing the effective divisor $\mathcal{T}'$ by $\mathcal{S}$ , $\omega$ is the sum of degrees of all the vertices on the path from root of $\mathcal{T}'$ to the root of $\mathcal{T}$  (except the root vertex of $\mathcal{T}'$) and on the left of this path .
\end{defn}

Then we define a map $\mathfrak{H}$ of degree 1 on $\RB_\infty^\lambda$ as
\begin{itemize}
	\item[(i)] If $\mathcal{T}$ is not an effective tree monomial, then define $\mathfrak{H}(\mathcal{T})=0$;
	\item[(ii)] If $\mathcal{T}$ is effective, denote by $\overline{\mathcal{T}}$ is obtained from $\mathcal{T}$ by replacing $\mathcal{T}'$ by $\mathcal{T}'-\frac{1}{l_\mathcal{S}}\partial \mathcal{S}$ with $\mathcal{T}'$ being the leading term of $\partial \mathcal{S}$.  Define $\mathfrak{H}(\mathcal{T})=\overline{\mathfrak{H}}(\mathcal{T})+\mathfrak{H}(\overline{\mathcal{T}})$, where, since  each tree monomial in  $\overline{\mathcal{T}}$ is strictly smaller than $\mathcal{T}$, define $\mathfrak{H}(\overline{\mathcal{T}})$ by taking induction on leading terms (this can be done by Lemma~\ref{Lem: homotopy well defined}).
	\end{itemize}
Let's explain more on the definition of $\mathfrak{H}$. Denote $\mathcal{T}$ by $\mathcal{T}_1$. By definition above, $\mathfrak{H}(\mathcal{T})=\overline{\mathfrak{H}}(\mathcal{T}_1)+\mathfrak{H}(\overline{\mathcal{T}_1})$. Since $\mathfrak{H}$ vanishes on non-effective tree monomials, we have $\mathfrak{H}(\overline{\mathcal{T}}_1)=\mathfrak{H}(\sum_{i_2\in I_2} \mathcal{T}_{i_2})$ where $\{\mathcal{T}_{i_2}\}_{i_2\in I_2}$ is the set of effective tree monomials together with their coefficients appearing in the expansion of $\overline{\mathcal{T}_1}$. Then by definition of $\mathfrak{H}$, $\mathfrak{H}(\sum_{i_2\in I_2} \mathcal{T}_{i_2})=\overline{\mathfrak{H}}(\sum_{i_2\in I_2} \mathcal{T}_{i_2})+\mathfrak{H}(\sum_{i_2\in I_2}\overline{\mathcal{T}_{i_2}})$, then we have
$$\mathfrak{H}(\mathcal{T})=\overline{\mathfrak{H}}(\mathcal{T}_1)+\overline{\mathfrak{H}}(\sum_{i_2\in I_2} \mathcal{T}_{i_2})+\mathfrak{H}(\sum_{i_2\in I_2}\overline{\mathcal{T}_{i_2}}).$$   Take induction on leading terms,  $\mathfrak{H}(\mathcal{T})$ is the following series:
\begin{eqnarray}
	\label{Eq:Definition-of-homotopy}
	\mathfrak{H}(\mathcal{T})=\overline{\mathfrak{H}}(\mathcal{T}_1)+\overline{\mathfrak{H}}(\sum_{i_2\in I_2} \mathcal{T}_{i_2})+\overline{\mathfrak{H}}(\sum_{i_3\in I_3} \mathcal{T}_{i_3})+\dots+\overline{\mathfrak{H}}(\sum_{i_n\in I_n} \mathcal{T}_{i_n})+\dots,
	\end{eqnarray}
 where $\{\mathcal{T}_{i_n}\}_{i_n\in I_n}$ is the set of  the effective tree monomials with their nonzero coefficients  appearing in the expansion of $\sum_{i_{n-1}\in I_{n-1}} \overline{\mathcal{T}_{i_{n-1}}}$.
	
	\begin{lem}\label{Lem: homotopy well defined}  For any effective tree monomial $\mathcal{T}$, the expansion of $\mathfrak{H}(\mathcal{T})$ in Equation \eqref{Eq:Definition-of-homotopy} is always a finite sum, i.e, there exists some large integer $n$ such that all tree monomials in $\sum_{i_n\in I_n}\overline{\mathcal{T}_{i_n}}$ are not effective.
		\end{lem}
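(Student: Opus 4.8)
The plan is to track the path-lexicographic leading terms of the successive summands of \eqref{Eq:Definition-of-homotopy}, extract from them a strictly decreasing sequence, and then invoke the fact, recorded when the order was introduced, that the graded path-lexicographic order is a well order. First I would dispose of the finiteness bookkeeping that makes the recursion legitimate. For a generator $\mathcal{S}$ of positive degree, $\partial\mathcal{S}$ is a \emph{finite} linear combination of tree monomials, since the right-hand sides of \eqref{Eq: defining HRB 1} and \eqref{Eq: defining HRB 2} are finite sums; hence for an effective tree monomial $\mathcal{U}$ with effective divisor $\mathcal{U}'=\widehat{\mathcal{S}}$ the element $\overline{\mathcal{U}}$, obtained by substituting $\mathcal{U}'-\tfrac{1}{l_{\mathcal{S}}}\partial\mathcal{S}$ into the fixed one-hole context carved out of $\mathcal{U}$ by $\mathcal{U}'$, is again a finite linear combination of tree monomials. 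Starting from $I_1=\{\mathcal{T}\}$, it follows by induction that every index set $I_n$ in \eqref{Eq:Definition-of-homotopy} is finite, so that $N_n:=\max\{\mathcal{T}_{i_n}:i_n\in I_n\}$ (maximum in the graded path-lexicographic order) is defined whenever $I_n\neq\emptyset$; set $N_1=\mathcal{T}$.

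Next I would establish $N_1>N_2>N_3>\cdots$. The key input is the fact, stated when $\mathfrak{H}$ was defined, that every tree monomial appearing with nonzero coefficient in $\overline{\mathcal{U}}$ is strictly smaller than $\mathcal{U}$: by definition $\widehat{\mathcal{S}}$ is the leading monomial of $\partial\mathcal{S}$ and its coefficient is $l_{\mathcal{S}}$, so $\mathcal{U}'-\tfrac{1}{l_{\mathcal{S}}}\partial\mathcal{S}$ is a combination of tree monomials strictly below $\mathcal{U}'=\widehat{\mathcal{S}}$, and grafting these into the fixed context keeps them strictly below $\mathcal{U}$ because the graded path-lexicographic order is compatible with operadic composition (cf.\ \cite[Chapter~3]{BD}). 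Now let $n\geqslant 2$ with $I_n\neq\emptyset$ and pick $i_n\in I_n$; by the construction of \eqref{Eq:Definition-of-homotopy}, $\mathcal{T}_{i_n}$ occurs in $\overline{\mathcal{T}_{i_{n-1}}}$ for some $i_{n-1}\in I_{n-1}$, whence $\mathcal{T}_{i_n}<\mathcal{T}_{i_{n-1}}\leqslant N_{n-1}$ and so $\mathcal{T}_{i_n}<N_{n-1}$. Taking the maximum over $i_n\in I_n$ yields $N_n<N_{n-1}$, as claimed.

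Finally, since the graded path-lexicographic order is a well order it contains no infinite strictly decreasing sequence, so the chain $N_1>N_2>\cdots$ must terminate; equivalently there is an $n$ with $I_{n+1}=\emptyset$, i.e.\ no tree monomial occurring in $\sum_{i_n\in I_n}\overline{\mathcal{T}_{i_n}}$ is effective. For that $n$ one has $\mathfrak{H}\bigl(\sum_{i_n\in I_n}\overline{\mathcal{T}_{i_n}}\bigr)=0$, because $\mathfrak{H}$ vanishes on non-effective tree monomials by definition, so the series \eqref{Eq:Definition-of-homotopy} is a finite sum. I do not expect a genuine obstacle here: the argument reduces to two standard facts (the order is a well order and is compatible with operadic grafting, the latter already implicitly used when defining $\mathfrak{H}$), and the only point that needs care is the bookkeeping that correctly pins down the decreasing sequence of leading terms across the levels of the recursion — in particular that each level-$n$ effective monomial is dominated by the previous leading term $N_{n-1}$.
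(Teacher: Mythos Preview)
Your proof is correct and follows essentially the same approach as the paper: both arguments track the maxima $N_k=\max\{\mathcal{T}_{i_k}:i_k\in I_k\}$, observe that $N_1>N_2>\cdots$ because every monomial in $\overline{\mathcal{U}}$ is strictly smaller than $\mathcal{U}$, and conclude by the well-ordering. The paper simply asserts the strict decrease and invokes the well order in one line, whereas you supply the intermediate bookkeeping (finiteness of each $I_n$, compatibility of the order with grafting) more explicitly.
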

	\begin{proof}	 It is easy to see that $\max\{\mathcal{T}_{i_k}|i_k\in I_k\}>\max\{\mathcal{T}_{i_{k+1}}|i_{k+1}\in I_{k+1}\}$ for all   $k\geq 1$ (by convention, $i_1\in I_1=\{1\}$), so Equation~\eqref{Eq:Definition-of-homotopy} cannot be an infinite sum, as $>$ is a well order.

\end{proof}

\begin{lem}{\label{Lem: Induction}}Let $\mathcal{T}$ be an effective tree monomial. Then $\partial \overline{\mathfrak{H}}(\mathcal{T})+\mathfrak{H}\partial(\mathcal{T}-\overline{\mathcal{T}})=\mathcal{T}-\overline{\mathcal{T}}$.
	\end{lem}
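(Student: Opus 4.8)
The strategy is to unwind the definitions of $\overline{\mathfrak{H}}$ and of $\overline{\mathcal T}$ and compare the two sides vertex by vertex. Recall that $\mathcal T$ is effective with effective divisor $\mathcal T'=\widehat{\mathcal S}$, where $\mathcal S$ is a generator of positive degree, and $l_{\mathcal S}=\pm 1$ is the coefficient of $\widehat{\mathcal S}$ in $\partial\mathcal S$. By definition $\overline{\mathfrak H}(\mathcal T)=(-1)^\omega\frac{1}{l_{\mathcal S}}\,m_{\mathcal T',\mathcal S}(\mathcal T)$, that is, $\overline{\mathfrak H}(\mathcal T)$ is obtained from $\mathcal T$ by contracting the subtree $\mathcal T'$ to the single vertex $\mathcal S$ and multiplying by the sign $(-1)^\omega/l_{\mathcal S}$; and $\overline{\mathcal T}$ is obtained from $\mathcal T$ by replacing the divisor $\mathcal T'$ by $\mathcal T'-\frac{1}{l_{\mathcal S}}\partial\mathcal S$, so that $\mathcal T-\overline{\mathcal T}=\frac{1}{l_{\mathcal S}}\,(\partial\mathcal S)_{\text{grafted into the ambient tree at the position of }\mathcal T'}$. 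The first step is therefore to write both sides explicitly as signed sums of tree monomials obtained by grafting fixed ``context'' trees above and below the active vertex.

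Next I would compute $\partial\overline{\mathfrak H}(\mathcal T)$. The differential $\partial$ is a derivation with respect to operadic composition, so $\partial$ applied to the contracted tree $m_{\mathcal T',\mathcal S}(\mathcal T)$ is a sum over all vertices: the term where $\partial$ hits $\mathcal S$ reproduces (up to the sign $(-1)^\omega/l_{\mathcal S}$ and the grafting) the whole of $\partial\mathcal S$ grafted back in, whose leading term is exactly $\mathcal T'$ and hence contributes $\mathcal T$ with the correct sign, while the remaining terms of $\partial\mathcal S$ grafted in give $-\overline{\mathcal T}$ up to sign; the terms where $\partial$ hits a vertex other than $\mathcal S$ must be shown to cancel against the corresponding contributions of $\mathfrak H\partial(\mathcal T-\overline{\mathcal T})$. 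For the latter: $\partial(\mathcal T-\overline{\mathcal T})$ is, again by the Leibniz rule, a sum of tree monomials; one applies $\mathfrak H$ to each, using that $\mathfrak H$ vanishes on non-effective monomials and is given by $\overline{\mathfrak H}$ plus a recursive correction on effective ones. The crucial combinatorial point is that, among all the monomials appearing in $\partial(\mathcal T-\overline{\mathcal T})$, exactly those coming from $\partial$ acting on vertices \emph{other than} the active divisor are effective with the same effective divisor $\mathcal T'$, and $\overline{\mathfrak H}$ of them precisely cancels the ``$\partial$ hits another vertex'' terms of $\partial\overline{\mathfrak H}(\mathcal T)$; the monomials obtained from the non-leading terms of $\partial\mathcal S$ either are non-effective (so killed by $\mathfrak H$) or are handled by the recursive definition, producing the residual $-\overline{\mathcal T}$ on the left-hand side. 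Collecting signs via the definition of $\omega$ and of $\alpha',\beta'$ in Equations~\eqref{Eq: sign   alpha'}\eqref{Eq: sign   beta'} then yields $\partial\overline{\mathfrak H}(\mathcal T)+\mathfrak H\partial(\mathcal T-\overline{\mathcal T})=\mathcal T-\overline{\mathcal T}$.

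The main obstacle is the sign bookkeeping together with the verification that the ``bad'' terms really do match up: one must check that the effectiveness condition (Definition~\ref{Def: effective tree monomials}), in particular the requirement that there be no typical divisor and no positive-degree vertex to the left of or below the typical leaf $l$, is exactly what guarantees that $\partial$ applied to vertices strictly below or to the left of $\mathcal T'$ produces monomials whose effective divisor is still $\mathcal T'$ (so $\overline{\mathfrak H}$ sees them) while $\partial$ applied to vertices strictly above the typical leaf, or the non-leading part of $\partial\mathcal S$, produces either non-effective monomials or monomials caught by the induction. I expect that a clean way to organize this is to stratify $\partial(\mathcal T-\overline{\mathcal T})$ by the location of the vertex hit relative to $\mathcal T'$, prove a short sublemma that ``hitting a vertex below-or-left of $\mathcal T'$ preserves effectiveness and the effective divisor,'' and then observe the desired identity term by term; the sign $(-1)^\omega$ is designed precisely so that the derivation signs from $\partial$ passing these below-and-left vertices are absorbed.
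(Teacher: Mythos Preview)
Your broad strategy matches the paper's: expand $\partial\overline{\mathfrak{H}}(\mathcal{T})$ by the Leibniz rule, observe that the term where $\partial$ hits $\mathcal{S}$ yields exactly $\mathcal{T}-\overline{\mathcal{T}}$, and argue that the remaining ``other-vertex'' terms cancel against $\mathfrak{H}\partial(\mathcal{T}-\overline{\mathcal{T}})$.

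There is, however, a genuine gap in your handling of $\mathfrak{H}\partial(\mathcal{T}-\overline{\mathcal{T}})$. First, you do not invoke $\partial^2=0$: since $\mathcal{T}-\overline{\mathcal{T}}$ is $\frac{1}{l_{\mathcal{S}}}$ times the context tree with $\partial\mathcal{S}$ grafted in, the Leibniz rule gives \emph{only} terms where $\partial$ hits a context vertex $X_k$ or $Y_k$; there is no separate ``active divisor'' contribution to analyze. Second, and more seriously, your treatment of the non-leading part of $\partial\mathcal{S}$ is confused --- the claim that these monomials ``produce the residual $-\overline{\mathcal{T}}$'' is wrong. The correct mechanism, used in the paper, is a direct formal consequence of the recursive definition of $\mathfrak{H}$: if $\mathcal{U}$ is the context tree (with some $\partial X_k$ or $\partial Y_k$) with $\widehat{\mathcal{S}}$ grafted in, then $\mathcal{U}$ is effective with effective divisor $\widehat{\mathcal{S}}$ and
\[
\mathfrak{H}(\mathcal{U})=\overline{\mathfrak{H}}(\mathcal{U})+\mathfrak{H}(\overline{\mathcal{U}}),
\]
where $\overline{\mathcal{U}}$ is precisely the same context with $\widehat{\mathcal{S}}-\tfrac{1}{l_{\mathcal{S}}}\partial\mathcal{S}$ in place of $\widehat{\mathcal{S}}$. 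Hence $\mathfrak{H}$ applied to the full linear combination (context with $\tfrac{1}{l_{\mathcal{S}}}\partial\mathcal{S}$) collapses to $\overline{\mathfrak{H}}(\mathcal{U})$ alone: the non-leading monomials cancel automatically against $\mathfrak{H}(\overline{\mathcal{U}})$, with no need to decide case-by-case whether they are effective. These $\overline{\mathfrak{H}}(\mathcal{U})$ are exactly what cancel the $\partial X_k$, $\partial Y_k$ terms of $\partial\overline{\mathfrak{H}}(\mathcal{T})$.

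Finally, your proposed stratification by vertex position (below-or-left of $\mathcal{T}'$ versus above the typical leaf) is unnecessary and slightly misleading: the paper checks uniformly that for \emph{every} context vertex $X_k$ or $Y_k$, each monomial in (context with $\partial X_k$ or $\partial Y_k$ and $\widehat{\mathcal{S}}$) remains effective with the same effective divisor $\widehat{\mathcal{S}}$; there is no separate regime for vertices above the typical leaf.
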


\begin{proof}
	We can write $\mathcal{T}$ as a compositions  in the following way:
	$$(\cdots (((((\cdots(X_1\circ_{i_1}X_2)\circ\cdots )\circ_{i_{p-1}}X_p)\circ_{i_p} \widehat{\mathcal{S}})\circ_{j_1}Y_1)\circ_{j_2}Y_2)\cdots)\circ_{j_q}Y_q,$$
	 where $\widehat{\mathcal{S}}$ is the effective divisor of $\mathcal{T}$ and $X_1,\dots, X_p$ are generators of $\RB_\infty^\lambda$ corresponding to the vertices which live on the path from root of $\mathcal{T}$ and root of $\widehat{\mathcal{S}}$ (except the root of $\widehat{\mathcal{S}}$) and on the left of this path in the underlying  tree of  $\mathcal{T}$.
	
	 By definition,
	 \begin{eqnarray*}
	 	\partial \overline{\mathfrak{H}}(\mathcal{T})&=&
\frac{1}{l_\mathcal{S}}(-1)^{\sum\limits_{j=1}^p|X_j|}\partial
\Big((\cdots ((((\cdots ((X_1\circ_{i_1}X_2)\circ_{i_2}\cdots )\circ_{i_{p-1}}X_p)\circ_{i_p} \mathcal{S})\circ_{j_1}Y_1)\circ_{j_2} \cdots)\circ_{j_q}Y_q\Big)\\
&=&\frac{1}{l_\mathcal{S}}\Big(\sum_{k=1}^p(-1)^{\sum\limits_{j=1}^p|X_j|+\sum\limits_{j=1}^{k-1}|X_j|}\\
	 	&&(\cdots ((((\cdots ((\cdots(X_1\circ_{i_1} X_2)\circ_{i_2} \cdots ) \circ_{i_{k-1}}\partial X_k )\circ_{i_k} \cdots)\circ_{i_{p-1}}X_p)\circ_{i_p} \mathcal{S})\circ_{j_1}Y_1)\circ_{j_2} \cdots)\circ_{j_q}Y_q\Big)\\
	 	&&+\frac{1}{l_\mathcal{S}}\Big((\cdots ((( (\cdots\cdots(X_1\circ_{i_1}X_2)\circ_{i_2}\cdots )\circ_{i_{p-1}}X_p)\circ_{i_p} \partial \mathcal{S})\circ_{j_1}Y_1)\circ_{j_2} \cdots)\circ_{j_q}Y_q\Big)\\
	 	&&+\frac{1}{l_\mathcal{S}}\Big(\sum_{k=1}^q(-1)^{|\mathcal{S}|+\sum\limits_{j=1}^{k-1}|Y_j|}\\
	 	&&(\cdots ((\cdots((((\cdots(X_1\circ_{i_1}X_2)\circ\cdots )\circ_{i_{p-1}}X_p)\circ_{i_p} \mathcal{S})\circ_{j_1}Y_1)\circ_{j_2} \cdots ) \circ_{j_k}\partial Y_{k})\circ_{j_{k+1}}\cdots )\circ_{j_q}Y_q\Big)
	 	\end{eqnarray*}
 	 and
 	\begin{eqnarray*}
 	 	&&\mathfrak{H}\partial (\mathcal{T}-\overline{\mathcal{T}})\\
 	 	&=&\frac{1}{l_\mathcal{S}}\mathfrak{H}\partial \Big((\cdots((((\cdots(X_1\circ_{i_1}X_2)\circ_{i_2}\cdots )\circ_{i_{p-1}}X_p)\circ_{i_p} \partial \mathcal{S})\circ_{j_1}Y_1)\circ_{j_2} \cdots)\circ_{j_q}Y_q\Big)\\
 	 	&=&\frac{1}{l_\mathcal{S}}\mathfrak{H}\Big(\sum\limits_{k=1}^p(-1)^{\sum\limits_{j=1}^{k-1}|X_j|}
 (\cdots((((\cdots  ((\cdots(X_1\circ_{i_1} \cdots )\circ_{i_{k-1}}\partial X_k)\circ_{i_k}\cdots)\circ_{i_{p-1}}X_p)\circ_{i_p} \partial \mathcal{S})\circ_{j_1}Y_1)\circ_{j_2} \cdots)\circ_{j_q}Y_q\Big)\\
 	 	&&+\frac{1}{l_\mathcal{S}}\mathfrak{H}\Big(\sum_{k=1}^q(-1)^{\sum\limits_{j=1}^p|X_j|+|\mathcal{S}|-1+\sum_{j=1}^{k-1}|Y_j|}\\
 	 	&&(\cdots((\cdots ((((\cdots(X_1\circ_{i_1}X_2)\circ_{i_2}\cdots )\circ_{i_{p-1}}X_p)\circ_{i_p} \partial \mathcal{S})\circ_{j_1}Y_1)\circ_{j_2} \cdots)\circ_{j_k}\partial Y_k)\circ_{j_{k+1}}\cdots\circ_{j_q}Y_q\Big).
 	 	\end{eqnarray*}
	
By the definition of the effective divisor in an effective tree monomial, it can be easily seen that each tree monomial in the expansion of
 \begin{align*} (\cdots((((\cdots  ((\cdots(X_1\circ_{i_1} \cdots )\circ_{i_{k-1}}\partial X_k)\circ_{i_k}\cdots)\circ_{i_{p-1}}X_p)\circ_{i_p} \widehat{\mathcal{S}})\circ_{j_1}Y_1)\circ_{j_2} \cdots)\circ_{j_q}Y_q\end{align*}
 and of
 	\begin{align*}(\cdots((\cdots ((((\cdots(X_1\circ_{i_1}X_2)\circ_{i_2}\cdots )\circ_{i_{p-1}}X_p)\circ_{i_p}  \widehat{\mathcal{S}})\circ_{j_1}Y_1)\circ_{j_2} \cdots)\circ_{j_k}\partial Y_k)\circ_{j_{k+1}}\cdots\circ_{j_q}Y_q
 \end{align*}
 is still effective tree monomial whose  effective divisor is still $\widehat{\mathcal{S}}$.
 Thus we have
 \begin{eqnarray*}
 	&&\mathfrak{H}\partial(\mathcal{T}-\overline{\mathcal{T}})\\
 	 	&=&\frac{1}{l_S}\Big(\sum\limits_{k=1}^p(-1)^{\sum\limits_{j=1}^{k-1}|X_j|+\sum\limits_{j=1}^p|X_j|-1}\\
 &&(\cdots((((\cdots  ((\cdots(X_1\circ_{i_1} \cdots )\circ_{i_{k-1}}\partial X_k)\circ_{i_k}\cdots)\circ_{i_{p-1}}X_p)\circ_{i_p} \mathcal{S})\circ_{j_1}Y_1)\circ_{j_2} \cdots)\circ_{j_q}Y_q\Big)\\
 	 	&&+ \frac{1}{l_S}\Big(\sum_{k=1}^q(-1)^{|S|-1+\sum\limits_{j=1}^{k-1}|Y_j|}\\
 	 	&&(\cdots((\cdots ((((\cdots(X_1\circ_{i_1}X_2)\circ_{i_2}\cdots )\circ_{i_{p-1}}X_p)\circ_{i_p}   \mathcal{S})\circ_{j_1}Y_1)\circ_{j_2} \cdots)\circ_{j_k}\partial Y_k)\circ_{j_{k+1}}\cdots\circ_{j_q}Y_q\Big).
 	\end{eqnarray*}
Take sum of the above expansion, then we get $\partial \overline{\mathfrak{H}}(\mathcal{T})+\mathfrak{H}\partial(\mathcal{T}-\overline{\mathcal{T}})=\mathcal{T}-\overline{\mathcal{T}}$.
	\end{proof}

\begin{prop}The degree $1$ map $\mathfrak{H}$ defined above satisfies $\partial \mathfrak{H}+\mathfrak{H}\partial=\mathrm{Id}$ in all  positive degrees of $\RB_\infty^\lambda$.
	\end{prop}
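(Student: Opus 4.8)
The strategy is to leverage Lemma~\ref{Lem: Induction} together with the iterative definition of $\mathfrak{H}$ in Equation~\eqref{Eq:Definition-of-homotopy}, and to handle the non-effective tree monomials separately. Fix a tree monomial $\mathcal{T}$ of positive degree; by linearity it suffices to verify $(\partial\mathfrak{H}+\mathfrak{H}\partial)(\mathcal{T})=\mathcal{T}$ on such monomials. I would split into two cases according to whether $\mathcal{T}$ is effective or not.

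\textbf{Case 1: $\mathcal{T}$ effective.} Here I unwind the recursive definition $\mathfrak{H}(\mathcal{T})=\overline{\mathfrak{H}}(\mathcal{T})+\mathfrak{H}(\overline{\mathcal{T}})$, where $\overline{\mathcal{T}}$ is obtained by replacing the effective divisor $\widehat{\mathcal{S}}$ by $\widehat{\mathcal{S}}-\frac{1}{l_\mathcal{S}}\partial\mathcal{S}$, so that $\mathcal{T}-\overline{\mathcal{T}}=\frac{1}{l_\mathcal{S}}(\text{the tree with }\widehat{\mathcal{S}}\text{ replaced by }\partial\mathcal{S})$ and every tree monomial occurring in $\overline{\mathcal{T}}$ is strictly smaller than $\mathcal{T}$ in the graded path-lexicographic order (this is exactly the input to Lemma~\ref{Lem: homotopy well defined}, guaranteeing the recursion terminates). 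Applying $\partial\mathfrak{H}+\mathfrak{H}\partial$ and using $\partial\mathfrak{H}(\mathcal{T})=\partial\overline{\mathfrak{H}}(\mathcal{T})+\partial\mathfrak{H}(\overline{\mathcal{T}})$ and $\mathfrak{H}\partial(\mathcal{T})=\mathfrak{H}\partial(\mathcal{T}-\overline{\mathcal{T}})+\mathfrak{H}\partial(\overline{\mathcal{T}})$, Lemma~\ref{Lem: Induction} gives $\partial\overline{\mathfrak{H}}(\mathcal{T})+\mathfrak{H}\partial(\mathcal{T}-\overline{\mathcal{T}})=\mathcal{T}-\overline{\mathcal{T}}$, so the remaining terms assemble into $(\partial\mathfrak{H}+\mathfrak{H}\partial)(\overline{\mathcal{T}})$. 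Since $\overline{\mathcal{T}}$ is a (finite) linear combination of strictly smaller tree monomials, I would induct on the well-order $>$: by the inductive hypothesis $(\partial\mathfrak{H}+\mathfrak{H}\partial)(\overline{\mathcal{T}})=\overline{\mathcal{T}}$, and adding this to $\mathcal{T}-\overline{\mathcal{T}}$ yields $\mathcal{T}$. The base case (the minimal tree monomials of positive degree) must be checked to be effective, or else falls under Case 2.

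\textbf{Case 2: $\mathcal{T}$ not effective.} Then $\mathfrak{H}(\mathcal{T})=0$ by definition, so I must show $\mathfrak{H}\partial(\mathcal{T})=\mathcal{T}$. The key is that in the expansion of $\partial\mathcal{T}$ (obtained by differentiating each vertex of $\mathcal{T}$ in turn and inserting the leading-term typical divisor $\widehat{\mathcal{S}}$ plus lower terms), exactly one tree monomial arises whose $\overline{\mathfrak{H}}$-image recovers $\mathcal{T}$: namely, differentiating the appropriate vertex $\mathcal{S}$ of $\mathcal{T}$ — the left-upper-most vertex of positive degree that obstructs effectivity — produces a term containing $\widehat{\mathcal{S}}$ as its effective divisor, and the sign conventions $(-1)^\omega$ and the coefficient $1/l_\mathcal{S}$ in the definition of $\overline{\mathfrak{H}}$ are rigged precisely so that $\overline{\mathfrak{H}}$ of that term equals $\mathcal{T}$. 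All other effective tree monomials appearing in $\partial\mathcal{T}$ must be shown to cancel in pairs under $\mathfrak{H}$, again invoking Lemma~\ref{Lem: Induction} applied to the effective pieces. This bookkeeping — identifying which vertex of a non-effective $\mathcal{T}$ to differentiate, and checking the signs match — is the main obstacle, as it requires a careful case analysis of why $\mathcal{T}$ fails to be effective (whether Condition~(i) or~(ii) of Definition~\ref{Def: effective tree monomials} is violated) and a delicate comparison of the sign $\omega$ with the signs $\alpha',\beta'$ in the differential.

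\textbf{Expected main difficulty.} The technical heart is the sign and coefficient matching in Case 2 and the verification that Lemma~\ref{Lem: Induction}'s hypotheses genuinely cover all the effective sub-monomials that appear; one must confirm that differentiating a vertex \emph{strictly to the left of or below} the effective divisor never creates a \emph{new} effective divisor higher up, and that differentiating the $Y_j$'s (vertices to the right) preserves effectivity with the stated effective divisor. I would organize this by first proving a small combinatorial lemma: if $\mathcal{T}$ is not effective, there is a unique ``first obstruction'' vertex, and $\partial$ applied there contributes $l_\mathcal{S}\cdot(\pm1)$ times a term whose $\overline{\mathfrak{H}}$ is $\mathcal{T}$. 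Granting that lemma, summing Lemma~\ref{Lem: Induction} over the effective constituents of $\partial\mathcal{T}$ and using that the non-effective constituents are killed by $\mathfrak{H}$ closes Case 2. Finally, since $\{m_2,T_1\}$ generate $\RB^\lambda$ in degree zero and $\rmH_0(\RB_\infty^\lambda)\cong\RB^\lambda$ was already established, the identity $\partial\mathfrak{H}+\mathfrak{H}\partial=\mathrm{Id}$ in positive degrees forces $\rmH_i(\RB_\infty^\lambda)=0$ for $i\geqslant1$, which together with the decomposability and weight-grading conditions (immediate from Definition~\ref{Def: Minimal model of operads} applied to the defining relations \eqref{Eq: defining HRB 1}--\eqref{Eq: defining HRB 2}) completes the proof of Theorem~\ref{Thm: Minimal model}.
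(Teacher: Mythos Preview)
Your Case 1 is essentially identical to the paper's argument: split $\mathfrak{H}(\mathcal{T})=\overline{\mathfrak{H}}(\mathcal{T})+\mathfrak{H}(\overline{\mathcal{T}})$, apply Lemma~\ref{Lem: Induction} to the first piece, and close by induction on the well-order.

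Your Case 2 has the right skeleton---locate the left-upper-most positive-degree vertex $\mathcal{S}$, note that the leading monomial $\widehat{\mathcal{S}}$ in $\partial\mathcal{S}$ produces an effective tree whose $\overline{\mathfrak{H}}$ returns $\mathcal{T}$---but you mischaracterize the cancellation mechanism for the remaining terms. You propose to cancel ``in pairs'' via Lemma~\ref{Lem: Induction}; the paper does \emph{not} invoke that lemma here. Instead, the paper argues two cleaner facts. First, differentiating any vertex $X_k$ or $Y_k$ (anything other than $\mathcal{S}$) produces \emph{only non-effective} tree monomials, so $\mathfrak{H}$ annihilates them outright; this is checked by classifying the path from $\mathcal{S}$ to the leftmost leaf above it into four shapes and observing that after replacing $\mathcal{S}$ by $\widehat{\mathcal{S}}$ the effective divisor is precisely $\widehat{\mathcal{S}}$. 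Second, for the $\partial\mathcal{S}$ term itself, one writes $\partial\mathcal{S}=l_\mathcal{S}\widehat{\mathcal{S}}+(\partial\mathcal{S}-l_\mathcal{S}\widehat{\mathcal{S}})$ and uses the recursive definition $\mathfrak{H}=\overline{\mathfrak{H}}+\mathfrak{H}\circ(\,\overline{\ \cdot\ }\,)$ on the $\widehat{\mathcal{S}}$ piece: the $\mathfrak{H}(\overline{\,\cdot\,})$ contribution from the leading piece is \emph{identically} $-\mathfrak{H}$ applied to the lower-order piece $(\partial\mathcal{S}-l_\mathcal{S}\widehat{\mathcal{S}})$, since $\overline{(\ldots\widehat{\mathcal{S}}\ldots)}=(\ldots(\widehat{\mathcal{S}}-\tfrac{1}{l_\mathcal{S}}\partial\mathcal{S})\ldots)$. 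This is a direct algebraic cancellation, not a pairwise one mediated by Lemma~\ref{Lem: Induction}.

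So the gap in your proposal is localized: your ``small combinatorial lemma'' about the first obstruction vertex is correct and matches the paper, but the subsequent appeal to Lemma~\ref{Lem: Induction} is misplaced. Replace it with (a) the observation that $\partial$ applied to any other vertex yields non-effective monomials, and (b) the tautological cancellation from the recursive definition of $\mathfrak{H}$, and your argument becomes the paper's.
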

\begin{proof}
Let $\mathcal{T}$  be an effective tree monomial.  Since the leading term of  $\overline{\mathcal{T}}$ is strictly smaller than $\mathcal{T}$, by induction, we have $$\mathfrak{H}\partial(\overline{\mathcal{T}})+\partial \mathfrak{H}(\overline{\mathcal{T}})=\overline{\mathcal{T}}.$$
	By the definition of $\mathfrak{H}$, $\mathfrak{H}(\mathcal{T})=\overline{\mathfrak{H}}(\mathcal{T})+\mathfrak{H}(\overline{\mathcal{T}})$ and we have $\partial \mathfrak{H}(\mathcal{T})=\partial \overline{\mathfrak{H}}(\mathcal{T})+\partial \mathfrak{H}(\overline{\mathcal{T}})$. Thus, $$\begin{array}{rcl} \partial \mathfrak{H}(\mathcal{T})+\mathfrak{H}\partial (\mathcal{T})&=&\partial \overline{\mathfrak{H}}(\mathcal{T})+\partial \mathfrak{H}(\overline{\mathcal{T}})+\mathfrak{H}\partial(\mathcal{T}-\overline{\mathcal{T}})+\mathfrak{H}\partial(\overline{\mathcal{T}})\\
  &=&\partial \overline{\mathfrak{H}}(\mathcal{T})+\mathfrak{H}\partial(\mathcal{T}-\overline{\mathcal{T}})+\partial \mathfrak{H}(\overline{\mathcal{T}})+\mathfrak{H}\partial(\overline{\mathcal{T}})\\
  &=&\mathcal{T}-\overline{\mathcal{T}}+\overline{\mathcal{T}}\\
  &=&\mathcal{T},\end{array}$$
  where in the third equality we have used the induction hypothesis and $$\partial \overline{\mathfrak{H}}(\mathcal{T})+\mathfrak{H}\partial(\mathcal{T}-\overline{\mathcal{T}})=\mathcal{T}-\overline{\mathcal{T}}$$ by Lemma \ref{Lem: Induction}.

  \medskip

	Next let's prove that for a non-effective tree monomial $\mathcal{T}$, the equation $\partial \mathfrak{H}(\mathcal{T})+\mathfrak{H}\partial(\mathcal{T})=\mathcal{T}$ holds.

By the definition of $\mathfrak{H}$, since $\mathcal{T}$ is not effective, $\mathfrak{H}(\mathcal{T})=0$, thus we just need to check that $\mathfrak{H}\partial(\mathcal{T})=\mathcal{T}$. Since $\mathcal{T}$ has   positive degree, there must exists at least one vertex of positive degree. Let's pick a special vertex $\mathcal{S}$ satisfying  the following conditions:
	\begin{itemize}
		\item[(i)] on the path from $\mathcal{S}$ to the leftmost leaf $l$ of $\mathcal{T}$ above $\mathcal{S}$, there are no other vertices of positive degree;
		\item[(ii)] for any leaf $l'$ of $\mathcal{T}$ located on the left of $l$, the vertices  on the path from the root of $\mathcal{T}$ to $l'$ are all of degree 0.
	\end{itemize}
It is easy to see such a vertex always exists in $\mathcal{T}$. Morally, this vertex is the ``left-upper-most" vertex of positive degree.
	Then the tree monomial  $\mathcal{T}$ can be written as
	$$(\cdots((((\cdots(X_1\circ_{i_1}X_2)\circ_{i_2}\cdots )\circ_{i_{p-1}}X_p)\circ_{i_p} \mathcal{S})\circ_{j_1}Y_1)\circ_{j_2} \cdots)\circ_{j_q}Y_q,$$
	where $X_1,\dots,X_p$ corresponds to the vertices located on the path from the root of $\mathcal{T}$ to $\mathcal{S}$ and on the left of this path in the plane.
	
	By definition,
	\begin{align*}
		&\mathfrak{H}\partial \mathcal{T}\\
		=&\mathfrak{H}\ \ {\Huge\{}\\
		&\sum_{k=1}^p(-1)^{\sum\limits_{t=1}^{k-1}|X_t|}
(\cdots(( ((\cdots((\cdots(X_1\circ_{i_1} \cdots)\circ_{i_k}\partial X_k)\circ_{i_{k+1}}\cdots )\circ_{i_{p-1}}X_p)\circ_{i_p} \mathcal{S})\circ_{j_1}Y_1)\circ_{j_2} \cdots)\circ_{j_q}Y_q\\
		&+(-1)^{\sum_{t=1}^p|X_t|}(\cdots((((\cdots(X_1\circ_{i_1}X_2)\circ\cdots )\circ_{i_{p-1}}X_p)\circ_{i_p} \partial \mathcal{S})\circ_{j_1}Y_1)\circ_{j_2} \cdots)\circ_{j_q}Y_q\\
		&\sum_{k=1}^q(-1)^{\sum\limits_{t=1}^p|X_t|+|\mathcal{S}|+\sum\limits_{t=1}^{k-1}|Y_t|} (\cdots(\cdots((((\cdots(X_1\circ_{i_1}X_2)\circ_{i_2}\cdots )\circ_{i_{p-1}}X_p)\circ_{i_p} \mathcal{S})\circ_{j_1}Y_1)\circ_{j_2} \cdots)\circ_{j_k}\partial Y_{k})\cdots\circ_{j_q}Y_q\\
		&{\Huge \}}
	\end{align*}
	
	By the assumption, the divisor consisting of the path from $\mathcal{S}$ to $l$ must be of the following forms
	\begin{eqnarray*}
		\begin{tikzpicture}[scale=0.6,descr/.style={fill=white}]
			\tikzstyle{every node}=[thick,minimum size=4pt, inner sep=1pt]
			\node(r) at(0,-0.5)[minimum size=0pt, label=below:$(A)$]{};
			\node(0) at(0,0)[circle, fill=black, label=right:$m_n(n\geqslant 3)$]{};
			\node(1-1) at(-2,2)[circle, draw]{};
			\node(1-2) at(0,2){};
			\node(1-3) at (2,2){};
			\node(2-1) at(-3,3)[circle,draw]{};
			\node(3-1) at(-4,4){};
			\draw(0)--(1-1);
			\draw(0)--(1-2);
			\draw(0)--(1-3);
			\draw[dotted, line width=1pt](-0.8,1)--(0.8,1);
			\draw[dotted, line width=1pt](1-1)--(2-1);
			\draw(2-1)--(3-1);
			\path[-,font=\scriptsize]
			(-1.8,1.2) edge [bend left=80] node[descr]{{\tiny$\sharp\geqslant 0$}} (-3.8,3.2);
			
		\end{tikzpicture}
		\hspace{4mm}
		\begin{tikzpicture}[scale=0.6,descr/.style={fill=white}]
			\tikzstyle{every node}=[thick,minimum size=4pt, inner sep=1pt]
			\node(r) at(0,-0.5)[minimum size=0pt, label=below:$(B)$]{};
			\node(0) at(0,0)[circle, fill=black, label=right:$m_n(n\geqslant 3)$]{};
			\node(1-1) at(-2,2)[circle, draw]{};
			\node(1-2) at(0,2){};
			\node(1-3) at (2,2){};
			\node(2-1) at(-3,3)[circle,draw]{};
			\node(3-1) at(-4,4)[circle, draw, fill=black]{};
			\node(4-1) at (-5,5){};
			\node(4-2) at(-3,5){};
			\draw(0)--(1-1);
			\draw(0)--(1-2);
			\draw(0)--(1-3);
			\draw[dotted, line width=1pt](-0.8,1)--(0.8,1);
			\draw[dotted, line width=1pt](1-1)--(2-1);
			\draw(2-1)--(3-1);
			\draw(3-1)--(4-1);
			\draw(3-1)--(4-2);
			\path[-,font=\scriptsize]
			(-1.8,1.2) edge [bend left=80] node[descr]{{\tiny$\sharp\geqslant 1$}} (-3.8,3.2);
		\end{tikzpicture}\\
		\begin{tikzpicture}[scale=0.6,descr/.style={fill=white}]
			\tikzstyle{every node}=[thick,minimum size=4pt, inner sep=1pt]
			\node(r) at(0,-0.5)[minimum size=0pt, label=below:$(C)$]{};
			\node(0) at(0,0)[circle, draw, label=right:$T_n(n\geqslant 2)$]{};
			\node(1-1) at(-2,2)[circle, draw]{};
			\node(1-2) at(0,2){};
			\node(1-3) at (2,2){};
			\node(2-1) at(-3,3)[circle,draw]{};
			\node(3-1) at(-4,4){};
			\draw(0)--(1-1);
			\draw(0)--(1-2);
			\draw(0)--(1-3);
			\draw[dotted, line width=1pt](-0.8,1)--(0.8,1);
			\draw[dotted, line width=1pt](1-1)--(2-1);
			\draw(2-1)--(3-1);
			\path[-,font=\scriptsize]
			(-1.8,1.2) edge [bend left=80] node[descr]{{\tiny$\sharp\geqslant 0$}} (-3.8,3.2);
		\end{tikzpicture}
		\hspace{4mm}
		\begin{tikzpicture}[scale=0.6,descr/.style={fill=white}]
			\tikzstyle{every node}=[thick,minimum size=4pt, inner sep=1pt]
			\node(r) at(0,-0.5)[minimum size=0pt, label=below:$(D)$]{};
			\node(0) at(0,0)[circle, draw, label=right:$T_n(n\geqslant 2)$]{};
			\node(1-1) at(-2,2)[circle, draw]{};
			\node(1-2) at(0,2){};
			\node(1-3) at (2,2){};
			\node(2-1) at(-3,3)[circle,draw]{};
			\node(3-1) at(-4,4)[circle, draw, fill=black]{};
			\node(4-1) at (-5,5){};
			\node(4-2) at(-3,5){};
			\draw(0)--(1-1);
			\draw(0)--(1-2);
			\draw(0)--(1-3);
			\draw[dotted, line width=1](-0.8,1)--(0.8,1);
			\draw[dotted, line width=1pt](1-1)--(2-1);
			\draw(2-1)--(3-1);
			\draw(3-1)--(4-1);
			\draw(3-1)--(4-2);
			\path[-,font=\scriptsize]
			(-1.8,1.2) edge [bend left=80] node[descr]{{\tiny$\sharp\geqslant 0$}} (-3.8,3.2);
		\end{tikzpicture}
	\end{eqnarray*}
	By the assumption that $\mathcal{T}$ is not effective and the speciality of the position of $\mathcal{S}$, one can see that the effective tree monomials in $\partial \mathcal{T}$ will only appear in the expansion of
	$$(-1)^{\sum_{t=1}^p |X_t| }(\cdots((((\cdots(X_1\circ_{i_1}X_2)\circ_{i_2}\cdots )\circ_{i_{p-1}}X_p)\circ_{i_p} \partial \mathcal{S})\circ_{j_1}Y_1)\circ_{j_2}  \cdots)\circ_{j_q}Y_q.$$
	
	Consider the tree monomial $$(\cdots((((\cdots(X_1\circ_{i_1}X_2)\circ_{i_2}\cdots )\circ_{i_{p-1}}X_p)\circ_{i_p} \widehat{ \mathcal{S}})\circ_{j_1}Y_1)\circ_{j_2} \cdots)\circ_{j_q}Y_q$$ in $\partial \mathcal{T}$. The the path connecting root of $\widehat{\mathcal{S}}$ and $l$ must be of the following form:
	
		\begin{eqnarray*}
		\begin{tikzpicture}[scale=0.6,descr/.style={fill=white}]
			\tikzstyle{every node}=[thick,minimum size=4pt, inner sep=1pt]
			\node(r) at(0,-0.5)[minimum size=0pt,label=below:$(A)$]{};
			\node(1) at (0,0)[circle,draw,fill=black,label=right:$m_{n-1}$]{};
			\node(2-1) at(-1,1)[circle,draw,fill=black]{};
			\node(2-2) at (1,1) {};
			\node(3-2) at(0,2){};
			\node(3-1) at (-2,2)[circle,draw]{};
			\node(4-1) at (-3,3)[circle,draw]{};
			\node(5-1) at (-4,4){};
			\draw(1)--(2-1);
			\draw(1)--(2-2);
			\draw(2-1)--(3-1);
			\draw(2-1)--(3-2);
			\draw [dotted,line width=1pt](-0.4,0.5)--(0.4,0.5);
			\draw(2-1)--(3-1);
			\draw[dotted,line width=1pt] (3-1)--(4-1);
			\draw(4-1)--(5-1);
			\path[-,font=\scriptsize]
			(-1.8,1.2) edge [bend left=80] node[descr]{{\tiny$\sharp\geqslant 0$}} (-3.8,3.2);
		\end{tikzpicture}
		\hspace{10mm}
		\begin{tikzpicture}[scale=0.6,descr/.style={fill=white}]
			\tikzstyle{every node}=[thick,minimum size=4pt, inner sep=1pt]
			\node(r) at(0,-0.5)[minimum size=0pt,label=below:$(B)$]{};
			\node(1) at (0,0)[circle,draw,fill=black,label=right:$m_{n-1}$]{};
			\node(2-1) at(-1,1)[circle,draw,fill=black]{};
			\node(2-2) at (1,1) {};
			\node(3-2) at(0,2){};
			\node(3-1) at (-2,2)[circle,draw]{};
			\node(4-1) at (-3,3)[circle,draw]{};
			\node(5-1) at (-4,4)[circle,draw,fill=black]{};
			\node(6-1) at(-5,5){};
			\node(6-2) at (-3,5){};
			\draw(1)--(2-1);
			\draw(1)--(2-2);
			\draw(2-1)--(3-1);
			\draw(2-1)--(3-2);
			\draw [dotted,line width=1pt](-0.4,0.5)--(0.4,0.5);
			\draw(2-1)--(3-1);
			\draw[dotted,line width=1pt] (3-1)--(4-1);
			\draw(4-1)--(5-1);
			\draw(5-1)--(6-1);
			\draw(5-1)--(6-2);
			\path[-,font=\scriptsize]
			(-1.8,1.2) edge [bend left=80] node[descr]{{\tiny$\sharp\geqslant 1$}} (-3.8,3.2);
		\end{tikzpicture}\\
		\\
		\begin{tikzpicture}[scale=0.6,descr/.style={fill=white}]
			\tikzstyle{every node}=[thick,minimum size=4pt, inner sep=1pt]
			\node(r) at(0,-0.5)[minimum size=0pt,label=below:$(C)$]{};
			\node(1) at (0,0)[circle,draw,label=right:$T_{n-1}$]{};
			\node(2-1) at(-1,1)[circle,draw,fill=black]{};
			\node(2-2) at (1,1) {};
			\node(3-1) at(-2,2)[circle,draw]{};
			\node(3-2) at(0,2){};
			\node(4-1) at(-3,3)[circle,draw]{};
			\node(5-1) at (-4,4){};
			\draw(1)--(2-1);
			\draw(1)--(2-2);
			\draw(2-1)--(3-1);
			\draw(2-1)--(3-2);
			\draw[dotted,line width=1pt](3-1)--(4-1);
			\draw(4-1)--(5-1);
			\draw [dotted,line width=1pt](-0.4,0.5)--(0.4,0.5);
			\path[-,font=\scriptsize]
			(-1.8,1.2) edge [bend left=80] node[descr]{{\tiny$\sharp\geqslant 1$}} (-3.8,3.2);
		\end{tikzpicture}
		\hspace{10mm}
		\begin{tikzpicture}[scale=0.6,descr/.style={fill=white}]
			\tikzstyle{every node}=[thick,minimum size=4pt, inner sep=1pt]
			\node(r) at(0,-0.5)[minimum size=0pt,label=below:$(D)$]{};
			\node(1) at (0,0)[circle,draw,label=right:$T_{n-1}$]{};
			\node(2-1) at(-1,1)[circle,draw,fill=black]{};
			\node(2-2) at (1,1) {};
			\node(3-1) at(-2,2)[circle,draw]{};
			\node(3-2) at(0,2){};
			\node(4-1) at(-3,3)[circle,draw]{};
			\node(5-1) at (-4,4)[circle,draw,fill=black]{};
			\node(6-1) at(-5,5){};
			\node(6-2) at(-3,5){};
			\draw(1)--(2-1);
			\draw(1)--(2-2);
			\draw(2-1)--(3-1);
			\draw[dotted,line width=1pt](3-1)--(4-1);
			\draw(4-1)--(5-1);
			\draw(5-1)--(6-1);
			\draw(5-1)--(6-2);
			\draw(2-1)--(3-2);
			\draw [dotted,line width=1pt](-0.4,0.5)--(0.4,0.5);
			\path[-,font=\scriptsize]
			(-1.8,1.2) edge [bend left=80] node[descr]{{\tiny$\sharp\geqslant 1$}} (-3.8,3.2);
		\end{tikzpicture}
	\end{eqnarray*}
	So the tree monomial $$(\cdots((((\cdots(X_1\circ_{i_1}X_2)\circ_{i_2}\cdots )\circ_{i_{p-1}}X_p)\circ_{i_p} \widehat{ \mathcal{S}})\circ_{j_1}Y_1)\circ_{j_2} \cdots)\circ_{j_q}Y_q$$ is effective and its effective divisor is exactly $\widehat{\mathcal{S}}$ itself.
	Then we have
	\begin{eqnarray*}&&\mathfrak{H}\partial \mathcal{T}\\
		&=&\mathfrak{H}((-1)^{\sum_{t=1}^p |X_t|}(\cdots((((\cdots(X_1\circ_{i_1}X_2)\circ\cdots )\circ_{i_{p-1}}X_p)\circ_{i_p} \partial \mathcal{S})\circ_{j_1}Y_1)\circ_{j_2}Y_2\cdots)\circ_{j_q}Y_q)\\
		&=&l_\mathcal{S}\mathfrak{H}\Big((-1)^{\sum_{t=1}^p |X_t|}(\cdots((((\cdots(X_1\circ_{i_1}X_2)\circ_{i_2}\cdots )\circ_{i_{p-1}}X_p)\circ_{i_p} \widehat{ \mathcal{S}})\circ_{j_1}Y_1)\circ_{j_2} \cdots)\circ_{j_q}Y_q\Big)\\
		&&+\mathfrak{H}\Big((-1)^{\sum_{t=1}^p |X_t|}(\cdots((((\cdots(X_1\circ_{i_1}X_2)\circ_{i_2}\cdots )\circ_{i_{p-1}}X_p)\circ_{i_p} (\partial \mathcal{S}-l_\mathcal{S}\widehat{\mathcal{S}}))\circ_{j_1}Y_1)\circ_{j_2} \cdots)\circ_{j_q}Y_q\Big)\\
		&=& l_\mathcal{S}\overline{\mathfrak{H}}\Big((-1)^{\sum_{t=1}^p |X_t|}(\cdots((((\cdots(X_1\circ_{i_1}X_2)\circ_{i_2}\cdots )\circ_{i_{p-1}}X_p)\circ_{i_p} \widehat{ \mathcal{S}})\circ_{j_1}Y_1)\circ_{j_2} \cdots)\circ_{j_q}Y_q\Big)\\
		&&+l_\mathcal{S}\mathfrak{H}\Big((-1)^{\sum_{t=1}^p |X_t|}(\cdots((((\cdots(X_1\circ_{i_1}X_2)\circ_{i_2}\cdots )\circ_{i_{p-1}}X_p)\circ_{i_p}  (\widehat{\mathcal{S}}-\frac{1}{l_\mathcal{S}}\partial \mathcal{S}))\circ_{j_1}Y_1)\circ_{j_2} \cdots)\circ_{j_q}Y_q\Big)\\
		&&+\mathfrak{H}\Big((-1)^{\sum_{t=1}^p |X_t|}(\cdots((((\cdots(X_1\circ_{i_1}X_2)\circ_{i_2}\cdots )\circ_{i_{p-1}}X_p)\circ_{i_p} (\partial \mathcal{S}-l_\mathcal{S}\widehat{\mathcal{S}}))\circ_{j_1}Y_1)\circ_{j_2} \cdots)\circ_{j_q}Y_q\Big)\\
		&=& l_\mathcal{S}\overline{\mathfrak{H}}\Big((-1)^{\sum_{t=1}^p |X_t|}(\cdots((((\cdots(X_1\circ_{i_1}X_2)\circ_{i_2}\cdots )\circ_{i_{p-1}}X_p)\circ_{i_p} \widehat{ \mathcal{S}})\circ_{j_1}Y_1)\circ_{j_2} \cdots)\circ_{j_q}Y_q\Big)\\
		&=&(\cdots((((\cdots(X_1\circ_{i_1}X_2)\circ_{i_2}\cdots )\circ_{i_{p-1}}X_p)\circ_{i_p} \mathcal{S})\circ_{j_1}Y_1)\circ_{j_2} \cdots)\circ_{j_q}Y_q\\
		&=& \mathcal{T} .
	\end{eqnarray*}
	
	This completes the proof.
  \end{proof}

\textbf{Proof of Theorem~\ref{Thm: Minimal model}:}

We have proved that the natural morphism $\phi: \RB_\infty^\lambda\twoheadrightarrow \RB^\lambda$ is a surjective quasi-isomorphism, and  it can be easily seen that the differential $\partial$ on $\RB_\infty^\lambda$ satisfies the conditions $(1)(2)$ in Definition~\ref{Def: Minimal model of operads}.

\begin{remark}\label{rem: dotsenko}We are grateful to  Dotsenko who kindly pointed out an alternative proof of Theorem~\ref{Thm: Minimal model}.

Let $\mathfrak{Tr}$ be the free operad generated by a unary operation $T$ and a binary operation $\mu.$ Then $\RB\cong \mathfrak{Tr}/<\tilde{G}>$ where $\tilde{G}$ is the defining relations for Rota-Baxter algebras. In \cite{DK13}, the authors prove that $\tilde{G}$ is a Gr\"obner-Shirshov basis and they constructed the minimal model for the operad $\mathfrak{Tr}/<G>$ where $G$ is the set of leading monomials in $\tilde{G}$. Denote this minimal model by ${}_m\RB_\infty^\lambda$.
	
	Now, let's introducing a new ordering $\prec$ on the the set $\mathfrak{Tr}(n)$ as follows: For two tree monomials $\mathcal{T, T'}$ in $\mathfrak{Tr}(n)$,
	\begin{itemize}
		\item[(1)]If $\mbox{weight}(\mathcal{T})<\mbox{weight}( \mathcal{T'})$, then $\mathcal{T}\prec\mathcal{T'}$.
		\item[(2)] If $\mbox{weight}(\mathcal{T})=\mbox{weight}( \mathcal{T'})$, compare $\mathcal{T}$ with $\mathcal{T'}$ via the natural path-lexicographic induced by setting $T\prec \mu.$
	\end{itemize}
With respect to the ordering $\prec$, the set $\mathfrak{Tr}(n)$ becomes a totally ordered set: $\mathfrak{Tr}(n)=\{x_1\prec x_2\prec x_3\prec\dots\}\cong \mathbb{N}^+$. Now, we define a map $\omega:\RB_\infty^\lambda\rightarrow \mathfrak{Tr}$ by replacing vertices $m_n (n\geqslant 2), T_n(n\geqslant 1)$ by $\underbrace{\mu\circ_1\mu\circ_1\mu\dots\circ_1\mu}_{n-1}$ and $ \underbrace{T\circ_1\mu\circ_1T\circ_1\mu\circ\dots\circ_1T\circ_1\mu\circ_1T}_{2n-1}$ in $\mathfrak{Tr}$ respectively. Notice that different tree monomials in $\RB_\infty^\lambda$ may have same image in $\mathfrak{Tr}$ under the action $\omega$.
 Define $\mathcal{F}^n_i $  to be the subspace of $\RB_\infty^\lambda(n)$ spanned by the tree monomials $\mathcal{R}$ with $\omega(\mathcal{R})$ smaller than or equal to $x_i$ with respect to  the ordering $\prec$ on $\mathfrak{Tr}(n)$.
Then we get a bounded below and exhausitive filtration for $\RB_\infty^\lambda(n)$ \[0=\mathcal{F}^n_0\subset \mathcal{F}^n_1\subset\mathcal{F}^n_2\subset \dots .\]
It  can be easily seen that the filtration in compatible with the differential $\partial$ on $\RB_\infty^\lambda(n)$. Morever, one can prove that there is an isomorphism of complexes
$$\bigoplus_{i\geq0}\mathcal{F}^n_{i+1}/\mathcal{F}^n_i\cong {}_m\RB_\infty^\lambda(n).$$
Since all positive homologies of ${}_m\RB_\infty^\lambda(n)$ vanishes, by classical spectral sequence argument, we have that all positive homologies of $\RB_\infty^\lambda(n)$ are trvial. 

This provides another proof for   Theorem \ref{Thm: Minimal model}.
	\end{remark}

\section*{Appendix A: Proof of Theorem~\ref{Thm: rb-L-infty}}

In this appendix, we will  prove Theorem~\ref{Thm: rb-L-infty}.

%

{\noindent{{\bf{Theorem~\ref{Thm: rb-L-infty}.}}} Given a graded space $V$  and an element $\lambda\in \bfk$, the graded space ${\mathfrak{C}_{\RBA}}(V)$ endowed with operations $\{l_n\}_{n\geqslant 1}$ defined above forms an $L_\infty$-algebra.
}

\bigskip

 By the definition of $L_\infty$-algebras, we need to check that the operators $\{l_n\}_{n\geqslant 1}$ on  $\mathfrak{C}_{\RBA}(V)$ satisfy the generalised anti-symmetry and the generalised Jacobi identity in Definition \ref{Def:L-infty}.
	
	  The operators  $\{l_n\}_{n\geqslant 1}$  is automatically  anti-symmetric by construction.

\bigskip

	Now, we are going to check that $\{l_n\}_{n\geqslant 1}$ satisfies the generalised Jacobi identity, i.e, the following equation:
	\begin{eqnarray}\label{Eq:jacobi}\quad \\
 \sum\limits_{i=1}^m\sum\limits_{\sigma\in \Sh(i,m-i)}\chi(\sigma; x_1,\dots,x_m)(-1)^{i(m-i)}l_{m-i+1}(l_i(x_{\sigma(1)}\ot \cdots \ot x_{\sigma(i)})\ot x_{\sigma(i+1)}\ot \cdots \ot x_{\sigma(m)})=0\nonumber
	\end{eqnarray}
	for any $x_1,\dots,x_m\in\mathfrak{C}_{\RBA}(V), m\geqslant 1$.

\medskip

By Remark~\ref{Rem: L-infinity for small n} (i), when $m=1$, Equation~(\ref{Eq:jacobi}) holds by definition of $l_1$;
	when $m=3$ and all $ x_1,x_2,x_3 \in \mathfrak{C}_{\Alg}(V)$, the LHS of  Remark~\ref{Rem: L-infinity for small n}(iii),   is just the usual  Jabobi identity of the graded Lie algebra $\mathfrak{C}_{\Alg}(V)$ endowed with Gerstenhaber Lie bracket and the RHS of Remark~\ref{Rem: L-infinity for small n}(iii) vanishes because $l_1$   sends an element of $\mathfrak{C}_{\Alg}(V)$ to zero or an element of $\mathfrak{C}_{\RBO}(V)$ in which case  we have two elements of $\mathfrak{C}_{\Alg}(V)$ and one in $\mathfrak{C}_{\RBO}(V)$, then $l_3$ applied to these three elements would give zero.

We have seen that one only  needs to check  Equation~(\ref{Eq:jacobi}) with some $x_i\in \mathfrak{C}_{\RBO}(V)$.

	\medskip

	By definition, $l_m(x_1,\dots,x_m)=0$ when $x_1,\dots,x_m$ are all contained in $ \mathfrak{C}_{\RBO}(V)$. So we have  $$l_{m-i+1}(l_i(x_{\sigma(1)}\ot \cdots \ot x_{\sigma(i)})\ot x_{\sigma(i+1)}\ot \cdots \ot x_{\sigma(m)})=0$$ unless there are exactly two elements in $\{x_1,\dots,x_m\}$ belonging to $\mathfrak{C}_{\Alg}(V)$.
Write $m=n+2$ and assume that $$ x_1=sh_1\in \Hom((sV)^{\ot n_1},sV)\subset \mathfrak{C}_{\Alg}(V), x_2=sh_2\in \Hom((sV)^{\ot n_2},sV)\subset \mathfrak{C}_{\Alg}(V)$$ and $$x_3=g_1, \dots,  x_{n+2}=g_n\in   \mathfrak{C}_{\RBO}(V).$$
Since the expansion of the Equation~(\ref{Eq:jacobi}) depends on the integers $n_1,n_2,n$, we classify the following cases:
\begin{itemize}
	\item[(I)] $n<\min\{n_1,n_2\}$,
	\item[(II)] $\min\{n_1,n_2\}\leqslant n<\max\{n_1,n_2\}$,
	\item[(III)] $\max\{n_1,n_2\}\leqslant n<n_1+n_2-1$,
	\item[(IV)] $n=n_1+n_2-1$.
\end{itemize}

\medskip

 Now, we   check  Equation~(\ref{Eq:jacobi}) for Case $(\mathrm{I})$.

Assume first that $n_1, n_2\geqslant 1$.
Given $\sigma\in S_m$ and graded elements $g_1,\dots,g_m$, denote $\varepsilon_i^\sigma=\sum\limits_{j=1}^i|g_{\sigma(j)}|$ and $\eta_i^m=\sum\limits_{j=1}^i(|g_{\sigma(j)}|+1)$.
	The expansion of  Equation~(\ref{Eq:jacobi}) contains the following three parts:

\smallskip
	
\quad	\noindent $\mathrm{(1)}$ The terms that $sh_1,sh_2$ are both contained in $l_i(\dots)$  :
	\begin{align*}
		{{A}}=&l_{n+1}(l_2(sh_1\ot sh_2)\ot g_1\ot \cdots \ot g_n)\\
		\stackrel{(I)}{=}&l_{n+1}([sh_1,sh_2]_G\ot g_1\ot \cdots \ot g_n)\\
		\stackrel{(\ref{Eq: Gerstahaber bracket})}{=}&l_{n+1}(sh_1\{ sh_2\}\ot g_1\ot \cdots \ot g_n)+(-1)^{1+(|h_1|+1)(|h_2|+1)}l_{n+1}(sh_2\{ sh_1\}\ot g_1\ot \cdots \ot g_n)\\
		\stackrel{(III)(ii)}{=}&\ \ \  \sum_{\delta\in S_n}(-1)^{\alpha_1}\lambda^{n_1+n_2-1-n}g_{\delta(1)}\Big\{\big(sh_1\{ sh_2\}\big)\big\{ sg_{\delta(2)},\dots,sg_{\delta(n)}\big\}\Big\}\\
		&+\sum_{\delta\in S_n}(-1)^{\alpha_1}(-1)^{1+(|h_1|+1)(|h_2|+1)}\lambda^{n_1+n_2-1-n}g_{\delta(1)}\Big\{\big(sh_2\{ sh_1\}\big) \big\{sg_{\delta(2)},\dots,sg_{\delta(n)}\big\}\Big\}\\
		\stackrel{(\ref{Eq: pre-jacobi})}{=}&\ \ \ \sum_{k=0}^{n-i}\sum_{i=1}^n\sum_{\delta\in S_n}(-1)^{\alpha_2}\lambda^{n_1+n_2-1-n}\cdot \\
		&\ \ \ \ \  g_{\delta(1)}\Big\{sh_1 \Big\{sg_{\delta(2)},\dots, sg_{\delta(i)},sh_2\{sg_{\delta(i+1)},\dots,sg_{\delta(i+k)}\},sg_{\delta(i+k+1)}, \dots, sg_{\delta(n)}\Big\}\Big\}	\\
		&+\sum_{k=0}^{n-i}\sum_{i=1}^n\sum_{\delta\in S_n}(-1)^{\alpha_3}\lambda^{n_1+n_2-1-n}\cdot \\
		&\ \ \ \ \ g_{\delta(1)}\Big\{sh_2 \Big\{sg_{\delta(2)},\dots, sg_{\delta(i)},sh_1\big\{sg_{\delta(i+1)}, \dots,sg_{\delta(i+k)}\big\},sg_{\delta(i+k+1)},\dots,sg_{\delta(n)}\Big\}\Big\},
	\end{align*}
	where the signs $(-1)^{\alpha_j}, j=1,2,3$ are the following:
	\begin{eqnarray*}(-1)^{\alpha_1}&=&\chi(\delta,g_1,\dots g_n)(-1)^{\sum\limits_{j=1}^{n-1}\varepsilon_j^\delta+1+n(|h_1|+|h_2|)+ (|h_1|+|h_2|)(|g_{\delta(1)}|+1)},\\
(-1)^{\alpha_2}&=&\chi(\delta; g_1,\dots,g_n)(-1)^{\sum\limits_{j=1}^{n-1}\varepsilon_j^\delta+1+n(|h_1|+|h_2|)+
(|h_1|+1)(|g_{\delta(1)}|+1)+(|h_2|+1)\eta_i^\delta},\\
		(-1)^{\alpha_3}		&=&\chi(\delta;g_1,\dots,g_n)(-1)^{\sum\limits_{j=1}^{n-1}\varepsilon_j^\delta+n(|h_1|+|h_2|)+(|h_1|+1)(|h_2|+1)+(|h_2|+1)(|g_{\delta(1)}|+1)
+(|h_1|+1)  \eta_i^\delta }.
	\end{eqnarray*}

	\medskip
	
	\quad \noindent $\mathrm{(2)}$    The terms with $sh_1$ contained in $l_i(\dots)$ :
	\begin{align*}
		{ B}=&\sum_{i=1}^n\sum_{\sigma\in \Sh(i,n-i)}(-1)^{\beta_1}l_{n-i+2}\Big(l_{i+1}(sh_1,g_{\sigma(1)},\dots,g_{\sigma(i)}),sh_2,g_{\sigma(i+1)},\dots,g_{\sigma(n)}\Big)\\
		=&\sum_{i=1}^n\sum_{\sigma\in \Sh(i,n-i)}(-1)^{\beta_2} l_{n-i+2}\Big(sh_2,\underbrace{l_{i+1}(sh_1,g_{\sigma(1)},\dots,g_{\sigma(i)})}_{\widehat{h_1}},g_{\sigma(i+1)},\dots,g_{\sigma(n)}\Big)\\
		=&\ \ \ \ \sum_{i=1}^n\sum_{\sigma\in \Sh(i,n-i)}\sum_{\pi\in S_{n-i}}(-1)^{\beta_3}\lambda^{n_2-(n-i+1)}\widehat{h_1} \Big\{sh_2 \big\{sg_{\sigma(i+\pi(1))},\dots,sg_{\sigma(i+\pi(n-i))}\big\}\Big\}\\
		&+	\sum_{i=1}^{n}\sum_{k=1}^{n-i}\sum_{\sigma\in \Sh(i,n-i)}\sum_{\pi\in S_{n-i}}(-1)^{\beta_4}\lambda^{n_2-(n-i+1)}\cdot \\
		&\ \ \ \ g_{\sigma(i+\pi(1))} \Big\{sh_2\big\{sg_{\sigma(i+\pi(2))},\dots,sg_{\sigma(i+\pi(k))},s\widehat{h_1},sg_{\sigma(i+\pi(k+1))},\dots,sg_{\sigma(i+\pi(n-i))}\big\}\Big\}
	\end{align*}

	\medskip
	
	By definition,
	\begin{align*}
		\widehat{h_1}&=l_{i+1}(sh_1,g_{\sigma(1)},\dots,g_{\sigma(i)})\\
		&=\sum_{\tau\in S_i}\chi(\tau,g_{\sigma(1)},\dots,g_{\sigma(i)})(-1)^{1+i(|h_1|+1)+\sum\limits_{k=1}^{i-1}\varepsilon_k^{\sigma\tau}+(|h_1|+1)(|g_{\sigma\tau(1)}|+1)}\lambda^{n_1-i}g_{\sigma\tau(1)}\Big\{sh_1\big\{sg_{\sigma\tau(2)},\dots,sg_{\sigma\tau(i)}\big\}\Big\}.
	\end{align*}
	Then we have
	\begin{align*}
		B=&\ \ \ \ \sum_{i=1}^n\sum_{\sigma\in \Sh(i,n-i)}\sum_{\pi\in S_{n-i}}\sum_{\tau\in S_i}(-1)^{\beta_5}\lambda^{n_1+n_2-n-1}\cdot\\
		&\ \ \ \ \ \Big(g_{\sigma\tau(1)}\Big\{sh_1\{sg_{\sigma\tau(2)},\dots,sg_{\sigma\tau(i)}\}\Big\}\Big)\Big\{sh_2 \big\{sg_{\sigma(i+\pi(1))},\dots,sg_{\sigma(i+\pi(n-i))}\big\}\Big\}\\
		&+\sum_{k=1}^{n-i}\sum_{i=1}^n\sum_{\sigma\in \Sh(i,n-i)}\sum_{\tau\in S_i}\sum_{\pi\in S_{n-i}}(-1)^{\beta_6}\cdot\\
		&g_{\sigma(i+\pi(1))}\Big\{sh_2\big\{sg_{\sigma(i+\pi(2))},\dots,sg_{\sigma(i+\pi(k))},sg_{\sigma\tau(1)}\big\{sh_1 \{sg_{\sigma\tau(2)},\dots,sg_{\sigma\tau(i)} \}\big\}, sg_{\sigma(i+\pi(k+1))},\dots,sg_{\sigma(i+\pi(n-i))}\big\}\Big\}\\
		\stackrel{(\ref{Eq: pre-jacobi})}{=}&\ \ \ \ \sum_{i=1}^n\sum_{\sigma\in \Sh(i,n-i)}\sum_{\pi\in S_{n-i}}\sum_{\tau\in S_i}(-1)^{\beta_5}\lambda^{n_1+n_2-n-1}g_{\sigma\tau(1)}\Big\{sh_1\big\{sg_{\sigma\tau(2)},\dots,sg_{\sigma\tau(i)}\big\},sh_2 \big\{sg_{\sigma(i+\pi(1))},\dots,sg_{\sigma(i+\pi(n-i))}\big\}\Big\}\\
		&+\sum_{i=1}^n\sum_{\sigma\in \Sh(i,n-i)}\sum_{\pi\in S_{n-i}}\sum_{\tau\in S_i}(-1)^{\beta_7}\lambda^{n_1+n_2-n-1}g_{\sigma\tau(1)}\Big\{sh_2 \big\{sg_{\sigma(i+\pi(1))},\dots,sg_{\sigma(i+\pi(n-i))}\big\},sh_1\big\{sg_{\sigma\tau(2)},\dots,sg_{\sigma\tau(i)}\big\}\Big\}\\
		&+\sum_{k=1}^i\sum_{i=1}^n\sum_{\sigma\in \Sh(i,n-i)}\sum_{\pi\in S_{n-i}}\sum_{\tau\in S_i}(-1)^{\beta_8}\lambda^{n_1+n_2-n-1}\cdot \\
		&\ \ \ \ g_{\sigma\tau(1)}\Big\{ sh_1\big\{sg_{\sigma\tau(2)},\dots,sg_{\sigma\tau(k)},sh_2 \big\{sg_{\sigma(i+\pi(1))},\dots,sg_{\sigma(i+\pi(n-i))}\big\},sg_{\sigma\tau(k+1)},\dots,sg_{\sigma\tau(i)}\big\}\Big\}\\
		&+\sum_{k=2}^i\sum_{i=1}^n\sum_{\sigma\in \Sh(i,n-i)}\sum_{\pi\in S_{n-i}}\sum_{\tau\in S_i}(-1)^{\beta_8}\lambda^{n_1+n_2-n-1}\cdot\\
		&\ \ \ \ \ \ g_{\sigma\tau(1)}\Big\{ sh_1\big\{sg_{\sigma\tau(2)},\dots,sg_{\sigma\tau(k)}\big\{sh_2 \big\{sg_{\sigma(i+\pi(1))},\dots,sg_{\sigma(i+\pi(n-i))}\big\}\big\},\dots,sg_{\sigma\tau(i)}\big\}\Big\}\\
		&+\sum_{k=1}^{n-i}\sum_{i=1}^n\sum_{\sigma\in \Sh(i,n-i)}\sum_{\tau\in S_i}\sum_{\pi\in S_{n-i}}(-1)^{\beta_6}\lambda^{n_1+n_2-n-1}\cdot\\
		&\ \ \ \ g_{\sigma(i+\pi(1))}\Big\{sh_2\big\{sg_{\sigma(i+\pi(2))},\dots,sg_{\sigma(i+\pi(k))},sg_{\sigma\tau(1)}\big\{sh_1 \{sg_{\sigma\tau(2)},\dots,sg_{\sigma\tau(i)} \}\big\}, sg_{\sigma(i+\pi(k+1))},\dots,sg_{\sigma(i+\pi(n-i))}\big\}\Big\}\\
		\overset{\star}{=}&\ \ \ \ \sum_{i=1}^n\sum_{\delta\in S_m}(-1)^{\beta_9}\lambda^{n_1+n_2-n-1}g_{\delta(1)}\Big\{sh_1\big\{sg_{\delta(2)},\dots,sg_{\delta(i)}\big\},sh_2\big\{sg_{\delta(i+1)},\dots,sg_{\delta(n)}\big\}\Big\}\\
	&+\sum_{i=1}^n\sum_{\delta\in S_n}(-1)^{\beta_{10}}\lambda^{n_1+n_2-n-1}g_{\delta(1)}\Big\{sh_2\big\{sg_{\delta(i+1)},\dots,sg_{\delta(n)}\big\}, sh_1\big\{sg_{\delta(2)},\dots,sg_{\delta(i)}\big\}\Big\}\\
	&+\sum_{k=1}^i\sum_{i=1}^n\sum_{\delta\in S_n}(-1)^{\beta_{11}}\lambda^{n_1+n_2-n-1} g_{\delta(1)}\Big\{sh_1\big\{sg_{\delta(2)},\dots,sg_{\delta(k)},sh_2\{sg_{\delta(i+1)},\dots,sg_{\delta(n)}\},sg_{\delta(k+1)},\dots,sg_{\delta(i)}\big\}\Big\}\\
	&+\sum_{k=2}^i\sum_{i=1}^n\sum_{\delta\in S_n}(-1)^{\beta_{11}}\lambda^{n_1+n_2-n-1} g_{\delta(1)}\Big\{sh_1\big\{sg_{\delta(2)},\dots,sg_{\delta(k)}\big\{sh_2\{sg_{\delta(i+1)},\dots,sg_{\delta(n)}\}\big\},sg_{\delta(k+1)},\dots,sg_{\delta(i)}\big\}\Big\}\\
	&+\sum_{k=1}^{n-i}\sum_{i=1}^n\sum_{\delta\in S_n}(-1)^{\beta_{12}}\lambda^{n_1+n_2-n-1}\cdot\\
	&\ \ \ \ \  g_{\delta(i+1)}\Big\{sh_2\big\{sg_{\delta(i+2)},\dots,sg_{\delta(i+k)},sg_{\delta(1)}\big\{sh_1\big\{sg_{\delta(2)},\dots,sg_{\delta(i)}\big\}\big\},sg_{\delta(i+k+1)},\dots,sg_{\delta(n)}\big\}\Big\}
	&\\
	=&\ \ \ \ \sum_{i=1}^n\sum_{\delta\in S_m}(-1)^{\zeta_1}\lambda^{n_1+n_2-n-1}g_{\delta(1)}\Big\{sh_1\big\{sg_{\delta(2)},\dots,sg_{\delta(i)}\big\},sh_2\big\{sg_{\delta(i+1)},\dots,sg_{\delta(n)}\big\}\Big\}\\
	&+\sum_{i=1}^n\sum_{\delta\in S_n}(-1)^{\zeta_2}\lambda^{n_1+n_2-n-1}g_{\delta(1)}\Big\{sh_2\big\{sg_{\delta(2)},\dots,sg_{\delta(i)}\big\}, sh_1\big\{sg_{\delta(i+1)},\dots,sg_{\delta(n)}\big\}\Big\}\\
	&+\sum_{k=0}^{n-i}\sum_{i=1}^n\sum_{\delta\in S_n}(-1)^{\zeta_3}\lambda^{n_1+n_2-n-1} g_{\delta(1)}\Big\{sh_1\big\{sg_{\delta(2)},\dots,sg_{\delta(i)},sh_2\{sg_{\delta(i+1)},\dots,sg_{\delta(i+k)}\},sg_{\delta(i+k+1)},\dots,sg_{\delta(n)}\big\}\Big\}\\
	&+\sum_{k=0}^{n-i}\sum_{i=2}^n\sum_{\delta\in S_n}(-1)^{\zeta_3}\lambda^{n_1+n_2-n-1} g_{\delta(1)}\Big\{sh_1\big\{sg_{\delta(2)},\dots,sg_{\delta(i)}\big\{sh_2\{sg_{\delta(i+1)},\dots,sg_{\delta(i+k)}\}\big\},sg_{\delta(i+k+1)},\dots,sg_{\delta(n)}\big\}\Big\}\\
	&+\sum_{k=0}^{n-i}\sum_{i=2}^n\sum_{\delta\in S_n} (-1)^{\zeta_4}\lambda^{n_1+n_2-n-1}g_{\delta(1)}\Big\{sh_2\big\{sg_{{\delta(2)}},\dots,sg_{\delta(i-1)}, sg_{\delta(i)}\big\{sh_1\{sg_{\delta(i+1)},\dots,sg_{\delta(i+k)}\}\big\},sg_{\delta(i+k+1)},\dots, sg_{\delta(n)}\big\}\Big\},
\end{align*}
where
\begin{eqnarray*}(-1)^{ \beta_1}&=&\chi(\sigma;g_1,\dots,g_n)(-1)^{(i+1)(n-i+1)+i+(|h_2|+1)\varepsilon_i^\sigma },\\
	(-1)^{\beta_2}&=&\chi(\sigma;g_1,\dots,g_n)(-1)^{(i+1)(n-i+1)+i+(|h_2|+1)\varepsilon_i^\sigma+1+(|h_2|+1)(i+|h_1|+\varepsilon_i^\sigma)}
=\chi(\sigma;g_1,\dots,g_n)(-1)^{(i+1)(n-i)+(|h_2|+1)(i+|h_1|)},\\
	(-1)^{\beta_3}&=&(-1)^{\beta_2}\chi(\pi;g_{\sigma(i+1)},\dots,g_{\sigma(i+n-i)})(-1)^{1+(n-i+1)(|h_2|+1)+(n-i)|\widehat{h_1}|+
\sum\limits_{k=1}^{n-i-1}\sum\limits_{j=1}^k|g_{\sigma(i+\pi(j))}|+(|\widehat{h_1}|+1)(|h_2|+1)},\\
(-1)^{\beta_4}&=&(-1)^{\beta_2}\chi(\pi;g_{\sigma(i+1)},\dots,g_{\sigma(i+n-i)})(-1)^{k+|\widehat{h_1}| (\sum\limits_{j=1}^kg_{\sigma(i+\pi(j))} )+1+(n-i+1)(|h_2|+1)+
\sum\limits_{t=1}^{n-i-1}\sum\limits_{j=1}^t|g_{\sigma(i+\pi(j))}|+(n-i-k)|\widehat{h_1}| }\\ &&
\cdot (-1)^{\sum_{j=1}^k |g_{\sigma(i+\pi(j))}|+ (|h_2|+1)(|g_{\sigma(i+\pi(1))}|+1)},\\
	(-1)^{\beta_5}&=&(-1)^{\beta_3}\chi(\tau;g_{\sigma(1)},\dots,g_{\sigma(n)})(-1)^{1+i(|h_1|+1)+\sum\limits_{j=1}^{i-1}\varepsilon_j^{\sigma\tau}+
(|h_1|+1)(|g_{\sigma\tau(1)}|+1)},\\
	(-1)^{\beta_6}&=&(-1)^{\beta_4}\chi(\tau;g_{\sigma(1)},\dots,g_{\sigma(i)})(-1)^{1+i(|h_1|+1)+\sum\limits_{j=1}^{i-1}
\varepsilon_j^{\sigma\tau}+(|h_1|+1)(|g_{\sigma\tau(1)}|+1)},\\
	(-1)^{\beta_7}&=&(-1)^{\beta_5}(-1)^{\big(|h_1|+1+\eta_i^{\sigma\tau}-(|g_{\sigma\tau(1)}|+1)\big)\big(|h_2|+1+\sum\limits_{j=1}^{n-i}(|g_{\sigma(i+\pi(j))}|+1)\big)},\\
	(-1)^{\beta_8}&=&(-1)^{\beta_5}(-1)^{\big(\eta_i^{\sigma\tau}-\eta_k^{\sigma\tau}\big)\big(|h_2|+1+\sum\limits_{j=1}^{n-i}(|g_{\sigma(i+\pi(j))}|+1)\big)},\\
	(-1)^{\beta_9}&=&\chi(\delta;g_1,\dots,g_n)(-1)^{n(|h_1|+|h_2|)+(|h_1|+1)(|g_{\delta(1)}|+1)+\sum\limits_{j=1}^{n-1}\varepsilon_{j}^\delta+(|h_2|+1)\eta^\delta_i},\\
	(-1)^{\beta_{10}}&=&(-1)^{\beta_9}(-1)^{\big(|h_2|+1+\eta^\delta_n-\eta_i^\delta\big)\big(|h_1|+|g_{\delta(1)}|+\eta_i^\delta\big)},\\
	(-1)^{\beta_{11}}&=&\chi(\delta;g_1,\dots,g_n)(-1)^{\sum\limits_{j=1}^{n-1}\varepsilon_j^\delta+n(|h_1|+|h_2|)+(|h_1|+1)(|g_{\delta(1)}|+1)+(|h_2|+1)\eta^\delta_k+(\eta^\delta_i-\eta^\delta_k)(\eta_n^\delta-\eta^\delta_i)},
	\\
	(-1)^{\beta_{12}}&=&\chi(\delta;g_1,\dots,g_n)(-1)^{\sum\limits_{j=1}^{n-1}\varepsilon^\delta_j+n(|h_1|+|h_2|)+(|h_1|+1)(|h_2|+1)+(|h_1|+1+\eta_{i}^\delta)(\eta_{i+k}^\delta-\eta^\delta_i)+(|h_1|+1)(|g_{\delta(1)}|+1)+(|h_2|+1)(|g_{\delta(i+1)}|+1)},\\
	(-1)^{\zeta_1}&=&(-1)^{\beta_9},\\
	(-1)^{\zeta_2}&=&\chi(\delta;g_1,\dots, g_n)(-1)^{\sum\limits_{j=1}^{n-1}\varepsilon_j^\delta+n(|h_1|+h_2)+(|h_1|+1)(|h_2|+1)+(|h_2|+1)(|g_{\delta(1)}|+1)+(|h_1|+1)\eta_i^\delta},\\
	(-1)^{\zeta_3}&=&\chi(\delta;g_1,\dots,g_n)(-1)^{\sum\limits_{j=1}^{n-1}\varepsilon_j^\delta+n(|h_1|+h_2)+(|h_1|+1)(|g_{\delta(1)}|+1)+(|h_2|+1)\eta^\delta_i},\\
	(-1)^{\zeta_4}&=&\chi(\delta;g_1,\dots,g_n)(-1)^{\sum\limits_{j=1}^{n-1}\varepsilon_j^\delta+n(|h_1|+|h_2|)+(|h_1|+1)(|h_2|+1)+(|h_1|+1)\eta_i^\delta+(|h_2|+1)(|g_{\delta(1)}|+1)}.
	\end{eqnarray*}

	Notice that in the step $\overset{\star}{=}$ above, we replace the triple $(\tau,\sigma,\pi)$ by its corresponding permutation $\delta\in S_n$ as in { Lemma \ref{Lem: permutation}},  and  we use Equation~(\ref{Eq: sign formulae for shuffles}).

	\medskip
	
	\quad \noindent $\mathrm{(3)}$ The computation of the terms with $sh_2$ contained in $l_i(\dots)$ is almost the same as $\mathrm{(II)}$
	
	\begin{align*}
		C=&\sum_{i=1}^n\sum_{\sigma\in S(i,n-i)}(-1)^{\gamma_1} l_{n-i+2}\Big(l_{i+1}(sh_2,g_{\sigma(1)},\dots,g_{\sigma(i)}), sh_1,g_{\sigma(i+1)},\dots,g_{\sigma(n)}\Big)\\
		=&\sum_{i=1}^n\sum_{\sigma\in \Sh(i,n-i)}(-1)^{\gamma_2}l_{n-i+2}\big(sh_1,l_{i+1}(sh_2,g_{\sigma(1)},\dots,g_{\sigma(i)}), g_{\sigma(i+1)},\dots,g_{\sigma(n)}\big)\\
		=&\ \ \ \  \sum_{i=1}^n\sum_{\delta\in S_n}(-1)^{\gamma_3}\lambda^{n_1+n_2-n-1}g_{\delta(1)}\Big\{ sh_2\big\{sg_{\delta(2)},\dots,sg_{\delta(i)}\big\}, sh_1\big\{sg_{\delta(i+1)},\dots,sg_{\delta(n)}\big\}\Big\}\\
		&+\sum_{i=1}^n\sum_{\delta\in S_n}(-1)^{\gamma_4}\lambda^{n_1+n_2-n-1}g_{\delta(1)}\Big\{sh_1\big\{sg_{\delta(2)},\dots,sg_{\delta(i)}\big\}, sh_2\big\{sg_{\delta(i+1)},\dots,sg_{\delta(n)}\big\}\Big\}\\
		&+\sum_{k=0}^{n-i}\sum_{i=1}^n\sum_{\delta\in S_n}(-1)^{\gamma_5}\lambda^{n_1+n_2-n-1}\cdot\\
		&\ \ \ \ \  g_{\delta(1)}\Big\{sh_2\big\{sg_{\delta(2)},\dots,sg_{\delta(i)},sh_1\{sg_{\delta(i+1)},\dots,sg_{\delta(i+k)}\},sg_{\delta(i+k+1)},\dots,sg_{\delta(n)}\big\}\Big\}\\
		&+\sum_{k=0}^{n-i}\sum_{i=2}^n\sum_{\delta\in S_n}(-1)^{\gamma_5}\lambda^{n_1+n_2-n-1}\cdot\\
		&\ \ \ \ \  g_{\delta(1)}\Big\{sh_2\Big\{sg_{\delta(2)},\dots,sg_{\delta(i)}\big\{sh_1\{sg_{\delta(i+1)},\dots,sg_{\delta(i+k)}\}\big\},sg_{\delta(i+k+1)},\dots,sg_{\delta(n)}\Big\}\Big\}\\
		&+\sum_{k=0}^{n-i}\sum_{i=2}^n\sum_{\delta\in S_n}(-1)^{\gamma_6}\lambda^{n_1+n_2-n-1}\cdot\\
		&\ \ \ \ \ g_{\delta(1)}\Big\{sh_1\Big\{sg_{\delta(2)},\dots,sg_{\delta(i-1)},sg_{\delta(i)}\big\{sh_2\{sg_{\delta(i+1)},\dots,sg_{\delta(i+k)}\}\big\},sg_{\delta(i+k+1)},\dots,sg_{\delta(n)}\Big\}\Big\}
	\end{align*}
where \begin{eqnarray*}
	(-1)^{\gamma_1}&=&\chi(\sigma,g_1,\dots,g_n)(-1)^{i+1+(|h_1|+1)(|h_2|+1+\varepsilon_i^\sigma)+(i+1)(n-i+1)},\\
	(-1)^{\gamma_2}&=&(-1)^{\gamma_1}(-1)^{\big(|h_1|+1\big)\big(i+|h_2|+\varepsilon_{i}^\sigma\big)+1}\\
	(-1)^{\gamma_3}&=&\chi(\delta;g_1,\dots,g_n)(-1)^{\sum\limits_{j=1}^{n-1}\varepsilon_j^\delta+1+n(|h_1|+|h_2|)+(|h_2|+1)(|g_{\delta(1)}|+1)+(|h_1|+1)(|h_2|+1)+(|h_1|+1)\eta_i^\delta}\\
	(-1)^{\gamma_4}&=&\chi(\delta;g_1,\dots,g_n)(-1)^{\sum\limits_{j=1}^{n-1}\varepsilon_j^\delta+1+n(|h_1|+|h_2|)+(|h_1|+1)(|g_{\delta(1)}|+1)+(|h_2|+1)\eta_i^\delta}\\
	(-1)^{\gamma_5}&=&\chi(\delta;g_1,\dots,g_n)(-1)^{\sum\limits_{j=1}^{n-1}\varepsilon_j^\delta+1+n(|h_1|+|h_2|)+(|h_2|+1)(|g_{\delta}|+1)+(|h_1|+1)(|h_2|+1)+(|h_1|+1)\eta_i^\delta},\\
	(-1)^{\gamma_6}&=&\chi(\delta;g_1,\dots,g_n)(-1)^{\sum\limits_{j=1}^{n-1}\varepsilon_j^\delta+1+n(|h_1|+|h_2|)+(|h_2|+1)\eta_i^\delta+(|h_1|+1)(|g_{\delta(1)}|+1)}
	\end{eqnarray*}

	Then the expansion of  Equation~(\ref{Eq:jacobi}) is just ${A+B+C}$. And one can see that the same term appears exactly twice in ${A+B+C}$ with opposite signs. Thus  we have $$ A+B+C=0.$$
	
	For the situation that $n_1 =0\ \mbox{or}\ n_2=0$, i.e., $sh_1$ or $sh_2$ belongs to $\Hom(k,sV)$, the computation for Equation~(\ref{Eq:jacobi}) is similar, but notice that $l_1(sh_1)$ or $l_1(sh_2)$ may be nonzero in this situation.

\medskip

	For the cases $\mathrm{(II)\ (III)\ (IV)}$, the computation is also similar, but there may be more terms in the expansion of Equation~(\ref{Eq:jacobi}). For example, in case $(\mathrm{(II)}$, assuming $n_1\leqslant n<n_2$, there will be terms of the following forms:
	\begin{eqnarray*}
		h_1\circ\Big(sg_{\delta(1)}\ot \cdots \ot sg_{\delta(i)}\big\{sh_2\{sg_{\delta(i+1)},\dots,sg_{\delta(i+n-n_1)}\}\big\}\ot \cdots \ot sg_{\delta(n)}\Big)\\
		g_{\delta(1)}\Big\{sh_2\big\{sg_{\delta(2)},\dots,sg_{\delta(i)},sh_1\circ(sg_{\delta(i+1)}\ot \cdots\ot sg_{\delta(i+n_1)}),\dots,sg_{\delta(n)}\big\}\Big\}
		\end{eqnarray*}
	in both $B$ and $C$. Tracking their signs, one can find that these terms will be  eliminated.

 We are done!

\section*{Appendix B: Proof of Proposition~\ref{Prop: homotopy RB-arising-from-homotopy-Rota-Baxter}}
In this appendix, we will  prove Proposition~\ref{Prop: homotopy RB-arising-from-homotopy-Rota-Baxter}.

{\noindent{{\bf{Proposition~\ref{Prop: homotopy RB-arising-from-homotopy-Rota-Baxter}}}
		\begin{itemize}
			\item[(i)]
			The pair $(V,\{\widetilde{m}_n\}_{n\geqslant 1})$     forms an $A_\infty$-algebra.  And the family of operators $\{T_n\}_{n\geqslant 1}$ defines an $A_\infty$-morphism from $(V,\{\widetilde{m}_n\}_{n\geqslant 1})$ to $(V,\{m_n\}_{n\geqslant 1})$.
			
			\item[(ii)]  These two family of operators $\{\widetilde{b}_n\}_{n\geqslant 1}\bigcup\{\widetilde{R}_n\}_{n\geqslant 1}$ is also a Maurer-Cartan element in $\mathfrak{C}_\RBA(V)$, thus a homotopy Rota-Baxter algebra structure of weight $\lambda$ on $V$.
			%
			
		\end{itemize}

		\bigskip
		
		For (i), we show that    operators $\{\widetilde{b}_n\}_{n\geqslant 1}$ satisfy the following equation:
		\begin{eqnarray*}
			\sum_{j=1}^n\widetilde{b}_{n-j+1}\{ \widetilde{b}_j\}=\sum_{  i+j+k= n,\atop  i,   k\geq 0, j\geq 1 }\widetilde{b}_{i+1+k}\circ(\id^{\ot i}\ot \widetilde{b}_j\ot \id^{\ot k})=0,\end{eqnarray*}
		which says that $(V,\{\widetilde{m_n}\}_{n\geqslant 1})$ is an $A_\infty$-algebra.

		In fact,
		\begin{align*}
			&\sum_{j=1}^n\widetilde{b}_{n-j+1}\{ \widetilde{b}_j\}\\
			=&\sum_{j=1}^n \sum_{p=1}^{n-j+1}\sum_{k=0}^{p-1}\sum_{l_1+\cdots +l_k+p-k=n-j+1,\atop l_1, \dots, l_k\geqslant 1}  \lambda^{p-k-1}\big(b_p\{ sR_{l_1},\dots,sR_{l_k}\}\big)\{\widetilde{b}_j\}\\
			=&\sum_{j=1}^n\sum_{p=1}^{n-j+1}\sum_{k=0}^{p-1}\sum_{l_1+\cdots +l_k+p-k=n-j+1,\atop l_1, \dots, l_k\geqslant 1} \sum_{r=0}^k\lambda^{p-k-1}b_p\big\{sR_{l_1},\dots,sR_{l_r},\widetilde{b}_j,sR_{l_{r+1}},\dots, sR_{l_k}\big\}\\
			&+\sum_{j=1}^n\sum_{p=1}^{n-j-1}\sum_{k=0}^{p-1}\sum_{l_1+\cdots +l_k+p-k=n-j+1,\atop l_1, \dots, l_k\geqslant 1}\sum_{r=1}^k\lambda^{p-k-1}b_p\big\{sR_{l_1},\dots,sR_{l_r}\{\widetilde{b}_j\},sR_{l_{r+1}}, \dots,sR_{l_k}\big\}\\
			=&\sum_{j=1}^n\sum_{p=1}^{n-j+1}\sum_{k=0}^{p-1}\sum_{l_1+\cdots +l_k+p-k=n-j+1,\atop l_1, \dots, l_k\geqslant 1} \sum_{r=0}^k \sum_{q=1}^j\sum_{s=0}^{q-1}  \sum_{t_1+\cdots +t_s+q-s=j,\atop t_1, \dots, t_s\geqslant 1}  \\
&\quad\quad\quad\quad \lambda^{p+q-k-s-2}b_p\Big\{sR_{l_1},\dots,sR_{l_r},b_q\big\{sR_{t_1},\dots,sR_{t_s}\big\}, sR_{l_{r+1}}, \dots,sR_{l_k}\Big\}\\
			&+\sum_{j=1}^n\sum_{p=1}^{n-j+1}\sum_{k=0}^{p-1}\sum_{l_1+\cdots +l_k+p-k=n-j+1,\atop l_1, \dots, l_k\geqslant 1}  \sum_{r=1}^k \sum_{q=1}^j\sum_{s=0}^{q-1}  \sum_{t_1+\cdots +t_s+q-s=j,\atop t_1, \dots, t_s\geqslant 1}  \\
&\quad\quad\quad\quad \lambda^{p+q-k-s-2}b_p\Big\{sR_{l_1},\dots,sR_{l_r}\big\{b_q\big\{sR_{t_1},\dots,sR_{t_s}\big\}\big\}, sR_{l_{r+1}},\dots,sR_{l_k}\Big\}\\
			=& \sum_{1\leqslant p+q\leqslant n+1, \atop p, q\geqslant 1}  \sum_{m=0}^{p+q-2}\sum_{s=0}^{q-1}\sum_{r=0}^{m-s}\sum_{l_1+\cdots +l_m=n+1-p-q+m,\atop l_1, \dots, l_m\geqslant 1}  \lambda^{p+q-m-2} b_p\Big\{sR_{l_1},\dots,sR_{l_r},b_q\big\{sR_{l_{r+1}},\dots,sR_{l_{r+s}}\big\},\dots,sR_{l_m}\Big\}\\ &  +\sum_{j=1}^n\sum_{p=1}^{n-j+1}
			   \sum_{k=0}^{p-2}  \sum_{r=0}^{k}\sum_{j_1+\cdots +j_k+p-k =n+1-j, \atop  j_1, \dots, j_k   \geqslant 1} \\
&\quad\quad\quad\quad \lambda^{p-k-1}b_p\Big\{sR_{j_1},\dots,sR_{j_r},
			\sum_{1\leqslant s\leqslant  q\leqslant  j} \sum_{t_1+\cdots+t_s+q-s=j,\atop t_1, \dots, t_s\geqslant 1} \lambda^{q-s}sR_{t_1}\Big\{b_q\big\{sR_{t_2},\dots,
			sR_{t_s}\big\}\Big\},sR_{j_{r+1}},\dots,sR_{j_k}\Big\} \\
			=& \sum_{1\leqslant p+q\leqslant n+1, \atop p, q\geqslant 1}  \sum_{k=0}^{q-1}\sum_{m-q=0}^{p-1}\sum_{r=0}^m\sum_{l_1+\cdots +l_m=n+1-p-q+m,\atop l_1, \dots, l_m\geqslant 1}  \lambda^{p+q-m-2}b_p\Big\{sR_{l_1},\dots,sR_{l_r},b_q\big\{sR_{l_{r+1}},\dots,sR_{l_{r+k}}\big\},\dots,sR_{l_m}\Big\} \\
			 &+\sum_{1\leqslant p+q\leqslant n+1, \atop p, q\geqslant 1}  \sum_{r=0}^{m-q} \sum_{l_1+\cdots +l_m=n+1-p-q-m\atop l_1, \dots, l_m\geqslant 1}\lambda^{p+q-m-2}b_p\Big\{sR_{l_1},\dots,sR_{l_r},b_q\circ(sR_{l_{r+1}},\dots,sR_{l_{r+q}}),\dots,sR_{l_m}\Big\}\\
			=&\sum_{t=2}^{n+1}  \sum_{l_1+\cdots +l_m=n+1-t-m, \atop l_1, \dots, l_m\geqslant 1}\lambda^{t-m-2}   \Big(\sum_{  p+q=t, p, q\geqslant 1}b_p\{ b_q\}\Big)\Big\{sR_{l_1},\dots,sR_{l_m}\Big\}\\
			=&0,
		\end{align*}
		where   we get  the fourth equality by reindexing,   the fifth equality is obtained from  Equation~(\ref{Eq: homotopy-rb-operator}) and the last equality uses Equation~(\ref{Eq: A infinity}).
		
			By the definition of homotopy Rota-Baxter algebra of weight $\lambda$, the three family of operators $\{b_n\}_{n\geqslant1}$, $\{\widetilde{b}_n\}_{n\geqslant 1}$, $\{R_n\}_{n\geqslant 1}$ fulfill the equation:
		\[\sum_{k=1}^n\sum_{l_1+\dots+l_k=n,\atop l_1,\dots,l_k\geqslant 1}b_k(sR_{l_1}\ot \dots\ot sR_{l_k})=\sum_{p=1}^nsR_{p}\{\widetilde{b}_{n-p+1}\}.\]
		Thus $\{T_n\}_{n\geqslant 1}$ is an $A_\infty$-morphism from $(V,\{\widetilde{m}_n\}_{n\geqslant 1})$ to $(V, \{m_n\}_{n\geqslant 1})$.
		
		\bigskip

		For (ii), we just need to check that $\{\widetilde{b}_n\}_{n\geqslant 1}\bigcup\{\widetilde{R}_n\}_{n\geqslant 1}$ fulfill Equation ~(\ref{Eq: homotopy RB-operator-version-2}).
		
		By the definition of $\{\widetilde{R}_n\}_{n\geqslant 1}$, one can check that the following equation holds:
		\begin{eqnarray}\label{Eq:expansion-of-new-rb-operator}\widetilde{R}_n&=&\sum_{k=1}^\infty R_n^k\\
			\notag	&=&\sum_{k=1}^\infty\sum\limits_{0\leqslant q\leqslant p-1,\atop t_1,\dots,t_q\leqslant k-1}\sum\limits_{l_1+\cdots +l_q+p-q=n,\atop l_1,\dots,l_q\geqslant 1}\lambda^{p-q-1}s^{-1}(sR_p)\{sR^{t_1}_{l_1},\dots,sR^{t_{q}}_{l_q}\}\\
			\notag	&=&\sum_{l_1+\cdots +l_q+p-q=n,\atop l_1,\dots,l_q\geqslant 1}\sum_{0\leqslant q\leqslant p-1}\lambda^{p-q-1}s^{-1}(sR_p)\{s\widetilde{R}_{l_1},\dots,s\widetilde{R}_{l_q}\}
		\end{eqnarray}

		Now, let's prove Equation \eqref{Eq: homotopy-rb-operator} holds for $\{\widetilde{b}_{m}\}_{m\geqslant 1}\cup\{\widetilde{R}_m\}_{m\geqslant 1}$, i.e., the following equation holds for any $n\geqslant 1$:
		\begin{eqnarray} \label{Eq: homotopy-new-rb-operator}
			&&    \sum_{k=1}^n\sum_{l_1+\cdots +l_k=n}s^{-1}\widetilde{b}_k\circ(s\widetilde{R}_{l_1}\ot \cdots\ot s\widetilde{R}_{l_k})\\
			&&\notag  \ \ \ \ \ \ \ \ \ \ \ \ \ =\sum_{p=1}^n\sum_{r_1+\cdots +r_q+p-q=n,\atop 1\leqslant q\leqslant p}\lambda^{p-q}s^{-1}(s\widetilde{R}_{r_1})\big\{\widetilde{b}_p\{s\widetilde{R}_{r_2},\dots,s\widetilde{R}_{r_q}\}\big\}.
		\end{eqnarray}
		We prove this by taking induction on $n$. When $n=1$, it is easy to see that $\widetilde{R}_1=R_1$ and $\widetilde{b}_1=b_1$. The Equation \eqref{Eq: homotopy-new-rb-operator} holds naturally for $n=1$. Now, assume that Equation \eqref{Eq: homotopy-new-rb-operator} holds for all integers $\leqslant n-1$.
		Firstly, we have the following equation holds:
		
		\begin{eqnarray*}&&\sum_{m=1}^n\sum_{l_1+\cdots +l_m=n}s^{-1}\widetilde{b}_m\circ(s\widetilde{R}_{l_1}\ot \cdots\ot s\widetilde{R}_{l_m})\\
			&=&\sum_{m=1}^n\sum_{l_1+\cdots +l_m=n}\sum_{0\leqslant k\leqslant p-1,\atop i_1+\cdots +i_k+p-k=m}\lambda^{p-k-1}\Big(b_p\{sR_{i_1},\dots,sR_{i_k}\}\Big)\circ\Big(s\widetilde{R}_{l_1}\ot \cdots \ot s\widetilde{R}_{l_m}\Big)\\
			&=&\sum_{m=1}^n\sum_{l_1+\cdots +l_m=n}\sum_{0\leqslant k\leqslant p-1,\atop i_1+\cdots +i_k+p-k=m}\sum_{0\leqslant t_1\leqslant t_2\leqslant \dots\leqslant t_k\leqslant l_m}\lambda^{p-k-1}\cdot \\
			&&\  b_p\circ\Big(s\widetilde{R}_{l_1}\ot\cdots \ot s\widetilde{R}_{l_{t_1}}\ot sR_{i_1}\circ(s\widetilde{R}_{l_{t_1+1}}\ot \cdots \ot s\widetilde{R}_{l_{t_1+i_1}})\ot \cdots \ot sR_{i_k}\circ\big( s\widetilde{R}_{l_{t_k+1}}\ot \cdots \ot s\widetilde{R}_{l_{t_k+i_k}}\big)\ot \cdots \ot s\widetilde{R}_{l_m}\Big)\\
			&\overset{\star}{=}&\sum_{m=1}^n\sum_{l_1+\cdots +l_m=n}\sum_{0\leqslant k\leqslant p-1,\atop i_1+\cdots +i_k+p-k=m}\sum_{0\leqslant t_1\leqslant t_2\leqslant \dots\leqslant t_k\leqslant l_m}\lambda^{p-k-1}\cdot \\
			&&\  b_p\circ\Bigg(\Big(\sum_{0\leqslant j_1\leqslant k_1-1,\atop r_1^1+\cdots +r^1_{j_1}+k_1-j_1=l_1} \lambda^{k_1-j_1-1}sR_{k_1}\{s\widetilde{R}_{r_{1}^1},\dots,s\widetilde{R}_{r^{1}_{j_1}}\}\Big)\ot\cdots \\
			&&\cdots \ot \Big(\sum_{0\leqslant j_{t_1}\leqslant k_{t_1}-1,\atop r_1^{t_1}+\cdots +r^{t_1}_{j_{t_1}}+k_{t_1}-j_{t_1}=l_{t_1}} \lambda^{k_{t_1}-j_{t_1}-1}sR_{k_{t_1}}\{s\widetilde{R}_{r_{1}^{t_1}},\dots,s\widetilde{R}_{r^{t_1}_{j_{t_1}}}\}\Big)\ot  sR_{i_1}\circ(s\widetilde{R}_{l_{t_1+1}}\ot \cdots \ot s\widetilde{R}_{l_{t_1+i_1}})\ot \cdots \\
			&&\cdots \ot sR_{i_k}\circ\big( s\widetilde{R}_{l_{t_k}+1}\ot \cdots \ot s\widetilde{R}_{l_{t_k+i_k}}\big)\ot \cdots \ot \Big(\sum_{0\leqslant j_m\leqslant k_m-1,\atop r_1^m+\cdots +r^m_{j_m}+k_m-j_m=l_m} \lambda^{k_m-j_m-1}sR_{k_m}\{s\widetilde{R}_{r_{1}^m},\dots,s\widetilde{R}_{r^{m}_{j_m}}\}\Big)\Bigg)\\
			&=&\sum_{p+r_1+\cdots +r_q-q=n,\atop0\leqslant q\leqslant p-1}\lambda^{p-q-1}\sum_{j_1+\cdots +j_k=p}s^{-1}\Big(sb_k\circ( sR_{j_1}\ot \cdots \ot sR_{j_k})\Big)\{s\widetilde{R}_{r_1},\dots,s\widetilde{R}_{r_q}\}.
		\end{eqnarray*}
		In the Equality $\overset{\star}{=}$ above, we replace all $s\widetilde{R}_{l_j}, j\notin \bigcup\limits_{r=1}^k\{t_r+1,\dots,t_r+i_r\}$ by their expansions in the last line  of Equation \eqref{Eq:expansion-of-new-rb-operator}.

	Now, let's compute the RHS of Equation \eqref{Eq: homotopy-new-rb-operator}.  We have:
		
		\begin{eqnarray*}&&\sum_{p=1}^n\sum_{1\leqslant q\leqslant p}\sum_{r_1+\cdots +r_q+p-q=n}\lambda^{p-q}s^{-1}(s\widetilde{R}_{r_1})\big\{\widetilde{b}_p\{s\widetilde{R}_{r_2},\dots,s\widetilde{R}_{r_q}\}\big\}\\&=&\sum_{p=1}^n\sum_{0\leqslant q\leqslant p-1}\sum_{m+r_1+\cdots +r_q+p-q-1=n}\lambda^{p-q-1}s^{-1}s\widetilde{R}_m\big\{\widetilde{b}_p\{s\widetilde{R}_{r_1},\dots,s\widetilde{R}_{r_q}\}\big\}	\\
			&=&\sum_{p=1}^n\sum_{0\leqslant q\leqslant p-1}\sum_{m+r_1+\cdots +r_q+p-q-1=n}\lambda^{p-q-1}\sum_{0\leqslant j\leqslant k-1,\atop i_1+\cdots +i_j+k-j=m}\lambda^{k-j-1}s^{-1}\Big(sR_k\big\{s\widetilde{R}_{i_1},\dots,s\widetilde{R}_{i_j}\big\}\Big)\Big\{\widetilde{b}_p\big\{s\widetilde{R}_{r_1},\dots,s\widetilde{R}_{r_q}\big\}\Big\}\\
			&=&\sum_{i_1+\cdots +i_r+p+k-r-1=n}\sum_{0\leqslant q\leqslant p-1,\atop 0\leqslant j\leqslant  r-q\leqslant k-1}\lambda^{k-(r-q)-1+p-q-1}s^{-1} (sR_{k})\Big\{s\widetilde{R}_{i_1},\dots,s\widetilde{R}_{i_j},\widetilde{b}_p\big\{s\widetilde{R}_{i_{j+1}},\dots,s\widetilde{R}_{i_{j+q}}\big\},s\widetilde{R}_{i_{j+q+1}},\dots,s\widetilde{R}_{i_r}\Big\}\\
			&&+\sum_{i_1+\cdots +i_r+p+k-r-1=n}\sum_{0\leqslant q\leqslant p-1,\atop 1\leqslant j\leqslant r-q\leqslant k-1}\lambda^{k-(r-q)-1+p-q-1}s^{-1} (sR_{k})\Big\{s\widetilde{R}_{i_1},\dots,s\widetilde{R}_{i_j}\big\{\widetilde{b}_p\{s\widetilde{R}_{i_{j+1}},\dots,s\widetilde{R}_{i_{j+q}}\}\big\},s\widetilde{R}_{i_{j+q+1}},\dots,s\widetilde{R}_{i_r}\Big\}\\
			&=&\sum_{i_1+\cdots +i_r+p+k-r-1=n}\sum_{0\leqslant q\leqslant p-1,\atop 0\leqslant j\leqslant r-q\leqslant k-1}\lambda^{k-(r-q)-1+p-q-1}s^{-1} (sR_{k})\Big\{s\widetilde{R}_{i_1},\dots,s\widetilde{R}_{i_j},\widetilde{b}_p\big\{s\widetilde{R}_{i_{j+1}},\dots,s\widetilde{R}_{i_{j+q}}\big\},s\widetilde{R}_{i_{j+q+1}},\dots,s\widetilde{R}_{i_r}\Big\}\\
			&&+\sum_{i_1+\cdots +i_{j-1}+m+i_{j+q+1}+\cdots +i_r+k-r+q=n}\sum_{1\leqslant j\leqslant r-q\leqslant k-1}\lambda^{k-(r-q)-1}s^{-1} (sR_{k})\Big\{s\widetilde{R}_{i_1},\dots,\\
			&&\ \ \ \ \ \ \ \ \ \ \ \ \ \dots,\underline{ \sum_{i_j+i_{j+1}+\cdots +i_{j+q}+p-q-1=m,\atop 0\leqslant q\leqslant p-1}\lambda^{p-q-1}s\widetilde{R}_{i_j}\big\{\widetilde{b}_p\{s\widetilde{R}_{i_{j+1}},\dots,s\widetilde{R}_{i_{j+q}}\}\big\}},s\widetilde{R}_{i_{j+q+1}},\dots,s\widetilde{R}_{i_r}\Big\}	
		\end{eqnarray*}
		Notice that in the last step of the above expansion, $m=i_j+i_{j+1}+\cdots +i_{j+q}+p-q-1\leqslant n-1$. By assumption, Equation \eqref{Eq: homotopy-new-rb-operator} holds for all integers $\leqslant n-1$, so  we  have:
		
		\begin{eqnarray*}
			\sum_{i_j+i_{j+1}+\cdots +i_{j+q}+p-q-1=m,\atop 0\leqslant q\leqslant p-1}\lambda^{p-q-1}s\widetilde{R}_{i_j}\big\{\widetilde{b}_p\{s\widetilde{R}_{i_{j+1}},\dots,s\widetilde{R}_{i_{j+q}}\}\big\}=\sum_{p=1}^m\sum_{l_1+\cdots +l_p=m}\widetilde{b}_p\circ(s\widetilde{R}_{l_1}\ot \cdots \ot s\widetilde{R}_{l_p})
		\end{eqnarray*}
		
		Replacing the underlined part in the expansion by the RHS above and reindexing,  we have
		\begin{eqnarray*}
			&&\sum_{p=1}^n\sum_{1\leqslant q\leqslant p}\sum_{r_1+\cdots +r_q+p-q=n}\lambda^{p-q}s^{-1}(s\widetilde{R}_{r_1})\big\{\widetilde{b}_p\{s\widetilde{R}_{r_2},\dots,s\widetilde{R}_{r_q}\}\big\}\\
			&=&	\sum_{i_1+\cdots +i_r+p+k-r-1=n}\sum_{0\leqslant j\leqslant r-q\leqslant k-1,\atop 0\leqslant q\leqslant p-1}\lambda^{k-(r-q)-1+p-q-1}s^{-1} (sR_{k})\Big\{s\widetilde{R}_{i_1},\dots,s\widetilde{R}_{i_j},\widetilde{b}_p\{s\widetilde{R}_{i_{j+1}},\dots,s\widetilde{R}_{i_{j+q}}\},s\widetilde{R}_{i_{j+q+1}},\dots,s\widetilde{R}_{i_r}\Big\}\\
			&&+	\sum_{i_1+\cdots +i_r+p+k-r-1=n}\sum_{0\leqslant j\leqslant r-p\leqslant k-1}\lambda^{k-(r-p+1)-1}s^{-1}sR_{k}\Big\{s\widetilde{R}_{i_1},\dots, s\widetilde{R}_{i_j},\widetilde{b}_p\circ\big(s\widetilde{R}_{i_{j+1}}\ot \cdots \ot s\widetilde{R}_{i_{j+p}}\big), \dots,s\widetilde{R}_{i_r}\Big\}\\
			&=&\sum_{r_1+\cdots +r_q+p-q=n}\sum_{0\leqslant q\leqslant p-1}\lambda^{p-q-1}s^{-1}\sum_{k=1}^p\Big(sR_{k}\{\widetilde{b}_{p-k+1}\}\Big)\Big\{s\widetilde{R}_{r_1},\dots,s\widetilde{R}_{r_q}\Big\}
		\end{eqnarray*}
		Since $\{b_k\}_{k\geqslant 1}\cup\{R_k\}_{k\geqslant 1}$ fulfill Equation \eqref{Eq: homotopy-rb-operator}, we have the equation
		\[\sum_{j_1+\dots+j_k=p}s^{-1}\Big(sb_k\circ( sR_{j_1}\ot \cdots\ot sR_{j_k})\Big)=\sum_{k=1}^ps^{-1}\Big(sR_{k}\{\widetilde{b}_{p-k+1}\}\Big)\]
		holds for all positive integer $p$.
		Then Equation \eqref{Eq: homotopy-new-rb-operator} holds for integer $n$. Thus $\{\widetilde{b}_k\}_{k\geqslant 1}\cup\{\widetilde{R}_k\}_{k\geqslant 1}$ gives a homotopy Rota-Baxter algebra structure of weight $\lambda$ on $V$.

		\bigskip

		%

 \textbf{Acknowledgements:}   The authors were  supported    by NSFC (No.   11971460, 12071137)   and  by  STCSM  (No. 18dz2271000).

 The authors are grateful to Jun Chen, Xiaojun Chen, Li Guo, Yunnan Li,  Zihao Qi,  Yunhe Sheng, Rong Tang etc       for many useful comments.
  Li Guo  read carefully  part of this paper and gave  very detailed suggestions and  kind encouragements.
    Xiaojun Chen imposed  a question which led to Subsection 6.3  and Zihao Qi added a remark which  simplified the proof of Lemma~\ref{Lem: homotopy well defined}. Dotensko kindly gave an alternative proof of Theorem~\ref{Thm: Minimal model} which is reproduced in Remark~\ref{rem: dotsenko}.  Das draw our attention to his papers \cite{Das21, Das21b}. We are very grateful to these researchers for their interests and comments.

 The authors lectured about this paper in various occasions, in particular,    at ICCM meeting in December 2020, at Capital Normal University in January 2021,  at Southest University in May 2021,   at Beijing Normal Univeristy and at Northest Normal University   in June 2021 etc. We would like to express our sincere gratitude to the organisers  for the invitations and their useful remarks.

\end{document}